\setlist[enumerate]{label={\rm(\roman*)}} 
\newtheorem*{theorem*}{Theorem}
\newtheorem{theorem}{Theorem}[section]
\theoremstyle{definition}
\newtheorem{definition}[theorem]{Definition}
\theoremstyle{definition}
\theoremstyle{plain}
\newtheorem{proposition}[theorem]{Proposition}
\theoremstyle{plain}
\newtheorem{lemma}[theorem]{Lemma}
\theoremstyle{plain}
\newtheorem{corollary}[theorem]{Corollary}
\theoremstyle{definition}
\newtheorem{remark}[theorem]{Remark}
\theoremstyle{definition}
\theoremstyle{plain}
\theoremstyle{plain}
\theoremstyle{plain}
\newtheorem{conjecture}[theorem]{Conjecture}
\numberwithin{equation}{section}
\def\diff{\mathrm{d}}
\def\Diff{\mathrm{D}}
\def\reals{\mathbb{R}}
\def\nat{\mathbb{N}}
\def\spt{\operatorname{spt}}
\def\conv{\operatorname{conv}}
\def\Em{\operatorname{Em}}
\def\H{\mathcal{H}}
\def\refine{\operatorname{refine}}
\def\Wedge{\textstyle\bigwedge}
\def\Rleft{l(R)}
\def\Rright{r(R)}
\newcommand{\AEm}{\textnormal{\AE}}
\def\Reflat{\textnormal{Ref}}
\newcommand{\tLip}{\textnormal{Lip}}
\newcommand{\tspan}{\operatorname{span}}
\newcommand{\Id}{\operatorname{Id}}
\newcommand{\rng}{\operatorname{rng}}
\title{Failure of Lang's Flat Chain Conjecture and non-regularity of the prescribed Jacobian equation}
\author{Jakub Tak\'a\v{c}}
\affil{University of Warwick}
\begin{document}
\maketitle

\begin{abstract}
We show that Lang's Flat Chain Conjecture (that is, without requiring finite mass of the underlying currents) fails for metric $k$-currents in $\reals^d$ whenever $d\geq 2$ and $k\in\{1, \dots, d\}$.
In all other cases, it holds. The original conjecture due to Ambrosio and Kirchheim remains open.
We first connect Lang's conjecture to a regularity statement concerning the prescribed Jacobian equation near $L^\infty$. 
We then show that the equation does not have the required regularity.
For a Lipschitz vector field $\pi$, its derivative $\Diff\pi$ exists a.e.~and is identified with a matrix.
Our non-regularity results for the prescribed Jacobian equation quantify how ``small'' the set
    \begin{equation*}
        \conv(\{\det\Diff \pi: \tLip(\pi)\leq L\})\subset L^\infty
    \end{equation*}
    is for every $L>0$.
The symbol ``$\conv$'' stands for the convex hull.
The ``smallness'' is quantified in topological terms and is used to show that Lang's Flat Chain Conjecture fails.
\end{abstract}
\section{Introduction}\label{S:intro}
The main purpose of this paper is to provide a complete resolution of the so-called ``Lang's Flat Chain Conjecture'' \cite[page 716]{UL}.
The conjecture concerns the structure of metric currents, which are objects of interest in Geometric Measure Theory.
They are related to a number of geometric problems (in metric spaces) such as the Plateau problem \cite{AKcur}, isoperimetric inequalities \cite{BWY}, flows of measures \cite{PaSteFlows} and even geometric flows \cite{Allen} (here currents are in particular used to define a suitable notion of distance \cite{SoWe}).
Theory of metric currents was first developed by Ambrosio and Kirchheim in their seminal paper \cite{AKcur}, where they also state what is popularly called the ``Flat Chain Conjecture'' \cite[page 68]{AKcur}.
The conjecture states that metric $k$-currents of \emph{finite mass} in $\reals^d$ with compact support are flat $k$-chains in the sense of Federer and Fleming (we provide relevant definitions as well as a detailed description of the problem later in Subsections \ref{SS:intro-currents} and \ref{SS:prel-currents}). 
Later a more general theory of metric currents, encompassing also currents not of finite mass, was developed by Lang \cite{UL} and an analogous question was asked about this type of metric currents. 
Our main result is that the conjecture due to Lang for $k$-currents in $\reals^d$ fails if $d\geq 2$ and $k\in\{1, \dots, d\}$ (see Theorem \ref{T:fcc-fails-all-cases}). The original conjecture due to Ambrosio and Kirchheim remains, in general, open. We discuss the special cases that have been resolved \cite{DeRindler, Sch} in the subsections below.

Remarkably, our method relies on connecting the conjecture to the (non-)regularity of a certain partial differential equation (PDE), which is usually called the \emph{prescribed Jacobian equation}:
\begin{equation}\label{E:PJE1}
    \det \Diff \pi = \rho\quad \text{on $Q=[0,1]^d$.}
\end{equation}
If $\pi$ is a Lipschitz vector field on $Q$, then by Rademacher's differentiation theorem, the function $\det\Diff\pi$ is defined a.e.~and lies in $L^\infty(Q)$.
Thus one could ask, for any $\rho \in L^\infty(Q)$, is there a Lipschitz vector field $\pi$ satisfying \eqref{E:PJE1} a.e.?
The only available result in this direction is due to Burago and Kleiner \cite{BK} and independently McMullen \cite{McM}, who, provided $d\geq 2$, construct a $\rho\in L^\infty(Q)$ attaining only two positive values such that there is no \emph{biLipschitz} solution to the equation.
We give a stronger result, asserting that the of set of $\rho\in L^\infty(Q)$ for which there exists a Lipschitz solution (as opposed to biLipschitz) is ``small'' (Proposition \ref{P:intro-strong-non-surj}).

It is this property of the prescribed Jacobian equation that eventually leads to the resolution of Lang's Flat Chain Conjecture. A number of connections of the conjecture to a Lipschitz regularity of PDEs have been recently observed by multiple groups \cite{DeMaMa, ArBou, BCTVW, MM} independently (including the author), as is also indicated in the recent survey by Marchese \cite{MarcheseSurvey}. We provide a more detailed discussion of these connections in Subsection \ref{SS:intro-char}.
As for the prescribed Jacobian equation, we give an overview of some previously available results, as well as of some related problems and of our own results, later in Subsection \ref{SS:intro-reg}, but for now, we turn our attention to metric currents.

\subsection{Gentle introduction to metric currents and the Flat Chain Conjecture}\label{SS:intro-currents}
The basic idea behind ``currents'' is that of generalized surfaces.
Consider the Plateau problem, which asks for a minimal $k$ dimensional manifold $M$ having the prescribed boundary $\partial M = N$.
Here, ``minimal'' is to be interpreted as having the least possible $k$ dimensional volume.
The standard way to show that such an object even exists is to take a sequence $M_n$, whose volumes converge to the infimal volume and then use some kind of compactness theorem to extract a ``convergent'' subsequence.
The limit of this subsequence can then be expected to be a solution.
The basic problem with this approach is that the space of (smooth) manifolds is not compact with respect to a useful notion of convergence and therefore we need to ``enlarge'' the space into something, where compactness is available.
This is usually achieved by various different spaces of currents and the first successful implementation of this idea (for the Plateau problem) is due to Federer and Fleming \cite{FF}.

General theory of currents dates back to de Rham \cite{dR}
and is based on the observation that every oriented $k$ dimensional manifold acts by integration on, say, smooth and compactly supported differential $k$-forms.
Moreover, this action is linear and (in an appropriate sense) continuous.
A $k$-current is, by definition, any such linear and continuous action.
The approach is analogous to the classical theory of distributions.
Since the theory requires differential forms, it implicitly uses the vector space structure of $\reals^d$ and therefore it only makes sense in a Euclidean space (or possibly a space with a suitable notion of a tangent field, such as a manifold).
Thus, if one wishes to make sense of a current in a general metric space a different approach needs to be chosen.

A theory of metric currents was first developed by Ambrosio and Kirchheim in their seminal paper \cite{AKcur} motivated by earlier ideas of De Giorgi.
Their theory imposed a finite mass condition on the currents and was later extended by Lang \cite{UL}, who developed a more general\footnote{The theory of Lang is not \emph{strictly} more general than the theory of Ambrosio and Kirchheim in non $\sigma$-compact metric spaces.
This has to do with set-theoretical considerations \cite[Lemma 2.9 and the paragraph above]{AKcur}, but under reasonable assumptions, every current of Ambrosio and Kirchheim can be obtained as a limit of Lang's currents \cite[page 711]{UL}.} theory encompassing also currents of infinite mass.

To understand the idea, let us first examine once again any $k$ dimensional oriented manifold $M\subset \reals^d$.
The manifold induces a multilinear functional:
\begin{equation*}
    \llparenthesis M \rrparenthesis (f,\pi^1, \dots, \pi^k) = \int_M f \diff \pi^1\wedge\dots \wedge\diff \pi^k.
\end{equation*}
Apriori, the expression makes sense if the functions $f, \pi^1, \dots, \pi^k$ are smooth and $f$ is, moreover, compactly supported.
However, the formula above can be written in more elementary terms.
If $F_1, \dots, F_k$ are some (smooth) unit vector fields providing the orientation of $M$, then we have
\begin{equation*}
    \llparenthesis M \rrparenthesis (f,\pi^1, \dots, \pi^k) = \int_M f\det(\frac{\partial \pi^i}{\partial F_j})_{i,j=1}^k \diff \H^k,
\end{equation*}
where the integral is a Lebesgue integral with respect to the Hausdorff $k$-measure $\H^k$.
By the Rademacher differentiation theorem, the expression above is well defined whenever $f$ is Lipschitz and compactly supported and $\pi^1, \dots, \pi^k$ are Lipschitz.

For a metric space $X$, we denote by $\tLip(X)$ the space of real valued Lipschitz maps defined on $X$ and by $\tLip_c(X)$ the subspace of those maps which are compactly supported.
The multilinear functional
\begin{equation*}
    \llparenthesis M \rrparenthesis \colon \tLip_c(\reals^d) \times [\tLip(\reals^d)]^k \to \reals
\end{equation*}
has several properties which are used as axioms to define metric currents.
Precisely, if $X$ is a locally compact\footnote{The theory may be extended into spaces which are not locally compact, but for us the locally compact setting suffices.} metric space, we call the \emph{multilinear} functional
\begin{equation*}
    T\colon \tLip_c(X) \times [\tLip(X)]^k\to \reals
\end{equation*}
a \emph{metric current}, if
\begin{enumerate}
    \item (locality) whenever there is some $j\in\{1, \dots, k\}$ such that $\pi^j$ is constant on a neighbourhood of $\spt f$, then $T(f, \pi^1, \dots, \pi^k)=0$,
    \item (joint continuity) if $f_i \to f$, $\pi^j_i \to \pi^j$ for each $j\in\{1, \dots, k\}$, pointwise, with a uniform bound on Lipschitz constants, 
    and there is some compact set $K$ such that $\spt f_i, \spt f \subset K$, then
    \begin{equation*}
        T(f_i, \pi^1_i, \dots, \pi^k_i)\to T(f, \pi^1, \dots, \pi^k).
    \end{equation*}
\end{enumerate}
The functional $\llparenthesis M \rrparenthesis$ is indeed a metric $k$-current in $\reals^d$.
We see immediately that locality is satisfied, while the fact that joint continuity holds is non-trivial, but well known \cite[Theorem 2.16]{AFP}.
The currents which we introduced are equivalent to the definition of Lang \cite[Lemma 2.2]{UL}.

To understand appropriately the statement of the Flat Chain Conjecture, we need to introduce also the so-called flat chains.
However, the general case is quite involved and therefore we postpone the relevant definitions to Subsection \ref{SS:prel-currents}.
We continue here by considering only the top-dimensional case of $d$-currents in $\reals^d$, which is the most important for this paper.
Fortunately, this case also requires no knowledge of the Federer--Fleming theory of currents and we may restrict our attention to only classical distributions.

Every metric $d$-current $T$ in $\reals^d$ defines a distribution $\tilde{C}_d T$ via the formula
\begin{equation*}
    \tilde C_d T(\varphi) = T(\varphi, x^1, \dots, x^d) \quad \text{for $\varphi\in C^\infty_c(\reals^d)$.}
\end{equation*}
Here, for each $j=1, \dots, d$, the symbol $x^j$ stands for the coordinate function $x\mapsto x^j$ and $C^\infty_c(\reals^d)$ is the space of smooth compactly supported functions.
It is instructive to verify that if $M\subset \reals^d$ is a $d$-dimensional manifold with the canonical orientation, then $ \tilde C_d \llparenthesis M\rrparenthesis $ is the distribution given by the $L^1_\textnormal{loc}$-function $\chi_M$ (characteristic function of $M$).

Using several well-established identifications (see Subsection \ref{SS:prel-currents}), the top-dimensional case of the Flat Chain Conjecture of Lang \cite[page 716]{UL} may be reduced to the following statement.
For the general statement, see Conjecture \ref{Con:Lang-general}.
The symbol $\mathcal{L}^d$ stands for the Lebesgue measure.
\begin{conjecture}\label{Con:Lang-TD}
    Let $T$ be a metric $d$-current on $\reals^d$ supported inside $Q=[0,1]^d$.
Then there exists some $u\in L^1(Q)$ such that
    \begin{equation*}
        \tilde C_d T(\varphi) = \int_Q u\varphi \;\diff \mathcal{L}^d \quad \text{for all $\varphi \in C^\infty_c(\reals^d)$.}
    \end{equation*}
\end{conjecture}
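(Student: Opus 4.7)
The plan is to recover an $L^1$ density for $\tilde C_d T$ in three steps: (i) show $\tilde C_d T$ is a distribution of order zero, so that Riesz representation produces a signed Radon measure $\mu$ supported in $Q$; (ii) show $\mu$ is absolutely continuous with respect to $\mathcal{L}^d$; (iii) apply Radon--Nikodym to obtain $u\in L^1(Q)$.

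For step (i), I would try to upgrade joint continuity into a quantitative bound $|T(\varphi,x^1,\dots,x^d)|\leq C\|\varphi\|_\infty$ for $\varphi\in C^\infty_c(\reals^d)$ supported in a fixed compact $K\supset Q$, via a Banach--Steinhaus-type argument applied to the family $\{\varphi\mapsto T(\varphi,x^1,\dots,x^d)\}$ on bounded subsets of $\tLip_c(\reals^d)$ with a common Lipschitz bound. Combined with joint continuity along smooth mollifications $\varphi_\varepsilon\to\varphi$, this would extend $\tilde C_d T$ to a continuous linear functional on $C_c(K)$, i.e.\ a signed Radon measure $\mu$.

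For step (ii), the determinantal structure should be exploited. Multilinearity and antisymmetry in the $\pi^j$ slots encode a Jacobian, which vanishes on sets where the coordinate maps degenerate. For a Borel set $E\subset Q$ with $\mathcal{L}^d(E)=0$, I would approximate $\chi_E$ by smooth functions $\varphi_j$ supported in thin open neighbourhoods $E_j\supset E$ with $\mathcal{L}^d(E_j)\to 0$, and perturb the coordinate maps $x^1,\dots,x^d$ to Lipschitz maps that collapse one direction on $E_j$ while preserving Lipschitz constants uniformly. Joint continuity and locality together should force $T(\varphi_j,x^1,\dots,x^d)\to 0$, and hence $\mu(E)=0$.

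The main obstacle is step (i). Joint continuity as formulated is purely qualitative, and without a finite-mass hypothesis there is no intrinsic mechanism preventing the Lipschitz constant of $\varphi$ from entering the estimate, so $\tilde C_d T$ could a priori be a distribution of positive order. The Ambrosio--Kirchheim framework sidesteps this by postulating finite mass from the outset; Lang's formulation precisely removes that assumption, and it is exactly this gap that the conjecture asks to close. The abstract already signals that the paper refutes the conjecture by showing that $\conv(\{\det\Diff\pi:\tLip(\pi)\leq L\})$ is topologically ``small'' in $L^\infty$; one would then expect densities $\rho\in L^\infty(Q)$ lying outside this set to be the source of a metric $d$-current whose associated distribution cannot be represented by any $L^1$ function, thereby breaking step (i) and the conjecture itself.
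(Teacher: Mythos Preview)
The statement is a \emph{conjecture}, and for $d\geq 2$ the paper \emph{disproves} it (Corollary~\ref{C:failure-of-everything}); there is no proof in the paper to compare your attempt against. You have essentially diagnosed this yourself in your final paragraph, so let me only sharpen two points. First, your step~(ii) is not the issue: it is already a theorem of De~Philippis and Rindler (cited immediately after the conjecture in the paper) that any metric $d$-current whose associated distribution \emph{is} a signed measure must be absolutely continuous with respect to $\mathcal L^d$. The entire content of the conjecture is therefore your step~(i), and the Banach--Steinhaus manoeuvre you sketch cannot succeed: the only bound joint continuity delivers is $|T(\varphi,x^1,\dots,x^d)|\leq C\max\{\lVert\varphi\rVert_\infty,\tLip(\varphi)\}$ (this is Corollary~\ref{C:norm-is-finite}), and no uniform boundedness principle removes the $\tLip(\varphi)$ term.

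The paper's refutation is dual and non-constructive. It equips $\mathcal D_d(Q)$ with a Banach norm $\lVert\cdot\rVert_{\mathbb P_d}$, observes that the conjecture amounts to surjectivity of $\Em_*\colon L^1(Q)\to\mathbb P_d(Q)$, and shows via the open mapping theorem that $\Em_*$ does not even have closed range (Theorem~\ref{T:f-a-characterisation}). The input is strong Lipschitz non-regularity of the prescribed Jacobian equation (Section~\ref{S:reg}) combined with a Hahn--Banach separation in the weak$^*$ topology of $L^\infty$; no explicit bad current is exhibited. By contrast, for $d=1$ the conjecture holds (Theorem~\ref{T:k=1=d}), and the proof there is exactly your step~(i): the antiderivative $W(t)=\int_0^t\omega$ satisfies $\tLip(W)=\lVert\omega\rVert_\infty$, whence $|C_1 T(\omega\,\diff t)|=|T(1,W)|\leq\lVert T\rVert_{\mathbb P_1}\lVert\omega\rVert_\infty$, giving finite mass directly.
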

In other words, the conjecture states that if the space of metric $d$-currents supported inside $Q$ is identified with a space of distributions (supported inside $Q$), then the resulting space of distributions coincides with $L^1(Q)$.
We also remark that the converse statement is known to be true \cite[Theorem 2.16]{AFP} \cite[Proposition 2.6]{UL}, i.e.~given $u\in L^1(Q)$, the functional
\begin{equation*}
    T_u(f,\pi^1, \dots, \pi^d)= \int_Q u f \det\Diff\pi\;\diff \mathcal{L}^d
\end{equation*}
is a metric $d$-current and clearly $\tilde{C}_{d} T_u = u$.
That is, $u\mapsto T_u$ is a restriction of the inverse of $\tilde{C}_d$.

DePhilippis and Rindler \cite{DeRindler} show that the conjecture holds if one also, additionally, assumes that there exists some finite Borel measure $\mu$ on $Q$ such that
\begin{equation*}
    |T(f,\pi^1, \dots, \pi^d)|\leq \int_Q |f| \diff \mu \prod_{j=1}^d \tLip(\pi^j).
\end{equation*}
This is equivalent to requiring that the distribution $\tilde C_d T$ be a signed measure.
In other words, any signed measure $\mu$ that arises from a metric $d$-current $T$ as $\tilde{C}_d T = \mu$ must satisfy $\mu \ll \mathcal{L}^d$.

The principal result that we obtain (and from which the complete resolution of Lang's conjecture in all dimensions and codimensions is obtained) is that Conjecture \ref{Con:Lang-TD} fails whenever $d\geq 2$, which is explicitly available in Corollary \ref{C:failure-of-everything}.
The proof is non-constructive and so we are not able to demonstrate an explicit example of such a metric $d$-current.
The proof in fact relies on two non-constructive steps; we use the open mapping theorem as well as a version of the Hahn-Banach separation theorem.
Before describing how Conjecture \ref{Con:Lang-TD} is linked to the prescribed Jacobian equation in Subsection \ref{SS:intro-char}, we discuss the equation itself.

\subsection{The prescribed Jacobian equation and Lipschitz regularity}\label{SS:intro-reg}

Due to its geometric meaning, the Jacobian appears when dealing with many different types of problems in mathematical analysis.
Here we point out a couple of connections that are the most relevant for the present paper.
The prescribed Jacobian equataion is of interest in the Calculus of Variations and in elasticity, for which a standard reference is the book of Dacorogna \cite[Chapter 14]{Daco}.
Specifically in elasticity, a lower estimate on the Jacobian is a natural constraint, since zero Jacobian corresponds to compressing the underlying material infinitely, which would require infinite energy \cite[(2.7)]{Ball}.
In the recent work of Bevan et al \cite{BKV}  a constrained minimisation problem is studied and the constraint is given by the equation \eqref{E:PJE1}.
Namely, a functional $\mathcal{F}(\pi)$ is minimised over vector fields $\pi$ which solve the equation \eqref{E:PJE1} for a fixed right hand side $\rho$.

Then there is the link of the prescribed Jacobian equation to the theory of optimal transport (via the Monge-Amp\`ere equation \cite[Lecture 7]{ABS}).
Interestingly, optimal transport is also directly connected to metric currents - in particular (normal) metric $1$-currents, which can be seen as transporting the positive part of their boundary to the negative part.
This connection is explored in the recent works of Arroyo-Rabasa and Bouchitt\'e \cite{ArBou} and, independently, the paper of the author jointly with Bate, Caputo, Valentine and Wald \cite{BCTVW}. Similar connection is also explored in the work of De Pauw \cite{dePauw}.

Finally, there is the somewhat surprising connection of the prescribed Jacobian equation to large-scale geometry.
This was first observed independently by Burago and Kleiner \cite{BK} and McMullen \cite{McM} and is of particular interest to our work.
Recall that $Z\subset \reals^d$ is a \emph{separated net}, if there are two constants $\varepsilon>0$ and $\delta>0$ such that for any $x\in \reals^d$, there is $z\in Z$ with $\lVert x-z\rVert\leq \varepsilon$ and whenever $z,w\in Z$, $z\not= w$, then $\lVert z - w \rVert\geq \delta$.
Burago and Kleiner show that existence of \emph{biLipschitz solutions} (i.e.~Lipschitz, injective solutions, with Lipschitz inverses) to \eqref{E:PJE1} given a right hand side $\rho\in L^\infty(Q)$, corresponds to the existence of biLipschitz bijections $F\colon Z \to \mathbb{Z}^d$, where $Z$ is a separated net.

They continue to show that there exists $\rho\in L^\infty(Q)$, which, moreover, only attains two positive values, such that there is no biLipschitz solution to \eqref{E:PJE1}.
This is used to infer that there exists a separated net $Z$, for which there is no biLipschitz bijection $F\colon Z \to \mathbb{Z}^d$.
Existence of such bijections was previously an open question of Gromov.

Later, these ideas were refined  by Dymond et al \cite{DKK}.
They study the pushforward equation
\begin{equation}\label{E:intro-pw-equation}
    \pi_\# \mathcal{L}^d_{|Q}=\rho \mathcal{L}^d_{|\pi(Q)}.
\end{equation}
Note that if $\pi$ is injective, then \eqref{E:intro-pw-equation} is the same as \eqref{E:PJE1} by the change of variables formula.
They show that there exists $\rho \in L^\infty(Q)$ (once again, it may be assumed that $\rho$ only attains two positive values) such that \eqref{E:intro-pw-equation} admits no \emph{Lipschitz regular solutions} (for the relevant definitions, see the paper \cite{DKK}).
This is used to assert a statement about Lipschitz maps from $n^d$ point subsets of $\mathbb{Z}^d$ to the regular $n^d$ square in $\mathbb{Z}^d$ as $n\to\infty$.

Concerning the (lack of) regularity of the equation \eqref{E:PJE1}, there are multiple ``scales'' of relevant questions.
Since the equation is of first order, one may for example ask, whether given $\rho\in L^p$, there is a solution $\pi \in W^{1,p}$.
This was resolved affirmatively in the case $d<p<\infty$ by Ye \cite{Ye}.
In the less regular regime of $p<d$, the Jacobian loses its geometric meaning and has very pathological behaviour.
This can be seen in the work of Koumatos et al \cite[1.2]{KRW}.
A particular special case of their result asserts that a Sobolev map $\pi$ can be found which maps $Q=[0,1]^d$ to $2Q=[0,2]^d$ satisfying
\begin{equation*}
    \det\Diff\pi = 1\quad\text{in $Q$} \quad\text{and} \quad \pi_{|\partial Q}=2 \Id.
\end{equation*}
In fact, the map $\pi$ may be required to lie in any Sobolev space $W^{1,p}$ for $p<d$.
This is a kind of infinitesimal analogue of the Banach--Tarski paradox, allowing one to double the global volume, while on the infinitesimal scale the volume stays preserved.
In a similar fashion, an earlier construction of Hencl \cite{Standa} produces, for any $p<d$, a \emph{homeomorphism} $\pi\in W^{1,p}$ such that $\det\Diff\pi=0$ a.e.

Given the scaling behaviour of determinants, it is also natural to ask whether for a $\rho \in L^{p}$ there exists a solution in $W^{1, pd}$.
In fact, the previously mentioned results of Koumatos et al resolve this affirmatively when $p<1$ (i.e.~$pd<d$).
For $p=1$ (i.e.~$pd=d$), this is known to fail due to Coifman et al \cite{CLMS} (in French), who show that whenever $\pi\in W^{1,d}$, then $\det\Diff\pi$ is actually a member of a Hardy space which is known to be strictly smaller than $L^1$ (see also the earlier work of M\"uller \cite{Muller}).
In the case $p=\infty$ the question of existence of solutions in $W^{1,pd}$ and $W^{1,p}$ agree and simply reduce to the question of existence of Lipschitz solutions for $L^\infty$ right hand sides.

Another natural question is whether given $\rho \in C^{k,\alpha}$ (H\"older space), there is a solution $\pi \in C^{k+1,\alpha}$, for which an affirmative answer was found by Dacorogna and Moser \cite{DM} if $\alpha\in(0,1)$.
However, the question of Lipschitz (i.e.~$C^{0,1}$) solutions given a $\rho\in L^\infty(Q)$ or even $\rho\in C(Q)$ remained open, although Riviere and Ye \cite{RiYe} find less regular H\"older solutions.
We remark that it is sometimes also of interest to construct solutions which are invertible and their inverses also have satisfactory regularity (in particular, in some of the reference given above).

We are interested in Lipschitz solutions for $L^\infty$ right hand sides and, motivated by resolving the Flat Chain Conjecture, we actually consider a generalisation of the equation \eqref{E:PJE1}, given by the following ``linearisation'':
\begin{equation}\label{E:intro-the-equation1}
    \sum_i f_i \det\Diff\pi_i = \rho \quad \text{a.e.~on $Q=[0,1]^d$.}
\end{equation}
Here, $\rho\in L^\infty(Q)$ is a prescribed function and $f_i, \pi_i$ are unknown, possibly infinite, sequences of Lipschitz maps.
Naturally, $\pi_i$ are vector fields and $f_i$ are scalar fields.

\begin{definition}[Lipschitz solutions]\label{D:solution}
    Given a function $\rho\in L^\infty(Q)$,
    a sequence of Lipschitz maps $f_i\colon Q \to \reals$ and $\pi_i \colon Q \to \reals^d$ is called a \emph{Lipschitz solution to \eqref{E:intro-the-equation1} (with the right hand side $\rho$)} if
    \begin{equation}\label{E:intro-inequality}
        \sum_i\max\{\tLip(f_i), \lVert f_i \rVert_\infty\} \prod_{j=1}^d \tLip(\pi_i^j)<\infty,
    \end{equation}
    and \eqref{E:intro-the-equation1} holds with absolute convergence in $L^\infty(Q)$.
\end{definition}
Note that the estimate \eqref{E:intro-inequality} immediately implies that the sum on the left hand side of \eqref{E:intro-the-equation1} converges absolutely in $L^\infty$.

We now continue by defining three notions of \emph{non-regularity} which we will study.
It is the case that in order to resolve Conjecture \ref{Con:Lang-TD}, we only need to consider one of these notions (strong Lipschitz non-regularity), however, it is very instructive to discuss all three.

\begin{definition}\label{D:Lip-reg}
    \begin{enumerate}
        \item[(1)] We say that the prescribed Jacobian equation has \emph{weak Lipschitz non-regularity} if there exists some $\rho\in L^\infty(Q)$ for which there does not exists a Lipschitz vector field $\pi$ solving the equation \eqref{E:PJE1} in the sense of equality almost everywhere.
        \item[(2)] We say that the prescribed Jacobian equation has \emph{linearised Lipschitz non-regularity} if there exists some $\rho\in L^\infty(Q)$ for which there does not exists a Lipschitz solution $(f_i, \pi_i)$ to \eqref{E:intro-the-equation1}.
        \item[(3)] We say that the prescribed Jacobian equation has \emph{strong Lipschitz non-regularity} if a sequence of $L^1(Q)$ functions $u_n$, $n\in \nat$ can be found satisfying the two following inequalities simultaneously:
        \begin{enumerate}
        \item\label{Enum:strong-non-reg1} for each $n\in \nat$, $\lVert u_n \rVert_{L^1(Q)}\geq 1$
        \item \label{Enum:strong-non-reg2} for each $n\in \nat$,
        \begin{equation*}
            \sup\left\{\left|\int_Q u_n f\det\Diff\pi\;\diff \mathcal{L}^d\right|: \max\{\tLip(f), \lVert f \rVert_\infty\} \prod_{j=1}^d\tLip(\pi^j)\leq 1 \right\}\leq \frac{1}{n}.
        \end{equation*}
    \end{enumerate}
    \end{enumerate}
\end{definition}
It is clear that linearised Lipschitz non-regularity implies weak Lipschitz non-regularity.
To see that strong Lipschitz non-regularity implies linearised Lipschitz non-regularity we essentially ``repeat'' the proof of the open mapping theorem in a different setting.
Let
\begin{equation*}
    K= \{f\det\Diff \pi: \max\{\tLip(f), \lVert f \rVert_\infty\} \prod_{j=1}^d \tLip(\pi^j)\leq 1\}\subset L^\infty(Q).
\end{equation*}
If the prescribed Jacobian equation does \emph{not} have linearised Lipschitz non-regularity, then in particular
\begin{equation*}
    \bigcup_{n\in\nat} \{n\rho: \rho \in \overline{\conv K}\}= L^\infty(Q).
\end{equation*}
Thus, an application of the Baire category theorem shows that for at least one $n\in \nat$, the set
\begin{equation*}
    \{n\rho: \rho \in \overline{\conv K}\}
\end{equation*}
has non-empty interior.
By the homogeneity of the determinant, we may freely rescale the set and see that in fact the set $\overline{\conv K}$ contains an open neighbourhood of $0$.
By definition, therefore, some
$\varepsilon>0$ exists for which
\begin{equation*}
    \overline{\conv K} \supset \{\rho \in L^\infty(Q): \lVert \rho \rVert_{\infty}\leq \varepsilon\}.
\end{equation*}
It follows that for any $u\in L^1(Q)$,
\begin{equation*}
    \varepsilon\lVert u \rVert_1\leq \sup_{\rho \in \overline{\conv K}} \left|\int u\rho \;\diff \mathcal{L}^d\right|=\sup_{\rho \in \conv K} \left|\int u\rho \;\diff \mathcal{L}^d\right|=\sup_{\rho \in K} \left|\int u\rho \;\diff \mathcal{L}^d\right|,
\end{equation*}
where in the first equality we use that the functional $\rho \mapsto \int u \rho $ is continuous and in the second equality that it is linear.
By definition of $K$, this contradicts strong Lipschitz non-regularity.

It is not known to us whether there exists a simple way of proving strong Lipschitz non-regularity under the assumption of linearised Lipschitz non-regularity, but in light of the following result, this is not needed.

\begin{theorem}\label{T:intro-reg}
    The prescribed Jacobian equation has strong Lipschitz non-regularity.
Therefore, it also has linearised Lipschitz non-regularity and weak Lipschitz non-regularity.
\end{theorem}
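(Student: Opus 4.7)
The plan is to argue by contradiction, converting the statement into a geometric assertion about the set $K = \{f\det\Diff\pi : \max\{\tLip(f), \lVert f \rVert_\infty\}\prod_{j=1}^d \tLip(\pi^j) \leq 1\} \subset L^\infty(Q)$. Suppose strong Lipschitz non-regularity fails. After normalisation this means there exists $c>0$ such that
\begin{equation*}
    \sup_{\rho \in K} \Bigl|\int_Q u\rho\,\diff \mathcal{L}^d\Bigr| \geq c\lVert u \rVert_{L^1}
    \quad\text{for every } u \in L^1(Q),
\end{equation*}
because otherwise, for $c=1/n$, a normalised witness would already furnish the sequence required by items (\ref{Enum:strong-non-reg1})–(\ref{Enum:strong-non-reg2}). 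Since $K$ is symmetric (replacing $f$ by $-f$ preserves every constraint), the bipolar theorem in the $L^1$–$L^\infty$ duality shows that the weak-$*$-closed convex hull $\overline{\conv K}^{w^*}\subset L^\infty(Q)$ contains the closed $L^\infty$-ball of radius $c$ centred at $0$. This is the Hahn–Banach ingredient, dual in spirit to the open-mapping step already used to pass from strong to linearised non-regularity.

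Next, I would reduce from $K$ to the pure-Jacobian class $K_L := \{\det\Diff\pi : \tLip(\pi)\leq L\}$ featured in Proposition \ref{P:intro-strong-non-surj}. Multilinearity of the determinant applied to $\tilde\pi = (f\pi^1, \pi^2,\dots,\pi^d)$ gives
\begin{equation*}
    \det\Diff\tilde\pi = f\det\Diff\pi + \pi^1\det\Diff(f, \pi^2, \dots, \pi^d),
\end{equation*}
so each element of $K$ is the difference of two expressions of the form ``bounded scalar times a pure Jacobian'', with Lipschitz constants controlled by the defining bound of $K$. Iterating and using that $\|\pi^1\|_\infty$ is bounded on the unit cube, one absorbs $K$ into $L\cdot\conv(K_L\cup -K_L)$ for a dimensional constant $L=L(d)$. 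Combining this with the previous paragraph forces $\overline{\conv K_L}^{w^*}$ to contain an $L^\infty$-ball of radius $c/L>0$. Applying Proposition \ref{P:intro-strong-non-surj}, whose topological smallness is expressed via explicit $L^\infty$-witnesses separated from every convex combination of Jacobians and hence survives weak-$*$ closure, yields the desired contradiction.

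The main obstacle is the reduction step: the displayed identity peels off one $f$-factor at the cost of creating a new expression of the same shape, and making the iteration terminate (or converge absolutely) under the multiplicative bound in the definition of $K$ requires bookkeeping, since $\tLip(f)$ and $\lVert f \rVert_\infty$ enter jointly but $\tLip(\pi^j)$ enters as a product. A cleaner alternative would be to note that $\tLip(f\pi^1)\leq \lVert f \rVert_\infty\tLip(\pi^1)+\lVert\pi^1\rVert_\infty\tLip(f)$ and organise the iteration into a finite scheme producing a dimensional $L$. With the reduction in place, everything else is standard functional analysis wrapped around the hard analytic input of Proposition \ref{P:intro-strong-non-surj}, whose own proof (presumably via a quantitative Burago–Kleiner/McMullen multi-scale obstruction) I expect to carry the real difficulty but which we are entitled to use.
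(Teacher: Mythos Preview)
Your overall scheme is close to the paper's, but the reduction step to the ``pure-Jacobian'' class $K_L$ is both unnecessary and, as sketched, circular.

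Proposition \ref{P:intro-strong-non-surj} is not about $K_L=\{\det\Diff\pi:\tLip(\pi)\le L\}$; it is about the full equation $\sum_i f_i\det\Diff\pi_i=\rho$ under the bound \eqref{E:leqS}. In particular, the set of such solutions with $s\le S$ already contains $S\cdot\conv K$ (absorb convex coefficients into the $f_i$). So once you have $\overline{\conv K}^{w^*}\supset cB_{L^\infty}$ from the bipolar step, you can simply apply Proposition \ref{P:intro-strong-non-surj} with, say, $S=2/c$: rescale the resulting $\rho_0$ and $U$ by $c/2$ to land inside $cB_{L^\infty}$ while keeping $U$ disjoint from $\{s\le 1\}\supset\conv K$, and since $U$ is weak$^*$ open this contradicts the inclusion. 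This is exactly the contrapositive of the paper's route (strong non-surjectivity of $\Em$ plus the predual Hahn--Banach separation of Section \ref{S:HB}), so with the reduction removed your argument and the paper's coincide.

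As for the reduction itself, your iteration does not terminate. Applying your identity to the correction term $\pi^1\det\Diff(f,\pi^2,\dots,\pi^d)$ gives
\[
\pi^1\det\Diff(f,\pi^2,\dots,\pi^d)=\det\Diff(\pi^1 f,\pi^2,\dots,\pi^d)-f\det\Diff(\pi^1,\pi^2,\dots,\pi^d),
\]
and the last term is exactly $f\det\Diff\pi$ again; substituting back collapses the whole identity to a tautology. So the claimed absorption of $K$ into $L\cdot\conv(K_L\cup -K_L)$ is not established by this scheme. Fortunately, as noted above, you never needed it.
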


We actually also prove linearised Lipschitz non-regularity more ``directly'' (it follows immediately from Theorem \ref{T:nowhere-dense} together with Baire category theorem, or from Corollary \ref{C:failure-of-everything} together with the fact that strong non-surjectivity implies non-surjectivity; cf.~Section \ref{S:HB}).

Before we discuss some aspects of the proof, we discuss related results in literature.
The lack of regularity of PDEs near $L^\infty$ as well as near $L^1$ is not something specific to our particular case.
Consider the so-called Ornstein's non-inequality \cite{Orn} (see also the work of Kirchheim and Kristensen \cite[Theorem 1.3]{KK} for a more general statement with a different proof).
One possible way to view the inequality is as saying that under determined PDEs lack regularity gain near $L^1$.
In more explicit way, if we have an under determined linear differential operator of order $k$, say $\mathscr{A}$, then for the equation
\begin{equation*}
    \mathscr{A} u =f,
\end{equation*}
we may find a sequence $f_i$ bounded in $L^1$ for which any sequence of solutions $u_i$ must satisfy $\lVert \nabla^k u_i \rVert_{L^1}\to \infty$.
In elasticity theory, this is of interest when $\mathscr{A}$ is the symmetric part of the gradient (cf.~Korn's inequality \cite{Horgan} asserting instead that for $p\in(1,\infty)$ a regularity gain can be expected for the symmetric part of the gradient).
In the theory of currents, it is useful to consider $\mathscr{A}=\diff$, the exterior derivative.
Except for the case when $\diff$ coincides with the Frech\'et derivative, the operator is indeed under determined.

The non-inequality of Ornstein can be dualised to obtain an analogous result near $L^\infty$.
It appears that the first instance of an $L^\infty$ result is due to De Leeuw and Mirkil \cite{dLM} (in French) and came before Ornstein's result.
A duality argument in combination with Ornstein's result is carried out by McMullen \cite{McM} who uses this idea to show lack of Lipschitz solutions for the \emph{divergence equation}:
\begin{equation*}
    \operatorname{div} u = f,
\end{equation*}
for some right sides $f\in L^\infty$.
This result was, however, first obtained by Preiss \cite{PreissDiv} using a different technique.
Bouragin and Brezis \cite{BB} provide a number of results for the divergence equation in various limiting cases and in particular provide a simpler duality argument, akin to McMullen's approach, showing the lack of Lipschitz solutions.
It appears that a similar method could be used for any under determined linear differential operator, but this is not available explicitly in literature.

Our Theorem \ref{T:intro-reg} may thus be seen as a multilinear (as opposed to linear) version of these types of statements.
It is clear that linearised Lipschitz non-regularity is a statement about lack of regularity near $L^\infty$.
On the other hand, it is not clear, whether strong Lipschitz non-regularity is a result ``near $L^\infty$'' or ``near $L^1$''.
It concerns $L^1$ maps, but no operator on acting on these maps is really considered.
In fact, to obtain an operator of this kind, theory of metric currents needs to be developed.
Or rather, a development of such an operator leads immediately to the development of metric currents and eventually to the resolution of Lang's Flat Chain Conjecture.

It is also not clear whether our result is strictly stronger than available linear results (say, for operators of order $1$ near $L^\infty$).
However, a large scale of linear operators can be considered and non-regularity results for them obtained using our multilinear result.
An example of these types of operators is given by the exterior derivative $\diff$ acting on $k$-forms for $k\geq 1$.
This is further discussed in Section \ref{S:prescribed-exterior} where we also provide the relevant results.
It appears that the results we collect concerning the operator $\diff$ are ``colloquially known'' since it should be possible to obtain them by dualizing the result of Ornstein as we discussed above.
But these results are not available explicitly, thus we provide them together with detailed proofs.

We continue by outlining the basic ideas behind Theorem \ref{T:intro-reg}.
There are, broadly speaking, two main steps that we need to carry out.
The first is purely functional-analytic and it is here that we rely on the Hahn-Banach separation theorem.
The second step is proving the following proposition.

\begin{proposition}\label{P:intro-strong-non-surj}
    For any $S>0$, there exists a smooth map $\rho_0 \in L^\infty(Q)$ with $\lVert \rho_0 \rVert_\infty\leq 2$ together with a weak$^*$ open set $U$ with $\rho_0\in U$ and such that for every $\rho \in U$, there is no solution $(f_i, \pi_i)$ to the equation \eqref{E:intro-the-equation1} satisfying
    \begin{equation}\label{E:leqS}
        \sum_i\max\{\tLip(f_i), \lVert f_i \rVert_\infty\} \prod_{j=1}^d \tLip(\pi_i^j)\leq S.
    \end{equation}
\end{proposition}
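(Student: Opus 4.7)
The plan is to pass via weak*-duality to an equivalent problem of constructing a suitable $L^1$-function, and then to build this function through a multi-scale argument.

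First observe that the set of admissible right-hand sides
\begin{equation*}
K_S:=\bigg\{\sum_i f_i \det\Diff\pi_i : \sum_i\max\{\tLip(f_i),\lVert f_i\rVert_\infty\}\prod_{j=1}^d\tLip(\pi_i^j)\leq S\bigg\}\subset L^\infty(Q)
\end{equation*}
is convex, balanced, and satisfies $K_S=S\cdot K_1$. It thus suffices to produce $u\in L^1(Q)$ such that
\begin{equation*}
    M(u):=\sup\bigg\{\Big|\int_Q uf\det\Diff\pi\,\diff\mathcal{L}^d\Big| : \max\{\tLip(f),\lVert f\rVert_\infty\}\prod_{j=1}^d\tLip(\pi^j)\leq 1\bigg\}<\frac{2\lVert u\rVert_1}{S},
\end{equation*}
since then $\sup_{\rho\in K_S}|\int u\rho|\leq S\cdot M(u)<2\lVert u\rVert_1$. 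Given such $u$, take $\rho_0:=(2\operatorname{sgn} u)*\phi_\varepsilon$ for a standard mollifier: then $\rho_0$ is smooth with $\lVert\rho_0\rVert_\infty\leq 2$, and $\int u\rho_0\to 2\lVert u\rVert_1$ as $\varepsilon\to 0$, so $\int u\rho_0>S\cdot M(u)$ for sufficiently small $\varepsilon$. The weak*-open half-space $U:=\{\rho\in L^\infty(Q):\int u\rho>S\cdot M(u)\}$ then contains $\rho_0$ and is disjoint from $K_S$.

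The principal analytic tool for bounding $M(u)$ is the distributional Jacobian identity: for $u$ smooth compactly supported in the interior of $Q$ and Lipschitz $f,\pi$,
\begin{equation*}
    \int_Q uf\det\Diff\pi\,\diff\mathcal{L}^d = -\int_Q (\pi^1-c)\det(\nabla(uf),\nabla\pi^2,\ldots,\nabla\pi^d)\,\diff\mathcal{L}^d,
\end{equation*}
valid for any constant $c$ (since shifting $\pi^1$ leaves $\det\Diff\pi$ unchanged). Choosing $c$ adaptively gives a bound whose geometric gain is proportional to $\diam(\spt u)$. To force $M(u)$ small relative to $\lVert u\rVert_1$, I construct $u$ as a multi-scale self-similar pattern in the spirit of the Burago--Kleiner construction: partition $Q$ into subcubes of side $r_1$ carrying mean-zero oscillations, then recursively refine within each subcube at scales $r_2\ll r_1$ and onwards through a hierarchy of $k$ levels, chosen so that the cumulative multi-scale gain drives $M(u)/\lVert u\rVert_1$ below $2/S$.

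The main obstacle is that a one-scale construction does not suffice for large $S$: the localization gain of $r$ from integration by parts is largely offset by the associated gradient cost $\lVert\nabla u\rVert_1/\lVert u\rVert_1\sim r^{-1}$. The core technical work---paralleling Burago--Kleiner's inductive multi-scale argument---is to design the hierarchy so that the mean-zero cancellations at different scales genuinely compound rather than cancel. This hierarchical analysis, producing a quantitative bound on $M(u)$ arbitrarily small in the appropriate sense while $\lVert u\rVert_1$ stays bounded below, is where the bulk of the proof's difficulty lies.
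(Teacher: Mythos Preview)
Your reduction to finding $u\in L^1(Q)$ with $M(u)<2\lVert u\rVert_1/S$ is correct, and is essentially the converse of the paper's Hahn--Banach step (Theorem~\ref{T:quant-separation}): the paper deduces the existence of such $u$ \emph{from} the proposition rather than using one to prove it. But the construction of $u$ is not carried out, and the sketch has a real gap. You yourself note that the single-scale integration-by-parts estimate gives no net gain (the localisation factor $r$ is cancelled by $\lVert\nabla u\rVert_1/\lVert u\rVert_1\sim r^{-1}$); the assertion that multi-scale cancellations ``genuinely compound'' is left unsubstantiated. The obstacle is structural: $M(u)$ is a supremum over all admissible $(f,\pi)$, and while for each fixed $\pi$ there is some scale at which $\pi$ is nearly affine, that scale depends on $\pi$. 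Your identity gives no mechanism to control the contributions from scales that are \emph{not} favourable for a given $\pi$, and a fixed $u$ cannot adapt. Nothing in the proposal explains how a universal $u$ could defeat every $(f,\pi)$ at once.

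The paper circumvents this by working on the primal side: it builds $\rho_0$ directly as a multi-scale checkerboard (Section~\ref{S:reg}), and then, for any putative solution $(f_i,\pi_i)_i$, packages the entire infinite family $(\pi_i)$ into a single finitely biLipschitz map $h\colon Q\to\ell^2$ (Lemma~\ref{L:def-h-into-ell2}). A dichotomy for such maps (Lemma~\ref{L:dichotomy}, extending \cite{DKK} to $\ell^2$-valued targets) then locates---\emph{depending on the solution}---a pair of adjacent cubes on which all $\pi_i$ are simultaneously close to a common translation; a Stokes-type estimate (Lemma~\ref{L:Average-det}) forces $\int_Q\sum f_i\det\Diff\pi_i$ and $\int_{Q'}\sum f_i\det\Diff\pi_i$ to be close, while $\rho_0$ is built so that these differ by $\sim r^d$. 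The decisive feature is that the ``good'' pair of cubes is allowed to depend on the solution; this adaptivity is precisely what your dual approach through a fixed $u$ cannot have, and is why the paper obtains the separating functionals only a posteriori, non-constructively.
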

An important remark is that the reason we require $\rho_0$ to be smooth is only to be certain that there exists \emph{some} Lipschitz solution to \eqref{E:intro-the-equation1} with right hand side $\rho_0$.

Once we obtain a proof of Proposition \ref{P:intro-strong-non-surj}, the Hahn-Banach theorem is used to construct a functional separating the set $U$ from the set of $L^\infty(Q)$ maps for which there exists a solution satisfying \eqref{E:leqS}.
Due to working with the weak$^*$ topology, this separating functional lies in $L^1(Q)$ and as we increase $S\to \infty$ we obtain the sequence required in the definition of strong Lipschitz non-regularity.

The proof of Proposition \ref{P:intro-strong-non-surj} is overall quite similar to the proof, due to Burago and Kleiner \cite{BK}, that the prescribed Jacobian equation lacks \emph{biLipschitz} solutions.
Therefore it is also quite similar to the main result concerning the pushforward equation of Dymond at el \cite{DKK}.
However, four problems arise, some more significant than others, requiring new ideas that need to be implemented in the entire procedure.
As these adjustments cannot really be separated from the rest of the argument, we provide in Section \ref{S:reg} a complete detailed proof, even if it has large overlaps with the aforementioned works.
Here we make the best attempt to point out the changes made to the argument, without getting into too many details.
Our goal is to construct some $\rho$ having the prescribed properties together with a weak$^*$ open neighbourhood $U$.
The changes done to the construction of $\rho$ from \cite{BK} and \cite{DKK} can broadly be characterised as
\begin{enumerate}
    \item\label{Enum:Lip} considering Lipschitz maps as opposed to biLipschitz,
    \item\label{Enum:sums} considering possibly infinite sums of Jacobians,
    \item\label{Enum:coefficients} allowing the Jacobians $\det\Diff\pi_i$ to be multiplied with the coefficients $f_i$ (which are Lipschitz functions),
    \item\label{Enum:open} finding an entire weak$^*$ open set of right hand sides $\rho$ as opposed to a single one.
\end{enumerate}

The first and the second item are the most complicated to deal with.
First we consider item \ref{Enum:sums}.
An intermediate results of Dymond et al. \cite[Lemma 3.1]{DKK}  essentially deals with the case of finite sums of Jacobians:
\begin{equation}\label{E:fin-sum}
    \sum_{i=1}^k \det\Diff \pi_i,
\end{equation}
where we need a control over both the number $k$ and the biLipschitz constants of $\pi_i$'s.
The definition of $g$ on page 614 of their paper is where the ``sums'' implicitly appear and our counterpart to this idea is in Lemma \ref{L:def-h-into-ell2}.
One of the adjustments we make is that we take their central intermediate results \cite[Lemma 3.3]{DKK} for maps from $\reals^d \to \reals^n$ with $n\geq d$ and improve it to hold even for maps from $\reals^d \to \ell^2$, the separable Hilbert space, which allows us to consider infinite sums instead of finite ones.
This is available in Lemma \ref{L:dichotomy}.

To deal with item \ref{Enum:Lip} we must find a way to make the argument work for Lipschitz maps as opposed to biLipschitz.
This problem appears at two separate instances of the argument.
Firstly, the biLipschitz constant appears in a certain quantitative estimate concerning the sum \eqref{E:fin-sum} (which in our case may be infinite).
This can be resolved by a simple trick, which we illustrate in the following.
Suppose we have a Lipschitz map $F\colon \reals^d \to \reals^n$.
Then the map
\begin{equation*}
    \tilde F \colon \reals^d \to \reals^{d+n};\quad \tilde F(x)= (x, F(x)),
\end{equation*}
is biLipschitz.
Remarkably, the entire argument which we need to carry out is completely invariant under such modifications of functions (which we carry out in Lemma \ref{L:def-h-into-ell2}, where the first $d$ variables correspond to the identity).
Moreover, the quantitative estimates needed, which in the approach of Dymond et al \cite{DKK} depend on the biLipschitz constants, can be made to only depend on the Lipschitz constants.

The second instance where biLipschitzness appears is of qualitative nature.
Let $Q$ be a cube inside $\reals^d$ of side-length $r>0$ and $\pi, \kappa \colon Q \to \reals^d$ $L$-Lipschitz maps.
Then
\begin{equation*}
    |\int_Q \det\Diff\pi - \int_Q\det\Diff\kappa|\leq C r^{d-1} \lVert \pi-\kappa\rVert_\infty L^{d-1}.
\end{equation*}
This is shown already by Burago and Kleiner \cite{BK} when $\pi, \kappa$ are also homeomorphisms (equivalently injective) using a kind of topological argument.
The same argument is also used by Dymond et al \cite{DKK}.
However, as it turns out, the exact same estimate (with possibly a different constant) may be obtained for general Lipschitz maps by a use of Stokes theorem - see Lemma \ref{L:Average-det} (cf. \cite[Proposition 5.1]{AKcur}).

The item \ref{Enum:coefficients} is resolved fairly simply.
The right hand sides $\rho$ which are constructed cause a problem by oscillating very fast on small scales.
These oscillations cannot be disrupted (i.e.~cancelled out) by the coefficients $f_i$, because they are required to be Lipschitz.
This idea can be seen in the estimate in Lemma~\ref{L:est-on-squares-with-coef}.

Finally, item \ref{Enum:open} is implicitly already resolved in the original paper of Burago and Kleiner (although it is likely very difficult to see until the correct perspective arising from weak$^*$ topology is taken).
The point is that a map $\rho$ is constructed, for which there are no solutions with controlled constants (in our case, this means solutions satisfying the estimate \eqref{E:leqS}).
However, the way it is \emph{proven} that $\rho$ has this property only relies on asserting that the quantities
\begin{equation*}
    \int_{Q_i} \rho
\end{equation*}
for some \emph{finite} collection of cubes $Q_i$, $i=1, \dots, N$ lie in some open intervals $I_i\subset \reals$.
Such a condition can be stated as belonging to the weak$^*$ open set
\begin{equation*}
    U=\bigcap_{i=1}^N\{\rho: \int_{Q_i} \rho\in I_i\}.
\end{equation*}
Then, the proof that \emph{any} element of $U$ has the same property (lack of a solution with good constants) is exactly the same as it is for the original $\rho$ which is constructed.
See Lemma \ref{L:smoothing} for the explicit definition of $U$.

\subsection{Connecting regularity to the Flat Chain Conjecture and further discussion}\label{SS:intro-char}

The fact that there is \emph{some} connection between metric currents and Jacobians is not deep, as it can be immediately seen from \cite[Theorem 5.5]{UL}, nor new.
The first instance where this is explored seems to be due to De Lellis \cite{DeL}, where a theory of Jacobians is developed in a metric space using the theory of metric currents.
Our observations, however, link Jacobians to metric currents closely enough to allow us to obtain invalidity of Lang's Flat Chain Conjecture.
The most precise statement we are able to make about this connection is Theorem \ref{T:f-a-characterisation}, special case of which follows.

\begin{theorem}\label{T:intro-char}
    Suppose $d\in \nat$.
If the prescribed Jacobian equation has strong Lipschitz non-regularity, then the Conjecture \ref{Con:Lang-TD} fails.
\end{theorem}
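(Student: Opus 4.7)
My approach is to reformulate Conjecture~\ref{Con:Lang-TD} as a statement that a certain natural linear map between Banach spaces is bounded in both directions, and then apply the closed graph theorem (or equivalently, the open mapping theorem) to derive a quantitative estimate that is directly contradicted by strong Lipschitz non-regularity.

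The first step is to equip the space $\mathcal{M}_d(Q)$ of metric $d$-currents supported in $Q$ with the norm
\[
    \|T\| := \sup\Bigl\{|T(f,\pi^1,\ldots,\pi^d)| : \max\{\tLip(f), \|f\|_\infty\}\prod_{j=1}^d\tLip(\pi^j) \leq 1\Bigr\}.
\]
I need to verify that $\|T\|$ is finite for every $T\in\mathcal{M}_d(Q)$ and that $\mathcal{M}_d(Q)$ is complete. Finiteness follows by combining the joint continuity axiom with Arzel\`a--Ascoli on a compact neighbourhood of $Q$: the ``unit ball'' of test functions is pointwise sequentially compact, so joint continuity forces $T$ to be bounded on it. Completeness of a Cauchy sequence $(T_n)$ is then obtained by taking pointwise limits $T(f,\pi) := \lim T_n(f,\pi)$ and checking that the resulting multilinear functional inherits locality (immediate) and joint continuity (a $3\varepsilon$/diagonal argument, using that $T_n\to T$ in $\|\cdot\|$ is uniform on any bounded family of test functions).

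Next, the map $\Phi\colon L^1(Q)\to\mathcal{M}_d(Q)$ given by $u\mapsto T_u$ is bounded (indeed $\|T_u\|\leq\|u\|_1$ by Hadamard's inequality $|\det\Diff\pi|\leq\prod_j\tLip(\pi^j)$) and injective (since $\tilde C_d T_u = u$). Assuming Conjecture~\ref{Con:Lang-TD}, I can define a linear map $\Psi\colon\mathcal{M}_d(Q)\to L^1(Q)$, $T\mapsto u_T$, where $u_T$ is the unique $L^1$ function representing the distribution $\tilde C_d T$; uniqueness follows because the action on $C^\infty_c(\reals^d)$ determines the $L^1$ class. To apply the closed graph theorem to $\Psi$, suppose $T_n\to T$ in $\mathcal{M}_d(Q)$ and $u_{T_n}\to v$ in $L^1(Q)$. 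Then for every $\varphi\in C_c^\infty(\reals^d)$,
\[
    \tilde C_d T(\varphi) = \lim_n T_n(\varphi,x^1,\ldots,x^d) = \lim_n\int_Q u_{T_n}\varphi\,\diff\mathcal{L}^d = \int_Q v\varphi\,\diff\mathcal{L}^d,
\]
so $u_T = v$ and the graph of $\Psi$ is closed. The closed graph theorem then produces a constant $C>0$ with $\|u_T\|_1 \leq C\|T\|$ for every $T\in\mathcal{M}_d(Q)$.

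To conclude, apply this bound along the sequence $(T_{u_n})$, where $(u_n)$ is furnished by strong Lipschitz non-regularity. By definition $\|u_n\|_1\geq 1$, and after extending test functions from $Q$ to $\reals^d$ via McShane, the estimate $\|T_{u_n}\|\leq 1/n$ is exactly Definition~\ref{D:Lip-reg}(3)(b). Since $\Psi(T_{u_n})=u_n$,
\[
    1 \leq \|u_n\|_1 = \|\Psi(T_{u_n})\|_1 \leq C\,\|T_{u_n}\| \leq C/n
\]
for all $n\in\nat$, which is impossible for large $n$. Hence Conjecture~\ref{Con:Lang-TD} fails. The main obstacle is the Banach-space set-up, i.e.~verifying that the Lip-norm is finite on every current and that joint continuity survives passage to Cauchy limits; once these are handled, the argument reduces to a direct application of the closed graph theorem, with strong Lipschitz non-regularity supplying the explicit witness to unboundedness of $\Psi$.
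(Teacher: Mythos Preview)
Your proposal is correct and follows essentially the same route as the paper: equip the space of metric $d$-currents supported in $Q$ with the ``potential'' norm $\|T\|=\sup\{|T(f,\pi)|:\|f\|_L\prod_j\tLip(\pi^j)\le 1\}$, verify it is a Banach space, and then use open mapping/closed graph on the map $u\mapsto T_u$ to derive the lower bound $\|T_u\|\ge C^{-1}\|u\|_1$ that strong Lipschitz non-regularity violates. The only cosmetic differences are that the paper packages finiteness and completeness via the metric compactum $\mathcal{K}$ (realising the current space as a closed subspace of $C(\mathcal{K})$) and phrases the argument through $\Em_*$ having closed range rather than through the inverse $\Psi$ having closed graph.
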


The proof of Theorem \ref{T:intro-char} is non-constructive, as it relies on the open mapping theorem.
The basic idea is that if one attempts to ``forcefully'' dualise the statement of Conjecture \ref{Con:Lang-TD} (which is about metric currents and $L^1$ maps), one would expect to obtain a statement about $L^\infty$ maps and Jacobians of Lipschitz maps.
This is exactly what happens and the dual statement is essentially the negation of strong Lipschitz non-regularity.
By combining Theorem \ref{T:intro-char} and Theorem \ref{T:intro-reg}, we see that Conjecture \ref{Con:Lang-TD} fails whenever $d\geq 2$.
Using some basic theory of metric currents and flat chains, a complete resolution of the conjecture in all dimensions and codimensions is obtained.
Our main result is therefore the following.

\begin{theorem}\label{T:fcc-fails-all-cases}
     Suppose $d\in\nat$ and $k\in\nat\cup \{0\}$, $k\leq d$.
Then every compactly supported metric $k$-current in $\reals^d$ is a flat $k$-chain if and only if either $k=0$ or $k=d=1$.
 \end{theorem}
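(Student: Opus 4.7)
The statement has two directions. For the positive direction, in the case $k=0$ the locality axiom is vacuous and the joint continuity axiom, together with the Riesz--Markov representation, identifies a compactly supported metric $0$-current on $\reals^d$ with a finite signed Borel measure; such objects are flat $0$-chains. In the case $k=d=1$, the one-dimensional prescribed Jacobian equation $\pi'=\rho$ admits the Lipschitz primitive $\pi(x)=\int_0^x\rho\,\diff t$ for every $\rho\in L^\infty$, so strong Lipschitz non-regularity fails in dimension one. Running the duality underlying Theorem \ref{T:intro-char} in reverse (or by an elementary direct computation using joint continuity) one concludes that Conjecture \ref{Con:Lang-TD} holds in dimension $1$, so that every compactly supported metric $1$-current in $\reals$ is represented by an $L^1$ function and is therefore a flat $1$-chain.

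For the negative direction, the top-dimensional case $k=d\geq 2$ is immediate from Theorem \ref{T:intro-reg} combined with Theorem \ref{T:intro-char}, which together produce a compactly supported metric $d$-current in $\reals^d$ violating Conjecture \ref{Con:Lang-TD} and hence not a flat $d$-chain. When $2\leq k<d$, I reduce to this by taking a non-flat compactly supported metric $k$-current $T$ in $\reals^k$ (available from the top-dimensional step since $k\geq 2$), pushing it forward through the inclusion $\iota\colon\reals^k\hookrightarrow\reals^d$, and observing that the linear projection $p\colon\reals^d\to\reals^k$ onto the first $k$ coordinates satisfies $p\circ\iota=\Id$. Since Lipschitz pushforwards preserve flatness (a basic property of flat chains), were $\iota_\# T$ a flat $k$-chain then so would $T=p_\#\iota_\# T$ be, a contradiction.

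The remaining case $k=1$, $d\geq 2$ is the main obstacle, because the preceding reduction is unavailable: in dimension one the top-dimensional FCC holds. My plan is to construct a non-flat $1$-current in $\reals^d$ directly from the top-dimensional failure. Given a compactly supported metric $d$-current $T_d$ in $\reals^d$ with $\tilde C_d T_d$ not represented by an $L^1$ function, define
\begin{equation*}
    S(f,\pi):=T_d(f,\pi,x^2,\ldots,x^d),
\end{equation*}
which is readily checked to satisfy the locality and joint continuity axioms for a metric $1$-current and to obey $S(\varphi,x^1)=\tilde C_d T_d(\varphi)$. If $S$ were a flat $1$-chain in $\reals^d$, the structure theorem for flat $1$-chains (a rectifiable $1$-part plus the boundary of a rectifiable $2$-part) would force $\tilde C_d T_d$ to be represented by an $L^1$ function on $Q$, contradicting the top-dimensional failure. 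The hard part is verifying this structural implication rigorously, i.e.~pinning down which distributions arise as $(\text{flat }1\text{-chain})(\varphi,x^1)$; a cleaner alternative is to invoke the non-regularity results for the exterior derivative $\diff$ acting on $k$-forms alluded to in Section \ref{S:prescribed-exterior}, which should furnish a uniform reduction from the top-dimensional prescribed Jacobian result to FCC failure at every codimension.
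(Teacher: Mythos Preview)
Your treatment of the cases $k=d\geq 2$ and $2\leq k<d$ is correct and matches the paper. The remaining three cases each have issues.

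For $k=0$, the Riesz--Markov argument is wrong. Joint continuity for metric $0$-currents means continuity under \emph{pointwise} convergence with uniformly bounded Lipschitz constants, not uniform convergence; Riesz--Markov needs the latter. In Lang's framework a compactly supported metric $0$-current is precisely an element of the Lipschitz-free space $\AEm_b(K)$ (Proposition~\ref{P:0currents}), and such elements need not be measures. What \emph{is} true, and what the paper uses (Corollary~\ref{C:density-of-normal-in-D0}, Theorem~\ref{T:case-k=0}), is that Dirac masses span a dense subspace of $\AEm_b(K)$ in the norm dual to $\lVert\cdot\rVert_L=\max\{\lVert\cdot\rVert_\infty,\tLip(\cdot)\}$, and this norm is exactly the flat seminorm $\lVert\cdot\rVert_{\mathbb{F}^0(K)}$; hence every metric $0$-current is a flat-norm limit of normal $0$-currents.

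For $k=d=1$, ``running the duality in reverse'' does not work: Theorem~\ref{T:f-a-characterisation} says strong Lipschitz non-regularity is equivalent to $\Em_*$ having non-closed range, not to $\Em_*$ being non-surjective, so its negation gives closed range, not surjectivity. Your parenthetical hint is the right one: the paper (Theorem~\ref{T:k=1=d}) uses the primitive $W(t)=\int_0^t\omega$ directly to show $|C_1T(\omega\,\diff t)|=|T(1,W)|\leq C_T\tLip(W)=C_T\lVert\omega\rVert_\infty$, hence $T$ has finite mass, and then flatness is immediate.

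For $k=1$, $d\geq 2$, your construction of $S$ is natural but the structural implication you need is both misstated (flat chains decompose as finite-mass plus boundary of finite-mass, not rectifiable) and, once corrected, does not yield the conclusion: if $S=X+\partial Y$ with $\lVert X\rVert,\lVert Y\rVert<\infty$, then $\tilde C_dT_d(\varphi)=S(\varphi\,\diff x_1)$ is bounded only by $C(\lVert\varphi\rVert_\infty+\lVert\diff\varphi\rVert_\infty)$, which says nothing beyond finite flat norm. The paper's route is entirely different and much shorter: take a non-flat metric $2$-current $T$ in $\reals^2$ and show $\partial T$ is non-flat (Lemma~\ref{L:1-current-case}). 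If $\partial T$ were flat, Theorem~\ref{T:flat-structure} gives a finite-mass flat $S$ with $\partial S=\partial T$; then $T-S$ is a boundaryless $2$-current in $\reals^2$, hence $T-S=\partial X$ for some $3$-current $X$, but the only $3$-current in $\reals^2$ is zero, forcing $T=S$, a contradiction. One then pushes $\partial T$ into $\reals^d$ by an isometry.
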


To conclude this subsection, we discuss different statements sharing the name ``Flat Chain Conjecture'' and the relationships between them.
We also give a historical overview of the very limited progress on these questions.
Basic knowledge of flat chains and metric currents is assumed (consult Section \ref{S:prel} for the required definitions).

The first instance of a Flat Chain Conjecture is due to Ambrosio and Kirchheim \cite[page 68]{AKcur}.
They conjectured (in our language), that every compactly supported metric $k$-current of \emph{finite mass} in $\reals^d$ is a flat $k$-chain.
We shall abbreviate this conjecture to $\operatorname{FCC}(\textnormal{AK})[k,d]$.
They also proved the converse statement that every flat $k$-chain is a metric current.
Later, the theory of metric currents was extended by Lang to include currents not of finite mass and an analogous conjecture was stated for this type of currents.
To be explicit, Lang conjectures \cite[page 716]{UL} that every compactly supported metric $k$-current in $\reals^d$ is a flat $k$-chain.
We shall abbreviate this conjecture to $\operatorname{FCC}(\textnormal{L})[k,d]$.
It is obvious that $\operatorname{FCC}(\textnormal{L})[k,d]$ implies $\operatorname{FCC}(\textnormal{AK})[k,d]$ for any $k$ and $d$.
This implication, in light of our results, is not very useful.

It appears that no results concerning $\operatorname{FCC}(\textnormal{L})[k,d]$ are available in the literature other than the description of the problem itself due to Lang.
On the other hand, $\operatorname{FCC}(\textnormal{AK})[k,d]$ has enjoyed much attention and has been resolved affirmatively in some non-trivial cases.
The case of $\operatorname{FCC}(\textnormal{AK})[1,d]$ is known to hold due to Schioppa \cite[Theorem 1.6]{Sch} (with simpler proofs appearing later due to Marchese and Merlo \cite{MM}, Arroyo-Rabasa and Bouchitte \cite{ArBou} and Bate et al \cite{BCTVW}).
As we already stated in Subsection \ref{SS:intro-currents}, $\operatorname{FCC}(\textnormal{AK})[d,d]$ was proven for every $d\in\nat$ by DePhilippis and Rindler \cite{DeRindler}.
Remarkably, their approach also relies on PDEs (although, at least seemingly, a different type of a PDE problem is resolved).

It also appears that a number of PDE type conditions are known to some researchers, which would be sufficient (but not necessary) for the validity of $\operatorname{FCC}(\textnormal{AK})[k,d]$, although not all of these findings are published. A PDE type condition is showed to be sufficient in Marchese and Merlo \cite{MM}. Implicitly, this type of condition can also be seen in the aforementioned work of Arroyo-Rabasa and Bouchitt\'e \cite{ArBou} and Bate et al \cite{BCTVW}. It should be stated that these different groups (including the present author) realized these types of connections to PDEs independently as is also indicated in a recent survey by Marchese \cite{MarcheseSurvey}.
The main problem is that in most cases the PDE type conditions are actually known to fail when considering pointwise estimates. One contribution of the present paper is in showing the \emph{necesssity} of a pointwise PDE type condition for a Flat Chain Conjecture, even if one must consider the conjecture without the finite mass condition.

On the other hand, the results of Marchese and Merlo \cite{MM} and De Masi and Marchese \cite{DeMaMa} provide a plausible sufficient PDE type condition for $\operatorname{FCC}(\textnormal{AK})[k,d]$. The main point is that they do not require finding a Lipschitz map solving the underlying PDE in a pointwise sense, but instead only in a very weak measure-theoretic sense.

Finally, it should be stated that in all cases in which we disprove $\operatorname{FCC}(\textnormal{L})[k,d]$ we actually produce a metric $k$-current with compact support but \emph{infinite flat norm}.
Thus one might conjecture that any compactly supported metric $k$-current in $\reals^d$ with finite flat norm is a flat chain.
We shall refer to this conjecture as $\operatorname{FCC}_{FF}(\textnormal{L})[k,d]$, the letters $FF$ standing for ``finite flat''.
It is natural to expect that $\operatorname{FCC}_{FF}(\textnormal{L})[k,d]$ would be very closely related to $\operatorname{FCC}(\textnormal{AK})[k,d]$, however, except for the obvious implication, we are not able to assert any general relationship between the two conjectures.
The exception to this is the top dimensional case $k=d$, where the flat norm coincides with the mass norm, and therefore $\operatorname{FCC}_{FF}(\textnormal{L})[d,d]$ is obviously equivalent to $\operatorname{FCC}(\textnormal{AK})[d,d]$ and they both hold due to DePhilippis and Rindler \cite{DeRindler}.

\subsection{Structure of the paper}

The paper is structured as follows.
In Section \ref{S:prel} we collect some basic background material which will be needed later.
In Section \ref{S:char} we develop basic theory of metric currents and potential forms (which arise when we try to linearise the prescribed Jacobian equation).
It is here where we obtain a proof of Theorem \ref{T:intro-char}.
Section \ref{S:HB} is dedicated to proving a version of the Hahn-Banach separation theorem, where it is our goal to be able to separate sets in a dual Banach space with functionals that lie in its predual.
This section is purely functional-analytic and the results work for general dual Banach spaces, although we only need to apply it to $L^\infty([0,1]^d)$.
In Section \ref{S:reg} we obtain all of the relevant non-regularity results and in particular the proof of Proposition \ref{P:intro-strong-non-surj} and Theorem \ref{T:intro-reg}.
The combination of these results is used to assert that Conjecture \ref{Con:Lang-TD} fails.
In Section \ref{S:all-dim} we develop some basic theory of flat chains and use it to obtain a proof of Theorem \ref{T:fcc-fails-all-cases}.
The very short Section \ref{S:prescribed-exterior} is dedicated to obtaining a Lipschitz non-regularity result for the prescribed exterior derivative equation.

\subsection{Acknowledgements}
I would like to express my gratitude to David Bate for bringing the Flat Chain Conjecture to my attention and for many fruitful discussions and careful reading of the manuscript.
I am grateful to Filip Rindler for his deep insights into the (lack of) regularity of PDEs and a number of discussions where he pointed out particularly interesting results in the literature.
Finally, I wish to thank Michael Dymond for discussing with me some of the more technical details of his previous joint work with Kalu\v{z}a and Kopeck\'a \cite{DKK}.

The author is supported by the Warwick Mathematics Institute Centre for Doctoral Training and by the European Union’s Horizon 2020 research and innovation programme (Grant agreement No.
948021)
\section{Preliminaries}\label{S:prel}

\subsection{Classical and metric currents}\label{SS:prel-currents}

\paragraph{Classical currents and flat chains}
We shall adopt the notation and conventions of Lang \cite{UL} with some slight adjustments.
For a more detailed overview of currents, see the book of Federer \cite{Federer}

Let $d\in \nat$, $k\in\nat\cup \{0\}$.
We denote by $e_1, \dots, e_d$ the canonical basis of $\reals^d$ and by $\diff x_1, \dots, \diff x_d$ the dual basis of $(\reals^d)^*$.
The exterior product spaces $\Wedge_k \reals^d$ and $\Wedge^k \reals^d$ are equipped with the \emph{mass} and the \emph{comass} norms respectively.
We recall that if $v_1, \dots v_k\in \reals^d$ have norm at most one, then the mass norm of $v_1 \wedge \dots \wedge v_k$ is at most one.
Analogous statement holds for the comass norm.
We identify $\Wedge^d \reals^d = \reals$ via the map
\begin{equation}\label{E:td-ext-prod}
    \lambda \mapsto \lambda \diff x_1 \wedge\dots \wedge \diff x_d.
\end{equation}

We recall the following fact concerning the size of pairings.

\begin{lemma}\label{L:prel-multivector-pairing-est}
    If $v^*_1, \dots, v^*_k \in (\reals^d)^*$ and $v_1, \dots, v_k \in\reals^d$, then
    \begin{equation*}
        |\langle v_1^* \wedge \dots \wedge v_k^*, v_1\wedge \dots \wedge v_k\rangle|\leq \prod_{j=1}^k\lVert v_j^*\rVert \lVert v_j\rVert,
    \end{equation*}
    where the norms on the right hand side are the Euclidean norm of $\reals^d$ and the dual (also Euclidean) norm of $(\reals^d)^*$.
\end{lemma}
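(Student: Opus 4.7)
The plan is to deduce the estimate by combining the two norm bounds recalled immediately before the statement with the standard duality of the mass and comass norms. The aim is to establish the chain
\begin{equation*}
|\langle v_1^* \wedge \dots \wedge v_k^*, v_1\wedge \dots \wedge v_k\rangle| \leq \lVert v_1^* \wedge \dots \wedge v_k^*\rVert_{\textnormal{comass}} \cdot \lVert v_1 \wedge \dots \wedge v_k\rVert_{\textnormal{mass}} \leq \prod_{j=1}^k \lVert v_j^*\rVert \lVert v_j\rVert.
\end{equation*}

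For the two norm bounds appearing on the right, I would argue as follows. If any $v_j$ or $v_j^*$ is zero, both sides of the desired inequality vanish and nothing remains to show. Otherwise, the $k$-linearity of the exterior product lets me write
\begin{equation*}
v_1 \wedge \dots \wedge v_k = \Bigl(\prod_{j=1}^k \lVert v_j\rVert\Bigr)\, u_1 \wedge \dots \wedge u_k, \qquad u_j := v_j/\lVert v_j\rVert,
\end{equation*}
and analogously on the covector side. Since each $u_j$ and $u_j^*$ has norm one, the two facts recalled just before the lemma give $\lVert u_1 \wedge \dots \wedge u_k\rVert_{\textnormal{mass}} \leq 1$ and $\lVert u_1^* \wedge \dots \wedge u_k^*\rVert_{\textnormal{comass}} \leq 1$; absolute homogeneity of the two norms then promotes these to the bounds $\prod_j \lVert v_j\rVert$ and $\prod_j \lVert v_j^*\rVert$.

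The leftmost inequality in the chain is a direct consequence of mass and comass being a dual pair of norms on $\Wedge_k \reals^d$ and $\Wedge^k \reals^d$, which by construction gives $|\langle \omega, \xi\rangle| \leq \lVert \omega\rVert_{\textnormal{comass}} \lVert \xi\rVert_{\textnormal{mass}}$ for every such $\omega$ and $\xi$. Multiplying the three bounds then yields the claim. I do not expect any genuine obstacle here; the lemma is essentially a bookkeeping consequence of the mass/comass duality framework set up just above, and all three ingredients have already been recalled.
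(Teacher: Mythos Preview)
Your argument is correct; it is precisely the bookkeeping the paper has in mind. The paper itself does not supply a proof of this lemma---it is introduced with ``We recall the following fact'' and left unproved---so your proposal using the mass/comass duality together with the two norm bounds recalled just above the statement is the intended justification.
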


For a map $f\colon \reals^n \to \reals^m$, the symbol $\Diff f (x)\colon \reals^n \to \reals^m$ stands for the Frech\'et differential of $f$ at $x$.
In particular, if $m=1$, then $\Diff f(x)\in(\reals^n)^*$.
For a differential $k$-form $\omega$, we denote its exterior derivative by $\diff \omega$.
In particular, if $\omega$ is a $0$-form, then $\diff \omega = \Diff \omega$.

We denote by $C^\infty_c(\reals^d, \Wedge^k\reals^d)$ the set of smooth and compactly supported differential $k$-forms on $\reals^d$.
Recall that a linear functional
\begin{equation*}
    T\colon C^\infty_c(\reals^d, \Wedge^k\reals^d) \to \reals
\end{equation*}
continuous in the sense of distributions is called a $k$-current.
The linear space of all $k$-currents in $\reals^d$ is denoted by $\overline{\mathcal D}_k(\reals^d)$.
We say that $T\in\overline{\mathcal{D}}_k(\reals^d)$ is of \emph{finite mass} if it is represented as an integration against a $\Wedge_k\reals^d$-valued measure.
If $T$ is of finite mass, then the induced measure is determined uniquely, its total variation measure, $\mu$, is called the \emph{mass measure} and we denote $\mu(\reals^d)=\lVert T \rVert$.
The quantity $\lVert T \rVert$ is called the \emph{mass} or \emph{mass norm} of $T$.

For a $k$-current $T$ ($k\geq 1$), we denote by $\partial T$ the boundary of $T$ which we recall is defined via:
\begin{equation*}
    \partial T(\omega) = T(\diff \omega) \quad \text{for $C^\infty_c(\reals^d, \Wedge^{k-1}\reals^d)$.}
\end{equation*}
A $k$-current $T$ is called \emph{normal} if both $T$ and $\partial T$ are of finite mass.
The support of $T\in \overline{\mathcal{D}}_k(\reals^d)$, $\spt T$, is the least closed set $F$ such that $T(\omega) = 0$ for all $\omega\in C^\infty_c(\reals^d, \Wedge^k\reals^d)$ vanishing in $F$.

\paragraph{Flat chains}
Given a compact set $K\subset\reals^d$, we define the \emph{flat seminorm (with respect to $K$)} of a smooth differential form and the \emph{flat norm (with respect to $K$)} of a current by
\begin{equation*}
    \lVert \omega \rVert_{\mathbb{F}^k(K)}=\max\{\sup_{x\in K}\lVert\omega(x) \rVert, \sup_{x\in K}\lVert \diff\omega \rVert\}
\end{equation*}
and
\begin{equation*}
    \lVert T \rVert_{\mathbb{F}_k(K)}=\sup_{\lVert\omega\rVert_{\mathbb{F}^k(K)}\leq 1} |T(\omega)|,
\end{equation*}
respectively.
Note that $\lVert T \rVert_{\mathbb{F}_k(K)}<\infty$ implies $\spt T \subset K$.

We recall the equivalent way of defining the flat norm via
\begin{equation*}
    \lVert T \rVert_{\mathbb{F}_k(K)}=\inf\{\lVert S \rVert + \lVert R \rVert: T=S+\partial R; \;\spt R, \spt S\subset K\}.
\end{equation*}

By $\mathbb{F}_k(K)$, we denote the set of \emph{flat $k$-chains in $K$}, i.e.~the closure with respect to $\lVert \cdot \rVert_{\mathbb{F}_k(K)}$ of normal currents supported inside $K$.
The space $\mathbb{F}_k(K)$ is equipped with the flat norm, making it into a Banach space.
We call a $k$-current \emph{flat}, or a \emph{flat chain}, if it lies in $\mathbb{F}_k(K)$ for some compact $K\subset \reals^d$.

Next, we identify flat chains in the top dimension.
The symbol $\mathcal{L}^d$ stands for the Lebesgue measure.
Via \eqref{E:td-ext-prod}, given a $d$-form $\omega$, there exists a unique real valued map which we, using abuse of notation, denote again by $\omega$, such that
\begin{equation*}
    \omega = \omega \diff x_1 \wedge \dots \wedge\diff x_d.
\end{equation*}

\begin{lemma}\label{L:prel-identify-with-L1}
    Suppose $Q\subset \reals^d$ is compact, convex and has non-empty interior.
Then the map
    \begin{equation*}
        L^1(Q)\ni \varphi \mapsto (\omega \mapsto \int\varphi\omega\;\diff\mathcal{L}^d)\in \mathbb{F}_d(Q)
    \end{equation*}
    is an isometry of Banach spaces.
\end{lemma}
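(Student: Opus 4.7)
The plan is to establish the claimed isometric isomorphism in three steps: reducing the flat norm to the mass norm for top-dimensional currents, verifying that the map is an isometric embedding, and proving surjectivity.

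First, I would observe that for $k=d$ the flat norm collapses to the mass norm. In the dual formula
\begin{equation*}
    \lVert T \rVert_{\mathbb{F}_d(K)} = \inf\{\lVert S \rVert + \lVert R \rVert: T=S+\partial R,\ \spt S,\, \spt R\subset K\},
\end{equation*}
the summand $R$ would be a $(d+1)$-current in $\reals^d$; since $\Wedge^{d+1}\reals^d = \{0\}$, $R$ must vanish and consequently $\lVert T \rVert_{\mathbb{F}_d(K)} = \lVert T \rVert$. This identifies $\mathbb{F}_d(Q)$ with the mass-norm closure of normal $d$-currents supported in $Q$.

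Second, for $\varphi \in L^1(Q)$ and $\omega \in C^\infty_c(\reals^d, \Wedge^d \reals^d)$ identified with a scalar via \eqref{E:td-ext-prod}, the comass of $\omega(x) \diff x_1 \wedge \dots \wedge \diff x_d$ at each $x$ equals $|\omega(x)|$, and standard $L^1$--$L^\infty$ duality yields $\lVert T_\varphi \rVert = \lVert \varphi \rVert_{L^1(Q)}$. To place $T_\varphi$ into $\mathbb{F}_d(Q)$, I would approximate $\varphi$ in $L^1$ by $\varphi_n \in C^\infty_c(Q^\circ)$: the convexity of $Q$ together with $Q^\circ\neq\emptyset$ allows the standard trick of composing $\varphi$ with a small linear contraction towards an interior point and then mollifying. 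Each $T_{\varphi_n}$ is normal (finite mass is immediate, and integration by parts identifies $\partial T_{\varphi_n}$ with integration against $\nabla \varphi_n \in L^1$, also of finite mass), and $T_{\varphi_n} \to T_\varphi$ in mass, so $T_\varphi \in \mathbb{F}_d(Q)$.

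Third, since the map is isometric from the complete space $L^1(Q)$, its image is a closed subspace of $\mathbb{F}_d(Q)$. Because $\mathbb{F}_d(Q)$ is by definition the closure of normal $d$-currents supported in $Q$, it suffices to exhibit each such normal current $T$ in the form $T_\varphi$. Such $T$ is represented by a signed Borel measure $\mu$ supported in $Q$, and the finite-mass boundary condition translates, via $\partial T(\eta)=T(\diff \eta)$, into the distributional gradient $\nabla \mu$ being a vector-valued finite measure on $\reals^d$. The mollifications $\mu_\epsilon = \mu * \rho_\epsilon$ then form a smooth family uniformly bounded in $BV(\reals^d)$, which is precompact in $L^1_{\textnormal{loc}}$ by the Rellich-type compactness theorem for $BV$. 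Any subsequential $L^1$-limit must agree with $\mu$ as a distribution (since $\mu_\epsilon \to \mu$ also weakly as measures), forcing $\mu \ll \mathcal{L}^d$ with density in $L^1(Q)$, and giving $T=T_\varphi$.

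The main obstacle is this last step: establishing the absolute continuity of normal $d$-currents with respect to $\mathcal{L}^d$. The fact is classical but relies essentially on the compactness-in-$BV$ argument sketched above; the remaining steps reduce to the identification of top-dimensional forms with scalars, the density of $C^\infty_c(Q^\circ)$ in $L^1(Q)$, and standard $L^1$--$L^\infty$ duality.
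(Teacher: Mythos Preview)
The paper states this lemma without proof, treating it as a standard preliminary fact from the theory of flat chains; there is no argument to compare against. Your proof is correct and follows the natural route: the reduction of the flat norm to the mass norm in top dimension is immediate since $\Wedge^{d+1}\reals^d=\{0\}$; the isometric embedding follows from $L^1$--$L^\infty$ duality together with the density of $C^\infty_c(Q^\circ)$ in $L^1(Q)$ (for which convexity and non-empty interior are exactly what is needed); and surjectivity reduces to the classical fact that a finite measure whose distributional gradient is again a finite measure must be absolutely continuous with $L^1$ density, which your $BV$-compactness sketch establishes. This last point is the only substantive one, and your identification of it as the main obstacle is accurate; it can also be cited directly from Federer \cite[4.1.18]{Federer}.
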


In light of the previous lemma, we may simply identify $\mathbb{F}_d(Q)=L^1(Q)$.

Given $T\in \mathbb{F}_k(K)$ and a smooth map $F\colon \reals^d \to \reals^m$, one can define the pushforward of $T$ by $F$, denoted by $F_\# T$.
The definition is given by
\begin{equation*}
    F_\#T(\omega \diff x_{i_1}\wedge\dots \wedge\diff x_{i_k})=T(\eta(\omega\circ F) \diff F_{i_1} \wedge \dots \wedge\diff F_{i_k}) \quad \text{for $\omega\in C^\infty_c(\reals^m)$,}
\end{equation*}
where $\eta$ is any smooth, compactly supported function, which is equal to $1$ on a neighbourhood of $F(\spt T)$.
By the definition of support, the particular choice of $\eta$ does not matter.
It can be verified \cite[4.1.14]{Federer} that $F_\# T \in \mathbb{F}_k(F(K))$.
In particular, we have the following invariance of flatness under isometries.

\begin{lemma}\label{L:flat-injection}
    Suppose $T$ is a $k$-current in $\reals^d$ supported inside a compact set.
Let $I\colon \reals^d \to \reals^m$ be an isometry.
If $I_\# T$ is a flat chain, then so is $T$.
\end{lemma}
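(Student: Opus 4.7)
The plan is to construct a smooth left inverse for $I$ and then exploit functoriality of the pushforward. Since the isometric image $I(\reals^d)$ is an affine $d$-plane in $\reals^m$, we should be able to retract $\reals^m$ back onto it (and hence onto $\reals^d$) by orthogonal projection, which is as smooth as one could want.

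First I would observe that any distance-preserving map $I \colon \reals^d \to \reals^m$ between Euclidean spaces is automatically affine: the translated map $x \mapsto I(x) - I(0)$ preserves inner products (by the polarization identity), hence is linear and of the form $x \mapsto Ax$ for a linear isometry $A \colon \reals^d \to \reals^m$ (whose matrix satisfies $A^T A = \Id_{\reals^d}$). Thus $I(x) = Ax + b$ with $b = I(0)$, and in particular $I$ is smooth. Next, I would define
\begin{equation*}
    P \colon \reals^m \to \reals^d, \qquad P(y) = A^T(y - b),
\end{equation*}
which is affine hence smooth, and a direct computation gives $P \circ I = \Id_{\reals^d}$.

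Finally, I would apply functoriality of pushforward. From the defining formula of $F_\#$ recalled earlier, one checks $(G \circ F)_\# T = G_\# F_\# T$ for smooth $F, G$ at the level of smooth test forms; since pushforward by a smooth map is continuous in the flat norm and normal currents are dense in $\mathbb{F}_k$, this identity extends to flat chains. Applied to $F = I$ and $G = P$, we obtain
\begin{equation*}
    T = \Id_\# T = (P \circ I)_\# T = P_\#(I_\# T).
\end{equation*}
By assumption $I_\# T$ is a flat chain in $\reals^m$, and by the cited fact from \cite[4.1.14]{Federer} the pushforward of a flat chain by a smooth map is again a flat chain. Hence $T = P_\#(I_\# T)$ lies in $\mathbb{F}_k(P(\spt(I_\# T)))$, which is a flat $k$-chain.

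I do not expect any substantial obstacle. The two points needing care are (i) the elementary verification that an isometric embedding between Euclidean spaces is affine (so that $P$ can be taken smooth), and (ii) the functoriality of pushforward on the full Banach space of flat chains, which follows by approximating with normal currents and using that $\lVert F_\# S \rVert_{\mathbb{F}} \lesssim \lVert S \rVert_{\mathbb{F}}$ with a constant depending only on $\lVert \Diff F \rVert_\infty$ on a suitable compact set.
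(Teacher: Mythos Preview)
Your proof is correct and follows essentially the same approach as the paper: construct a smooth left inverse for $I$ (your $P(y)=A^T(y-b)$ is exactly the paper's $F=I^{-1}\circ P$, with $P$ the orthogonal projection onto $I(\reals^d)$), then use functoriality of pushforward together with the cited fact from \cite[4.1.14]{Federer} that pushforward by a smooth map preserves flatness. Your version is more explicit about why $I$ is affine and how the retraction is built, but the argument is the same.
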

\begin{proof}
    Let $F\colon \reals^m \to \reals^d$ be given by $F=I^{-1}\circ P$, where $P$ is the orthonormal projection to $I(\reals^d)$.
Then one may verify that $F_\#I_\# T = T$.
Therefore, if $I_\# T$ is a flat chain, then by the discussion above, also $T= F_\#I_\# T$ is a flat chain.
\end{proof}

\paragraph{Metric currents}
We already provided the definition of metric currents in Subsection \ref{SS:intro-currents}.
Given a locally compact metric space $X$,
the vector space of all metric $k$-currents in $X$ is denoted by $\mathcal{D}_k(X)$.

One may naturally define \emph{mass} and \emph{boundary} of a metric $k$-current \cite{UL}, but we shall not need to work with these notions explicitly.
In case $X$ is a Euclidean space, these notions correspond to the mass and boundary of classical currents.

The support of $T\in \mathcal{D}_k(X)$ is the least closed set $F\subset X$ such that $T(f, \pi^1, \dots, \pi^k)=0$ whenever $f=0$ on $F$.

If $T$ is a compactly supported metric $k$-current on $X$ and $Y$ is a locally compact metric space, $F\colon X \to Y$ is Lipschitz, then we define the \emph{pushforward} of $T$ under $F$ by
\begin{equation*}
    F_\# T(f,\pi_1,\dots, \pi_k)=T(\sigma (f\circ F), \pi_1 \circ F, \dots, \pi_k \circ F) \quad \text{for $f\in\tLip_c(Y), \pi_i \in \tLip(Y)$.}
\end{equation*}
where $\sigma$ is any Lipschitz, compactly supported map on $Y$ with $\sigma = 1$ on a neighbourhood of $F(\spt T)$ (we need to assume that $Y$ is locally compact to guarantee existence of such $\sigma$).
The pushforward of a metric current is again a metric current by definition.

\paragraph{Relationship of classical and metric currents in the Euclidean space}

Metric currents in the Euclidean space can be identified with classical currents using the so-called comparison map.
The definition we use as well as its basic properties are due to Lang \cite[Theorem 5.5]{UL} though for finite mass metric currents with compact support, this has previously been done by Ambrosio and Kirchheim \cite[Theorem 11.1]{AKcur}.
For every $k\geq 0$, there exists an injective linear map $C_k \colon \mathcal{D}_k(\reals^n) \to \overline{\mathcal{D}}_k(\reals^n)$ such that
\begin{equation*}
    C_k(T)(f \diff g_1 \wedge \dots \wedge \diff g_k)=T(f,g_1,\dots,g_k),
\end{equation*}
for all $f\in C^{\infty}_c(\reals^n)$ and $g_1,\dots, g_k \in (C^{\infty}(\reals^n)\cap\tLip(\reals^n))^k$.
The map $C_{k}$ is called the \emph{comparison map} and it preserves all of the important structure of currents, such as the boundary operation and mass.
In particular, Lang shows \cite[Theorem 5.5]{UL} that
\begin{equation*}
    \rng C_k \supset \bigcup_{K\subset \reals^d}\mathbb{F}_k(K),
\end{equation*}
where the union is taken over compact subsets of $\reals^d$.
In other words, every flat chain is in the range of $C_k$.
The following is the general statement of the Flat Chain Conjecture of Lang \cite[p. 716]{UL}.

\begin{conjecture}\label{Con:Lang-general}
    For every $d\in \nat$ and $k\in\nat\cup\{0\}$, if $T\in\mathcal{D}_k(\reals^d)$ has compact support, then
    \begin{equation*}
        C_k(T)\in \mathbb{F}_k(K) \quad \text{for some compact $K\subset\reals^d$.}
    \end{equation*}
\end{conjecture}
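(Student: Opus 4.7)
The plan is to settle the conjecture by showing it holds in exactly the trivial cases ($k=0$, or $k=d=1$) and fails in all remaining cases $d\geq 2$, $1\leq k\leq d$; in other words, to aim at Theorem \ref{T:fcc-fails-all-cases}. The positive cases should be quick. For $k=0$, a metric $0$-current with compact support is an $\reals$-valued functional on $\tLip_c(\reals^d)$, which via locality and continuity ought to be representable by a compactly supported Radon measure, and every such measure is normal, hence flat. The case $k=d=1$ is essentially the Ambrosio--Kirchheim/DePhilippis--Rindler top-dimensional statement in $\reals^1$, where the standard arguments collapse to a Lipschitz one-variable computation.

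For the negative direction, the first move is to reduce to the top dimension $k=d$. If one exhibits, for some $1\leq k\leq d$, a compactly supported $T\in \mathcal D_k(\reals^k)$ with $C_k(T)\notin \bigcup_K \mathbb F_k(K)$, then pushing $T$ forward by an isometric embedding $I\colon \reals^k \hookrightarrow \reals^d$ and invoking Lemma \ref{L:flat-injection} produces a metric $k$-current in $\reals^d$ which is still not flat. So it suffices to find, for every $d\geq 2$, a compactly supported $T\in \mathcal D_d(\reals^d)$ whose comparison image is not flat in any compact. Via the identification $\mathbb F_d(Q)=L^1(Q)$ of Lemma \ref{L:prel-identify-with-L1}, this is exactly the failure of Conjecture \ref{Con:Lang-TD}: the distribution $\tilde C_d T$ must be some distribution on $Q$ that fails to be representable by an $L^1$ density.

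To manufacture such a $T$, I would rely on the characterisation expressed in Theorem \ref{T:intro-char}: strong Lipschitz non-regularity of the prescribed Jacobian equation implies the failure of Conjecture \ref{Con:Lang-TD}. Thus the game is reduced to producing, for each $n$, an $L^1$ function $u_n$ of norm at least $1$ that nearly annihilates every product $f\det\Diff\pi$ with bounded Lipschitz data. The strategy is the one sketched in the introduction: Hahn--Banach separation in the $L^\infty$--$L^1$ duality, applied to a weak$^*$-open set $U$ of admissible right-hand sides that are not reachable by bounded Lipschitz combinations. This $U$ is provided by Proposition \ref{P:intro-strong-non-surj}: a smooth base point $\rho_0$ together with a weak$^*$ neighbourhood inside which no decomposition $\sum f_i \det\Diff\pi_i = \rho$ can satisfy \eqref{E:leqS}. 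Rescaling $S\to\infty$ then yields the required sequence $u_n$.

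The main obstacle is Proposition \ref{P:intro-strong-non-surj} itself. Following the Burago--Kleiner/McMullen/Dymond--Kalu\v{z}a--Kopeck\'a template, one should construct $\rho$ exhibiting a self-similar oscillation across infinitely many dyadic scales, with integrals over a carefully chosen finite family of cubes falling into prescribed open intervals; the finiteness is what yields a weak$^*$-open neighbourhood. The novel technical points to surmount are: replacing biLipschitz with Lipschitz (handled by the graph trick $F\mapsto (\mathrm{Id},F)$ and a Stokes-based estimate in the spirit of Lemma \ref{L:Average-det} for the $L^\infty$ difference between cube integrals of Jacobians); accommodating infinite sums by enlarging the target to $\ell^2$ and extending the Dymond--Kalu\v{z}a--Kopeck\'a dichotomy there; and absorbing the Lipschitz coefficients $f_i$, which cannot cancel fast small-scale oscillations. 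Assembling these ingredients into the required quantitative obstruction at every dyadic scale is the combinatorially delicate core of the proof; once it is in place, the rest of the deduction is essentially soft functional analysis.
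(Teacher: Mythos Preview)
Your overall architecture matches the paper's, but there is a genuine gap in the reduction step for the negative direction. You write that it suffices, for each $1\leq k\leq d$, to find a non-flat $T\in\mathcal{D}_k(\reals^k)$ and push it forward to $\reals^d$. For $k\geq 2$ this is exactly what the paper does (Corollary \ref{C:k-geq2}), using the top-dimensional failure in $\reals^k$. But for $k=1$ and $d\geq 2$ your reduction would require a non-flat metric $1$-current in $\reals^1$, and you yourself note (correctly, and as the paper proves in Theorem \ref{T:k=1=d}) that every compactly supported metric $1$-current in $\reals$ \emph{is} flat. So the case $k=1$, $d\geq 2$ is not covered by your scheme.

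The paper closes this gap differently: it takes a non-flat metric $2$-current $T$ in $\reals^2$ and shows that $\partial T$ must also be non-flat (Lemma \ref{L:1-current-case}). The argument is short: if $\partial T$ were flat, then by Theorem \ref{T:flat-structure} one could write $\partial C_2 T = \partial S$ for a finite-mass flat $2$-chain $S$; then $C_2 T - S$ is a boundaryless $2$-current in $\reals^2$, hence zero (there are no nonzero $3$-currents), forcing $C_2 T = S$ to be flat, a contradiction. This produces a non-flat metric $1$-current in $\reals^2$, which is then pushed forward to $\reals^d$ (Corollary \ref{C:case-k=1}). You should add this boundary argument to complete the case analysis.

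The remainder of your plan---the positive cases, the top-dimensional failure via strong Lipschitz non-regularity, the Hahn--Banach/open-mapping machinery, and the Burago--Kleiner-style construction with the four adjustments you list---tracks the paper's approach closely and accurately.
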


We identify $T$ and $C_k T$ whenever no confusion can arise.
In particular, we say that a metric $d$-current $T$ in $\reals^d$ is flat (or that it is a flat chain), if $C_k(T)$ is a flat chain.

\begin{remark}
    \begin{enumerate}
        \item There are metric currents $T\in\mathcal{D}_k(\reals^d)$ with a compact support, such that $C_k(T)\in \mathbb{F}_k(K)$ for \emph{some} compact $K\subset\reals^d$, however, $C_k(T)\not\in \mathbb{F}_k(\spt T)$.
        \item If $Q\subset \reals^d$ is compact, convex and has non-empty interior, then $T\in\mathcal{D}_k(\reals^d)$ with $\spt T\subset Q$ is a flat chain if and only if $T\in \mathbb{F}_k(Q)$.
Indeed, one implication is obvious and for the other, consider a sequence of normal currents $N_i$ approximating $C_k T$ in a compact set $K\supset Q$.
If $F$ is the nearest-point projection to $Q$, then $F_\# N_i$ is a sequence of normal currents approximating $T$ in the space $\mathbb{F}_k(Q)$.
We leave the details to the reader.
    \end{enumerate}
\end{remark}

In light of the preceding remark, we consider the special case of the conjecture when we only study $d$-currents supported in the unit cube $Q=[0,1]^d$.
In this case, the conjecture would in particular imply that the map
\begin{equation*}
    C_d^{-1}\colon \mathbb{F}_d(Q)\to \mathcal{D}_d(Q)
\end{equation*}
is surjective.
When combined with the identification of $L^1(Q)=\mathbb{F}_d(Q)$ from Lemma \ref{L:prel-identify-with-L1}, stating that this map is surjective is the same as the statement of Conjecture \ref{Con:Lang-TD} from preliminaries.

Finally, we shall need to pass between metric and classical currents when considering pushforwards, which is enabled by the following lemma, proof of which follows immediately from the definitions.

\begin{lemma}\label{L:commutativity-pw}
    Suppose $d,n\in\nat$.
    Suppose $T$ is a compactly supported metric $k$-current in $\reals^d$ and $F\colon \reals^d \to \reals^n$ is smooth and Lipschitz.
Then
    \begin{equation*}
        F_\# C_k(T)=C_k(F_\# T).
    \end{equation*}
\end{lemma}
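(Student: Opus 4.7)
The plan is to evaluate both sides on a spanning family of test forms and unfold the definitions of the three operations involved ($C_k$, classical pushforward, metric pushforward). Since both $F_\# C_k(T)$ and $C_k(F_\# T)$ are classical $k$-currents on $\reals^n$ and both operations are linear, it suffices to check equality on basic forms of the type $\omega\,\diff x_{i_1}\wedge\dots\wedge\diff x_{i_k}$ with $\omega\in C^\infty_c(\reals^n)$ and $1\leq i_1<\dots<i_k\leq n$.

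For the left-hand side I first apply the definition of the classical pushforward to write
\begin{equation*}
F_\# C_k(T)(\omega\,\diff x_{i_1}\wedge\dots\wedge\diff x_{i_k}) = C_k(T)\bigl(\eta\cdot(\omega\circ F)\,\diff F_{i_1}\wedge\dots\wedge\diff F_{i_k}\bigr),
\end{equation*}
where $\eta\in C^\infty_c(\reals^d)$ equals $1$ on a neighbourhood of the (compact) set $F(\spt T)$. The arguments $\eta\cdot(\omega\circ F)\in C^\infty_c(\reals^d)$ and $F_{i_1},\dots,F_{i_k}\in C^\infty(\reals^d)\cap\tLip(\reals^d)$ fit the hypotheses of the defining identity for the comparison map, so the above equals $T(\eta\cdot(\omega\circ F),F_{i_1},\dots,F_{i_k})$. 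For the right-hand side I first apply the defining identity of $C_k$ and then the definition of the metric pushforward, yielding
\begin{equation*}
C_k(F_\# T)(\omega\,\diff x_{i_1}\wedge\dots\wedge\diff x_{i_k})= F_\# T(\omega,x_{i_1},\dots,x_{i_k})= T\bigl(\sigma\cdot(\omega\circ F),F_{i_1},\dots,F_{i_k}\bigr),
\end{equation*}
where $\sigma\in\tLip_c(\reals^d)$ equals $1$ on a neighbourhood of $F(\spt T)$.

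The only remaining task is to compare the two resulting expressions, which differ only in the cutoff multiplying $\omega\circ F$. Both $\eta$ and $\sigma$ equal $1$ on a common open set $V\supset F(\spt T)$; by continuity of $F$, the preimage $F^{-1}(V)$ is an open neighbourhood of $\spt T$ on which $\eta\cdot(\omega\circ F)-\sigma\cdot(\omega\circ F)=0$. The difference is a Lipschitz, compactly supported function on $\reals^d$ vanishing on a neighbourhood of $\spt T$, hence by multilinearity of $T$ in the first argument and the defining property of $\spt T$, the two values of $T$ coincide. This matching of the smooth and the Lipschitz cutoff is the single non-automatic step; there is no real obstacle since it reduces directly to the support characterization, which is why the author describes the proof as following immediately from the definitions.
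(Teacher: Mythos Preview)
Your approach is correct and is exactly what the paper means by ``follows immediately from the definitions.'' There is, however, a small notational slip (which the paper's own wording invites): the cutoffs $\eta$ and $\sigma$ must live on the \emph{domain} $\reals^d$ and be equal to $1$ on a neighbourhood of $\spt T$, not of $F(\spt T)\subset\reals^n$; otherwise the products $\eta\cdot(\omega\circ F)$ and $\sigma\cdot(\omega\circ F)$ are not well-defined functions on $\reals^d$. With this correction your final paragraph becomes even simpler---$(\eta-\sigma)(\omega\circ F)$ already vanishes on a neighbourhood of $\spt T$, so no passage through $F^{-1}(V)$ is needed.
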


\subsection{Properties of Lipschitz maps and their determinants}\label{ss:Lip-det}

If $f\colon \reals^d \to \reals$ is Lispchitz, then by Rademacher's theorem its Frech\'et differetial $\Diff f(x)$ exists for almost every $x\in \reals^d$.
If $E\subset \reals^d$ and $f\colon E \to \reals$ is Lipschitz, then there is a Lipschitz extension $\tilde f\colon \reals^d\to \reals$ of $f$.
Moreover, at almost every point of $E$, $\tilde f$ is Frech\'et differetible with the differential only dependending on $f$.
Thus, if we let $\Diff f(x)$ be the Frech\'et differential at $x$ of any Lipschitz extension of $f$, then $\Diff f(x)$ is well defined almost everywhere in $E$.
If $\pi\colon E \to \reals^d$ is Lipschitz, then the differential is identified with the matrix
\begin{equation*}
    \Diff \pi(x)=(\frac{\partial \pi^j}{\partial e_i}(x))_{i,j=1}^d.
\end{equation*}
Recall the basic inequality
\begin{equation*}
    |\det\Diff \pi(x)|\leq \prod_{j=1}^d \lVert\Diff\pi^j(x)\rVert,
\end{equation*}
whenever the left side is well defined, where on the right hand side the norm is the dual norm of $(\reals^d)^*$.
In particular, we have
\begin{equation}\label{E:det-linfty-est}
    \lVert \det\Diff \pi \rVert_{L^\infty(E)}\leq \prod_{j=1}^d\tLip(\pi^j).
\end{equation}

Suppose $\pi^1, \dots, \pi^k\colon E \to \reals$ and $x\in E$ is such that $\Diff \pi^j(x)$ exists for all $j\in\{1, \dots, k\}$.
Recall that $\diff \pi^j(x)= \Diff\pi^j(x)$.
By Lemma \ref{L:prel-multivector-pairing-est}, we have
\begin{equation}\label{E:est-of-product}
    |\langle \diff \pi_1 \wedge \dots \wedge \diff \pi_k, v_1\wedge \dots \wedge v_k \rangle|\leq \prod_{j=1}^k \tLip(\pi^j) \quad \text{for $v_1, \dots, v_k\in\reals^d$ of norm at most $1$.}
\end{equation}

We will convene to usually omit the differential $\diff \mathcal{L}^d$ when integrating with respect to the Lebesgue measure.
Thus, we write, for example,
\begin{equation*}
    \int_E f\det\Diff \pi=\int_E f\det\Diff \pi\;\diff\mathcal{L}^d.
\end{equation*}

The following lemma is a well known property of determinants, which may be found e.g.~in the book of Ambrosio at al \cite[Theorem 2.16]{AFP}, see also Lang \cite[Proposition 2.6]{UL}.

\begin{lemma}[Joint continuity of determinants]\label{L:j-c-for-dets}
    Suppose $d\in\nat$, $E\subset \reals^d$ is Borel, $\pi_i, \pi\colon E\to \reals^d$ are Lipschitz and $\pi_i\to \pi$ pointwise with bounded Lipschitz constants on $E$ and $f_i, f\in L^1(E)$ with $f_i \to f$ in $L^1(Q)$.
Then
    \begin{equation*}
        \int_E f_i\det \Diff \pi_i \to \int_E f\det\Diff \pi.
    \end{equation*}
\end{lemma}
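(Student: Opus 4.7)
The plan has three main ingredients: reduction to a smooth test function, uniform convergence of the $\pi_i$, and iterated integration by parts via the null Lagrangian structure of the Jacobian.

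First I would extend each $\pi_i$ and $\pi$ to Lipschitz maps on all of $\reals^d$ with a common Lipschitz bound $L$ (possible because the Lipschitz constants are uniformly bounded on $E$). The inequality \eqref{E:det-linfty-est} gives $\lVert\det\Diff \pi_i\rVert_{L^\infty(\reals^d)}\leq L^d$, so
\begin{equation*}
\Bigl|\int_E (f_i - f)\det\Diff \pi_i \Bigr| \leq L^d \lVert f_i - f\rVert_{L^1(E)} \to 0,
\end{equation*}
and the same $L^\infty$ bound lets one approximate $f$ (extended by zero outside $E$) in $L^1(\reals^d)$ by a smooth compactly supported $g$ at an $i$-independent cost. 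It therefore suffices to prove $\int g\det\Diff \pi_i \to \int g\det\Diff \pi$ for each fixed $g\in C^\infty_c(\reals^d)$. Next, an application of Arzel\`a--Ascoli on a compact neighbourhood of $\spt g$ upgrades the pointwise convergence $\pi_i\to\pi$, under the uniform Lipschitz bound, to uniform convergence on that neighbourhood.

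The heart of the argument is the null Lagrangian identity: for any Lipschitz $\pi\colon \reals^d\to\reals^d$ and $g\in C^\infty_c(\reals^d)$,
\begin{equation*}
\int g\det\Diff \pi\,\diff\mathcal{L}^d = -\int \pi^1\,\diff g\wedge \diff \pi^2\wedge\dots\wedge \diff \pi^d.
\end{equation*}
For $\pi\in C^\infty$ this is Stokes' theorem applied to the compactly supported $(d-1)$-form $g\pi^1\,\diff \pi^2\wedge\dots\wedge \diff \pi^d$. Iterating the identity (each step trades a factor $\det\Diff \pi$ for a factor $\pi^j$ times a Jacobian of one fewer component), after at most $d$ steps one expresses $\int g\det\Diff \pi$ as a finite linear combination of integrals of the form $\int P(\pi^1,\dots,\pi^d)\,D^{\alpha}g\,\diff\mathcal{L}^d$, where $P$ is a polynomial and $D^{\alpha}g$ is a fixed smooth compactly supported derivative. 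Each such integral is continuous in $\pi$ under uniform convergence on $\spt g$, so the conclusion follows from the uniform convergence $\pi_i\to\pi$ obtained above. The extension of the null Lagrangian identity from smooth to Lipschitz $\pi$ proceeds by mollification $\pi^\varepsilon = \pi*\rho_\varepsilon$, inducting on $d$: the base case $d=1$ is the classical integration-by-parts for Lipschitz functions against compactly supported test functions, and the inductive step uses the identity in dimension $d-1$ (applied after a cofactor expansion) to pass to the limit as $\varepsilon\to 0$.

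The main obstacle is exactly this last passage. The gradients $\Diff \pi^\varepsilon$ converge to $\Diff \pi$ only in $L^p$ for $p<\infty$ and not uniformly, so one cannot directly dominated-converge the nonlinear product $\diff \pi^2\wedge\dots\wedge \diff \pi^d$. The inductive structure just described is what resolves this: at each step the multilinearity in $\Diff \pi$ is reduced by one factor, and strong $L^p$ convergence of the surviving gradients combined with uniform convergence of the Lipschitz coordinate pulled outside the differential is enough to pass to the limit.
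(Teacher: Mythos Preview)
The paper does not give its own proof of this lemma; it simply cites \cite[Theorem 2.16]{AFP}. So the comparison is between your proposal and the standard argument in that reference.

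Your reduction to a fixed $g\in C^\infty_c(\reals^d)$ and your upgrade to uniform convergence are both fine. The gap is in the sentence claiming that after at most $d$ iterations of the null Lagrangian identity one expresses $\int g\det\Diff\pi$ as a combination of integrals $\int P(\pi^1,\dots,\pi^d)\,D^\alpha g$ with $P$ a polynomial and \emph{no} derivatives of $\pi$ remaining. This is false. Indeed, $\int g\det\Diff\pi$ is alternating in $(\pi^1,\dots,\pi^d)$, whereas any expression $\int P(\pi)\,D^\alpha g$ with $P$ a polynomial (necessarily a multiple of $\pi^1\cdots\pi^d$ by multilinearity) is symmetric in the $\pi^j$; hence no such formula can hold unless it is identically zero. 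Concretely, for $d=2$ one step gives $-\int\pi^1\,\diff g\wedge\diff\pi^2$, and a second integration by parts only yields $\int\pi^2\,\diff\pi^1\wedge\diff g$, which still carries a derivative of $\pi$. So your proposed route to ``continuity under uniform convergence'' does not close.

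The correct structure, and essentially what is in \cite{AFP}, is to run the induction not on the ambient dimension but on the size $k$ of the minor, proving that every $k\times k$ minor of $\Diff\pi_i$ converges weak$^*$ in $L^\infty$ to the corresponding minor of $\Diff\pi$. The step from $k-1$ to $k$ uses exactly one application of the null Lagrangian identity for $k\times k$ minors (valid for Lipschitz maps by the straightforward mollification you describe, since $\Diff\pi^\varepsilon\to\Diff\pi$ in every $L^p_{\mathrm{loc}}$), together with the inductive hypothesis that the $(k-1)\times(k-1)$ cofactors converge weak$^*$, paired against the strongly $L^1$-convergent functions $\pi_i^{a}\,\partial_j g$. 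In other words, the induction is the proof of the continuity statement itself, not merely of the integration-by-parts identity; your last paragraph has the right ingredients but assigns them the wrong role.
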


Suppose $Q=[0,1]^d$ and let $Q$ have the standard orientation given by the basis vectors $e_1, \dots, e_d$.
Let $\partial Q$ stand for the boundary of $Q$.
Naturally, $\partial Q$ is a non-overlapping union of its faces, each being an oriented $(d-1)$-manifold.
We use $\sigma$ for the surface measure on $\partial Q$.
We denote by $\operatorname{Tan}_{\partial Q}$ the unit oriented tangent to $\partial Q$, whose orientation is induced by the orientation of $Q$.
Note that since $\partial Q$ is not quite a smooth manifold, the map $\operatorname{Tan}_{\partial Q}$ is only defined on the interior of each face, in particular $\sigma$-a.e.
We recall the Stokes theorem:
\begin{equation*}
    \int_Q \diff \omega=\int_{\partial Q} \omega.
\end{equation*}
In particular, if $\omega = \pi_1 \diff \pi_2 \wedge \dots \wedge \diff \pi_d$ for some smooth functions $\pi_1, \dots, \pi_d$, then $\diff \omega = \diff \pi_1 \wedge \dots \wedge \diff \pi_d$.
The above equality of Stokes reads in this case as
\begin{equation}\label{E:stokes}
    \int_Q \det\Diff\pi \;\diff \mathcal{L}^d= \int_{\partial Q} \pi_1\langle \diff \pi_2 \wedge \dots \wedge \diff \pi_d, \operatorname{Tan}_{\partial Q}\rangle \;\diff\sigma.
\end{equation}

\subsection{Functional-analytic structure of Lipschitz maps}\label{SS:fa-lip}

Given any metric space $X$ with a distinguished point $0\in X$, we use the notation
\begin{enumerate}
    \item $\tLip_0(X)=\{f\colon X \to \reals: f(0)=0\; \text{and f is Lipschitz}\}$,
    \item $\tLip_b(X)=\{f\colon X \to \reals:\text{f is Lipschitz and bounded}\}$.
\end{enumerate}

The considerations below for the space $\tLip_0(X)$ follow from the book of Weaver \cite[Chapter 3]{Weaver}. The space $\tLip_b(X)$ may be identified with a $\tLip_0(Y)$ space (over some metric space $Y$) via \cite[Corollary 2.14]{Weaver}. Thus, all of the following assertions can be obtained from the given reference.
We warn the reader that the space $\tLip_b(X)$ space is denoted by $\tLip(X)$ therein.

The former space $\tLip_0(X)$ is equipped with the Lipschitz constant as a norm and is a Banach space.
The latter space $\tLip_b(X)$ is equipped with the norm
\begin{equation*}
    \lVert f \rVert_L=\max\{\lVert f \rVert_\infty, \tLip(f)\},
\end{equation*}
and it is also a Banach space.

Both of the Banach spaces are isometric to dual Banach spaces, whose preduals we may denote by $\AEm_0(X)$ and $\AEm_b(X)$ respectively.
If $X$ is separable, the then both $\AEm_0(X)$ and $\AEm_b(X)$ are separable, and hence the
unit balls of the Lipschitz spaces, equipped with the weak$^*$ topologies are compact and metrizable.
We denote
\begin{enumerate}
    \item by $\tLip_1(X)$ the unit ball of $\tLip_0(X)$, equipped with the weak$^*$ topology,
    \item by $\tLip_{b,1}(X)$ the unit ball of $\tLip_b(X)$, equipped with the weak$^*$ topology.
\end{enumerate}

For a Banach space $B$, we write $\epsilon \colon B \to B^{**}$ for the canonical embedding defined by $\epsilon(x)(\varphi)=\varphi(x)$ for $x\in X, \varphi\in X^*$.

\begin{lemma}\label{L:w*}
    If we identify $\AEm_0(X)^*=\tLip_0(X)$ and $\AEm_b(x)^*=\tLip_b(X)$, then the span of evaluation functionals is dense in both $\epsilon(\AEm_0(X))$ and $\epsilon(\AEm_b(X))$.
    In either of the spaces $\tLip_0(X)$ or $\tLip_b(X)$ a sequence converges weak$^*$ if and only if it is bounded and it converges pointwise.
\end{lemma}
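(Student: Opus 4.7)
The plan is to invoke the standard construction of the Arens--Eells (Lipschitz-free) spaces $\AEm_0(X)$ and $\AEm_b(X)$ as found in Weaver's book, and to unpack what the canonical embedding $\epsilon$ does to the elementary generators. The only content of the lemma is to translate this construction into the two statements we need.

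First I would settle the density claim. The space $\AEm_0(X)$ is realised by Weaver as the completion of the linear span of ``molecules'' $\sum a_i(\delta_{x_i} - \delta_0)$ with the norm
\begin{equation*}
    \Bigl\lVert \sum a_i(\delta_{x_i}-\delta_0) \Bigr\rVert_{\AEm_0(X)} = \sup\Bigl\{ \Bigl| \sum a_i f(x_i) \Bigr| : f \in \tLip_0(X),\ \tLip(f) \leq 1 \Bigr\}.
\end{equation*}
Under the canonical embedding $\epsilon$, the molecule $\delta_x - \delta_0$ is sent to the functional $f \mapsto f(x)$ on $\tLip_0(X)$, i.e.~exactly the evaluation functional at $x$. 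Since molecules are dense in $\AEm_0(X)$ by construction, their images span a dense subset of $\epsilon(\AEm_0(X))$. The argument for $\AEm_b(X)$ is identical after invoking the identification $\tLip_b(X) = \tLip_0(Y)$ for an appropriate pointed metric space $Y$ provided by Weaver's Corollary 2.14.

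Next I would prove the weak$^*$ characterisation, which I carry out for $\tLip_0(X)$; the case $\tLip_b(X)$ is handled by the same identification. One direction is immediate: if $f_n \to f$ weak$^*$, then $(f_n)$ is norm-bounded by the uniform boundedness principle, and testing against $\delta_x - \delta_0$ yields pointwise convergence. For the converse, suppose $\tLip(f_n) \leq M$ for all $n$ and $f_n \to f$ pointwise, so that $\tLip(f) \leq M$ as well. Fix $\mu \in \AEm_0(X)$ and $\varepsilon > 0$. By the density just proved, choose a molecule $\nu = \sum_{i=1}^N a_i (\delta_{x_i} - \delta_0)$ with $\lVert \mu - \nu \rVert_{\AEm_0(X)} < \varepsilon$. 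Then
\begin{equation*}
    |\mu(f_n) - \mu(f)| \leq |(\mu-\nu)(f_n)| + |\nu(f_n) - \nu(f)| + |(\nu-\mu)(f)| \leq 2M\varepsilon + \sum_{i=1}^N |a_i|\,|f_n(x_i) - f(x_i)|.
\end{equation*}
The finite sum tends to $0$ by pointwise convergence, so $\limsup_n |\mu(f_n) - \mu(f)| \leq 2M\varepsilon$; sending $\varepsilon \to 0$ finishes the argument.

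There is no real obstacle: the lemma is essentially a re-packaging of the defining universal property of the Arens--Eells space. The only thing to be careful about is book-keeping, namely verifying that the identification between molecules and evaluation functionals works through $\epsilon$ as claimed, and that the passage from $\tLip_0$ to $\tLip_b$ via Weaver's device preserves the structure we need. Since both of these are addressed directly in the reference, I would simply cite Weaver and keep the discussion to the two short arguments above.
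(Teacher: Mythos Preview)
Your proposal is correct and aligns with the paper's approach: the paper does not give an explicit proof of this lemma but simply refers to Weaver's book (Chapter 3 and Corollary 2.14), and what you have written is precisely the standard unpacking of the Arens--Eells construction found there. Your density argument via molecules and the $3\varepsilon$-argument for the weak$^*$ characterisation are exactly what the citation is pointing to.
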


\begin{proposition}\label{P:0currents}
    If $X$ is compact,
    then $\epsilon(\AEm_b(X))= \mathcal{D}_0(X)$.
\end{proposition}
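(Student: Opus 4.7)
The plan is to identify $\mathcal{D}_0(X)$ with the weak$^*$-continuous linear functionals on $\tLip_b(X) = \AEm_b(X)^*$; since $\epsilon(\AEm_b(X))$ is, by definition of a predual, precisely this latter set, the proposition reduces to an unwinding of the joint continuity axiom. First I would observe that, because $X$ is compact, $\tLip_c(X) = \tLip_b(X)$, and the locality axiom is vacuous when $k = 0$. Hence $T \in \mathcal{D}_0(X)$ amounts to: $T$ is a linear functional on $\tLip_b(X)$ satisfying $T(f_i) \to T(f)$ whenever $f_i \to f$ pointwise with uniformly bounded Lipschitz constants (the common-support condition being automatic on a compact space).

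The forward inclusion $\epsilon(\AEm_b(X)) \subseteq \mathcal{D}_0(X)$ is direct. Given $\mu \in \AEm_b(X)$ and a sequence $f_i \to f$ pointwise with uniformly bounded Lipschitz constants, the numbers $f_i(x_0)$ are bounded at any fixed point $x_0 \in X$, so the estimate $\lVert f_i \rVert_\infty \leq |f_i(x_0)| + \tLip(f_i)\cdot \diam(X)$ shows that the $f_i$ are bounded in the $\tLip_b$-norm. Lemma~\ref{L:w*} then promotes pointwise convergence to weak$^*$ convergence, and $\epsilon(\mu)$ is weak$^*$ continuous by definition.

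For the reverse inclusion, take any $T \in \mathcal{D}_0(X)$. The key tool is the Krein-Smulian theorem: a linear functional on a dual Banach space $Y^*$ is weak$^*$ continuous if and only if its kernel meets every scaled unit ball $nB_{Y^*}$ in a weak$^*$-closed set, and by homogeneity this reduces to checking that $T$ restricted to the unit ball $\tLip_{b,1}(X)$ is weak$^*$ continuous. Since $X$ is compact and hence separable, $\AEm_b(X)$ is separable (as noted in the paper), so $\tLip_{b,1}(X)$ is weak$^*$ compact and metrizable; thus it suffices to verify sequential weak$^*$ continuity on $\tLip_{b,1}(X)$. For any $f_i \in \tLip_{b,1}(X)$ with $f_i \to f$ in the weak$^*$ topology, Lemma~\ref{L:w*} supplies pointwise convergence, the Lipschitz constants are uniformly bounded by $1$, and the joint continuity axiom for $T$ gives $T(f_i) \to T(f)$.

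The only nontrivial move is the appeal to Krein-Smulian, which is needed to pass from sequential weak$^*$ continuity on the metrizable unit ball to weak$^*$ continuity on all of $\tLip_b(X)$; everything else is a transcription of the joint continuity axiom through Lemma~\ref{L:w*}. I do not anticipate any genuine obstacle, since compactness of $X$ gives both separability of $\AEm_b(X)$ (needed for metrizability of $\tLip_{b,1}(X)$) and the absence of the support condition in the definition of $\mathcal{D}_0(X)$.
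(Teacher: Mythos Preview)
Your proposal is correct and follows essentially the same approach as the paper: both arguments reduce to showing that $\mathcal{D}_0(X)$ coincides with the weak$^*$-continuous functionals on $\tLip_b(X)$ via the Kre\v{\i}n--Shmul'yan theorem and Lemma~\ref{L:w*}. You spell out the two inclusions separately and make the role of separability (for metrizability of $\tLip_{b,1}(X)$) explicit, whereas the paper compresses these into a short chain of identifications, but the substance is identical.
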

\begin{proof}
    For any Banach space $B$, we recall
    the basic fact:
    \begin{equation*}
        \epsilon(B)=\{\varphi\in B^{**}: \varphi \;\text{acts weak$^*$ continuously on $B^*$}\}.
    \end{equation*}
    Since $X$ is compact, by applying this fact to $B=\AEm_b(X)$, we conclude that $\epsilon(\AEm_b(X))$ is the set of those maps
    \begin{equation*}
        T\colon \tLip_c(X)= \tLip_b(X)\to \reals,
    \end{equation*}
    which are linear and weak$^*$ continuous.
By the Kre\v{\i}n-Shmul'yan theorem \cite[Chapter 5, 12.1]{Conway}, weak$^*$ continuity is in this case equivalent to weak$^*$ sequential continuity.
By Lemma \ref{L:w*}, $\epsilon(\AEm_b(X))$ therefore consists of those functionals on $\tLip_c(X)=\tLip_b(X)$ which are linear and continuous with respect to pointwise convergence with bounded Lipschitz constants.
By definition, this is the space $\mathcal{D}_0(X)$.
\end{proof}

\begin{corollary}\label{C:density-of-normal-in-D0}
    For any $x\in X$, the evaluation functional $\pi \mapsto \pi(x)$ on $\tLip_b(X)$ is a measure in the sense that
    \begin{equation*}
        \pi\mapsto \pi(x)=\int \pi \;\diff \delta_x \quad\text{for every $\pi\in\tLip(X)$,}
    \end{equation*}
    where $\delta_x$ is the Dirac measure at $x$.
    If $X$ is compact, then for every metric $0$-current $T$ on $X$, there exists a sequence of measures $\mu_i$ on $X$ such that
    \begin{equation*}
        \sup_{\pi\in\tLip_{b,1}(X)} |\int\pi\;\diff \mu_i-T(\pi)|\to 0.
    \end{equation*}
\end{corollary}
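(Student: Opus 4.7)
The plan is to observe first that the initial statement is essentially a tautology: by definition of integration against a Dirac measure, $\int \pi \, d\delta_x = \pi(x)$, so the evaluation functional at $x$ coincides with integration against the finite signed Borel measure $\delta_x$. Nothing needs to be proved beyond unpacking this definition (and noting that $\delta_x$ is indeed a finite Borel measure).

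For the second statement, I would combine Proposition \ref{P:0currents} with the density assertion in Lemma \ref{L:w*}. Given a compactly supported metric $0$-current $T$, Proposition \ref{P:0currents} yields some $\tau \in \AEm_b(X)$ with $\epsilon(\tau)(\pi) = T(\pi)$ for every $\pi \in \tLip_b(X)$. Then, by Lemma \ref{L:w*}, the linear span of evaluation functionals is norm-dense in $\epsilon(\AEm_b(X))$, so I can pick a sequence of the form
\begin{equation*}
\Phi_i = \sum_{j=1}^{N_i} c_{i,j}\,\operatorname{ev}_{x_{i,j}},
\end{equation*}
where $\operatorname{ev}_x(\pi) := \pi(x)$, such that $\Phi_i \to \epsilon(\tau)$ in the norm of $\tLip_b(X)^*$.

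The next step is to invoke the first part of the corollary, which lets me identify each $\Phi_i$ with integration against the finite signed measure $\mu_i := \sum_{j=1}^{N_i} c_{i,j}\,\delta_{x_{i,j}}$. The required uniform estimate will then follow immediately from the tautological dual-norm formula
\begin{equation*}
\| \Phi_i - T \|_{\tLip_b(X)^*} = \sup_{\pi \in \tLip_{b,1}(X)} \left| \int \pi \, d\mu_i - T(\pi) \right| \longrightarrow 0.
\end{equation*}
No substantive obstacle is expected: the entire argument is essentially a transcription of the two cited results into the language of measures. The only mild interpretive point is that ``measure'' must be read as \emph{signed} (indeed, purely atomic) measure, which is the natural reading in this dual-space setting.
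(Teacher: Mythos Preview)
Your proposal is correct and follows essentially the same approach as the paper's proof, which simply cites the first part of Lemma~\ref{L:w*} and Proposition~\ref{P:0currents}; you have just spelled out the details that the paper leaves implicit. The one cosmetic point is that you need not say ``compactly supported'' for $T$, since $X$ itself is assumed compact.
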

\begin{proof}
    The first statement is trivial.
The second statement then follows by the first part of Lemma \ref{L:w*} and Proposition \ref{P:0currents}.
\end{proof}

\section{The comparison map in the top dimension and sums of Jacobians}\label{S:char}
The goal of this section is to characterise a particular topological property of the comparison map in top dimension via the prescribed Jacobian equation.
This is the content of Theorem \ref{T:f-a-characterisation}.
First, however, some theory needs to be developed.

\subsection{Metric compactum}
For a compact space $K$, we denote by $C(K)$ the space of continuous real valued functions on $K$.
The space is equipped with the supremum norm denoted by $\lVert \cdot \rVert_{C(K)}$ making it into a Banach space.
We recall that the space $C(K)$ is separable if (and only if) the space $K$ is metrisable.

We shall identify the space of metric currents on a compact metric space $X$ with a subspace of $C(\mathcal{K})$ for a suitable compact (metrisable) space $\mathcal{K}$.
This will allow us to define a ``natural'' norm on the space of currents via inheritance.

\begin{definition}
    Given a metric space $X$ and $k\in\nat$, the \emph{metric $k$-compactum} is the topological space
    \begin{equation*}
        \mathcal{K}=\mathcal{K}(X)=\tLip_{b,1}(X)\times [\tLip_1(X)]^k,
    \end{equation*}
    equipped with the product topology.
\end{definition}
Let $X$ be compact.
Then
it follows from the considerations in Subsection \ref{ss:Lip-det} that the space $\mathcal{K}(X)$ is compact and metrisable.
Recall that $\mathcal{D}_k(X)$ stands for the space of metric $k$-currents on $X$.
Since $\tLip_b(X)=\tLip_c(X)$, any element $T$ of $\mathcal{D}_k(X)$ restricts, by definition, to a map $T\colon \mathcal{K}(X)\to \reals$.
We shall identify $T=T_{|\mathcal{K}(X)}$.

\begin{theorem}
    Let $X$ be a compact metric space and $k\in\nat$.
Then $\mathcal{D}_k(X)\subset C(\mathcal{K}(X))$.
\end{theorem}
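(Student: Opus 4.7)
The plan is to verify that every metric $k$-current, restricted to $\mathcal{K}(X)$, is continuous, by appealing to the metrisability of $\mathcal{K}(X)$ and then reducing continuity to sequential continuity, which is a direct translation of the joint continuity axiom.

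First I would observe that since $X$ is compact, it is in particular separable. By the discussion in Subsection \ref{SS:fa-lip}, the preduals $\AEm_0(X)$ and $\AEm_b(X)$ are then separable, so the weak-$*$ compact unit balls $\tLip_1(X)$ and $\tLip_{b,1}(X)$ are metrisable. Consequently $\mathcal{K}(X)$, being a finite product of compact metrisable spaces, is itself compact and metrisable. On a metrisable space, continuity is equivalent to sequential continuity, so it suffices to establish sequential continuity of $T|_{\mathcal{K}(X)}$ for each $T\in\mathcal{D}_k(X)$.

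Next, suppose $(f_i,\pi_i^1,\dots,\pi_i^k)\to (f,\pi^1,\dots,\pi^k)$ in $\mathcal{K}(X)$. By Lemma \ref{L:w*}, weak-$*$ convergence in the unit ball of $\tLip_b(X)$ (resp.\ $\tLip_0(X)$) amounts to pointwise convergence together with a uniform bound on Lipschitz constants (and $\lVert\cdot\rVert_\infty$ for $\tLip_b$); as we remain in the unit balls this bound is simply $1$. Hence $f_i\to f$ and $\pi_i^j\to\pi^j$ pointwise on $X$ with uniformly bounded Lipschitz constants. Because $X$ itself is compact, the supports $\spt f_i$ and $\spt f$ lie in the compact set $K:=X$, so the joint continuity axiom in the definition of a metric current applies and yields $T(f_i,\pi_i^1,\dots,\pi_i^k)\to T(f,\pi^1,\dots,\pi^k)$. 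This gives sequential continuity of $T|_{\mathcal{K}(X)}$, and hence continuity.

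Finally, since $\mathcal{K}(X)$ is compact, any continuous real-valued function on it is bounded, so $T|_{\mathcal{K}(X)}\in C(\mathcal{K}(X))$, yielding the claimed inclusion under the identification $T=T|_{\mathcal{K}(X)}$. There is no genuine obstacle here: the whole point of the definition of $\mathcal{K}(X)$ is that the weak-$*$ topologies on the Lipschitz unit balls encode precisely the mode of convergence (pointwise with bounded Lipschitz constants on a fixed compact set) appearing in the joint continuity axiom, so the abstract continuity on $\mathcal{K}(X)$ is nothing other than the axiomatic joint continuity of the current.
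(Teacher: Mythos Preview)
Your proof is correct and takes the same approach as the paper, which simply writes ``This follows from the joint continuity property.'' You have merely spelled out the details that are implicit in that one line: metrisability of $\mathcal{K}(X)$ reduces continuity to sequential continuity, and Lemma~\ref{L:w*} identifies weak$^*$ convergence in the unit balls with pointwise convergence under a uniform Lipschitz bound, which is precisely the hypothesis of the joint continuity axiom.
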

\begin{proof}
    This follows from the joint continuity property.
\end{proof}

\begin{corollary}\label{C:norm-is-finite}
    Suppose $X$ is compact and $k\in\nat$.
    For any current $T\in \mathcal{D}_k(X)$,
    \begin{equation*}
        |T(f,\pi^1, \dots, \pi^k)|\leq \lVert T \rVert_{C(\mathcal{K})} \lVert f \rVert_L \prod_{j=1}^k \tLip(\pi^j),
    \end{equation*}
    for every $(f, \pi^1, \dots, \pi^1)\in \tLip_b(X)\times[\tLip(X)]^k$.
\end{corollary}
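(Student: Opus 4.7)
The plan is to reduce the asserted inequality to an estimate of $T$ on the compact space $\mathcal{K}$ by normalising each argument so that the tuple lands inside $\mathcal{K}$, with the scaling factors accounted for by multilinearity. Since the previous theorem identifies $T$ with a continuous function on the compactum $\mathcal{K}$, the sup norm $\lVert T \rVert_{C(\mathcal{K})}$ is automatically finite, and the only work is to propagate the pointwise bound on $\mathcal{K}$ to a bound on all of $\tLip_b(X)\times[\tLip(X)]^k$.

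First I would dispose of the degenerate cases. If $\lVert f \rVert_L = 0$ then $f\equiv 0$ and multilinearity forces $T(f,\pi^1,\dots,\pi^k)=0$. If $\tLip(\pi^j)=0$ for some $j$, then $\pi^j$ is a constant function on all of $X$, in particular constant on a neighbourhood of $\spt f$, so the locality axiom gives $T(f,\pi^1,\dots,\pi^k)=0$. In either case both sides of the claimed inequality vanish, so we may assume $\lVert f \rVert_L>0$ and $\tLip(\pi^j)>0$ for every $j$.

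The key reduction is that $T$ is unchanged when an additive constant is added to any of the $\pi^j$. By multilinearity, the difference $T(f,\pi^1,\dots,\pi^j+c,\dots,\pi^k)-T(f,\pi^1,\dots,\pi^j,\dots,\pi^k)$ equals $T(f,\pi^1,\dots,c,\dots,\pi^k)$ where $c$ denotes the constant function; since constants are locally constant everywhere, locality kills this term. Thus, replacing each $\pi^j$ by $\pi^j-\pi^j(0)$, we may assume $\pi^j(0)=0$ without changing either side of the inequality nor $\tLip(\pi^j)$. Setting
\begin{equation*}
\tilde f := f/\lVert f\rVert_L,\qquad \tilde\pi^j := \pi^j/\tLip(\pi^j),
\end{equation*}
we have $\tilde f\in\tLip_{b,1}(X)$ and $\tilde\pi^j\in\tLip_1(X)$, so $(\tilde f,\tilde\pi^1,\dots,\tilde\pi^k)$ is a point of $\mathcal{K}$.

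Applying multilinearity once more,
\begin{equation*}
T(f,\pi^1,\dots,\pi^k)=\lVert f\rVert_L\prod_{j=1}^k\tLip(\pi^j)\cdot T(\tilde f,\tilde\pi^1,\dots,\tilde\pi^k),
\end{equation*}
and the last factor is bounded in absolute value by $\lVert T\rVert_{C(\mathcal{K})}$ by definition of the sup norm (finite because of the preceding theorem and compactness of $\mathcal{K}$). This yields the desired estimate. I do not expect any serious obstacle here; the only subtlety worth flagging is the invariance-under-constants step, which is the reason the normalisation lands in $\tLip_0(X)$ rather than just in $\tLip_b(X)$, but this is an immediate consequence of locality combined with multilinearity.
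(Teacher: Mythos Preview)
Your proof is correct and follows essentially the same approach as the paper: subtract constants from the $\pi^j$ using locality to land in $\tLip_0(X)$, then normalise via multilinearity so that the tuple lies in $\mathcal{K}$, and finally apply the definition of $\lVert T\rVert_{C(\mathcal{K})}$. Your version is slightly more detailed in that you explicitly treat the degenerate cases and spell out why the invariance-under-constants step works, but the argument is the same.
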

\begin{proof}
    By locality, we may subtract constant maps from each of the last $d$ variables of $T$.
Thus, it is sufficient to prove the inequality assuming $\pi_1, \dots,\pi_d \in \tLip_0(X)$.
    By homogeneity in each variable (on both sides of the inequality), it is sufficient to prove the statement under the additional assumption that $(f,\pi^1, \dots, \pi^k)\in \mathcal{K}(X)$.
Then the statement holds by definition of $\lVert T \rVert_{C(\mathcal{K})}$.
\end{proof}

\subsection{Potential forms and the PDE}\label{SS:potential-forms}
In this subsection we develop some basic theory of what we call potential forms, which make up a linear space that is in natural duality with the space of metric currents (cf.~the work of Pankka and Soultanis \cite{PS}, where a different notion of duality is explored).

We fix $d\in \nat$ and denote $Q=[0,1]^d\subset \reals^d$.
This is the exclusive underlying compact pointed metric space with which we shall be concerned.
The distinguished point is $0$.
We shall also only be discussing $d$-currents and therefore we fix (consistently with the previous subsection) the metric $d$-compactum
\begin{equation*}
    \mathcal{K}= \tLip_{b,1}(Q)\times [\tLip_1(Q)]^d.
\end{equation*}
When working with spaces over $Q$, such as $L^1(Q)$, $L^\infty(Q)$ various Lipschitz spaces, or spaces of \emph{forms} and \emph{currents} which we define later, we will often omit writing out the symbol $Q$ and write only $L^1$, $L^\infty$ etc.

\begin{definition}\label{D:fls}
For $d\in\nat$, any expression of the form
\begin{equation*}
    \sum_i(f_i,\pi_i),
\end{equation*}
where
\begin{enumerate}
    \item the summation is implicitly assumed to be over $i\in\nat$,
    \item for each $i$, $f_i\in\tLip_b(Q)$ and $\pi_i\in [\tLip(Q)]^d$ and
    \item \label{Enum:s-est}
    \begin{equation*}
        s(\sum_i(f_i,\pi_i))=\sum_i \lVert f_i \rVert_L \prod_{j=1}^d\tLip(\pi_i^j)<\infty.
    \end{equation*}
\end{enumerate}
is called a \emph{formal Lipschitz sum}.
We denote by $\mathcal{A}_d$ the \emph{set} of formal Lipschitz sums.
\end{definition}

Expressions such as $\sum_{i=1}^n(f_i,\pi_i)$ for $n\in\nat$ are identified with the formal Lipschitz sum $\sum_i(f_i,\pi_i)$ where $f_i=0$ and $\pi_i=0$ for $i>n$.

Next, we define an equivalence relation $\sim$ on $\mathcal{A}_d$ and then a vector space structure on $\mathcal{A}_d\slash\sim$.
The equivalence relation is given by $\sum_i(f_i,\pi_i)\sim\sum_i(g_i,\kappa_i)$ if
\begin{equation*}
    \sum_i T(f_i ,\pi_i) = \sum_i T(g_i,\kappa_i) \quad \text{for every $T\in\mathcal{D}_d$.}
\end{equation*}
The definition if valid due to \ref{Enum:s-est} and Corollary \ref{C:norm-is-finite}.
Indeed, the sums appearing in the definition are even guaranteed to be converging absolutely.

Given a countable set $S$, any family $\{(f_s, \pi_s)\}_{s\in S}$ such that
\begin{equation*}
    \sum_{s\in S} \lVert f_s \rVert_L \prod_{j=1}^d \tLip(\pi_s^d) <\infty,
\end{equation*}
and any two bijections $\rho_1, \rho_2\colon \nat \to S$,
\begin{equation*}
    \sum_{i}(f_{\rho_1(i)}, \pi_{\rho_1(i)})\sim \sum_{i}(f_{\rho_2(i)}, \pi_{\rho_2(i)}).
\end{equation*}
Thus, we define the common value to be $[\sum_s(f_s,\pi_s)]$.

Using this notation, we may define addition on $\mathcal{A}_d\slash \sim$ via
\begin{equation*}
    [\sum_i(f_i, \pi_i)] + [\sum_i(g_i, \kappa_i)]=[\sum_{i,j\in\nat}(f_i,\pi_i)+(g_j,\kappa_j)],
\end{equation*}
We also define scalar multiplication
\begin{equation*}
    \lambda [\sum_i(f_i,\pi_i)]=[\sum_i(\lambda f_i, \pi_i)].
\end{equation*}
It is immediately verified that under these operations, $\mathcal{A}_d\slash \sim$ is a vector space.

Observe the inequality (cf.~\eqref{E:det-linfty-est})
\begin{equation*}
        \lVert f \det \Diff \pi \rVert_{L^\infty(Q)}\leq \lVert f \rVert_L \prod_{j=1}^d \tLip(\pi^j),
\end{equation*}
which guarantees, that whenever $\sum_i (f_i,\pi_i)$ is a formal Lipschitz sum, the sum $\sum_i f_i \det\Diff \pi_i$ converges absolutely in $L^\infty(Q)$.

\begin{definition}
    Two formal Lipschitz sums $\sum_i(f_i, \pi_i)$ and $\sum_i(f_i,\pi_i)$ are called \emph{weakly equivalent} if
    \begin{equation*}
        \sum_i f_i \det\Diff \pi_i = \sum_i g_i \det\Diff \kappa_i \quad \text{a.e.~in $Q$,}
    \end{equation*}
    where the convergence is absolute in $L^\infty(Q)$.
\end{definition}

It is very natural to expect that two formal Lipschitz sums are equivalent if and only if they are weakly equivalent.
We are, however, not able to assert the validity of this statement.

Recall the convention that we usually omit writing out the differential $\diff \mathcal{L}^d$ when integrating with respect to the Lebesgue measure.

\begin{lemma}\label{L:equivalences}
    If two formal Lipschitz sums are equivalent, then they are also weakly equivalent.
\end{lemma}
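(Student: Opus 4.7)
The strategy is to test the equivalence against the family of metric currents induced by $L^1$ functions, which was introduced in the paper immediately after Conjecture \ref{Con:Lang-TD}. Recall that for every $u\in L^1(Q)$ the functional
\begin{equation*}
    T_u(f,\pi^1,\dots,\pi^d)=\int_Q u f\det\Diff\pi
\end{equation*}
is a metric $d$-current. Hence, given two equivalent formal Lipschitz sums $\sum_i(f_i,\pi_i)$ and $\sum_i(g_i,\kappa_i)$, plugging $T=T_u$ into the definition of equivalence yields
\begin{equation*}
    \sum_i \int_Q u f_i\det\Diff\pi_i=\sum_i\int_Q u g_i\det\Diff\kappa_i
\end{equation*}
for every $u\in L^1(Q)$.

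Next I would interchange the sum and the integral. Condition \ref{Enum:s-est} in Definition \ref{D:fls} together with the product estimate \eqref{E:det-linfty-est} ensure that $\sum_i f_i\det\Diff\pi_i$ and $\sum_i g_i\det\Diff\kappa_i$ converge absolutely in $L^\infty(Q)$, with $L^\infty$-norms bounded by $s(\sum_i(f_i,\pi_i))$ and $s(\sum_i(g_i,\kappa_i))$ respectively. Since $Q$ has finite Lebesgue measure, multiplication by $u\in L^1(Q)$ followed by integration over $Q$ is continuous from $L^\infty(Q)$ to $\reals$, so the sum can be pulled through the integral. Writing $F=\sum_i f_i\det\Diff\pi_i$ and $G=\sum_i g_i\det\Diff\kappa_i$ in $L^\infty(Q)$, this gives
\begin{equation*}
    \int_Q u(F-G)=0\qquad\text{for every }u\in L^1(Q).
\end{equation*}

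Finally, since $Q$ is bounded we have $F-G\in L^\infty(Q)\subset L^1(Q)$, so the standard duality argument (e.g.\ taking $u=\operatorname{sign}(F-G)$, or invoking the pairing $L^1(Q)^{*}=L^\infty(Q)$ applied to the linear functional $u\mapsto\int_Q u(F-G)$) shows that $F=G$ almost everywhere in $Q$. That is precisely the statement of weak equivalence. I do not anticipate a genuine obstacle: the only non-trivial ingredient is that every $u\in L^1(Q)$ induces a metric $d$-current $T_u$, and this is already recorded earlier in the paper; the rest is an application of $L^1$--$L^\infty$ duality and Fubini-type interchange justified by absolute convergence in $L^\infty$.
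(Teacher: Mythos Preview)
Your proposal is correct and follows essentially the same approach as the paper: test equivalence against the currents $T_u$ for $u\in L^1(Q)$, interchange sum and integral (the paper cites dominated convergence, you phrase it as continuity of the pairing, which amounts to the same thing), and conclude $F=G$ a.e.\ by $L^1$--$L^\infty$ duality. The paper's version is slightly terser on the final step, but the argument is identical.
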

\begin{proof}
    For a function $u\in L^1$, we denote by $T_u$ the metric $d$-current
    \begin{equation*}
        T_u(f, \pi^1, \dots, \pi^d)=\int_Q u f \det\Diff\pi \quad \text{for $(f, \pi^1, \dots, \pi^1)\in \tLip_b\times[\tLip]^d$}.
    \end{equation*}
    Suppose now $\sum_i(f_i, \pi_i)$ and $\sum_i(g_i, \kappa_i)$ are formal Lipschitz sums.
Then we have
    \begin{equation*}
        \sum_i T_u (f_i, \pi_i) =\sum_i \int_Q u f_i \det\Diff\pi_i=\int_Q u \sum_i f_i\det\Diff\pi_i,
    \end{equation*}
    by Lebesgue dominated convergence theorem.
Similar identity holds for $\sum_i(g_i, \kappa_i)$.
Therefore, since every $T_u$ is a metric $d$-current, if $\sum_i(f_i, \pi_i)$ and $\sum_i(g_i, \kappa_i)$ are equivalent, then in particular
    \begin{equation*}
        \int_Q u \sum_i f_i\det\Diff\pi_i = \int_Q u \sum_i g_i\det\Diff\kappa_i \quad \text{for every $u\in L^1(Q)$.}
    \end{equation*}
    Thus, the sums are weakly equivalent.
\end{proof}

\begin{definition}\label{D:pot-forms}
    We define the \emph{potential norm} on $\mathcal{A}_d\slash \sim$ via
    \begin{equation*}
    \lVert \omega \rVert_{\mathbb{P}^d}=\lVert \omega \rVert_{\mathbb{P}^d(Q)}=\inf\{s(\sum_i(f_i,\pi_i)): \sum_i(f_i,\pi_i)\in\omega\}\quad \text{for $\omega \in \mathcal{A}_d\slash \sim$.}
    \end{equation*}
    The set $\mathbb{P}^d(Q)=\mathbb{P}^d=\mathcal{A}_d\slash \sim$, equipped with the potential norm, is called \emph{the space of (top dimensional) potential (differential) forms}\footnote{If one is familiar with the basic theory of tensor products, it can be easily verified, that the space $\mathbb{P}^d$ may alternatively obtained from the tensor product, say $\mathcal{T}= \tLip_b(Q)\otimes \bigotimes_{j=1}^d \tLip(Q)$.
This is due to the fact that the equivalence relation we define has strictly larger equivalence classes than the equivalence relation used to define the tensor product.
Thus $\mathbb{P}^d= \overline{\mathcal{T}} \slash \sim$}.
Its elements are called \emph{(top dimensional) potential (differential) forms}.
The map
    \begin{equation*}
        \Em \colon \mathbb{P}^d(Q)\to L^\infty(Q),
    \end{equation*}
    given by $\Em([\sum_i(f_i,\pi_i)])= \sum_i f_i \det\Diff\pi_i$ is called \emph{the canonical embedding (for potential forms)}.
\end{definition}

The definition of $\Em$ is valid due to Lemma \ref{L:equivalences} which asserts that the choice of the representative is irrelevant.
Since we do not know whether weak equivalence is equivalent to equivalence, we are not able to assert that $\Em$ is injective.
We shall nevertheless call it an ``embedding''.

From now on, we will use the shorthand notation
\begin{equation*}
    T(f, \pi) = T(f, \pi^1, \dots, \pi^d) \quad \text{for $(f, \pi^1, \dots, \pi^d) \in \tLip_b\times [\tLip]^d$.}
\end{equation*}

\begin{definition}\label{D:duality}
    Given a current $T\in \mathcal{D}_d(Q)$ and a potential form $\omega \in \mathbb{P}^d(Q)$, we define their \emph{duality pairing} as
    \begin{equation*}
        \langle \omega, T \rangle = \sum_i T(f_i, \pi_i),
    \end{equation*}
    where $\sum_i (f_i, \pi_i)\in\omega$.
\end{definition}
 Once again, by definition of $\sim$, the choice of the representative is irrelevant.

\begin{theorem}
    The space $\mathbb{P}^d$ is complete.
\end{theorem}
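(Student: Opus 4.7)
The plan is to use the standard Banach-space completeness criterion via absolutely convergent series. Before doing so, one must briefly check that $\lVert\cdot\rVert_{\mathbb{P}^d}$ is truly a norm (not just a seminorm). Subadditivity and homogeneity are easy consequences of the way addition and scalar multiplication are defined on $\mathcal{A}_d/\sim$: the concatenation of representatives gives a representative of the sum, and $s$ is additive on concatenation. For the separation of points, if $\lVert\omega\rVert_{\mathbb{P}^d}=0$ then for every $\varepsilon>0$ there is a representative $\sum_i(f_i,\pi_i)\in\omega$ with $s<\varepsilon$; by Corollary \ref{C:norm-is-finite} applied termwise,
\begin{equation*}
    |\langle \omega, T\rangle|=\Bigl|\sum_i T(f_i,\pi_i)\Bigr|\leq \lVert T\rVert_{C(\mathcal{K})}\, s(\textstyle\sum_i(f_i,\pi_i))
\end{equation*}
for every $T\in\mathcal{D}_d$, so $\langle\omega,T\rangle=0$ for all $T$ and hence $\omega=0$ by the very definition of $\sim$.

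Now take any sequence $(\omega_n)_{n\in\nat}\subset\mathbb{P}^d$ with $\sum_n \lVert\omega_n\rVert_{\mathbb{P}^d}<\infty$; it suffices to show $\sum_n\omega_n$ converges in $\mathbb{P}^d$. For each $n$, I would choose a representative $\sum_i(f_i^n,\pi_i^n)\in\omega_n$ with
\begin{equation*}
    s\bigl(\textstyle\sum_i(f_i^n,\pi_i^n)\bigr)\leq \lVert\omega_n\rVert_{\mathbb{P}^d}+2^{-n}.
\end{equation*}
Fix a bijection $\nat\times\nat\to\nat$ and consider the ``diagonal'' expression $\sum_{n,i}(f_i^n,\pi_i^n)$. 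The double sum of $\lVert f_i^n\rVert_L\prod_j\tLip((\pi_i^n)^j)$ equals $\sum_n s(\sum_i(f_i^n,\pi_i^n))\leq \sum_n(\lVert\omega_n\rVert_{\mathbb{P}^d}+2^{-n})<\infty$, so this is a legitimate formal Lipschitz sum; call its equivalence class $\omega$.

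It remains to verify $\omega=\sum_n\omega_n$ in norm. The key observation is that by the definitions of the algebraic operations on $\mathcal{A}_d/\sim$, the equivalence class of the partial double-sum $\sum_{n\leq N, i}(f_i^n,\pi_i^n)$ is exactly $\sum_{n=1}^N\omega_n$, and therefore
\begin{equation*}
    \omega-\sum_{n=1}^N\omega_n=\Bigl[\textstyle\sum_{n>N,\,i}(f_i^n,\pi_i^n)\Bigr],
\end{equation*}
which gives $\lVert \omega-\sum_{n=1}^N\omega_n\rVert_{\mathbb{P}^d}\leq \sum_{n>N}(\lVert\omega_n\rVert_{\mathbb{P}^d}+2^{-n})\to 0$ as $N\to\infty$. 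Completeness follows.

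The only slightly delicate point is the bookkeeping in the previous paragraph: one must be comfortable that concatenating representatives via any bijection $\nat\times\nat\to\nat$ yields the same equivalence class (independence of ordering is built into the definition of the common value $[\sum_s(f_s,\pi_s)]$ in Subsection \ref{SS:potential-forms}) and that forming $\omega-\sum_{n\le N}\omega_n$ really does leave the ``tail'' $\sum_{n>N,\,i}(f_i^n,\pi_i^n)$ as a representative, which is immediate because adding the representatives of $-\omega_n$ for $n\le N$ cancels those terms inside every current pairing. No other obstacles arise, since separation of points and the triangle inequality already carry all the weight of the analytic content.
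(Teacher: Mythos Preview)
Your proof is correct and follows essentially the same route as the paper: both invoke the absolute-convergence (Riesz) criterion, choose near-optimal representatives for each $\omega_n$, concatenate them into a single formal Lipschitz sum to produce the candidate limit, and then identify it with $\sum_n\omega_n$. Your write-up is in fact slightly more careful than the paper's --- you verify explicitly that $\lVert\cdot\rVert_{\mathbb{P}^d}$ separates points, and you bound $\lVert\omega-\sum_{n\le N}\omega_n\rVert_{\mathbb{P}^d}$ directly by the tail $\sum_{n>N}(\lVert\omega_n\rVert_{\mathbb{P}^d}+2^{-n})$, whereas the paper only records the pairing identity $\sum_{j,i}T(f_{i,j},\pi_{i,j})=\sum_j\langle\omega_j,T\rangle$ and leaves the passage from this to norm convergence implicit.
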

\begin{proof}
    We use the Riesz completeness criterion.
Let $\sum_j \omega_j$ be an absolutely convergent sum in $\mathbb{P}^d$.
Then, by definition of the potential norm, we may find for each $j\in\nat$ a representative
\begin{equation}\label{E:repre}
    \sum_i(f_{i,j}, \pi_{i,j})\in \omega_j
\end{equation}
such that
\begin{equation}\label{E:doubly-sum-abs}
    \sum_j \sum_i \lVert f_{i,j} \rVert_L \prod_{k=1}^d \tLip(\pi_{i,j}^k)<\infty.
\end{equation}
We claim that
\begin{equation}\label{E:double-single}
    [\sum_j\sum_i(f_{i,j}, \pi_{i,j})] = \sum_j \omega_j,
\end{equation}
which would imply that $\sum_j \omega_j$ converges in $(\mathbb{P}_d)^*$ and conclude the proof.
By \eqref{E:doubly-sum-abs}, we see that the left hand side of \eqref{E:double-single} is well defined and, by the definition of $\sim$, it equals the right hand side if and only if
\begin{equation*}
    \sum_j \sum_i T(f_{i,j}, \pi_{i,j}) = \sum_j \langle \omega_j, T\rangle \quad \text{for every $T\in\mathbb{P}_d$.}
\end{equation*}
However, this equality follows immediately from the definition of the pairing, \eqref{E:repre} and \eqref{E:doubly-sum-abs}.
\end{proof}

Here we connect potential forms and the canonical embedding to Lipschitz regularity.

\begin{theorem}
    The following statements are equivalent.
    \begin{enumerate}
        \item The canonical embedding $\Em\colon \mathbb{P}^d \to L^\infty$ is non-surjective.
        \item The prescribed Jacobian equation has linearised Lipschitz non-regularity.
    \end{enumerate}
\end{theorem}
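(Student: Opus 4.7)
The plan is to derive this equivalence by directly matching the summability and convergence conditions appearing in Definition \ref{D:pot-forms} with those of Definition \ref{D:solution}; no serious work is required beyond this bookkeeping.

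First I would observe that clause (iii) of Definition \ref{D:fls}, defining a formal Lipschitz sum, is literally the inequality \eqref{E:intro-inequality} of Definition \ref{D:solution}, since $\lVert f \rVert_L = \max\{\tLip(f), \lVert f \rVert_\infty\}$. Next I would note that, for any formal Lipschitz sum $\sum_i (f_i, \pi_i)$, the series $\sum_i f_i \det\Diff\pi_i$ converges absolutely in $L^\infty(Q)$: the bound \eqref{E:det-linfty-est} yields
\[
\lVert f_i \det\Diff\pi_i \rVert_\infty \leq \lVert f_i \rVert_L \prod_{j=1}^d \tLip(\pi_i^j),
\]
and summing over $i$ is finite by the defining summability condition. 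This observation is already recorded just before Definition \ref{D:pot-forms}. Consequently, the equality $\Em([\sum_i (f_i, \pi_i)]) = \rho$ in $L^\infty(Q)$ is, as a statement about the data $\{f_i\}, \{\pi_i\}$, identical to the statement that $(f_i, \pi_i)$ is a Lipschitz solution to \eqref{E:intro-the-equation1} with right hand side $\rho$ in the sense of Definition \ref{D:solution}.

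With this identification in hand the two implications are immediate. For (1) $\Rightarrow$ (2), I would take any $\rho \in L^\infty(Q) \setminus \Em(\mathbb{P}^d)$; no class in $\mathbb{P}^d$ maps to $\rho$ under $\Em$, so no representative can satisfy $\sum_i f_i \det\Diff\pi_i = \rho$, and linearised Lipschitz non-regularity is witnessed by this $\rho$. For (2) $\Rightarrow$ (1), I would take any $\rho$ witnessing linearised Lipschitz non-regularity; any candidate $\omega = [\sum_i (f_i, \pi_i)] \in \mathbb{P}^d$ with $\Em(\omega) = \rho$ would, through any representative, produce a Lipschitz solution to \eqref{E:intro-the-equation1} for this $\rho$, a contradiction; hence $\rho \notin \Em(\mathbb{P}^d)$, so $\Em$ is non-surjective.

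The proof has no real obstacle: the potential-form construction has been tailored so that $\Em$ is precisely the linearised prescribed-Jacobian map. The only point worth verifying is that the value $\Em([\sum_i (f_i, \pi_i)])$ is independent of the representative chosen from the class; this is already built into $\Em$ by Lemma \ref{L:equivalences}, so working with any one representative in either direction is legitimate.
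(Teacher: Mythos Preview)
Your proposal is correct and matches the paper's own proof, which is a single sentence: ``This is an immediate consequence of the Definitions \ref{D:pot-forms} and \ref{D:Lip-reg}.'' You have simply unpacked that one line explicitly, verifying that the summability condition defining a formal Lipschitz sum coincides with \eqref{E:intro-inequality} and that $\Em$ sends a class to the $L^\infty$ function $\sum_i f_i\det\Diff\pi_i$, so surjectivity of $\Em$ is literally the negation of linearised Lipschitz non-regularity.
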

\begin{proof}
    This is an immediate consequence of the Definitions \ref{D:pot-forms} and \ref{D:Lip-reg}.
\end{proof}

 \begin{definition}
     For a current $T\in \mathcal{D}_d(Q)$, we define its \emph{potential norm} by
     \begin{equation*}
         \lVert T \rVert_{\mathbb{P}_d} = \sup_{\lVert \omega \rVert_{\mathbb{P}^d}\leq 1} |\langle \omega, T \rangle|.
     \end{equation*}
     The space $\mathcal{D}_d(Q)$ equipped with the potential norm is called the space of potential $d$-currents and denoted by $\mathbb{P}_d=\mathbb{P}_d(Q)$.
 \end{definition}

 \begin{theorem}\label{T:norm-equal}
    For every metric $d$-current $T\in\mathbb{P}_d$, we have
    \begin{equation*}
        \lVert T \rVert_{\mathbb{P}_d} = \lVert T \rVert_{C(\mathcal{K})}.
    \end{equation*}
    In particular, the former quantity is finite.
\end{theorem}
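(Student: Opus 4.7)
The plan is to prove two inequalities, one of which is essentially tautological and the other a direct consequence of Corollary \ref{C:norm-is-finite} combined with the definition of the potential norm.

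For the inequality $\lVert T \rVert_{C(\mathcal{K})} \leq \lVert T \rVert_{\mathbb{P}_d}$, I would fix an arbitrary $(f,\pi^1,\dots,\pi^d) \in \mathcal{K}$ and consider the single-term formal Lipschitz sum, that is, the representative with $f_1 = f$, $\pi_1 = (\pi^1,\dots,\pi^d)$ and all remaining terms zero. Call its class $\omega \in \mathbb{P}^d$. Since $(f,\pi^1,\dots,\pi^d) \in \mathcal{K}$, we have $\lVert f \rVert_L \leq 1$ and $\tLip(\pi^j) \leq 1$ for each $j$, whence $s$ of this formal sum is at most $1$ and so $\lVert \omega \rVert_{\mathbb{P}^d} \leq 1$. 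By the definition of the duality pairing, $\langle \omega,T \rangle = T(f,\pi)$, and therefore $|T(f,\pi)| \leq \lVert T \rVert_{\mathbb{P}_d}$. Taking supremum over $\mathcal{K}$ gives the desired inequality.

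For the reverse inequality $\lVert T \rVert_{\mathbb{P}_d} \leq \lVert T \rVert_{C(\mathcal{K})}$, I would fix $\omega \in \mathbb{P}^d$ with $\lVert \omega \rVert_{\mathbb{P}^d} \leq 1$ and an arbitrary $\varepsilon > 0$. By definition of the potential norm as an infimum, pick a representative $\sum_i(f_i,\pi_i) \in \omega$ with $\sum_i \lVert f_i \rVert_L \prod_{j=1}^d \tLip(\pi_i^j) \leq 1 + \varepsilon$. Then by the triangle inequality applied to the (absolutely convergent) pairing, and then by Corollary \ref{C:norm-is-finite} applied termwise,
\begin{equation*}
|\langle \omega, T \rangle| \leq \sum_i |T(f_i,\pi_i)| \leq \lVert T \rVert_{C(\mathcal{K})} \sum_i \lVert f_i \rVert_L \prod_{j=1}^d \tLip(\pi_i^j) \leq (1+\varepsilon)\lVert T \rVert_{C(\mathcal{K})}.
\end{equation*}
Letting $\varepsilon \to 0$ and taking supremum over $\omega$ with $\lVert \omega \rVert_{\mathbb{P}^d} \leq 1$ yields the claim.

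The \emph{in particular} assertion, that $\lVert T \rVert_{\mathbb{P}_d}$ is finite, then follows from the equality together with the fact that $T$ is a continuous real-valued function on the compact metric space $\mathcal{K}$ and so $\lVert T \rVert_{C(\mathcal{K})} < \infty$. I do not foresee any genuine obstacle here; the only minor subtlety is to note that Corollary \ref{C:norm-is-finite} makes the termwise bound legitimate (it already absorbs the normalisation in each variable via homogeneity and locality), so there is no need to renormalise the $(f_i,\pi_i)$ individually to lie in $\mathcal{K}$.
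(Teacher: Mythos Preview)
Your proof is correct and follows essentially the same approach as the paper: one inequality is tautological (single-term sums lie in the unit ball of $\mathbb{P}^d$), and the reverse inequality is obtained by applying Corollary~\ref{C:norm-is-finite} termwise to a near-optimal representative. Your use of an explicit $\varepsilon$ is slightly more careful than the paper's version, which tacitly identifies the supremum over $\lVert\omega\rVert_{\mathbb{P}^d}\leq 1$ with the supremum over representatives satisfying $s\leq 1$.
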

\begin{proof}
    The inequality
    \begin{equation*}
        \lVert T \rVert_{\mathbb{P}_d}\geq \lVert T \rVert_{C(\mathcal{K})}
    \end{equation*}
    is immediate, as the supremum on the left hand side is taken over a larger set.
We show the converse inequality.
Utilizing Corollary \ref{C:norm-is-finite}, we obtain
    \begin{equation*}
        \begin{split}
            \lVert T \rVert_{\mathbb{P}_d} &= \sup_{\lVert \omega \rVert_{\mathbb{P}^d}\leq 1} |T(\omega)|
            =\sup\{\sum_i T(f_i,\pi_i): s(\sum_i (f_i, \pi_i))\leq 1\}\\
            &=\sup\{\sum_i T(f_i,\pi_i): \sum_i\lVert f_i \rVert_L \prod_{j=1}^d\tLip(\pi^j_i)\leq 1\}\\
            &{\leq}\sup\{\sum_i \lVert T \rVert_{C(\mathcal{K})}\lVert f_i \rVert_L \prod_{j=1}^d\tLip(\pi^j_i): \sum_i\lVert f_i \rVert_L \prod_{j=1}^d\tLip(\pi^j_i)\leq 1 \}\\
            &\leq \lVert T \rVert_{C(\mathcal{K})}.
        \end{split}
    \end{equation*}
\end{proof}

 \begin{remark}
     A modified version of this construction can be carried out for a set $Q\subset \reals^d$ which is, say, open.
In this case, the space of potential currents (those for which the potential norm is finite) would consist of those metric currents, that have only a controlled ``explosion'' at the boundary of $Q$ and therefore, in general, the space of potential currents cannot be expected to coincide with all metric currents.
This justifies the naming convention.
 \end{remark}

\begin{theorem}
    The space $\mathbb{P}_d$ is an isometric closed subspace of $C(\mathcal{K})$, therefore it is a separable Banach space.
\end{theorem}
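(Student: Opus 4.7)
The plan is to exploit the already established Theorem \ref{T:norm-equal} to identify $\mathbb{P}_d$ with its image in $C(\mathcal{K})$ under the restriction map $T\mapsto T_{|\mathcal{K}}$, and then verify that this image is closed. Separability will then be immediate because $\mathcal{K}$ is compact metrizable, hence $C(\mathcal{K})$ is separable, and every closed subspace of a separable Banach space is separable.

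First I would note that the map $T\mapsto T_{|\mathcal{K}}$ is injective (any $T\in\mathcal{D}_d$ is recovered from $T_{|\mathcal{K}}$ by multilinearity) and, by Theorem \ref{T:norm-equal}, isometric from $(\mathbb{P}_d, \lVert\cdot\rVert_{\mathbb{P}_d})$ into $(C(\mathcal{K}), \lVert\cdot\rVert_{C(\mathcal{K})})$. The remaining content is closedness of the image. Take a sequence $T_n\in\mathbb{P}_d$ whose restrictions converge uniformly on $\mathcal{K}$ to some $T\in C(\mathcal{K})$; the goal is to produce a metric $d$-current whose restriction is $T$. The natural candidate is the homogeneous extension
\begin{equation*}
    \tilde T(f,\pi^1,\dots,\pi^d)=\lVert f\rVert_L\prod_{j=1}^d \tLip(\pi^j)\cdot T\Bigl(\tfrac{f}{\lVert f\rVert_L},\tfrac{\pi^1}{\tLip(\pi^1)},\dots,\tfrac{\pi^d}{\tLip(\pi^d)}\Bigr),
\end{equation*}
with the convention that $\tilde T(f,\pi)=0$ whenever $\lVert f\rVert_L=0$ or some $\tLip(\pi^j)=0$.

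The heart of the argument will be checking that $\tilde T\in\mathcal{D}_d(Q)$. Multilinearity is the routine part: by multilinearity of each $T_n$ and uniform convergence on $\mathcal{K}$, one has $T_n(f,\pi)\to\tilde T(f,\pi)$ for every $(f,\pi)\in\tLip_b\times[\tLip]^d$ (the case where some $\tLip(\pi^j)=0$ is handled because each $T_n$ vanishes there by locality), and passing to the limit preserves linearity in each slot. Locality is inherited similarly: if some $\pi^j$ is constant on a neighbourhood of $\spt f$, then $T_n(f,\pi)=0$ for every $n$, so $\tilde T(f,\pi)=0$. The main obstacle I anticipate is verifying \emph{joint continuity}: given sequences $f_i\to f$ and $\pi_i^j\to\pi^j$ pointwise with uniformly bounded Lipschitz constants and supports contained in $Q$, I need to show $\tilde T(f_i,\pi_i)\to\tilde T(f,\pi)$. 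Here I would first rescale to place all the $(f_i,\pi_i)$ and $(f,\pi)$ inside a common multiple $C\cdot\mathcal{K}$, on which $T_n$ still converges uniformly to the homogeneous extension of $T$; then apply Lemma \ref{L:w*} to translate pointwise convergence with bounded Lipschitz norms into weak$^*$ convergence, which is precisely convergence in the topology of $C\cdot\mathcal{K}$. Since $T$ is continuous on $\mathcal{K}$ (hence on $C\cdot\mathcal{K}$ by homogeneity), the desired convergence follows.

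Once $\tilde T$ is confirmed to be a metric $d$-current, its restriction to $\mathcal{K}$ agrees with $T$ by construction, so $T$ lies in the image of $\mathbb{P}_d$, proving closedness. Completeness of $\mathbb{P}_d$ then follows because $C(\mathcal{K})$ is a Banach space, and separability follows from the standard fact that $\mathcal{K}$, being a finite product of weak$^*$ closed unit balls of duals of separable preduals $\AEm_b(Q), \AEm_0(Q)$ (see Subsection \ref{SS:fa-lip}), is compact and metrizable, hence $C(\mathcal{K})$ is separable.
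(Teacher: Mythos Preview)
Your approach is essentially the paper's: isometric embedding via Theorem \ref{T:norm-equal}, then closedness by extending a uniform limit $T\in C(\mathcal{K})$ back to a functional on all of $\tLip_b\times[\tLip]^d$ and verifying the metric-current axioms. The paper is terser---it simply sets $T(f,\pi)=\lim_i T_i(f,\pi)$ and checks multilinearity, locality, and joint continuity in a couple of lines---whereas you spell out the homogeneous-extension formula and the rescale-into-$\mathcal{K}$ argument for joint continuity.

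There is one small technical point to fix in your definition of $\tilde T$: the normalized map $\pi^j/\tLip(\pi^j)$ need not lie in $\tLip_1(Q)$, because $\tLip_1(Q)$ is the unit ball of $\tLip_0(Q)$ and hence consists only of functions vanishing at the distinguished point $0$. Thus, when some $\pi^j(0)\neq 0$, the tuple on the right of your displayed formula is not in $\mathcal{K}$ and $T$ is not defined there. The remedy is straightforward: replace each $\pi^j$ by $\pi^j-\pi^j(0)$ before normalizing (this is harmless since each $T_n$ is local, so the pointwise limit inherits invariance under adding constants to the $\pi^j$), or equivalently define $\tilde T(f,\pi)=\lim_n T_n(f,\pi)$ as the paper does and check the limit exists by reducing to $\mathcal{K}$ via locality and multilinearity of the $T_n$. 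With this correction your argument goes through.
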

\begin{proof}
    The fact that $\mathbb{P}_d$ is an isometric subspace of $C(\mathcal{K})$ follows from Theorem \ref{T:norm-equal}.
We show that it is a closed subspace.
To that end, let $T_i \in \mathbb{P}_d$ be a sequence converging to $T\in C(\mathcal{K})$ with respect to the norm $\lVert \cdot \rVert_{C(\mathcal{K})}$.
Note that since each $T_i$ is actually a functional on all of $\tLip_b\times [\tLip]^d$, the map $T$ has a natural extension to $\tLip_b\times [\tLip]^d$ given, for example, by
    \begin{equation*}
        T(f, \pi) = \lim_i T_i(f, \pi).
    \end{equation*}
    Since pointwise limits of linear functionals are linear, it follows that $T$ is multilinear.
Since $T$ lies in the space $C(\mathcal{K})$, it is jointly continuous.
It remains to show that $T$ is local.
This is immediate since whenever $f\in\tLip_b$ and $\pi^j\in \tLip$ are such that one of ${\pi^j}$'s is constant on a neighbourhood of $\spt f$, then $T_i(f, \pi^1, \dots, \pi^d)=0$ for every $i$.
Hence also $T(f, \pi^1 ,\dots, \pi^d)=0$.
\end{proof}

\begin{remark}\label{R:duality}
    Several interesting open questions concerning the functional-analytic structure of the introduced spaces arise.
Firstly, by Theorem \ref{T:norm-equal} we see that we have a canonical embedding $\mathbb{P}_d\to (\mathbb{P}^d)^*$, which is isometric.
On the other hand,
    we have the canonical embedding $I\colon \mathbb{P}^d \to(\mathbb{P}_d)^*$,
    given by
    \begin{equation}\label{E:I}
        I(\omega)(T)=\langle \omega, T \rangle.
    \end{equation}
    Is $I$ onto?
Is $I$ an isomorphism of Banach spaces?
If $I$ is onto, then (since it is injective), it is also a Banach space isomorphism, which equates to an identification of the dual space $(\mathbb{P}_d)^*$ (up to constants).
    This would be a result akin to Wolffe's theorem on duality of flat forms \cite[Section 5]{Heinonen}.
\end{remark}

 In the following, we provide a simple characterisation of the surjectivity of $I$ via a compactness statement.
This result is not needed or used further in the present paper, but it is of independent interest.

\begin{proposition}\label{P:duality-of-potential}
The following two statements are equivalent.
\begin{enumerate}
    \item\label{Enum:dual1} For any bounded sequence $\omega_j\in \mathbb{P}^d$ a subsequence $\omega_{j_k}$ and a potential form $\omega \in \mathbb{P}^d$ may be found in such a way that for every $T\in \mathbb{P}_d$,
    \begin{equation*}
        \langle \omega_{j_k}, T \rangle \to \langle \omega, T \rangle.
    \end{equation*}
    \item\label{Enum:dual2} The operator $I$ is surjective and a Banach space isomorphism.
\end{enumerate}
\end{proposition}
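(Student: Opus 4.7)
The plan is to combine Goldstine's theorem with the separability of $\mathbb{P}_d$ (furnished by the preceding theorem). Set $\kappa\colon \mathbb{P}^d\to(\mathbb{P}^d)^{**}$ for the canonical embedding, and let $J\colon \mathbb{P}_d\to(\mathbb{P}^d)^*$ be given by $J(T)(\omega)=\langle\omega,T\rangle$. By Theorem~\ref{T:norm-equal}, $J$ is an isometric embedding, and a direct computation yields the factorisation $I=J^*\circ\kappa$, where $J^*$ denotes the Banach-space adjoint of $J$. Moreover, directly from the definition of $\sim$ on $\mathcal{A}_d$, the map $I$ is injective.

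For \ref{Enum:dual2}$\Rightarrow$\ref{Enum:dual1}, suppose $I$ is a Banach-space isomorphism. A bounded sequence $\omega_j\in\mathbb{P}^d$ then yields a bounded sequence $I(\omega_j)$ in $(\mathbb{P}_d)^*$. Separability of $\mathbb{P}_d$ renders closed balls of $(\mathbb{P}_d)^*$ weak$^*$-metrisable, so Banach--Alaoglu produces a weak$^*$-convergent subsequence $I(\omega_{j_k})\to\Psi\in(\mathbb{P}_d)^*$; surjectivity of $I$ then furnishes $\omega\in\mathbb{P}^d$ with $I(\omega)=\Psi$, which is precisely the content of \ref{Enum:dual1}.

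The substantive direction is \ref{Enum:dual1}$\Rightarrow$\ref{Enum:dual2}. Since $I$ is bounded and injective, the open mapping theorem reduces the task to establishing surjectivity. The starting point is Goldstine's theorem, which asserts that $\kappa(B_{\mathbb{P}^d})$ is weak$^*$ dense in $B_{(\mathbb{P}^d)^{**}}$. Because $J$ is an isometric embedding, Hahn--Banach implies that $J^*$ maps $B_{(\mathbb{P}^d)^{**}}$ onto $B_{(\mathbb{P}_d)^*}$, and $J^*$ is weak$^*$-to-weak$^*$ continuous as the adjoint of a bounded operator. Consequently, the set
\[
    I(B_{\mathbb{P}^d})=J^*(\kappa(B_{\mathbb{P}^d}))
\]
is weak$^*$ dense in $B_{(\mathbb{P}_d)^*}$. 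Given any $\Phi\in B_{(\mathbb{P}_d)^*}$, weak$^*$-metrisability on $B_{(\mathbb{P}_d)^*}$ allows us to extract a norm-bounded sequence $\omega_j\in B_{\mathbb{P}^d}$ with $I(\omega_j)\to\Phi$ weak$^*$. Now hypothesis \ref{Enum:dual1} applies (precisely because $\|\omega_j\|_{\mathbb{P}^d}\le 1$), producing $\omega\in\mathbb{P}^d$ and a subsequence so that $\langle\omega_{j_k},T\rangle\to\langle\omega,T\rangle$ for every $T\in\mathbb{P}_d$, i.e.\ $I(\omega_{j_k})\to I(\omega)$ weak$^*$. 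Uniqueness of weak$^*$ limits gives $I(\omega)=\Phi$, so $\Phi\in I(\mathbb{P}^d)$.

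The principal obstacle is precisely this last step: Goldstine's theorem alone produces a weak$^*$ limit only inside the bidual $(\mathbb{P}^d)^{**}$, and there is no functional-analytic reason for such a limit to lie in $\kappa(\mathbb{P}^d)$. Hypothesis \ref{Enum:dual1} is exactly the sequential compactness one needs to force this limit to be represented by a genuine potential form.
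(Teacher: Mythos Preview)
Your argument is correct and takes a genuinely different route from the paper. The paper first observes that $I(\mathbb{P}^d)$ separates points of $\mathbb{P}_d$, invokes the bipolar theorem to get weak$^*$ density of $I(\mathbb{P}^d)$ in $(\mathbb{P}_d)^*$, and then uses the Kre\v{\i}n--Shmul'yan theorem to reduce surjectivity to showing that $I(\mathbb{P}^d)\cap\lambda B_{(\mathbb{P}_d)^*}$ is weak$^*$ closed for each $\lambda$. You instead factor $I=J^*\circ\kappa$ through the bidual and combine Goldstine with the fact that $J^*$ is a weak$^*$-continuous quotient map; this yields directly that $I(B_{\mathbb{P}^d})$ is weak$^*$ dense in $B_{(\mathbb{P}_d)^*}$, so an arbitrary $\Phi$ in the dual unit ball is approximated by $I(\omega_j)$ with the $\omega_j$ already lying in $B_{\mathbb{P}^d}$. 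This is a real gain: hypothesis \ref{Enum:dual1} applies only to \emph{bounded} sequences in $\mathbb{P}^d$, and your approach hands you boundedness for free, whereas the paper's route through Kre\v{\i}n--Shmul'yan has to contend with the issue that preimages under $I$ of a sequence in $I(\mathbb{P}^d)\cap\lambda B_{(\mathbb{P}_d)^*}$ need not a priori be bounded. Your argument is self-contained and avoids Kre\v{\i}n--Shmul'yan entirely; the paper's is shorter in write-up but leans on heavier machinery.
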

\begin{proof}
    For $\lambda\geq 0$, we denote by $\lambda B_{\mathbb{P}^d}$ the ball in $\mathbb{P}^d$ around $0$ of radius $\lambda$.
    Since the linear space $I(\mathbb{P}^d)$ separates points of $\mathbb{P}_d$ (by definition), it is weak$^*$ dense in $(\mathbb{P}_d)^*$ by the bipolar theorem \cite[Chapter 5, 1.8]{Conway}.
Thus, $I$ is surjective if and only $I(\mathbb{P}^d)$ is weak$^*$ closed.
By the Kre\v{\i}n-Shmul'yan theorem \cite[Chapter 5, 12.1]{Conway}, this is the case if and only if $I(\mathbb{P}^d) \cap \lambda B_{\mathbb{P}^d}$ is weak$^*$ closed for any $\lambda\geq 0$.
Since $\mathbb{P}_d$ is separable, the weak$^*$ topology restricted to any ball is metrisable and hence by the Banach-Alaoglu theorem, the above is equivalent to requiring that $I(\mathbb{P}^d) \cap \lambda B_{\mathbb{P}^d}$ be sequentially weak$^*$ compact.

    We now show the implication \ref{Enum:dual1} $\implies$ $\ref{Enum:dual2}$.
Since $I$ is bounded, the sequence $I(\omega_j)$ is bounded in $(\mathbb{P}_d)^*$, say it lies in $\lambda B_{(\mathbb{P}_d)^*}$, therefore it has a weak$^*$ convergent subsequence $I(\omega_{j_k})\to \Phi \in \lambda B_{(\mathbb{P}_d)^*}$.
Using \ref{Enum:dual1}, we may extract a further subsequence and arrive at $\Phi \in I(\mathbb{P}_d)$.
We have shown that $I(\mathbb{P}^d) \cap \lambda B_{\mathbb{P}^d}$ is sequentially weak$^*$ compact and therefore \ref{Enum:dual2} follows by the paragraph above.

    The implication \ref{Enum:dual2} $\implies$ $\ref{Enum:dual1}$ is an immediate consequence of the open mapping theorem and Banach-Alaoglu.
\end{proof}

We now continue developing the theory of potential forms and currents with the goal of connecting the Flat Chain Conjecture and regularity of the prescribed Jacobian equation.

\begin{definition}\label{D:Em-star}
    The map $\Em_* \colon L^1(Q) \to \mathbb{P}_d(Q)$ is given by the formula
    \begin{equation*}
        \Em_*(u)(f,\pi_1, \dots, \pi_d)=\int u f \det\Diff \pi \;\diff\mathcal{L}^d\quad \text{for $(f, \pi^1, \dots, \pi^1)\in \tLip_b(Q)\times[\tLip(Q)]^d$,}
    \end{equation*}
    and is called \emph{the canonical embedding (for potential currents)}.
\end{definition}

Recall Lemma \ref{L:prel-identify-with-L1}, stating that the space of top-dimensional flat chains supported inside $Q=[0,1]^d$ coincides with $L^1(Q)$ in the sense where we identify an $L^1(Q)$ map $\varphi$ with a top dimensional flat chain $T$ via the formula
\begin{equation*}
    T(\omega \diff x_1 \wedge \dots \wedge \diff x_d)= \int_Q \varphi \omega\;\diff\mathcal{L}^d.
\end{equation*}
If this identification is denoted by, say, $J\colon L^1(Q) \to \mathbb{F}_d(Q)$, then $\Em_*=C_d^{-1}\circ J$, where $C_d^{-1}\colon \mathbb{F}_d(Q) \to \mathbb{P}_d(Q)$ is the inverse to the comparison map.

We also observe that $\Em=(\Em_*)^*\circ I$, where $I$ is given by \eqref{E:I}, which follows by unpacking the definitions (namely Definitions \ref{D:pot-forms}, \ref{D:duality}, \ref{D:Em-star}).
Here we recommend consulting the diagram in Figure \ref{fig:duality-diagram}.

\begin{figure}[ht]
    \centering
    \includegraphics[scale=1.1]{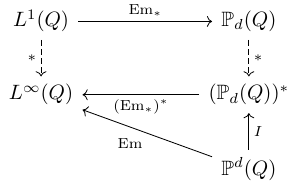}
    \caption{A diagram indicating the relationship between function spaces and potential spaces.
The dashed arrows only indicate direction of duality.
The full arrows correspond to operators.
The bottom part of the diagram commutes.}
    \label{fig:duality-diagram}
\end{figure}

The following theorem trivially implies Theorem \ref{T:intro-char}.

\begin{theorem}\label{T:f-a-characterisation}
The prescribed Jacobian equation has strong Lipschitz non-regularity, if and only if the range of $\Em_*$ is not closed.
In particular, if the prescribed Jacobian equation has strong Lipschitz non-regularity, then Conjecture \ref{Con:Lang-TD} fails.
\end{theorem}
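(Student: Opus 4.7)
The plan is to reformulate both strong Lipschitz non-regularity and Conjecture \ref{Con:Lang-TD} as statements about the operator $\Em_*$ and then to appeal to the open mapping theorem. First I would verify that $\Em_*$ is injective: testing $\Em_*(u)$ with arbitrary $f\in\tLip_b(Q)$ and the coordinate vector field $\pi=\Id$ gives $\int_Q uf\,\diff\mathcal{L}^d$, whose vanishing forces $u=0$ a.e. Next, by Theorem~\ref{T:norm-equal} together with homogeneity of the determinant in each $\pi^j$,
\begin{equation*}
    \lVert\Em_*(u)\rVert_{\mathbb{P}_d}=\sup\Bigl\{\Bigl|\int_Q uf\det\Diff\pi\,\diff\mathcal{L}^d\Bigr|:\max\{\tLip(f),\lVert f\rVert_\infty\}\prod_{j=1}^d\tLip(\pi^j)\leq 1\Bigr\}.
\end{equation*}
Thus strong Lipschitz non-regularity is exactly the existence of a sequence $u_n\in L^1(Q)$ with $\lVert u_n\rVert_{L^1}\geq 1$ and $\lVert\Em_*(u_n)\rVert_{\mathbb{P}_d}\leq 1/n$, i.e.\ the failure of $\Em_*$ to be bounded below.

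Next I would invoke the standard equivalence, valid for any injective bounded linear map between Banach spaces, that the range is closed if and only if the operator is bounded below. The nontrivial direction uses the open mapping theorem: if $\Em_*(L^1(Q))$ is closed, then $\Em_*\colon L^1(Q)\to\Em_*(L^1(Q))$ is a continuous bijection between Banach spaces, hence its inverse is continuous. The converse is immediate, as a bounded-below operator on a complete space has complete (hence closed) image. Combined with the reformulation above, this yields the equivalence between strong Lipschitz non-regularity and non-closedness of the range of $\Em_*$.

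For the final assertion, I would unwind Conjecture~\ref{Con:Lang-TD}: for $T=\Em_*(u)$ one has $\tilde C_d T(\varphi)=T(\varphi,x^1,\dots,x^d)=\int_Q u\varphi\,\diff\mathcal{L}^d$, so the conjecture asserts that every metric $d$-current supported in $Q$ lies in the range of $\Em_*$. Any such current has finite potential norm by Corollary~\ref{C:norm-is-finite} and Theorem~\ref{T:norm-equal}, so this is precisely the statement that $\Em_*\colon L^1(Q)\to\mathbb{P}_d(Q)$ is surjective. Surjectivity onto a Banach space trivially forces the range to be closed, so non-closedness of the range of $\Em_*$ immediately contradicts Conjecture~\ref{Con:Lang-TD}. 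The only real work, which is modest, lies in the identification of $\lVert\Em_*(u)\rVert_{\mathbb{P}_d}$ with the supremum appearing in Definition~\ref{D:Lip-reg}(3) and in confirming that every compactly supported metric $d$-current in $\reals^d$ with support in $Q$ is a potential current; once these identifications are in place the remainder is a direct application of the open mapping theorem.
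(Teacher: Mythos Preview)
Your proposal is correct and follows essentially the same route as the paper: identify $\lVert\Em_*(u)\rVert_{\mathbb{P}_d}$ with the supremum in Definition~\ref{D:Lip-reg}(3), then use the open mapping theorem to equate closedness of the range with boundedness below, and finally observe that the conjecture amounts to surjectivity of $\Em_*$. Your explicit verification of injectivity is a small but welcome addition that the paper leaves implicit.
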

\begin{proof}
    By the open mapping theorem, the operator $\Em_*$ has closed range if and only if it is an isomorphism into $\mathbb{P}_d(Q)$.
    This is the case, if and only if there exists a constant $C$ such that, whenever $u_n \in L^1(Q)$ satisfies \ref{Enum:strong-non-reg1} from Definition \ref{D:Lip-reg}, then
    \begin{equation}\label{E:est-from-below}
        \lVert \Em_*(u_n)\rVert_{\mathbb{P}_d}\geq C.
    \end{equation}
    Indeed, $C$ corresponds to the operator norm of the inverse of $\Em_*$.
We calculate
    \begin{equation*}
    \begin{split}
         \lVert \Em_*(u_n)\rVert_{\mathbb{P}_d}
        &=\sup\left\{\left|\Em_*(u_n)(f,\pi^1, \dots, \pi^d)\right|: \lVert f \rVert_L \prod_{j=1}^d\tLip(\pi^j)\leq 1 \right\}\\
        &=\sup\left\{\left|\int_Q u_n f\det\Diff\pi\;\diff \mathcal{L}^d\right|: \lVert f \rVert_L \prod_{j=1}^d\tLip(\pi^j)\leq 1 \right\}.
    \end{split}
    \end{equation*}
    Therefore, we see that if a constant $C$ satisfying \eqref{E:est-from-below} exists, then the sequence $u_n$ does not satisfy \ref{Enum:strong-non-reg2} of Definition \ref{D:Lip-reg}.
On the other hand, if a sequence $u_n$ can be found such that both \ref{Enum:strong-non-reg1} and \ref{Enum:strong-non-reg2} of Definition \ref{D:Lip-reg} hold, no such constant $C$ can exist.

    For the final statement, we observe that if Conjecture \ref{Con:Lang-TD} holds, then $\Em_*$ is surjective and therefore has closed range.
\end{proof}

\section{A Hahn-Banach separation theorem}\label{S:HB}

The point of this section is to develop a tool from functional analysis, which will allow us to pass from a particularly strong result on the operator $\Em$ (into $L^\infty$) to a result for the operator $\Em_*$ (on $L^1(Q)$), asserting eventually strong Lipschitz non-regularity for the prescribed Jacobian equation.

We use the following notation.
For a normed linear space $X$ and $\lambda\geq 0$, we denote by $\lambda B_X$ the ball in $X$ around $0$ of radius $\lambda$.
We also use the convention $B_X=1 B_X$.
The weak$^*$ topology on a dual Banach space $Y^*$ is denoted by $w^*$ and the symbol $(Y^*, w^*)$ stands for the locally convex topological space $Y^*$ equipped with the weak$^*$ topology induced by the predual $Y$.
The predual $Y$ is always assumed to be a Banach space.
We will identify $Y$ with $\epsilon (Y)\subset Y^{**}$, where $\epsilon$ is the canonical embedding.
That is, any element $y$ of $Y$ is identified with the functional acting on $Y^*$ via
\begin{equation*}
    y=(Y^*\ni \rho \mapsto \rho(y) \in \reals).
\end{equation*}

For the proof of the following version of a Hahn-Banach separation theorem, see the book of Conway \cite[Chapter IV, 3.7 Theorem]{Conway}.

\begin{lemma}\label{L:H-B}
    Suppose $X$ is a topological vector space and $A, B \subset X$ are disjoint convex sets.
If $A$ is open, then there exists $f\in X^*$ such that
    \begin{equation*}
        f(x)< \inf_B f \quad \text{for each $x\in A$.}
    \end{equation*}
\end{lemma}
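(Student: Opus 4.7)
The plan is to reduce the statement to the classical analytic version of the Hahn-Banach theorem (extension of a linear functional dominated by a sublinear gauge) via the Minkowski functional of an auxiliary open convex neighbourhood of the origin. First I would form the set $C = A - B$; since $A$ and $B$ are convex, so is $C$, and because $C$ is the union of translates of the open set $A$ (one for each $b \in B$), the set $C$ is open. Moreover, the disjointness $A \cap B = \emptyset$ translates exactly to $0 \notin C$. To place the origin \emph{inside} the working neighbourhood, pick any $c_0 \in C$ and set $D = c_0 - C$, which is open, convex, contains $0$, and satisfies $c_0 \notin D$.

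Next I would introduce the Minkowski functional
\begin{equation*}
p_D(x) = \inf\{ t > 0 : x/t \in D \},
\end{equation*}
which is well defined, positively homogeneous and subadditive on $X$ (using convexity of $D$ and the fact that $D$ is absorbing as an open neighbourhood of $0$). Continuity of $p_D$ at the origin (and hence everywhere, by subadditivity) is a direct consequence of $D$ being open. The key quantitative input is that $p_D(c_0) \geq 1$, because $c_0 \notin D$.

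Then I would apply the analytic Hahn-Banach theorem. Define $f_0$ on the one-dimensional subspace $\reals c_0$ by $f_0(tc_0) = t$; one checks $f_0 \leq p_D$ on $\reals c_0$ (for $t \geq 0$ by $p_D(tc_0) = tp_D(c_0) \geq t$, and trivially for $t < 0$). Extend $f_0$ to a linear functional $f$ on $X$ with $f \leq p_D$ everywhere. Continuity of $p_D$ yields continuity of $f$, so $f \in X^*$. Since $p_D(x) < 1$ for all $x \in D$, we get $f(x) < 1$ on $D$, i.e.\ $f(c_0 - a + b) < 1$, and using $f(c_0) \geq 1$ we deduce $f(a) - f(b) > 0$ is false, i.e.\ the reverse; retracing the signs carefully, this gives $f(a) < f(b)$ for all $a \in A$, $b \in B$.

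The only remaining point, and the most delicate one, is upgrading $\sup_A f \leq \inf_B f$ to the strict inequality $f(x) < \inf_B f$ for every $x \in A$. The plan here is to use openness of $A$ together with the fact that $f$ is a non-zero continuous linear functional (if $f \equiv 0$ the separation would fail): then $f$ is an open map onto $\reals$, so $f(A)$ is an open convex subset of $\reals$, i.e.\ an open interval. In an open interval the supremum is never attained, which yields $f(x) < \sup_A f \leq \inf_B f$ for all $x \in A$, as required. I expect this last openness-of-the-image step to be the only subtlety worth stating explicitly; the rest is standard Minkowski-functional bookkeeping.
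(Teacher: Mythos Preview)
The paper does not prove this lemma; it simply cites Conway \cite[Chapter IV, 3.7 Theorem]{Conway}. Your argument via the Minkowski functional of a translated difference set, followed by analytic Hahn--Banach on a one-dimensional subspace, is exactly the standard textbook proof (and is essentially what Conway does), so there is no meaningful difference in approach.

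One concrete slip to fix: with $D = c_0 - C$ and $f(c_0) = 1$ (equality, since $f$ extends $f_0$), the inequality $f(c_0 - a + b) < 1$ gives $-f(a) + f(b) < 0$, i.e.\ $f(a) > f(b)$, not $f(a) < f(b)$ as you wrote. So your $f$ separates with the opposite sign; replace $f$ by $-f$ at the end (or set $C = B - A$ from the start) and the rest goes through unchanged. Your final openness-of-$f(A)$ argument for the strict inequality is correct, and $f \not\equiv 0$ is automatic since $f(c_0) = 1$.
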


Firstly, we apply the lemma in case $K$ is an  absolutely convex subset of a dual Banach space.
Recall that $K$ is absolutely convex if $K$ is convex and whenever $x\in K$, then also $-x\in K$.
Given a Banach space $Y$, $u\in Y$ and $\rho \in Y^*$, we use the interchangeable notation
\begin{equation*}
    \langle \rho ,u\rangle = u(\rho) = \rho(u).
\end{equation*}

\begin{proposition}\label{P:sep-predual}
    Suppose $Y$ is a Banach space and $K\subset Y^*$ is absolutely convex.
Suppose $A\subset Y^*$ is weak$^*$ open, convex, $\rho \in A$ an $A\cap K = \emptyset$.
Then there exists $u\in Y$ such that
    \begin{equation*}
        u(\rho)> \sup_{k\in K} |u(k)|.
    \end{equation*}
\end{proposition}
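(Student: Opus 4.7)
The plan is a direct application of the Hahn-Banach separation theorem already recalled as Lemma \ref{L:H-B}, carried out inside the locally convex topological vector space $(Y^*,w^*)$. The key structural fact I will use is the standard identification of the continuous dual of $(Y^*,w^*)$ with $Y$ itself (via the canonical embedding $\epsilon$); this is what transforms an abstract separating functional into an element of the predual.

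First I would verify the hypotheses of Lemma \ref{L:H-B} in the space $X=(Y^*,w^*)$: the set $A$ is weak$^*$ open and convex by assumption, and $K$ is convex (being absolutely convex) and disjoint from $A$. Applying the lemma with $B=K$ yields a weak$^*$ continuous linear functional $f\colon Y^*\to\reals$ with
\begin{equation*}
    f(x)<\inf_{k\in K} f(k)\qquad\text{for every }x\in A,
\end{equation*}
and in particular for $x=\rho$. Since weak$^*$ continuous functionals on $Y^*$ are precisely the evaluations at elements of $Y$, there exists $u_0\in Y$ with $f=\epsilon(u_0)$, i.e.\ $f(\sigma)=\sigma(u_0)$ for $\sigma\in Y^*$. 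Setting $u=-u_0$ then flips the inequality to
\begin{equation*}
    u(\rho)>\sup_{k\in K} u(k).
\end{equation*}

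The only remaining point is the absolute value in the conclusion, and this is exactly where absolute convexity of $K$ is used. Since $K$ is balanced, $-K=K$, and hence
\begin{equation*}
    \sup_{k\in K}|u(k)|=\max\Bigl\{\sup_{k\in K} u(k),\;\sup_{k\in K}(-u(k))\Bigr\}=\sup_{k\in K} u(k),
\end{equation*}
which combined with the previous inequality gives the desired bound $u(\rho)>\sup_{k\in K}|u(k)|$. (If $K=\emptyset$ the claim is vacuous; otherwise absolute convexity forces $0\in K$, so both suprema are nonnegative and the identification above is unambiguous.)

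I do not foresee any real obstacle: the identification of the dual of $(Y^*,w^*)$ with $Y$ is the only nontrivial ingredient, and it is standard. The role of absolute convexity is exactly to upgrade a one-sided separation into a two-sided one, matching the absolute value in the target inequality, and this is what makes the statement tailored to later applications in Section \ref{S:reg}.
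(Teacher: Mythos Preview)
Your proof is correct and follows essentially the same approach as the paper: apply Lemma \ref{L:H-B} in $(Y^*,w^*)$, use the identification $(Y^*,w^*)^*=Y$ to realise the separating functional as an element of $Y$, flip signs, and invoke absolute convexity of $K$ to pass from a one-sided supremum to $\sup_{k\in K}|u(k)|$. The only cosmetic difference is that you spell out the identification via $\epsilon$ and treat the case $K=\emptyset$ separately, but the argument is the same.
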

\begin{proof}
    The space $(Y^*, w^*)$ is a topological vector space such that $(Y^*, w^*)^*=Y$.
We find $f\in Y$ from Lemma \ref{L:H-B} achieving
    \begin{equation*}
        f(\eta)< \inf_{k\in K} f(k) \quad \text{for every $\eta\in A$.}
    \end{equation*}
    In particular,
    \begin{equation*}
         f(\rho)< \inf_{k\in K} f(k).
    \end{equation*}
    By multiplying both sides with $-1$, we obtain
    \begin{equation*}
        -f(\rho)>-\inf_{k\in K}f(k)=\sup_{k\in K} -f(k)= \sup_{k\in K} f(k),
    \end{equation*}
    where the last equality follows from the fact that $K$ is absolutely convex.
The last equality also implies that
    \begin{equation*}
        \sup_{k\in K} f(k) = \sup_{k\in K} |f(k)|,
    \end{equation*}
    which in combination with the above yields for $u=-f$
    \begin{equation*}
        u(\rho)> \sup_{k\in K}|-u(k)|=\sup_{k\in K}|u(k)|.
    \end{equation*}
\end{proof}

\begin{definition}
    Suppose $X$ and $Y$ are Banach spaces and $T\colon X \to Y^*$ is a bounded linear operator.
We say that $T$ is \emph{strongly non-surjective} if for every $n\in \nat$, there exists $\rho_n \in \frac{1}{n}B_{Y^*}\cap T(X)$ and a weak$^*$ open neighbourhood $U_n$ of $\rho_n$ such that
    \begin{equation*}
        U\cap T(B_X)=\emptyset.
    \end{equation*}
\end{definition}

It follows from the open mapping theorem, that whenever $T$ is surjective, it cannot be strongly non-surjective, justifying the naming convention.
In general, the property of strong non-surjectivity is interesting in cases where $T(X)$ is weak$^*$ dense in $Y^*$ in which case it is strictly stronger than non-surjectivity.
Indeed, the identity operator
\begin{equation}\label{E:cont-to-linfty}
    \Id:C([0,1])\to L^\infty([0,1]),
\end{equation}
is non-surjective, has weak$^*$ dense range, but it is not strongly non-surjective, because $B_{C([0,1])}$ is weak$^*$ dense inside $B_{L^\infty([0,1])}$.
One can verify the non-trivial implication between non-surjectivity and strong non-surjectivity if, additionally, $X$ is a dual Banach space and $T$ is weak$^*$ to weak$^*$ continuous.
In more precise terms, suppose $X=Z^*$ for some Banach space $Z$ and the operator $T$ is weak$^*$ to weak$^*$ continuous and onto a weak$^*$ dense set.
Then it can be shown that if $T$ is non-surjective, then it is strongly non-surjective.
The considerations needed for this assertion are standard, although they require some fairly advanced machinery, such as the Kre\v{\i}n-Shmul'yan theorem.
Since this is not something we require in the sequel, we leave the details to the reader.

From these considerations, we see that the notion of strong non-surjectivity is useful, whenever we are not able to verify that the space $X$ is a dual Banach space, which is the situation to which we wish to apply the developed theory.
Indeed, recall the diagram in Figure \ref{fig:duality-diagram}.
The desired application for the theory developed in this section is for $X=\mathbb{P}^d$, $Y^*=L^\infty$ and $T=\Em$.
However, since we are not able to assert that $\mathbb{P}^d$ is a dual Banach space (cf.~Remark \ref{R:duality} and Proposition \ref{P:duality-of-potential}), strong non-surjectivity does not necessarily follow from non-surjectivity.
On the other hand, we can use the following theorem to obtain strong Lipschitz non-regularity from strong non-surjectivity of $\Em$ (see Corollary \ref{C:non-surj0} for the explicit statement).

\begin{theorem}[Predual separation theorem]\label{T:quant-separation}
    Let $X$ and $Y$ be Banach spaces and $T\colon X \to Y^*$ a bounded linear operator, which is additionally strongly non-surjective.
Then there exists a sequence $v_n \in Y$ such that
    \begin{enumerate}
        \item\label{Enum:sep1} $\lVert v_n \rVert_Y\geq 1$,
        \item\label{Enum:sep2} $\sup_{x\in B_X} |\langle T(x), v_n\rangle|\leq \frac{1}{n}$.
    \end{enumerate}
\end{theorem}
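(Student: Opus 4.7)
The plan is to apply Proposition \ref{P:sep-predual} once for each $n$ and then normalize. Strong non-surjectivity hands us, for each $n\in\nat$, an element $\rho_n\in T(X)\cap \tfrac{1}{n}B_{Y^*}$ together with a weak$^*$ open neighbourhood $U_n$ of $\rho_n$ such that $U_n\cap T(B_X)=\emptyset$. Since $(Y^*,w^*)$ is a locally convex topological vector space, I may shrink $U_n$ to assume it is also convex. Set
\begin{equation*}
    K=T(B_X)\subset Y^*.
\end{equation*}
Because $B_X$ is absolutely convex and $T$ is linear, $K$ is absolutely convex. The hypotheses of Proposition \ref{P:sep-predual} are therefore satisfied with $A=U_n$ and $\rho=\rho_n$.

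Applying that proposition yields some $u_n\in Y$ with
\begin{equation*}
    u_n(\rho_n)>\sup_{k\in K}|u_n(k)|=\sup_{x\in B_X}|\langle T(x),u_n\rangle|.
\end{equation*}
Since the right-hand side is nonnegative, we have $u_n(\rho_n)>0$ and in particular $u_n\neq 0$. I would then set $v_n=u_n/\lVert u_n\rVert_Y$, which gives $\lVert v_n\rVert_Y=1$, establishing \ref{Enum:sep1}. For \ref{Enum:sep2}, the duality pairing estimate together with $\lVert\rho_n\rVert_{Y^*}\leq 1/n$ yields
\begin{equation*}
    u_n(\rho_n)\leq \lVert \rho_n\rVert_{Y^*}\lVert u_n\rVert_Y\leq \frac{1}{n}\lVert u_n\rVert_Y,
\end{equation*}
and dividing through by $\lVert u_n\rVert_Y$ gives
\begin{equation*}
    \sup_{x\in B_X}|\langle T(x),v_n\rangle|<\frac{u_n(\rho_n)}{\lVert u_n\rVert_Y}\leq \frac{1}{n}.
\end{equation*}

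There is no real obstacle beyond invoking the correct form of Hahn--Banach: the one subtle point is that the separating functional must live in the predual $Y$ rather than merely in $Y^{**}$, which is precisely what Proposition \ref{P:sep-predual} secures through the identification $(Y^*,w^*)^*=Y$ applied in Lemma \ref{L:H-B}. All other ingredients---local convexity of the weak$^*$ topology (to convexify $U_n$), absolute convexity of $T(B_X)$, and the standard pairing bound---are straightforward.
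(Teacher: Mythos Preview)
Your proof is correct and follows essentially the same approach as the paper: apply Proposition \ref{P:sep-predual} to $K=T(B_X)$ and a convex weak$^*$ open neighbourhood of $\rho_n$, then normalize the resulting $u_n$. The only cosmetic difference is that the paper normalizes by $\tfrac{1}{n\,u_n(\rho_n)}$ (so that $\lVert v_n\rVert_Y\geq |v_n(n\rho_n)|=1$), whereas you normalize by $\tfrac{1}{\lVert u_n\rVert_Y}$ and then use $u_n(\rho_n)\leq \lVert \rho_n\rVert_{Y^*}\lVert u_n\rVert_Y\leq \tfrac{1}{n}\lVert u_n\rVert_Y$; both yield the same conclusion.
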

\begin{proof}
    Let $K=T(B_X)$.
Since $B_X$ is absolutely convex and $T$ is linear, $K$ is also absolutely convex.
By definition of the weak$^*$ topology (the fact that it is locally convex suffices) there exists a convex open set $V_n\subset U_n$ with $\rho\in V_n$.
Let us take $A=V_n$ and apply Proposition \ref{P:sep-predual} obtaining thereby $u_n \in Y$ such that
    \begin{equation}\label{E:sep-func}
        u_n(\rho_n)> \sup_{x\in B_X} |u_n(T(x))|.
    \end{equation}
    Since the right hand side is larger than or equal to $0$, we in particular have $u_n(\rho_n)>0$.
Let
    \begin{equation*}
        v_n=\frac{1}{n u_n(\rho_n)}u_n \in Y.
    \end{equation*}
    Then we have
    \begin{equation*}
        \lVert v_n \rVert_Y \geq |v_n(n\rho_n)|=\frac{1}{n u_n(\rho_n)}u_n(n\rho_n)=1,
    \end{equation*}
    by definition of $v_n$, using also the fact that $\lVert n\rho_n \rVert_{Y^*}\leq 1$.
This asserts item \ref{Enum:sep1}.
For item \ref{Enum:sep2}, we have
    \begin{equation*}
        \sup_{x\in B_X} |\langle T(x), v_n\rangle|=\sup_{x\in B_X} |\langle T(x), \frac{1}{n u_n(\rho_n)}u_n\rangle|
        < \frac{1}{n u_n(\rho_n)}u_n(\rho_n)=\frac{1}{n},
    \end{equation*}
    where in the last inequality we used \eqref{E:sep-func}.
\end{proof}

\begin{remark}
    If $X$ is a normed linear space, $Y$ is a Banach space and $T\colon X \to Y^*$ a bounded linear operator which has weak$^*$ range, then it can be shown that if a sequence $v_n$ as in Theorem \ref{T:quant-separation} exists, then $T$ is strongly non-surjective.
For each $n\in \nat$, $n\geq 3$, one may take
    \begin{equation*}
        U_n = v_n^{-1}(\tfrac{2}{n}, \infty)
    \end{equation*}
    and verify, by \ref{Enum:sep2}, that $U_n\cap T(B_X)=\emptyset$ and, by \ref{Enum:sep1} and weak$^*$ density of the range of $T$ that there is an element $\rho_n \in U_n\cap T(X)\cap \frac{3}{n} B_{Y^*}$.
Since the factor $3$ is irrelevant, strong non-surjectivity is thus proven.

    This allows us to provide an example of an isometric operator with weak$^*$ dense range which is strongly non-surjective (cf.~the previous example \eqref{E:cont-to-linfty}).
Let $Y$ be any Banach space with $\dim Y^{**}\slash Y = \infty$ and let $M\subset Y^*$ be the norm closed subspace constructed by Davis and Lindenstrauss \cite{WL}, which is \emph{total} and \emph{nonnorming} (using terminology of the cited paper).
Then we have the operator
    \begin{equation*}
        \Id \colon M \to Y^*,
    \end{equation*}
    which is an isometry, since we equip $M$ with the inherited norm.
By definition, the fact that $M$ is total means $\Id$ has weak$^*$ dense range.
By definition, the fact that $M$ is nonnorming, means that a sequence $v_n$ as in Theorem \ref{T:quant-separation} can be found.
Thus, by our previous argument, $\Id$ is strongly non-surjective.
\end{remark}

The following corollary justifies the developed theory of potential forms.
The relevant spaces are once again considered over $Q=[0,1]^d$ and this dependence is suppressed in the notation. 

\begin{corollary}\label{C:non-surj0}
    Suppose the operator $\Em\colon \mathbb{P}^d \to L^\infty$ is strongly non-surjective.
Then the prescribed Jacobian equation has strong Lipschitz non-regularity.
\end{corollary}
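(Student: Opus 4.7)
The plan is to apply Theorem \ref{T:quant-separation} directly to the operator $\Em \colon \mathbb{P}^d \to L^\infty$, with the identifications $X = \mathbb{P}^d$, $Y = L^1(Q)$, so that $Y^* = L^\infty(Q)$, and $T = \Em$. Since $\Em$ is assumed strongly non-surjective, Theorem \ref{T:quant-separation} furnishes a sequence $v_n \in L^1(Q)$ satisfying $\lVert v_n \rVert_{L^1} \geq 1$ together with
\begin{equation*}
    \sup_{\omega \in B_{\mathbb{P}^d}} \left| \langle \Em(\omega), v_n \rangle \right| \leq \frac{1}{n},
\end{equation*}
where the pairing $\langle \cdot, \cdot \rangle$ is the $L^\infty$–$L^1$ duality $\langle \rho, v \rangle = \int_Q \rho v \,\diff\mathcal{L}^d$.

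The remaining task is a direct verification that this sequence $v_n$ witnesses strong Lipschitz non-regularity in the sense of Definition \ref{D:Lip-reg}(3). Item (a) is exactly $\lVert v_n \rVert_{L^1} \geq 1$. For item (b), I would unpack the statement by restricting attention to the particularly simple potential forms built from a single pair: given any $(f, \pi^1, \dots, \pi^d) \in \tLip_b(Q) \times [\tLip(Q)]^d$ with $\lVert f \rVert_L \prod_{j=1}^d \tLip(\pi^j) \leq 1$, the singleton formal Lipschitz sum $(f, \pi)$ represents a potential form $\omega_{f,\pi} \in \mathbb{P}^d$ with $\lVert \omega_{f,\pi} \rVert_{\mathbb{P}^d} \leq 1$, so $\omega_{f,\pi} \in B_{\mathbb{P}^d}$. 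By Definition \ref{D:pot-forms} of $\Em$,
\begin{equation*}
    \langle \Em(\omega_{f,\pi}), v_n \rangle = \int_Q v_n f \det\Diff\pi \,\diff\mathcal{L}^d,
\end{equation*}
and applying the bound above to $\omega_{f,\pi}$ yields exactly the inequality required by item \ref{Enum:strong-non-reg2} of Definition \ref{D:Lip-reg}.

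There is no real obstacle here: the entire content is already packed into Theorem \ref{T:quant-separation} together with the definitional translation between potential forms and pairs $(f, \pi)$. The only thing to be careful about is matching the constants in the definition of strong Lipschitz non-regularity, namely that the supremum over admissible test pairs $(f,\pi)$ with $\max\{\tLip(f), \lVert f \rVert_\infty\} \prod_j \tLip(\pi^j) \leq 1$ is bounded by the supremum over the larger collection $B_{\mathbb{P}^d}$ appearing in Theorem \ref{T:quant-separation}, which is immediate since single-pair potential forms have $\mathbb{P}^d$-norm at most $\lVert f \rVert_L \prod_j \tLip(\pi^j)$.
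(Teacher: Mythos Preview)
Your proof is correct and follows essentially the same approach as the paper: apply Theorem \ref{T:quant-separation} with $X=\mathbb{P}^d$, $Y=L^1(Q)$, $T=\Em$, then verify Definition \ref{D:Lip-reg}(3) by observing that single-pair potential forms with $\lVert f\rVert_L\prod_j\tLip(\pi^j)\leq 1$ lie in $B_{\mathbb{P}^d}$. The paper writes the intermediate inequality through all formal Lipschitz sums explicitly, but the logic is identical.
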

\begin{proof}
    Applying the Predual separation Theorem \ref{T:quant-separation} to $T=\Em$, $X=\mathbb{P}^d$ and $Y=L^1$, we may obtain a sequence of maps $v_n\in L^1$ such that
    \begin{enumerate}
        \item $\lVert v_n \rVert_{L^1}\geq 1$,
        \item $\sup_{\omega\in B_{\mathbb{P}^d}}|\langle \Em(\omega), v_n \rangle|\leq \frac{1}{n}$.
    \end{enumerate}
    Using the second item, we have
    \begin{equation*}
    \begin{split}
        &\sup\left\{\left|\int_Q u_n f\det\Diff\pi\diff \mathcal{L}^d\right|: \lVert f \rVert_L \prod_{j=1}^n\tLip(\pi^j)\leq 1 \right\}\\
        &\leq
        \sup\left\{\left|\int_Q u_n \sum_if_i\det\Diff\pi_i\diff \mathcal{L}^d\right|: s(\sum_i (f_i,\pi_i))\leq 1 \right\} \\
        &=\sup_{\omega\in B_{\mathbb{P}^d}}|\langle \Em(\omega), v_n \rangle|\leq \frac{1}{n},
    \end{split}
    \end{equation*}
    and so the sequence $v_n$ satisfies the required properties from Definition \ref{D:Lip-reg}.
\end{proof}

In light of Corollary \ref{C:non-surj0} and Theorem \ref{T:f-a-characterisation}, our next goal will be to assert that $\Em$ is strongly non-surjective.

\section{Strong Lipschitz non-regularity of the prescribed Jacobian equation}\label{S:reg}
In this section, we show that if $d\geq 2$, then $\Em$ is strongly non-surjective and that therefore the prescribed Jacobian equation has strong Lipschitz non-regularity.

We recall the convention that the differential $\diff \mathcal{L}^d$ is usually omitted from integration notation and so we write for example
\begin{equation*}
    \int_Q f\det\Diff\pi=\int_Q f\det\Diff\pi\diff \mathcal{L}^d.
\end{equation*}
Throughout this section, the letter $Q$ will generally be used to denote a generic cube in $\reals^d$ and not refer to the unit cube $[0,1]^d$.

\subsection{Average determinants}\label{SS:AD}
Given a set $E$ and a map $f\colon E  \to \reals$, we use the notation
\begin{equation*}
    \lVert f \rVert_{\ell^\infty(E)}=\sup_{x\in E} |f(x)|.
\end{equation*}
We shall also use $\sigma$ to denote the surface measure of the boundary of the unit cube $[0,1]^d$.

\begin{lemma}\label{L:Average-det}
    For every $d\in\nat$, $d\geq 2$, there exists a constant $c_d\in(0,\infty)$ such that the following holds.
For every cube $Q\subset \reals^d$ of side length $r>0$, and every pair of Lipschitz maps $\pi,\kappa\colon Q \to \reals^d$, it holds that
    \begin{equation}\label{E:principal-Lip-estimate}
        |\int_Q \det \Diff \pi
        - \int_Q \det\Diff \kappa|\leq c_d L^{d-1} r^{d-1}\lVert \pi-\kappa \rVert_{\ell^\infty(\partial Q)},
    \end{equation}
    where $L$ is the maximum of the Lipschitz constants of the functions $\pi^1, \dots, \pi^d, \kappa^1, \dots, \kappa^d$.
The constant $c_d$ may be taken as the product of the number $d$ and the number of faces a $d$ dimensional cube has.
\end{lemma}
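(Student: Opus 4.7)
The plan is to construct an explicit $(d-1)$-form on $Q$ whose exterior derivative equals $\det\Diff\pi - \det\Diff\kappa$ (viewed as a top-dimensional form), and then invoke Stokes' theorem to reduce the estimate to controlling data on $\partial Q$ alone. The crucial design feature of the primitive will be that the difference $\pi - \kappa$ appears \emph{undifferentiated} in it, so only the pointwise values of $\pi - \kappa$ on $\partial Q$ enter the final bound.

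I would first establish the estimate assuming $\pi$ and $\kappa$ are smooth. Consider the linear homotopy $\pi_t = (1-t)\kappa + t\pi$, whose components are $L$-Lipschitz for each $t \in [0,1]$. Using multilinearity of the wedge product together with $\diff\diff\pi_t^i = 0$, one verifies the pointwise identity
\begin{equation*}
\frac{d}{dt}\bigl(\diff\pi_t^1 \wedge \dots \wedge \diff\pi_t^d\bigr) = \sum_{j=1}^d \diff\pi_t^1 \wedge \dots \wedge \diff(\pi^j - \kappa^j) \wedge \dots \wedge \diff\pi_t^d = \diff\eta_t,
\end{equation*}
where
\begin{equation*}
\eta_t = \sum_{j=1}^d (-1)^{j-1}(\pi^j - \kappa^j)\,\diff\pi_t^1 \wedge \dots \widehat{\diff\pi_t^j} \dots \wedge \diff\pi_t^d.
\end{equation*}
Integrating in $t$ from $0$ to $1$ and applying Stokes on $Q$ then yields
\begin{equation*}
\int_Q \det\Diff\pi - \int_Q \det\Diff\kappa = \int_0^1 \int_{\partial Q} \eta_t \;\diff t.
\end{equation*}

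Next I would estimate the boundary integrand pointwise. By the pairing bound \eqref{E:est-of-product} applied summand by summand, combined with $\tLip(\pi_t^i) \leq L$ for every $i$ and $t$, for $\sigma$-a.e.~point of $\partial Q$
\begin{equation*}
|\langle \eta_t, \operatorname{Tan}_{\partial Q}\rangle| \leq \sum_{j=1}^d |\pi^j - \kappa^j|\,L^{d-1} \leq d\,\|\pi - \kappa\|_{\ell^\infty(\partial Q)}\,L^{d-1}.
\end{equation*}
Since the boundary of a $d$-cube of side $r$ consists of $2d$ faces of $(d-1)$-volume $r^{d-1}$, integrating the pointwise bound against $\sigma$ and then in $t$ delivers \eqref{E:principal-Lip-estimate} with $c_d = d \cdot 2d$, which is precisely the product of $d$ and the number of faces of a $d$-cube claimed in the statement.

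Finally, to extend from smooth to general Lipschitz maps, I would extend each of $\pi^j, \kappa^j$ to $\reals^d$ componentwise preserving the Lipschitz constant via the McShane extension, then mollify to obtain smooth approximants $\pi_\epsilon, \kappa_\epsilon$ with componentwise Lipschitz constants at most $L$ and converging uniformly to $\pi, \kappa$ on a neighbourhood of $Q$. The right-hand side of \eqref{E:principal-Lip-estimate} is manifestly continuous under this convergence, while the left-hand side is continuous by Lemma \ref{L:j-c-for-dets}, so the estimate passes to the limit. The only conceptual step is identifying the primitive $\eta_t$ that keeps $\pi^j - \kappa^j$ free of derivatives; after that, the argument consists of a pointwise comass bound and a routine mollification, neither of which poses real difficulty.
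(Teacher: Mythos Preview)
Your proof is correct and follows essentially the same strategy as the paper: reduce to smooth maps via Lemma~\ref{L:j-c-for-dets}, exhibit a $(d-1)$-form primitive in which $\pi^j-\kappa^j$ appears undifferentiated, apply Stokes, and bound the boundary integrand by \eqref{E:est-of-product} to obtain the constant $d\cdot 2d$. The only cosmetic difference is that the paper uses a discrete telescoping identity (replacing $\pi^i$ by $\kappa^i$ one coordinate at a time, yielding the forms $\omega_i(\kappa,\pi)=\diff\kappa^1\wedge\dots\wedge\diff\kappa^{i-1}\wedge\diff\pi^{i+1}\wedge\dots\wedge\diff\pi^d$) whereas you use the continuous homotopy $\pi_t$; both are standard expressions of multilinearity and lead to the same boundary estimate.
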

\begin{proof}
    Firstly, by the joint continuity property of determinants (Lemma \ref{L:j-c-for-dets}), it is sufficient to prove the statement for maps $\pi$ and $\kappa$ which  are defined and smooth on $\reals^d$.
    Since determinant is a multilinear map, we have the formula
    \begin{equation*}
    \begin{split}
        &\det D (\pi^1,\dots, \pi^d) - \det\Diff(\kappa^1, \dots, \kappa^d)
        =\det\Diff(\pi^1-\kappa^1, \pi^2,\dots, \pi^d)\\
        &+\det\Diff(\kappa^1, \pi^2-\kappa^2, \pi^3, \dots, \pi^d)
        +\dots+\det\Diff(\kappa^1,\dots,\kappa^{i-1}, \pi^i-\kappa^i, \pi^{i+1}, \dots, \pi^d)\\
        &+\dots+\det\Diff(\kappa^1, \dots, \kappa^{d-1}, \pi^d-\kappa^d).
    \end{split}
    \end{equation*}
    Since determinant is also anti-symmetric, the above formula implies that also
    \begin{equation*}
        \begin{split}
        &\det D (\pi^1,\dots, \pi^d) - \det\Diff(\kappa^1, \dots, \kappa^d)
        =\det\Diff(\pi^1-\kappa^1, \pi^2,\dots, \pi^d)\\
        &-\det\Diff(\pi^2-\kappa^2, \kappa^1, \pi^3, \dots, \pi^d)
        +\dots+(-1)^{i-1}\det\Diff(\pi^i-\kappa^i,\kappa^1,\dots,\kappa^{i-1}, \pi^{i+1}, \dots, \pi^d)\\
        &+\dots+(-1)^{d-1}\det\Diff(\pi^d-\kappa^d, \kappa^1, \dots, \kappa^{d-1}).
    \end{split}
    \end{equation*}
    Therefore, using the Stokes theorem \eqref{E:stokes} for each of the summands yields
    \begin{equation*}
        \begin{split}
            \int_Q \det \Diff \pi\ - \int_Q \det\Diff \kappa\
            =\sum_{i=1}^d\int_{\partial Q} (\pi^i-\kappa^i)\langle \omega_i(\kappa,\pi), \operatorname{Tan}_{\partial Q}\rangle \diff \sigma,
        \end{split}
    \end{equation*}
    where
    \begin{equation*}
        \omega_i(\kappa,\pi)=(-1)^{i-1}\diff\kappa^1\wedge\dots\wedge\diff\kappa^{i-1}\wedge\diff\pi^{i+1}\wedge\dots\wedge\diff\pi^d.
    \end{equation*}
    In particular, by triangle inequality,
    \begin{equation}\label{E:est-with-omega}
        |\int_Q \det \Diff \pi
        - \int_Q \det\Diff \kappa\
        |\leq \sum_i|\int_{\partial Q} (\pi^i-\kappa^i)\langle \omega_i(\kappa,\pi), \operatorname{Tan}_{\partial Q}\rangle \diff \sigma|
    \end{equation}
    For each $i\in\{1,\dots, d\}$, by \eqref{E:est-of-product}, we have
    \begin{equation*}
        |\langle \omega_i(\kappa,\pi), \operatorname{Tan}_{\partial Q}\rangle|\leq L^{d-1},
    \end{equation*}
    so that \eqref{E:est-with-omega} implies
    \begin{equation*}
        |\int_Q \det \Diff \pi - \int_Q \det\Diff \kappa|\leq \sum_i \lVert \pi^i - \kappa^i\rVert_{\ell^\infty(\partial Q)}L^{d-1}\sigma(\partial Q).
    \end{equation*}
    Let us denote by $\mathcal{F}(d)$ the number of faces a $d$-dimensional cube has.
Then $\sigma(Q)=r^{d-1}\mathcal{F}(d)$ so that in particular
    \begin{equation*}
         |\int_Q \det \Diff \pi - \int_Q \det\Diff \kappa|\leq d \mathcal{F}(d) r^{d-1} L^{d-1}\lVert \pi - \kappa\rVert_{\ell^\infty(\partial Q)},
    \end{equation*}
    which is \eqref{E:principal-Lip-estimate}.
\end{proof}

\begin{lemma}\label{L:est-on-squares-with-coef}
    Given a cube $Q\subset \reals^d$ and a function $f\colon Q \to \reals$, we let $f_Q$ be the value of $f$ at the center of the cube $Q$.
For any cube $Q\subset \reals^d$ of side length $r>0$ and any Lipschitz maps $f,g\colon Q \to \reals$ and $\pi,\kappa\colon Q \to \reals^d$ it holds that
    \begin{equation}\label{E:principal-estimate-coef}
    \begin{split}
        \left| \int_Q f \det \Diff \pi - \int_Q g \det\Diff\kappa \right|
        &\leq r^{d+1} (\tLip(f)+\tLip(g))\frac{1}{2}\sqrt{d}L^d\\
        &+r^d |f_Q-g_Q| L^d + r^{d-1} |f_Q| c_d L^{d-1} \lVert\pi-\kappa \rVert_{\ell^\infty(\partial Q)},
    \end{split}
    \end{equation}
    where $L$ is the maximum of the Lipschitz constants of the functions $\pi^1, \dots, \pi^d, \kappa^1, \dots, \kappa^d$.
\end{lemma}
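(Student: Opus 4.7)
The plan is to prove this by three successive triangle-inequality replacements, interpolating between the integrands $f\det\Diff\pi$ and $g\det\Diff\kappa$ via the intermediate quantities $f_Q\det\Diff\pi$, $f_Q\det\Diff\kappa$, and $g_Q\det\Diff\kappa$. Each of the three resulting error terms will correspond exactly to one of the three summands on the right-hand side of \eqref{E:principal-estimate-coef}.

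First I would write
\begin{equation*}
\int_Q f\det\Diff\pi - \int_Q g\det\Diff\kappa = \int_Q(f-f_Q)\det\Diff\pi + \int_Q(g_Q-g)\det\Diff\kappa + f_Q\int_Q\det\Diff\pi - g_Q\int_Q\det\Diff\kappa,
\end{equation*}
and apply the triangle inequality. The first two terms handle the variation of the coefficient functions: since $Q$ has side length $r$, its diameter is $r\sqrt d$, so any point of $Q$ lies within distance $\tfrac12 r\sqrt d$ of the center, giving the pointwise bounds $|f-f_Q|\leq \tfrac12\sqrt d\, r\,\tLip(f)$ and $|g-g_Q|\leq \tfrac12\sqrt d\, r\,\tLip(g)$ on $Q$. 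Combined with the standard estimate \eqref{E:det-linfty-est}, namely $\lVert\det\Diff\pi\rVert_\infty,\lVert\det\Diff\kappa\rVert_\infty\leq L^d$, and with $\mathcal L^d(Q)=r^d$, these two contributions together are at most $r^{d+1}(\tLip(f)+\tLip(g))\tfrac12\sqrt d\, L^d$, matching the first summand.

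Next I would handle the remaining difference $f_Q\int_Q\det\Diff\pi - g_Q\int_Q\det\Diff\kappa$ by inserting and subtracting $f_Q\int_Q\det\Diff\kappa$:
\begin{equation*}
\left|f_Q\int_Q\det\Diff\pi - g_Q\int_Q\det\Diff\kappa\right|\leq |f_Q|\left|\int_Q\det\Diff\pi - \int_Q\det\Diff\kappa\right| + |f_Q-g_Q|\left|\int_Q\det\Diff\kappa\right|.
\end{equation*}
The first piece is bounded directly by Lemma \ref{L:Average-det} applied to the Lipschitz maps $\pi,\kappa$ on the cube $Q$, producing $|f_Q|\,c_d L^{d-1} r^{d-1}\lVert\pi-\kappa\rVert_{\ell^\infty(\partial Q)}$, which is exactly the third summand. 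The second piece is bounded by $|f_Q-g_Q|\,L^d r^d$ using \eqref{E:det-linfty-est} once more and $\mathcal L^d(Q)=r^d$, giving the middle summand.

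There is no real obstacle here; the lemma is a bookkeeping refinement of Lemma \ref{L:Average-det} that separates out how the coefficient $f$ and the replacement $g$ enter the estimate, with the key structural observation being that only the central value $f_Q$ (and not $\tLip(f)$) appears in front of the boundary term $\lVert\pi-\kappa\rVert_{\ell^\infty(\partial Q)}$. This is precisely the point that will matter later when fast oscillations of $\rho$ on small cubes cannot be cancelled by Lipschitz coefficients, as previewed in item \ref{Enum:coefficients} of the outline in Subsection \ref{SS:intro-reg}.
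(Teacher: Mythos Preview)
Your proposal is correct and is essentially the same as the paper's proof: the paper also interpolates via the four intermediate quantities $f_Q\det\Diff\pi$, $f_Q\det\Diff\kappa$, $g_Q\det\Diff\kappa$, bounding the first and last pieces by the Lipschitz oscillation of $f,g$ times $L^d r^d$, the second by Lemma~\ref{L:Average-det}, and the third by $|f_Q-g_Q|L^d r^d$. The only cosmetic difference is that you group the decomposition as three terms and then split the last one, whereas the paper writes out all four from the start.
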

Before we continue with the proof of Lemma \ref{L:est-on-squares-with-coef}, since the estimate is quite technical, we first point out why it will later be useful.
There are four summands on the right hand side of \eqref{E:principal-estimate-coef}.
At the end of our argument, we achieve a contradiciton, by showing that each of the summands is much smaller than the quantity on the left hand side.
This will be achieved very easily for each of the summands, except the last one, where most work needs to be carried out.
The first two summands are dealt with easily simply by taking a small enough cube, due to the factor of $r^{d+1}$ (while the left hand side only scales as $r^d$).
Furthermore, in our setup, it will be easy to achieve $|f_Q-g_Q|\lesssim r$, so that the third summand will also be very small for small enough $r>0$.
To deal with the final summand, we will need to cleverly find particular cubes $Q$ on which the estimate $\lVert \pi - \kappa \rVert_{\ell^\infty}\lesssim \varepsilon r$ holds for small $\varepsilon$.
This means that the fourth summand, on these cubes, is asymptotically (for small $r>0$) equivalent to $\varepsilon r^d$, which leads to a contradiction for small enough $\varepsilon>0$.

Since in reality we are interested in infinite sums of functions of the form $\sum_i f_i \det\Diff \pi_i$, the particular powers of the number $L$ in \eqref{E:principal-estimate-coef} are also very important and it turns out that powers appearing in the estimate \eqref{E:principal-estimate-coef} are precisely what is needed to carry out the argument.
How exactly this works out is partially illuminated by Lemmma \ref{L:est-on-cubes-sums}.

\begin{proof}[Proof of Lemma \ref{L:est-on-squares-with-coef}]
    We write
    \begin{equation}\label{E:4-fold-est}
        \begin{split}
        &\left| \int_Q f \det \Diff \pi - \int_Q g \det\Diff\kappa \right|
        \leq \left| \int_Q f \det \Diff \pi - \int_Q f_Q \det\Diff\pi \right|\\
        &+ \left| \int_Q f_Q \det \Diff \pi - \int_Q f_Q \det\Diff\kappa \right|
        + \left| \int_Q f_Q \det \Diff \kappa - \int_Q g_Q \det\Diff\kappa \right|\\
        &+ \left| \int_Q g_Q \det \Diff \kappa - \int_Q g \det\Diff\kappa \right|.
    \end{split}
    \end{equation}
    Since $|\det \Diff \pi|\leq L^d$ a.e.~in $Q$ and $|f-f_Q|\leq r\tLip(f)\frac{1}{2}\sqrt{d}$, we have
    \begin{equation}\label{E:summand-1}
        \left| \int_Q f \det \Diff \pi - \int_Q f_Q \det\Diff\pi \right|\leq r^{d+1}\tLip(f)\frac{1}{2}\sqrt{d}L^d.
    \end{equation}
    Analogously, we also obtain
    \begin{equation}\label{E:summand-4}
        \left| \int_Q g_Q \det \Diff \kappa - \int_Q g \det\Diff\kappa \right|\leq r^{d+1}\tLip(g)\frac{1}{2}\sqrt{d}L^d.
    \end{equation}
    Using Lemma \ref{L:Average-det},
    \begin{equation}\label{E:summand-2}
        \begin{split}
            \left| \int_Q f_Q \det \Diff \pi - \int_Q f_Q 
            \det\Diff\kappa \right|&= |f_Q| \left| \int_Q 
            \det \Diff \pi - \int_Q \det\Diff\kappa \right|\\ 
            &\leq r^{d-1}|f_Q|c_d L^{d-1}\lVert \pi - 
            \kappa \rVert_{\ell^\infty(\partial Q)}.
        \end{split}
    \end{equation}
    Using once again the fact that $|\det \Diff \kappa|\leq L^d$, we obtain
    \begin{equation}\label{E:summand-3}
        \left| \int_Q f_Q \det \Diff \kappa - \int_Q g_Q \det\Diff\kappa \right| \leq \int_Q |f_Q- g_Q||\det\Diff\kappa|\leq r^d |f_Q- g_Q| L^d.
    \end{equation}
    Combining \eqref{E:4-fold-est} with the estimates \eqref{E:summand-1}, \eqref{E:summand-2}, \eqref{E:summand-3} and \eqref{E:summand-4} yields the required inequality.
\end{proof}

For the following lemma, recall the Definition \ref{D:fls} of formal Lipschitz sums.

\begin{lemma}\label{L:est-on-cubes-sums}
    Suppose $Q\subset [0,1]^d$ is a cube of side length $r>0$ and $\sum_i(f_i,\pi_i)$ and $\sum_i(g_i, \kappa_i)$ are formal Lipschitz sums.
Suppose further that $L_i$ stands for the maximum of the Lipschitz constants of the functions $\pi_i^1, \dots, \pi_i^d, \kappa^1_i,\dots,\kappa^d_i$ for $i\in\nat$.
Then
    \begin{equation}\label{E:principal-est-coef-sum}
        \begin{split}
            |\int_Q \sum_i f_i \det\Diff\pi_i\ &- \int_Q \sum_i g_i\det\Diff \kappa_i|\\
            &\leq r^{d+1}\frac{1}{2}\sqrt{d} \sum_i(\tLip(f_i)+\tLip(g_i))L^d_i \\
            &+r^d\sum_i |(f_i)_Q-(g_i)_Q| L_i^d\\
            &+r^{d-1}c_d\sup_i\{|(f_i)_Q|\}\sqrt{\sum_i L_i^d}\sqrt{\sum_i L_i^{d-2}\lVert\pi_i-\kappa_i\rVert_{\ell^\infty(\partial Q)}^2}.
        \end{split}
    \end{equation}
\end{lemma}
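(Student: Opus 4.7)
The plan is to reduce the infinite-sum inequality to the single-term estimate already established in Lemma \ref{L:est-on-squares-with-coef}, then patch the terms together with the triangle inequality and one application of the Cauchy--Schwarz inequality to convert a sum of products into a product of square-roots of sums.

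\textbf{Step 1: Interchange sum and integral.} Because $\sum_i (f_i,\pi_i)$ and $\sum_i(g_i,\kappa_i)$ are formal Lipschitz sums, the series $\sum_i f_i\det\Diff\pi_i$ and $\sum_i g_i\det\Diff\kappa_i$ converge absolutely in $L^\infty(Q)$ (see the discussion preceding Definition \ref{D:pot-forms}). Hence termwise integration is legitimate and
\begin{equation*}
\bigl|\int_Q \sum_i f_i\det\Diff\pi_i - \int_Q \sum_i g_i\det\Diff\kappa_i\bigr| \leq \sum_i \bigl|\int_Q f_i\det\Diff\pi_i - \int_Q g_i\det\Diff\kappa_i\bigr|.
\end{equation*}

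\textbf{Step 2: Apply the single-pair estimate.} For each index $i$, Lemma \ref{L:est-on-squares-with-coef} (applied to the pair $f_i,\pi_i$ versus $g_i,\kappa_i$, using $L_i$ as the Lipschitz bound) supplies
\begin{equation*}
\bigl|\int_Q f_i\det\Diff\pi_i - \int_Q g_i\det\Diff\kappa_i\bigr| \leq r^{d+1}(\tLip(f_i)+\tLip(g_i))\tfrac{1}{2}\sqrt{d}\,L_i^d + r^d|(f_i)_Q-(g_i)_Q|L_i^d + r^{d-1}|(f_i)_Q|c_d L_i^{d-1}\lVert\pi_i-\kappa_i\rVert_{\ell^\infty(\partial Q)}.
\end{equation*}
Summing in $i$, the first two groups of summands produce precisely the first two lines of the right-hand side of \eqref{E:principal-est-coef-sum}, so only the third group remains to be processed.

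\textbf{Step 3: Cauchy--Schwarz on the boundary term.} First, replace each $|(f_i)_Q|$ by $\sup_i |(f_i)_Q|$ and pull the supremum out of the sum. What remains is
\begin{equation*}
r^{d-1}c_d\,\sup_i|(f_i)_Q|\,\sum_i L_i^{d-1}\lVert\pi_i-\kappa_i\rVert_{\ell^\infty(\partial Q)}.
\end{equation*}
Now factorise $L_i^{d-1} = L_i^{d/2}\cdot L_i^{(d-2)/2}$ and apply the Cauchy--Schwarz inequality to the pair of sequences $\{L_i^{d/2}\}$ and $\{L_i^{(d-2)/2}\lVert\pi_i-\kappa_i\rVert_{\ell^\infty(\partial Q)}\}$:
\begin{equation*}
\sum_i L_i^{d-1}\lVert\pi_i-\kappa_i\rVert_{\ell^\infty(\partial Q)} \leq \sqrt{\sum_i L_i^d}\,\sqrt{\sum_i L_i^{d-2}\lVert\pi_i-\kappa_i\rVert_{\ell^\infty(\partial Q)}^2}.
\end{equation*}
Combining these three steps yields \eqref{E:principal-est-coef-sum}.

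There is essentially no obstacle beyond bookkeeping: the single-pair estimate does all the analytic work, and the only clever choice is the splitting of exponents in the Cauchy--Schwarz step, which is dictated by the shape of the desired conclusion. (If the right-hand side of \eqref{E:principal-est-coef-sum} happens to be infinite for a given pair of formal Lipschitz sums, the inequality is vacuous, so no further convergence hypothesis on the sequences $\{L_i\}$ needs to be imposed.)
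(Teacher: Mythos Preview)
Your proof is correct and follows essentially the same approach as the paper: interchange sum and integral, apply Lemma~\ref{L:est-on-squares-with-coef} termwise, then handle the boundary term by pulling out $\sup_i|(f_i)_Q|$ and applying Cauchy--Schwarz with the same splitting $L_i^{d-1}=L_i^{d/2}\cdot L_i^{(d-2)/2}$. Your closing remark about the right-hand side possibly being infinite also matches the paper's observation immediately following the proof.
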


\begin{proof}
    Since integration over $Q$ is a continuous linear functional and $\sum_i(f_i,\pi_i)$ is a formal Lipschitz sum, it follows that
    \begin{equation*}
        \int_Q \sum_i f_i \det\Diff\pi_i =\sum_i\int_Q f_i \det\Diff\pi_i.
    \end{equation*}
    and similarly for the $g_i$'s and $\kappa_i$'s.
Therefore, from the triangle inequality and Lemma \ref{L:est-on-squares-with-coef} we have
    \begin{equation}\label{E:midway-est-for-coef-sums}
        \begin{split}
            &|\int_Q \sum_i f_i \det\Diff\pi_i  - \int_Q \sum_i g_i\det\Diff \kappa_i |
            \leq \sum_i |\int_Q f_i \det\Diff\pi_i  - \int_Q  g_i\det\Diff \kappa_i |\\
            &\leq r^{d+1}\frac{1}{2}\sqrt{d} \sum_i(\tLip(f_i)+\tLip(g_i))L^d_i
            +r^d\sum_i |(f_i)_Q-(g_i)_Q| L_i^d\\
            &+r^{d-1}c_d \sum_i |(f_i)_Q| L_i^{d-1}\lVert \pi_i - \kappa_i \rVert_{\ell^\infty(\partial Q)}.
        \end{split}
    \end{equation}
    Writing $L^{d-1}$ as $L^{\frac{d}{2}}L^{\frac{d}{2}-1}$ and using Holder's inequality yields
    \begin{equation*}
    \begin{split}
        \sum_i |(f_i)_Q| L_i^{d-1}\lVert \pi_i - \kappa_i \rVert_{\ell^\infty(\partial Q)}
        &\leq \sup_i\{|(f_i)_Q|\} \sum_i L^{\frac{d}{2}}(L^{\frac{d}{2}-1}\lVert \pi_i - \kappa_i \rVert_{\ell^\infty(\partial Q)})\\
        &\leq \sup_i\{|(f_i)_Q|\}\sqrt{\sum_i L_i^d}\sqrt{\sum_i L_i^{d-2}\lVert\pi_i-\kappa_i\rVert_{\ell^\infty(\partial Q)}^2}.
    \end{split}
    \end{equation*}
    The last estimate combined with \eqref{E:midway-est-for-coef-sums} yields the desired inequality \eqref{E:principal-est-coef-sum}.
\end{proof}

Note that even if it assumed that $\sum_i(f_i, \pi_i)$ and $\sum_i(g_i, \kappa_i)$ are formal Lipschitz sums, it is still possible that the right hand side of the estimate \eqref{E:principal-est-coef-sum} is infinite.
This may for example happen if $L_i$ is much larger than
\begin{equation*}
    \left(\prod_{j=1}^d \tLip(\pi_i^j)\right)^{\frac{1}{d}}.
\end{equation*}
To avoid this issue (which is really only of technical nature) we shall consider sums where $L_i$ is the correct number to use.

\begin{definition}\label{D:regular}
    A formal Lipschitz sum $\sum_i(f_i,\kappa_i)$ is called \emph{regular} if
    \begin{enumerate}
        \item for each $i$ either $f_i=\pi_i=0$ or
    \begin{equation*}
        \max\{\tLip(f_i), \rVert f_i \rVert_\infty\}=1
    \end{equation*}
    and
    \begin{equation*}
        \tLip(\pi_i^j)= \tLip(\pi_i^k) \quad \text{for all $j,k\in\{1,\dots, d\}$,}
    \end{equation*}
        \item for each $i\in\nat$, $\pi_i(0)=0$.
    \end{enumerate}
    If the regular formal Lipschitz sum with which we are working is clear from context, the symbol $L_i$ stands for the value
    \begin{equation*}
        \tLip(\pi_i^j)\quad \text{where $j\in\{1,\dots ,d\}$ is arbitrary.}
    \end{equation*}
\end{definition}

\begin{proposition}\label{P:regular}
    For every formal Lipschitz sum $\sum_i(g_i, \kappa_i)$, there exists a regular formal Lipschitz sum $\sum_i(f_i, \pi_i) \sim \sum_i (g_i, \kappa_i)$.
If $\sum(f_i,\pi_i)$ is regular, then
    \begin{equation*}
        s(\sum_i(f_i,\pi_i))= \sum_i L_i^d.
    \end{equation*}
\end{proposition}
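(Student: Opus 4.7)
The plan is to rescale each term $(g_i,\kappa_i)$ separately, using multilinearity of metric currents together with translation invariance in the vector-field slots (a consequence of locality), in order to arrange that $\lVert f_i\rVert_L=1$, the Lipschitz constants of the coordinates of $\pi_i$ are all equal to a common value $L_i$, and $\pi_i(0)=0$.

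First I would dispose of two degenerate cases. If $g_i=0$, or if some coordinate $\kappa_i^j$ has $\tLip(\kappa_i^j)=0$, I replace the term by $(0,0)$. These replacements preserve the equivalence class $[\sum_i(g_i,\kappa_i)]$: the first case is trivial, and the second follows from locality, since a constant $\kappa_i^j$ kills every current. For a remaining (non-trivial) term, write $M=\lVert g\rVert_L>0$, $L_j=\tLip(\kappa^j)>0$ and set
\[
L^{\ast}=\Bigl(M\prod_{j=1}^d L_j\Bigr)^{1/d},\qquad f=g/M,\qquad \pi^j=\frac{L^{\ast}}{L_j}\bigl(\kappa^j-\kappa^j(0)\bigr).
\]
By construction $\lVert f\rVert_L=1$, $\tLip(\pi^j)=L^{\ast}$ for each $j$, and $\pi^j(0)=0$, so the new term satisfies the regularity conditions of Definition \ref{D:regular}.

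The key verification is that the new term is equivalent to the old one, not merely weakly equivalent. For an arbitrary $T\in\mathcal{D}_d$, locality applied to a (globally) constant argument $c$ gives $T(f,\pi^1,\dots,c,\dots,\pi^d)=0$; combined with multilinearity, this shows that adding a constant to any $\pi^j$ leaves $T(f,\pi^1,\dots,\pi^d)$ unchanged. Using this and the multilinearity of $T$ in all arguments, I obtain
\[
T(f,\pi^1,\dots,\pi^d)=\frac{1}{M}\prod_{j=1}^d\frac{L^{\ast}}{L_j}\,T(g,\kappa^1,\dots,\kappa^d)=T(g,\kappa),
\]
since $(L^{\ast})^d=M\prod_j L_j$. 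Summing over $i$ then yields $\sum_i T(f_i,\pi_i)=\sum_i T(g_i,\kappa_i)$ for every $T$, which is the definition of $\sim$. The new sum is again a formal Lipschitz sum because
\[
\lVert f\rVert_L\prod_{j=1}^d\tLip(\pi^j)=(L^{\ast})^d=\lVert g\rVert_L\prod_{j=1}^d\tLip(\kappa^j),
\]
so $s$ is preserved term by term and in particular remains finite.

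The second assertion is then a one-line computation: in a regular sum, each non-zero term contributes $\lVert f_i\rVert_L\prod_j\tLip(\pi_i^j)=1\cdot L_i^d$, and each zero term contributes $0=L_i^d$ (with $L_i=0$). Hence $s\bigl(\sum_i(f_i,\pi_i)\bigr)=\sum_i L_i^d$. I do not anticipate a genuine obstacle; the one technical point worth spelling out carefully is the translation invariance of $T$ in the vector-field slots, but this is a standard consequence of locality combined with multilinearity, and everything else is bookkeeping around the choice of $L^{\ast}$.
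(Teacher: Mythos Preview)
Your proof is correct and follows essentially the same approach as the paper: both rescale each non-degenerate term so that the first factor has $\lVert\cdot\rVert_L$-norm $1$ and the coordinates of the second factor share a common Lipschitz constant $L^{\ast}=(\lVert g\rVert_L\prod_j\tLip(\kappa^j))^{1/d}$, with constants subtracted to enforce $\pi(0)=0$, and then invoke multilinearity together with locality to verify equivalence. The only cosmetic difference is that the paper groups the degenerate indices via the set $I_0=\{i:T(g_i,\kappa_i)=0\text{ for all }T\}$ rather than your explicit ``$g_i=0$ or some $\tLip(\kappa_i^j)=0$'', but this amounts to the same thing.
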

\begin{proof}
    The latter statement follows immediately from the definitions.
For the former statement, suppose $\sum_i(g_i, \kappa_i)$ is a formal Lipschitz sum.
Let $I_0\subset \nat$ stand for the set of those indices, for which
    \begin{equation*}
        T(g_i, \kappa_i)=0 \quad\text{for every $T\in\mathbb{P}_d([0,1]^d)$.}
    \end{equation*}
    Let $I_1=\nat\setminus I_0$ and let $i\in I_1$.
Then it is necessary that $\tLip(\kappa_i^j)>0$ for every $j=1,\dots, d$ and $g_i\not=0$.
We let
    \begin{equation*}
        L_i=  \left(\max\{\tLip(g_i), \lVert g_i \rVert_\infty\}\prod_{j=1}^d \tLip(\kappa_i^j)\right)^{\frac{1}{d}}.
    \end{equation*}
    Furthermore, we define
    \begin{equation*}
        f_i= \frac{g_i}{\max\{\tLip(g_i), \lVert g_i \rVert_\infty\}};\quad \pi_i^j=\frac{\kappa^j_i}{\tLip(\kappa^j_i)}L_i-\frac{\kappa^j_i(0)}{\tLip(\kappa^j_i)}L_i.
    \end{equation*}
    For $i\in I_0$, we let $f_i=0$ and $\pi_i=0$.
It is now easily checked that $\sum_i(f_i,\pi_i)$ is regular and, consistently with the convention in Definition \ref{D:regular}, $L_i$ agrees with $\tLip(\pi_i^j)$.
Using multilinearity and locality of currents, we have for any $i\in I_1$ and $T\in \mathbb{P}_d([0,1]^d)$
    \begin{equation*}
        T(f_i, \pi_i)= \frac{1}{\max\{\tLip(g_i), \lVert g_i \rVert_\infty\}}\frac{1}{\prod_{j=1}^d\tLip(\kappa^j_i)}L_i^dT(g_i, \kappa_i)=T(g_i,\kappa_i).
    \end{equation*}
    If $i\in I_0$, then $T(g_i,\kappa_i)=0=T(0,0) = T(f_i, \pi_i)$.
Thus, in particular,
    \begin{equation*}
        \sum_i T(f_i, \pi_i)=\sum_iT(g_i, \kappa_i).
    \end{equation*}
    By definition, this means $\sum_i(f_i,\pi_i)\sim \sum_i(g_i,\kappa_i)$.
\end{proof}

In light of the previous proposition, until we state and prove the penultimate result, we shall restrict our attention to regular sums.

\begin{lemma}\label{L:principal-est-adjacent-squares}
    Suppose $Q$ is a cube in $\reals^d$ of side length $r>0$.
Suppose that $Q'$ is another cube of the same radius which shares a face with $Q$ and assume $Q,Q'\subset [0,1]^d$.
Let $\tau=c_{Q'}-c_Q$, where $c_Q$ and $c_{Q'}$ are the centres of $Q$ and $Q'$ respectively.
Suppose $W_i$, $i\in\nat$ is a sequence of vectors in $\reals^d$.
Let $\sum_i (f_i, \pi_i)$ be a regular formal Lipschitz sum.
For each $i$, let
    \begin{equation*}
        \tilde{\pi}_i(x)=\pi_i(x-\tau)-W_i.
    \end{equation*}
    Suppose that for each $i$, $L_i=\tLip(\pi_i^j)$, $j=1,\dots, d$ and
    \begin{equation*}
        S=s(\sum_i(f_i,\pi_i))=\sum_i L^d_i.
    \end{equation*}
    Then
    \begin{equation}\label{E:principal-est-squares}
    \begin{split}
        &\left| \int_Q \sum_i f_i \det \Diff \pi_i- \int_{Q'} \sum_i f_i \det \Diff \pi_i \right|\\
        &\leq r^{d+1}(\sqrt{d}+1)S + r^{d-1}c_d \sqrt{S}\sqrt{\sum_iL_i^{d-2} \lVert \pi_i - \tilde{\pi}_i\rVert^2_{\ell^\infty(\partial Q)}}.
    \end{split}
    \end{equation}
\end{lemma}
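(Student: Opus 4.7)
The plan is to introduce the auxiliary maps $\tilde{\pi}_i$ on $Q' = Q + \tau$ as an intermediary and split the difference via the triangle inequality as $A + B$, where
\begin{equation*}
    A = \int_Q\sum_i f_i \det\Diff\pi_i - \int_{Q'}\sum_i f_i \det\Diff\tilde{\pi}_i,\qquad
    B = \int_{Q'}\sum_i f_i \det\Diff\tilde{\pi}_i - \int_{Q'}\sum_i f_i \det\Diff\pi_i.
\end{equation*}
Each of $|A|$ and $|B|$ I would then bound separately. Since $W_i$ is a constant vector, the chain rule gives $\det\Diff\tilde{\pi}_i(x) = \det\Diff\pi_i(x-\tau)$, which is essential for both steps.

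To handle $A$, the change of variables $y = x - \tau$ yields $\int_{Q'} f_i \det\Diff\tilde{\pi}_i = \int_Q f_i(y+\tau)\det\Diff\pi_i(y)$, so each summand of $A$ equals $\int_Q[f_i(y)-f_i(y+\tau)]\det\Diff\pi_i(y)$; using $|\tau|=r$, $\tLip(f_i)\leq 1$ (by regularity) and $|\det\Diff\pi_i|\leq L_i^d$, this contributes at most $r^{d+1}L_i^d$, summing to $|A|\leq r^{d+1}S$. For $B$, I apply Lemma \ref{L:est-on-cubes-sums} on the cube $Q'$ to the pair of formal Lipschitz sums $\sum_i(f_i,\pi_i)$ and $\sum_i(f_i,\tilde{\pi}_i)$; the Lipschitz constants of the $\tilde{\pi}_i^j$ are the same $L_i$ because $\tilde{\pi}_i$ is a translate-and-shift of $\pi_i$. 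Crucially, because both sums share the \emph{same} coefficients $f_i$, the middle summand of \eqref{E:principal-est-coef-sum} (the one involving $|(f_i)_{Q'}-(g_i)_{Q'}|$) vanishes. The first summand reduces to $r^{d+1}\sqrt{d}\sum_i\tLip(f_i)L_i^d\leq r^{d+1}\sqrt{d}\,S$, and the third summand equals $r^{d-1}c_d\sqrt{S}\sqrt{\sum_iL_i^{d-2}\lVert\pi_i-\tilde{\pi}_i\rVert_{\ell^\infty(\partial Q')}^2}$ once we use $\sup_i|(f_i)_{Q'}|\leq \sup_i\lVert f_i\rVert_\infty\leq 1$. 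Combining $|A|$ and $|B|$ and using the natural identification $\partial Q' = \partial Q + \tau$ produces \eqref{E:principal-est-squares}.

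I expect no serious obstacle. The only real subtlety is exploiting the coincidence of the two coefficient sequences to kill the $|(f_i)_{Q'}-(g_i)_{Q'}|$ term from Lemma \ref{L:est-on-cubes-sums}; once this is recognised, only the Lipschitz regularity of the $f_i$'s (not their pointwise values) is needed to absorb the first summand, and the rest is bookkeeping of constants and a single change of variables.
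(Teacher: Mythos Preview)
Your proof is correct and uses the same key input (Lemma \ref{L:est-on-cubes-sums}) as the paper, with only a minor rearrangement. The paper first changes variables to rewrite $\int_{Q'}\sum_i f_i\det\Diff\pi_i$ as $\int_Q\sum_i f_i^\tau\det\Diff\tilde\pi_i$ and then applies Lemma \ref{L:est-on-cubes-sums} \emph{once} on $Q$ to the pair $\sum_i(f_i,\pi_i)$, $\sum_i(f_i^\tau,\tilde\pi_i)$; the middle term of \eqref{E:principal-est-coef-sum} then carries the contribution $|(f_i)_Q-(f_i)_{Q'}|\leq r$, yielding exactly your $r^{d+1}S$. You instead isolate this coefficient-shift as the separate term $A$ and then apply Lemma \ref{L:est-on-cubes-sums} on $Q'$ with coincident coefficients so the middle term vanishes. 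Both routes give the same constant $(\sqrt d+1)S$.

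One small point: your argument naturally produces $\lVert\pi_i-\tilde\pi_i\rVert_{\ell^\infty(\partial Q')}$, and the sentence ``using the natural identification $\partial Q'=\partial Q+\tau$'' does not literally equate this with $\lVert\pi_i-\tilde\pi_i\rVert_{\ell^\infty(\partial Q)}$, since the same function is being evaluated on two different sets. This discrepancy, however, is an artefact of a sign typo in the statement (compare Theorem \ref{T:finding-good-squares}, where $\tilde\pi_i(x)=\pi_i(x+\tau)-W_i$); with the intended sign the two suprema coincide after the substitution $x\mapsto x+\tau$, and in any case the application in Corollary \ref{C:est-on-good-square} only uses a bound valid on all of $Q$.
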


\begin{proof}
    Let us denote $f_i^\tau(x)=f_i(x-\tau)$.
Since translation by $\tau$ is an isometry and since $W^i$ is a constant vector, we have
    \begin{equation*}
        \int_{Q'} \sum_i f_i \det\Diff\pi_i \;\diff\mathcal{L}^d=\int_Q \sum_i f_i^\tau \det\Diff\tilde{\pi}_i \;\diff\mathcal{L}^d.
    \end{equation*}
    Therefore, applying \eqref{E:principal-est-coef-sum}, we obtain
    \begin{equation*}
        \begin{split}
            &\left| \int_Q \sum_i f_i \det \Diff \pi_i - \int_{Q'} \sum_i f_i \det \Diff \pi_i \right|
            \leq r^{d+1}\frac{1}{2}\sqrt{d} \sum_i(\tLip(f_i)+ \tLip(f_i^\tau))L^d_i \\
            &+r^d\sum_i |(f_i)_Q-(f_i)_{Q'}| L_i^d\\
            &+r^{d-1}c_d\sup_i\{|(f_i)_Q|\}\sqrt{\sum_i L_i^d}\sqrt{\sum_i L_i^{d-2}\lVert\pi_i-\tilde{\pi}_i\rVert_{\ell^\infty(\partial Q)}^2}.
        \end{split}
    \end{equation*}
    Using that the formal Lipschitz sum is regular, we have
    \begin{equation*}
        \tLip(f_i)+\tLip(f_i^\tau)\leq 2\tLip(f_i)\leq 2, \quad |(f_i)_{Q}-(f_i)_{Q'}|\leq r \tLip(f_i)\leq r,  \quad \sup_i|(f_i)_Q|\leq 1.
    \end{equation*}
     Thus, the above estimate implies
    \begin{equation*}
        \begin{split}
            \left| \int_Q \sum_i f_i \det \Diff \pi_i - \int_{Q'} \sum_i f_i \det \Diff \pi_i \right|
            &\leq r^{d+1}\sqrt{d} S
            +r^d r S\\
            &+r^{d-1}c_d\sqrt{S}\sqrt{\sum_i L_i^{d-2}\lVert\pi_i-\tilde{\pi}_i\rVert_{\ell^\infty(\partial Q)}^2},
        \end{split}
    \end{equation*}
    which is equivalent to \eqref{E:principal-est-squares}.
\end{proof}

We are now ready to outline in more detail how the estimate \eqref{E:principal-est-squares} will be used.
Consider we find a solution to the equation
\begin{equation*}
    \sum_i f_i \det\Diff\pi_i =\rho \quad\text{a.e.~in $[0,1]^d$.}
\end{equation*}
where $\rho\in L^\infty$ only attains values $1$ and $2$.
Suppose further that there is plenty of very small pairs of adjacent cubes $Q$ and $Q'$, as in Lemma \ref{L:principal-est-adjacent-squares} such that $\rho=1$ on $Q$ and $\rho=2$ on $Q'$.
Then the left-hand side of \eqref{E:principal-est-squares} is equal to $r^d$, where $r$ is the side length of the squares.
If $r>0$ is small enough, then the term $r^{d+1}(\sqrt{d}+1)$ is negligible compared to $r^d$.
Therefore, if we are able to find a particular pair of squares $Q$ and $Q'$ where we also have the estimate
\begin{equation*}
    \sqrt{\sum_i L_i^{d-2}\lVert \pi_i-\tilde{\pi}_i\rVert_{\ell^\infty(\partial Q)}}\lesssim r \varepsilon,
\end{equation*}
then we obtain a contradiction.
Indeed, we would then have that the right hand side of \eqref{E:principal-est-squares} is estimated by $\approx r^{d+1}+ r^{d}r\varepsilon\lesssim \varepsilon r^d$.
On the other hand, the left hand side itself is equal to $r^d$, which is indeed a contradiction.
This means that if such a pair of ``good squares" exists, then for this particular choice of $\rho$, there is no solution to the PDE.
This is exactly what also happens in the arguments of Burago and Kleiner \cite{BK} and Dymond et al \cite{DKK}.

\subsection{Behaviour of biLipschitz maps along thin rectangles}

In this subsection we shall collect a kind of dichotomy result on Lipschitz maps, which will later enable us to find the pairs of ``good squares'' which we mentioned earlier.
The first instance of this type of result is due to Burago and Kleiner \cite{BK} but a more refined version of the result appears in the work of Dymond et al \cite{DKK}.
In fact, their result is \emph{almost} good enough for our argument.
All we need to do is extend their result (which works for maps into $\reals^n$, $n\geq d$) to work for maps into $\ell^2$.
Importantly, the constants obtained in \cite{DKK} are \emph{independent} of $n$.
This means that extending the result to work for maps into $\ell^2$ boils down to what is essentially a standard approximation argument.
This is the content of the following lemma.

Since we simply apply the results of \cite{DKK} and combine them with an approximation argument, the geometric flavour of the lemma cannot be seen from our proof.
Therefore, a reader interested in the geometry of the argument should consult the proofs of \cite[Lemma 3.2, Lemma 3.3]{DKK} located in the appendix B of the named paper.
The basic geometric ideas are also the same as in \cite{BK}, where only case $d=n=2$ is discussed and therefore the argument is slightly more accessible.

\begin{definition}
    For each $n\in\nat$, suppose $P_n\colon \ell^2 \to \reals^n$ is the orthonormal projection to $\reals^n = \tspan\{e_1, \dots, e_n\}$.
    We say that a map $h\colon E \to \ell^2$, where $E\subset \reals^n$, is \emph{finitely $L$-biLipschitz}, if there exists some $n_0$, such that for all $n\geq n_0$, the map $P_n \circ h$ is $L$-biLipschitz.
\end{definition}

Note that by basic properties of projections, any finitely $L$-biLipschitz map is also $L$-biLipschitz.

In the following, $\lVert \cdot \rVert$ stands for both the Euclidean norm on $\reals^d$ and the norm induced by the canonical scalar product on $\ell^2$.
Which norm is in use is given by the argument.
Figure \ref{fig:adjacent-squares-dichotomy} illustrates the following lemma.

\begin{lemma}\label{L:dichotomy}
    Let $d\in\nat$, $L\geq 1$, $\varepsilon>0$.
Then there exist $M\in\nat$, $\varphi>0$ and $N_0\in\nat$, such that for every $c>0$, $N\in\nat$, $N\geq N_0$ and all finitely $L$-biLipschitz mappings
    \begin{equation*}
        h\colon [0,c]\times [0, \frac{c}{N}]^{d-1}\to \ell^2,
    \end{equation*}
    at least one of the following statements holds.
    \begin{enumerate}
        \item\label{Enum:DKK1} There is some $i\in\{1,\dots, N-1\}$ such that for every $x\in Q_i=[\frac{(i-1)c}{N}, \frac{ic}{N}]\times[0, \frac{c}{N}]^{d-1}$
        \begin{equation*}
            \lVert h(x+\frac{c}{N}e_1)-h(x)-\frac{1}{N}(h(ce_1)-h(0))\rVert \leq \frac{c\varepsilon}{N};
        \end{equation*}
        \item\label{Enum:DKK2} There is some $z$ which lies simultanously in the lattice $\frac{c}{NM}\mathbb{Z}^d$ and in the set
        \begin{equation*}
            [0,c-\frac{c}{NM}]\times [0, \frac{c}{N}-\frac{c}{NM}]^{d-1}
        \end{equation*}
        such that
        \begin{equation*}
            \frac{\lVert h(z+\frac{c}{NM}e_1)-h(z)\rVert}{\frac{c}{NM}}>(1+\varphi)\frac{\lVert h(c e_1)-h(0)\rVert}{c}.
        \end{equation*}
    \end{enumerate}
\end{lemma}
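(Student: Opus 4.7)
The plan is to reduce to \cite[Lemma 3.3]{DKK}, which establishes an identical dichotomy for finite-dimensional targets $\reals^n$ with $n\geq d$, and crucially gives constants that are \emph{independent of $n$}. Given $d$, $L$, $\varepsilon$, I would first invoke that lemma with parameter $\varepsilon/2$ in place of $\varepsilon$, obtaining constants $M_0$, $\varphi_0$, $N_0'$ valid in every finite dimension simultaneously. I would then declare $M = M_0$, $\varphi = \varphi_0/2$, and $N_0 = N_0'$ for the present statement.

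Let $h \colon [0,c]\times[0, c/N]^{d-1}\to \ell^2$ be finitely $L$-biLipschitz, so that $P_n \circ h$ is $L$-biLipschitz into $\reals^n$ for all $n$ past some threshold depending on $h$. For every such $n\geq d$, the DKK dichotomy forces one of two alternatives for $P_n \circ h$: either some $i_n \in \{1, \dots, N-1\}$ witnesses the analogue of item~(i) on $Q_{i_n}$ with tolerance $\varepsilon/2$, or some lattice point $z_n$ in the finite set $\frac{c}{NM}\integers^d \cap \bigl([0, c-\tfrac{c}{NM}]\times[0, \tfrac{c}{N}-\tfrac{c}{NM}]^{d-1}\bigr)$ witnesses the analogue of item~(ii) with constant $\varphi_0$. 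Since $i_n$ and $z_n$ range over finite sets, a pigeonhole argument extracts a subsequence $n_k \to \infty$ along which either a single $i$ works throughout in the first case, or a single $z$ works throughout in the second case.

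In the first case, the projections $P_{n_k}$ converge strongly (pointwise) to the identity on $\ell^2$, so for every fixed $x \in Q_i$ one passes to the pointwise limit in
\begin{equation*}
    \lVert P_{n_k}h(x + \tfrac{c}{N}e_1) - P_{n_k}h(x) - \tfrac{1}{N}(P_{n_k}h(ce_1) - P_{n_k}h(0))\rVert \leq \tfrac{c\varepsilon}{2N},
\end{equation*}
which recovers item~(i) for $h$ with the required tolerance $c\varepsilon/N$ to spare. In the second case, the same pointwise convergence yields only the non-strict limit inequality
\begin{equation*}
    \frac{\lVert h(z+\tfrac{c}{NM}e_1) - h(z)\rVert}{\tfrac{c}{NM}} \geq (1+\varphi_0)\frac{\lVert h(ce_1) - h(0)\rVert}{c};
\end{equation*}
however, the $L$-biLipschitz hypothesis on $h$ ensures $\lVert h(ce_1) - h(0)\rVert \geq c/L > 0$, so the right-hand side is strictly positive, and since $1 + \varphi_0 > 1 + \varphi$, the non-strict inequality sharpens to the strict inequality demanded by item~(ii).

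The only genuine obstacle is the survival of this strict inequality under the pointwise limit, and the factor-of-two slack built into the choice $\varphi = \varphi_0/2$ is precisely what resolves it. Everything else is a routine approximation argument, whose success rests entirely on the dimension-independence of the constants in \cite[Lemma 3.3]{DKK}; without that feature the reduction would collapse.
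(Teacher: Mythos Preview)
Your proposal is correct and follows the same route as the paper: invoke \cite[Lemma 3.3]{DKK} for each truncation $P_n\circ h$, split into two infinite subsets of indices according to which alternative fires, pigeonhole on the finite index set $\{1,\dots,N-1\}$ or on the finite lattice, and pass to the limit using $\lVert P_n u\rVert\to\lVert u\rVert$. The only difference is that you build in a factor-of-two slack ($\varepsilon/2$ and $\varphi_0/2$) before passing to the limit, whereas the paper takes the constants from \cite{DKK} unchanged and simply asserts that alternative~(ii) survives the limit. Your extra care in case~(ii) is warranted: a strict inequality does not automatically persist under limits, and your observation that $\lVert h(ce_1)-h(0)\rVert>0$ together with $1+\varphi_0>1+\varphi$ is exactly what is needed to recover the strict inequality. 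The slack in $\varepsilon$ is harmless but unnecessary, since the inequality in item~(i) is already non-strict.
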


\begin{proof}[Proof of Lemma \ref{L:dichotomy}]
Let $N, M$ and $\varphi$ be the constants obtained in \cite[Lemma 3.3]{DKK}.
Let us call, for the purposes of the proof, points which lie simultanously in the lattice $\frac{c}{NM}\mathbb{Z}^d$ and in the set
        \begin{equation*}
            [0,c-\frac{c}{NM}]\times [0, \frac{c}{N}-\frac{c}{NM}]^{d-1},
        \end{equation*}
\emph{lattice points}.
Note that there are only finitely many lattice points.
We denote by $P_n\colon \ell^2\to\nolinebreak \reals^n$ the orthogonal projection to $\tspan\{e_1, \dots, e_n\}$.
Recall that then
\begin{equation}\label{E:ell-2-norm-convergence}
    \lVert P_{n} u \rVert \to \lVert u \rVert, \quad \text{for every $u\in\ell^2$.}
\end{equation}
Suppose $h$ is finitely $L$-biLipschitz.
Let $n_0\in\nat$ be such that $P_n\circ h$ is $L$-biLipschitz for every $n\geq n_0$.
By \cite[Lemma 3.3]{DKK}, for every $n\geq n_0$, at least one of the following statements holds:
\begin{enumerate}[label={(\roman*)$_{n}$}]
        \item\label{Enum:DKKn1} There is some $i\in\{1,\dots, N-1\}$ such that for every $x\in Q_i=[\frac{(i-1)c}{N}, \frac{ic}{N}]\times[0, \frac{c}{N}]^{d-1}$
        \begin{equation}\label{E:almost-affine-n}
            \lVert (P_n\circ h)(x+\frac{c}{N}e_1)-(P_n\circ h)(x)-\frac{1}{N}((P_n\circ h)(ce_1)-(P_n\circ h)(0))\rVert \leq \frac{c\varepsilon}{N};
        \end{equation}
        \item\label{Enum:DKKn2} There is some lattice point $z$
        such that
        \begin{equation}\label{E:extra-strech-n}
            \frac{\lVert (P_n\circ h)(z+\frac{c}{NM}e_1)-(P_n\circ h)(z)\rVert}{\frac{c}{NM}}>(1+\varphi)\frac{\lVert (P_n\circ h)(c e_1)-(P_n\circ h)(0)\rVert}{c}.
        \end{equation}
\end{enumerate}
Therefore, the set of those $n\geq n_0$ such that \ref{Enum:DKKn1} holds is infinite or the set of those $n\geq n_0$ such that \ref{Enum:DKKn2} holds is infinite.
Suppose the former is the case.
Suppose $n_k$ is an increasing sequence of indices, such that statement \ref{Enum:DKKn1} holds for every $n=n_k$.
For each $k\in \nat$, there exists some $i_k\in\{1, \dots, N-1\}$, such that \eqref{E:almost-affine-n} holds for every $x\in Q_{i_k}$ with $n=n_k$.
Thus, there is at least one $i_0\in\{1, \dots, N-1\}$ and an increasing sequence of indices $k_l\in\nat$, such that for every $x\in Q_{i_0}$ and every $l\in\nat$, \eqref{E:almost-affine-n} holds with $n=n_{k_{l}}$.
By passing to the limit on the left hand side using \eqref{E:ell-2-norm-convergence}, we obtain that also \ref{Enum:DKK1} holds with $i=i_0$.

Suppose now that the set of those $n\geq n_0$ such that \ref{Enum:DKKn2} holds is infinite.
Then there exists an increasing sequence of indices $n_k$, such that statement \ref{Enum:DKKn1} holds for every $n=n_k$.
That is, there is, for each $k\in\nat$ a lattice point $z_k$ such that \eqref{E:extra-strech-n} holds for $n=n_k$ and $z=z_k$.
Since the set of possible lattice points that we consider is finite, once again there exists at least one lattice point $z_0$ and an increasing sequence of indices $k_l\in\nat$ such that \eqref{E:extra-strech-n} holds for $n=k_{k_l}$ and $z=z_0$.
Thus, \ref{Enum:DKK2} holds for $z=z_0$ by passing to the limit using \eqref{E:ell-2-norm-convergence}.
\end{proof}

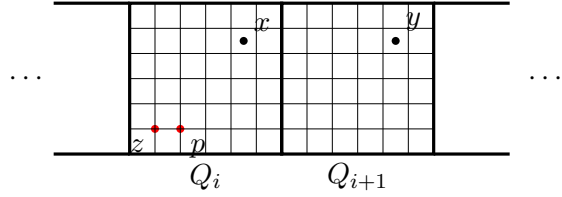
\begin{figure}[ht]
    \centering
    \begin{tikzpicture}
    \def\size{2}
    \def\step{1/3}

    \draw[very thick] (0,0) rectangle (\size,\size);
    \draw[very thick] (\size,0) rectangle (2*\size,\size);

    \node at (\size/2, -0.3) {$Q_i$};
    \node at (3*\size/2, -0.3) {$Q_{i+1}$};

    \coordinate (x) at (0.75*\size, 0.75*\size);
    \coordinate (y) at (x);
    \path (y) ++(\size,0) coordinate (y);
    \coordinate (z) at (1/6*\size, 1/6*\size);
    \coordinate (p) at (2/6*\size, 1/6*\size);

    \fill (x) circle (1.5pt);
    \node[above right] at (x) {$x$};
    \fill (y) circle (1.5pt);
    \node[above right] at (y) {$y$};
    \fill[red] (z) circle (1.5pt);
    \node[below left] at (z) {$z$};
    \fill[red] (p) circle (1.5pt);
    \node[below right] at (p) {$p$};
    \foreach \i in {1,2,3,4,5} {
        \draw[line width=0.3pt] (0,\i*\step) -- (\size,\i*\step);
        \draw[line width=0.3pt] (\i*\step,0) -- (\i*\step,\size);
        \draw[line width=0.3pt] (\size,\i*\step) -- (2*\size,\i*\step);
        \draw[line width=0.3pt] (\size+\i*\step,0) -- (\size+\i*\step,\size);
    }

    \draw[very thick] (-\size/2, \size) -- (0, \size);
    \draw[very thick] (-\size/2, 0) -- (0, 0);
    \node at (-\size/1.5, \size/2) {$\cdots$};

    \draw[very thick] (2*\size, \size) -- (2.5*\size, \size);
    \draw[very thick] (2*\size, 0) -- (2.5*\size, 0);
    \node at (2.75*\size, \size/2) {$\cdots$};
\end{tikzpicture}
    \caption{In the picture, $y = x+\frac{c}{N}e_1$ and $p=z+ \frac{c}{NM}e_1$.
The statement of Lemma \ref{L:dichotomy} is that if the stretching of $h$ on all pairs of points $z$ and $p$ is controlled by $(1+\varphi)$ times the stretching of $h$ on the endpoints $0$ and $ce_1$ (i.e.~statement \ref{Enum:DKK2} does not hold) then there must be some $Q_i$ such that $h(x)-h(y)\approx \textnormal{const}$ for all pairs $x, y$ where $y=x+\frac{1}{N}e_1$ and $x\in Q_i$.
If this is the case, this in particular implies that the image, under $h$, of $Q_{i+1}$ is nearly a translation of the image of $Q_i$.}
    \label{fig:adjacent-squares-dichotomy}
\end{figure}

In the following lemma, we connect formal Lipschitz sums with biLipschitz maps $h$ for which Lemma \ref{L:dichotomy} can be used.

\begin{lemma}\label{L:def-h-into-ell2}
    Let $d\in\nat$ and let $\sum_i(f_i,\pi_i)$ be a regular formal Lipschitz sum.
Denote by $L^i$ the Lipschitz constant of each $\pi_i^j$.
Define the map $h\colon [0,1]^d \to \ell^2$ given by
    \begin{equation*}
        h(x)=\sum_{j=1}^d x_j e_j + \sum_{i=1}^\infty\sum_{j=1}^{d}L_i^{\frac{d}{2}-1}\pi^j_i(x)e_{di+j} \quad \text{for $x\in [0,1]^d$.}
    \end{equation*}
    The definition is valid.
    If we denote
    \begin{equation*}
        S=\sum_i L_i^{d}=s(\sum_i(f_i,\pi_i)),
    \end{equation*}
    then $h$ is finitely $L$-biLipschitz for
    \begin{equation*}
        L=\sqrt{1+dS}.
    \end{equation*}
\end{lemma}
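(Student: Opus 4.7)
The plan is to establish two things separately: that the series defining $h(x)$ converges in $\ell^2$ for each $x\in[0,1]^d$, and that $P_n\circ h$ is $L$-biLipschitz for every $n\geq d$ with $L=\sqrt{1+dS}$.

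For the validity of the definition, I would exploit the regularity of $\sum_i(f_i,\pi_i)$, which gives $\pi_i(0)=0$, so that $|\pi_i^j(x)|\leq L_i\lVert x\rVert\leq L_i\sqrt{d}$ for $x\in[0,1]^d$. Then the squared coefficient of $e_{di+j}$ is at most $L_i^{d-2}\cdot L_i^2\cdot d=dL_i^d$. Summing over $j\in\{1,\dots,d\}$ and over $i$ yields a contribution of at most $d^2 S<\infty$, and the identity part contributes $\lVert x\rVert^2\leq d$. Thus $h(x)\in\ell^2$.

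For the biLipschitz estimate, I would compute using orthogonality of the basis vectors
\begin{equation*}
    \lVert h(x)-h(y)\rVert^2=\lVert x-y\rVert^2+\sum_{i=1}^\infty L_i^{d-2}\sum_{j=1}^d|\pi_i^j(x)-\pi_i^j(y)|^2.
\end{equation*}
Since every summand in the double sum is non-negative, the right-hand side is at least $\lVert x-y\rVert^2$, which is the lower biLipschitz estimate. For the upper estimate, use $|\pi_i^j(x)-\pi_i^j(y)|\leq L_i\lVert x-y\rVert$ to bound the double sum by $\sum_i L_i^{d-2}\cdot d\cdot L_i^2 \lVert x-y\rVert^2 = dS\lVert x-y\rVert^2$, so that $\lVert h(x)-h(y)\rVert^2\leq(1+dS)\lVert x-y\rVert^2$.

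Finally, for finite biLipschitzness I would observe that the identity part $\sum_{j=1}^d x_je_j$ occupies precisely the coordinates $1,\dots,d$, while the remaining contributions lie in coordinates indexed by $di+j\geq d+1$. Hence for every $n\geq d$ the projection $P_n h$ still retains the full identity piece, so the same orthogonal decomposition shows $\lVert P_nh(x)-P_nh(y)\rVert\geq\lVert x-y\rVert$. The upper bound follows from the fact that $P_n$ is a $1$-Lipschitz projection combined with the upper estimate already proven for $h$. This shows $h$ is finitely $L$-biLipschitz with $n_0=d$. No real obstacle arises; the only subtlety is the careful choice of the weight $L_i^{d/2-1}$, which is exactly what makes the upper estimate governed by the quantity $S=\sum_i L_i^d$ rather than a larger sum.
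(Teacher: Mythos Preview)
Your proof is correct and follows essentially the same approach as the paper: use $\pi_i(0)=0$ from regularity to establish well-definedness, exploit orthogonality of the basis vectors to write $\lVert h(x)-h(y)\rVert^2$ as the identity contribution plus the weighted sum $\sum_i L_i^{d-2}\sum_j|\pi_i^j(x)-\pi_i^j(y)|^2$, and then observe that for $n\geq d$ the projection $P_n$ retains the identity block (giving the lower bound) while being $1$-Lipschitz (giving the upper bound). Your exposition is arguably a bit more explicit about the constants in the well-definedness step, but the argument is the same.
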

\begin{proof}
    For any $x, y \in [0,1]^d$ let us consider the sum
    \begin{equation*}
        \sum_{i=1}^\infty\sum_{j=1}^d L_i^{d-2}(\pi_i^j(x)-\pi_i^j(y))^2.
    \end{equation*}
    By the estimate
    \begin{equation}\label{E:basic-lip-ell2-est}
        (\pi_i^j(x)-\pi_i^j(y))^2\leq L_i^2 \lVert x- y \rVert^2,
    \end{equation}
    the sum converges.
Since $\pi_0(0)=0$ for every $i\in\nat$, it follows that the sum in the definition of $h$ converges absolutely and therefore $h$ is well defined.
    For any $x,y\in Q$, we have
    \begin{equation}\label{E:Lip-est-into-ell2}
        \lVert h(x)-h(y)\rVert^2=\lVert x-y\rVert^2+\sum_{i=1}^\infty\sum_{j=1}^d L_i^{d-2}(\pi_i^j(x)-\pi_i^j(y))^2
    \end{equation}
    and
    \begin{equation*}
        \lVert (P_n\circ h)(x)-(P_n\circ h)(y)\rVert^2\geq \lVert x-y\rVert^2,
    \end{equation*}
    whenever $n\geq d$.
    Therefore the inverse of $P_n\circ h$ is $1$-Lipschitz for every $n\geq d$.
On the other hand, using \eqref{E:basic-lip-ell2-est} and \eqref{E:Lip-est-into-ell2}, we have
    \begin{equation*}
        \lVert h(x)-h(y) \rVert^2 \leq \lVert x-y \rVert^2+\sum_{i=1}^\infty\sum_{j=1}^d L_i^{d-2}L_i^2\lVert x-y\rVert^2\leq \lVert x-y \rVert^2 + d S \lVert x-y \rVert^2,
    \end{equation*}
    so that $h$ is $L$-Lipschitz.
Thus, $P_n\circ h$ is $L$-biLipschitz for every $n\geq d$.
\end{proof}

\subsection{Iterative construction of thin rectangles}

In this subsection, we construct the function $\rho \in L^\infty$ together with the required weak$^*$ open neighbourhood to prove Proposition \ref{P:intro-strong-non-surj}.
The idea of the construction is not too complicated and essentially the same as in \cite{BK} and \cite{DKK}.
However, the required notation is fairly involved.
We begin by introducing all of which we shall need.

\begin{definition}
    Let $d\geq 1$ and suppose $Q\subset \reals^d$ is a cube of side length $r>0$ with its sides parallel to the coordinate axes.
We call the unique point $p\in \reals^d$ such that
    \begin{equation*}
        Q=[0,r]^d+p
    \end{equation*}
    the \emph{principal vertex} of $Q$.
We use the notation $p=p(Q)$.

    Any set $R\subset \reals^d$ of the form
    \begin{equation*}
        R=[0,c]\times[0,r]^d+p,
    \end{equation*}
    where $r<c$ is called a \emph{rectangle}.
The point $p$ is called the principal vertex of the rectangle $R$.
We use the notation $p=p(R)$.
We also call the principal vertex the \emph{left endpoint} of $R$ and the point $p+re_1$ is called the \emph{right endpoint} of $R$.
We use the notation $l(R)$ and $r(R)$ for the left and the right endpoints of $R$ respectively.
\end{definition}

For the remainder of this subsection we will consider $d\in\nat$, $d\geq 2$ to be fixed.
We define $P\colon \reals^d \to \reals^{d-1}$ as the orthogonal projection to the last $d-1$ coordinates.
Suppose $K$ and $M$ are given natural numbers with $K, M\geq 2$.
Consider the \emph{reference lattice}
\begin{equation*}
    \Reflat=\frac{1}{M}\mathbb{Z}^{d}\cap[0,1-\frac{1}{M}]^{d}.
\end{equation*}
Suppose $Q\subset\reals^d$ is a cube with sides parallel to the coordinate axes, of side length $r$ and its principal vertex denoted by $p$, i.e.
\begin{equation*}
    Q=[0,r]^d+p.
\end{equation*}
Given an element $z\in \Reflat$, we consider the rectangle
\begin{equation*}
    R(Q,z)=[0,\frac{r}{KM}]\times [0, \frac{r}{M}]^{d-1}+p+rz.
\end{equation*}
Here we recommend consulting Figure \ref{fig:constr-with-all-subrect}.
The rectangle $R(Q,z)$ has the property, that its long side is a integer multiple of its short sides.
Any rectangle having this property  is called \emph{integral} and is of the form
\begin{equation}\label{E:integral-rectangle}
    R=[0,c]\times[0, \frac{c}{n}]^{d-1} + p
\end{equation}
for some integer $n$.
We call the number $n$ the \emph{factor of $R$} and denote it by $F(R)$.
Given any integral rectangle $R$ of the form given in \eqref{E:integral-rectangle}, and $i\in\{0, 1, \dots, F(R)-1\}$, we define the cube $Q(R,i)$ by
\begin{equation}\label{E:partitioning-rect-into-cubes}
    Q(R,i)=[0, \frac{c}{F(R)}]^d+p(R) + i\frac{c}{F(R)}.
\end{equation}
Note that the cubes $Q(R,i)$ are non-overlapping and form a partition of $R$.

We let $R_0= [0, \frac{1}{K}]\times [0, 1]^d$.
We let $Q(R_0, i_0)= Q_i$ for $i_0\in\{0, \dots, K-1\}$.
We let $R_{i_0}^{z_1} = R(Q_{i_0},z_1)$ for $z_1\in \Reflat$.
We let $Q_{i_0, i_1}^{z_1} = Q(R_{i_0}^{z_1}, i_1)$ whenever $i_0, i_1 \in \{0, \dots, K-1\}$ and $z_1\in\Reflat$.
We continue inductively.
Suppose we have defined $R_{i_0, i_1, \dots, i_{k-1}}^{z_1,\dots, z_{k}}$ and $Q_{i_0, \dots, i_k}^{z_1, \dots, z_k}$ for $k\geq 2$, $i_0, \dots, i_k \in \{0, \dots, K-1\}$ and $z_1,\dots, z_{k}\in \Reflat$.
We let
\begin{equation*}
    R_{i_0, i_1, \dots, i_{k}}^{z_1,\dots, z_{k+1}}=R(Q_{i_0, \dots, i_k}^{z_1, \dots, z_k}, z_{k+1});\quad Q_{i_0, \dots, i_{k+1}}^{z_1, \dots, z_{k+1}} =  Q(R_{i_0, i_1, \dots, i_{k}}^{z_1,\dots, z_{k+1}}, i_{k+1}),
\end{equation*}
for $z_{k+1}\in \Reflat$ and $i_{k+1}\in\{0, \dots, K-1\}$.
Geometrically, $Q_{i_0}$'s naturally partition $R_0$ into subcubes.
Each subcube $Q_{i_0}$ contains a rescaled and translated copy of the reference lattice $\Reflat(Q_{i_0})= \frac{1}{K} \Reflat + p(Q_{i_0})$.
Each point $\tilde{z}\in \Reflat(Q_{i_0})$ is a principal vertex of one of the rectangles $R(Q_{i_0},z_1)=R_{i_0}^{z_1}$.
In fact, $p(R_{i_0}^{z_1}) = p(Q_{i_0})+\frac{z_1}{K}$ (note that the side length of each $Q_{i_0}$ is equal to $\frac{1}{K}$.)
Continuing in the same fashion, each rectangle $R_{i_0}^{z_1}$ naturally partitions into subcubes $Q^{z_1}_{i_0, i_1}$ and in each of these subcubes we consider the rectangles $R^{z_1, z_2}_{i_0, i_1}$ with principal vertices at in the rescaled lattice $\frac{1}{KM} \frac{1}{K} \Reflat + p(Q^{z_1}_{i_0, i_1})$.

There are two details to keep in mind.
Firstly, a detail of more technical nature - the scaling of the sides of the cubes.
At level ``$0$'', we have that the side length of $Q_{i_0}$ is equal to $\frac{1}{K}$, but at level ``$k\geq 1$'', the side length of $Q_{i_0, \dots, i_k}^{z_1, \dots, z_k}$ is equal to $\frac{1}{K}\frac{1}{(KM)^k}$.
In other words, the rescaling is by factor $\frac{1}{MK}$ at each step, while the ``initial cubes'' have side length $\frac{1}{K}$.
Secondly, and this is very important, each of the rectangles $R$ that we construct satisfies $F(R)=K$.
That is, the proportion of the long side to the short sides stays constant.

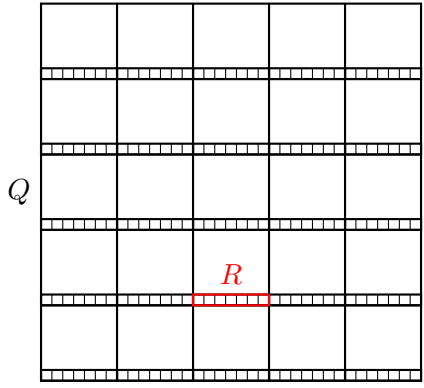
\begin{figure}[ht]
    \centering
    \begin{tikzpicture}
    \def\size{5} 
    \def\Rposx{2} 
    \def\Rposy{1}
    \def\listallR{0,1, 2, 3 , 4}
    \def\listQ{1,2,3,4} 
    \def\K{7} 
    \def\listR{1, 2, 3, 4, 5, 6} 
    \draw[thick] (0,0) rectangle (\size,\size);
    \node[left] at (0,\size/2) {$Q$};

    \foreach \x in \listQ {
        \draw[thick] (\x, 0) -- (\x, \size);
        \draw[thick] (0, \x) -- (\size, \x);
    }
    \foreach \z in \listallR {
        \foreach \p in \listallR{
            \draw[thick] (\z, \p) rectangle (\z+1, \p+1/\K);
            \foreach \x in \listR {
        \draw (\z+\x/\K, \p) -- (\z+\x/\K,\p+1/\K);
    }
        }
    }
    \draw[thick, red] (\Rposx, \Rposy) rectangle (\Rposx+1, \Rposy+1/\K);
    \node[above] at (\Rposx +0.5,\Rposy+0.15) {\textcolor{red}{$R$}};
\end{tikzpicture}
    \caption{
    A picture of a cube for $d=2$, $M=5$ and $K=7$.
The denoted rectangle $R$ (in red) is the rectangle $R(Q,(\frac{2}{M},\frac{1}{M}))$.
    Note that for any $z\in\Reflat$, $R(Q,z)\subset Q$ and moreover, if $K$ is very large, then the measure of the set $Q\setminus \bigcup_z R(Q,z)$ is nearly the measure of $Q$.
In the case $d=2$, we have explicitly $\H^2(Q\setminus \bigcup_z R(Q,z))=\H^2(Q)(1-\frac{1}{K}) = r^2 - \frac{r^2}{K}$.
    For each of the indicated rectangles on the picture, their natural partition into subcubes is also indicated, cf. \eqref{E:partitioning-rect-into-cubes}.
For example, $R=\bigcup_{i=1}^7 Q(R,i)$, where each $Q(R,i)$ is one of the very small squares inside the red rectangle $R$.
    If $Q=Q_{i_0, \dots, i_k}^{z_1, \dots, z_k}$,
    then each of the small squares is of the form
    $Q_{i_0, \dots, i_k, i_{k+1}}^{z_1, \dots, z_{k+1}}$
    and $R=R_{i_0, \dots, i_k}^{z_1, \dots, z_k, \scriptscriptstyle{(\frac{2}{M},\frac{1}{M})}}$.
    }
    \label{fig:constr-with-all-subrect}
\end{figure}

\begin{definition}
    Given integers $K, M \geq 2$, the rectangle $R_0$ is called \emph{admissible of order $0$} and each of the rectangles of the form $R_{i_0, i_1, \dots, i_{k-1}}^{z_1,\dots, z_{k}}$ is called \emph{admissible of order $k$}.
Each cube $Q_i$ is called \emph{admissible of order $0$} and each cube of the form $Q_{i_0, i_1, \dots, i_{k}}^{z_1,\dots, z_{k}}$ is called \emph{admissible of order $k$}.
    If a cube or a rectangle is admissible of some order $k$, it is called admissible.
We use the following notation
    \begin{enumerate}
        \item $\mathcal{R}_k$ for admissible rectangles of order $k$,
        \item $\mathcal{R}_{\leq k}$ for admissible rectangles of order at most $k$,
        \item $\mathcal{Q}_k$ for admissible cubes of order $k$,
        \item $\mathcal{Q}_{\leq k}$ for admissible cubes of order at most $k$.
    \end{enumerate}
    The set
    \begin{equation*}
        \{p(R_{i_0, \dots, i_{k}}^{z_1, \dots, z_{k+1}}): z_{k+1} \in \Reflat, i_{k} \in \{0, \dots, K-1\}\}
    \end{equation*}
    is called \emph{the lattice of $R_{i_0, i_1, \dots, i_{k-1}}^{z_1,\dots, z_{k}}$}.
Its elements are called \emph{lattice points in $R_{i_0, i_1, \dots, i_{k-1}}^{z_1,\dots, z_{k}}$}.
\end{definition}

We remark that the dependence of the entire construction, as well as the notion of admissibility on the parameters $K$ and $M$ is supressed in our notation.
The quantities $K$ and $M$ will be chosen at the end of the argument.

If one were to translate and rescale the rectangle $R_{i_0, i_1, \dots, i_{k-1}}^{z_1,\dots, z_{k}}$ to the rectangle $R_0$, say via a map $f\colon\reals^d \to \reals^d$, then the lattice of the rectangle $R_0$ is the image of the lattice of the rectangle $R_{i_0, i_1, \dots, i_{k-1}}^{z_1,\dots, z_{k}}$.
In particular, each such lattice is merely a rescaled and translated version of the lattice
\begin{equation*}
    \frac{1}{K}\Reflat \cup (\frac{1}{K}\Reflat + \frac{1}{K})\cup \dots \cup (\frac{1}{K}\Reflat + \frac{K-1}{K}).
\end{equation*}
Thus, each such lattice partitions into $K$ lattices, which are merely rescaled and translated copies of the reference lattice $\Reflat$ itself.
We recommend consulting also Figure \ref{fig:lattice}.
Note that since each admissible rectangle is a merely a translation of a kind of rectangle that appears as the domain of $h$ in Lemma \ref{L:dichotomy}, we may apply the lemma to each of these rectangles (of course, we need $K$ and $M$ to be large enough so that the lemma may be applied).
The lattice of the rectangle $R_{i_0, i_1, \dots, i_{k-1}}^{z_1,\dots, z_{k}}$ corresponds precisely to the lattice that appears in statement \ref{Enum:DKK2} of the aforementioned lemma after the identification of $R_{i_0, i_1, \dots, i_{k-1}}^{z_1,\dots, z_{k}}$ with $[0,\frac{c}{K}]\times[0,c]^{d-1}$ via a translation.

\begin{lemma}\label{L:orders-and-subsets}
    Let $k\in\nat\cup\{0\}$.
Then $\bigcup \mathcal{R}_k \supset \bigcup \mathcal{R}_{k+1}$.
Suppose $Q$ is an admissible cube of order $k$.
Then
    \begin{equation*}
        \mathcal{L}^d(Q\setminus \bigcup \mathcal{R}_{k+1})\geq \mathcal{L}^d(Q)(1-\frac{1}{K^{d-1}}).
    \end{equation*}
\end{lemma}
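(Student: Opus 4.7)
The lemma is a bookkeeping consequence of the construction; no deep idea is required, only careful use of the definitions.

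For the first assertion, I would unpack the definitions: every $R \in \mathcal{R}_{k+1}$ has the form $R = R(Q', z_{k+1})$ for some $Q' \in \mathcal{Q}_k$ and $z_{k+1} \in \Reflat$. Since $z_{k+1} \in [0, 1-\tfrac{1}{M}]^d$ and each side of $R(Q', z_{k+1})$ is at most the side length of $Q'$, the inclusion $R \subset Q'$ is immediate. By the partitioning formula \eqref{E:partitioning-rect-into-cubes}, $Q'$ is one of the subcubes of some admissible rectangle $R'' \in \mathcal{R}_k$ (taking $R'' = R_0$ when $k = 0$), so $Q' \subset R''$ and hence $R \subset R'' \in \mathcal{R}_k$.

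For the measure estimate I would fix $Q \in \mathcal{Q}_k$ of side length $r$. A routine induction shows that admissible cubes of the same order have pairwise disjoint interiors: the subcubes $Q(R, i)$, $i = 0, \ldots, F(R)-1$, tile any $R$ without overlap, so if admissible rectangles of order $k$ are non-overlapping, so are the admissible cubes of order $k$, and hence so are the admissible rectangles of order $k+1$ (each being contained in a unique admissible cube of order $k$). Combining this with the fact that every admissible rectangle of order $k+1$ lies in its parent admissible cube of order $k$, the rectangles of order $k+1$ meeting $Q$ are exactly the family $\{R(Q, z) : z \in \Reflat\}$, consisting of $M^d$ rectangles.

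It remains to verify that these rectangles have pairwise disjoint interiors and to compute their total volume. In the long direction (of side length $r/M$), consecutive translates tile the projection of $Q$ edge-to-edge. In each of the remaining $d-1$ directions, the lattice spacing $r/M$ strictly exceeds the short side length $r/(KM)$ (using $K \geq 2$), so consecutive rectangles are separated by positive gaps. Each rectangle has volume $(r/M)(r/(KM))^{d-1} = r^d/(M^d K^{d-1})$, hence
\begin{equation*}
    \mathcal{L}^d\bigl(Q \cap \textstyle\bigcup \mathcal{R}_{k+1}\bigr) = M^d \cdot \frac{r^d}{M^d K^{d-1}} = \frac{r^d}{K^{d-1}},
\end{equation*}
and subtracting from $\mathcal{L}^d(Q) = r^d$ yields the stated bound. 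The only step demanding any real attention is the inductive verification that admissible cubes of order $k$ are pairwise non-overlapping, which is needed to conclude that the rectangles of order $k+1$ inside $Q$ are exhausted by those associated with $Q$; this is a routine bookkeeping point rather than a genuine obstacle.
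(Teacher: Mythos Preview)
Your proof is correct and follows essentially the same approach as the paper's: count the $M^d$ rectangles of order $k+1$ inside $Q$, compute each of their volumes as $r^d/(M^d K^{d-1})$, and subtract. You do slightly more work than necessary by verifying pairwise disjointness of the rectangles to obtain an equality for $\mathcal{L}^d(Q \cap \bigcup \mathcal{R}_{k+1})$; the paper simply uses subadditivity of measure to get the inequality $\mathcal{L}^d(Q \cap \bigcup \mathcal{R}_{k+1}) \leq M^d \cdot r^d/(M^d K^{d-1})$, which is all that is needed.
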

\begin{proof}
    The inclusion follows immediately from the construction.
We show that the estimate on the measure holds.
Firstly, the number of elements of $\mathcal{R}_{k+1}$ that intersect $Q$ in a set of non-zero measure is equal to $\#\Reflat$, cardinality of $\Reflat$, which is a consequence of the construction.
Of course, $\#\Reflat= M^d$.
Secondly, each $R\in \mathcal{R}_{k+1}$ satisfies, by definition,
    \begin{equation*}
        \mathcal{L}^d(R)= (r\frac{1}{MK})^{d-1}r\frac{1}{M},
    \end{equation*}
    where $r$ is the side length of $Q$.
Thus,
    \begin{equation*}
        \mathcal{L}^d(Q\cap\bigcup \mathcal{R}_{k+1})\leq M^d (r\frac{1}{MK})^{d-1}r\frac{1}{M}= r^d \frac{1}{K^{d-1}}= \mathcal{L}^d(Q)\frac{1}{K^d}.
    \end{equation*}
    The required estimate is therefore obtained via
    \begin{equation*}
        \mathcal{L}^d(Q\setminus \bigcup \mathcal{R}_{k+1})= \mathcal{L}^d(Q) - \mathcal{L}^d(Q\cap\bigcup \mathcal{R}_{k+1})\geq \mathcal{L}^d(Q)- \mathcal{L}^d(Q)\frac{1}{K^d}.
    \end{equation*}
\end{proof}

For a map $h\colon R \to \ell^2$ and an admissible rectangle $R$, we let
\begin{equation*}
    A_h(R)= \frac{\lVert h(l(R))-h(r(R))\rVert}{\lVert (l(R))-(r(R))\rVert}.
\end{equation*}

The following is essentially Lemma \ref{L:dichotomy} with a changed notation.

\begin{lemma}\label{L:dichotomy-good-rectangles}
    For every $L\geq 0$, $\varepsilon>0$ and $d\in\nat$, $d\geq 2$, there exists an $N_0, M\in\nat$, $\varphi>0$ such that for any $K\geq N_0$ the following holds.
Suppose $h\colon [0,1]^d\to \ell^2$ is a finitely $L$-biLipschitz map.
Then for each $k\in\nat\cup\{0\}$ and any admissible rectangle $R$ of order $k$, one of the following statements holds.
    \begin{enumerate}
        \item\label{Enum:DKK1-const1} There exists some $i\in\{0, \dots, K-2\}$ such that for every $x\in Q(R, i)$,
        \begin{equation*}
            \lVert h(x) - h(x+\frac{1}{K(KM)^k}e_1)-\frac{1}{K}(h(l(R))-h(r(R))\rVert \leq \frac{1}{K(KM)^k}\varepsilon;
        \end{equation*}
        \item\label{Enum:DKK1-const2} There exists some admissible rectangle $R'\in \mathcal{R}_{k+1}$ with $R'\subset R$ such that
        \begin{equation}\label{E:stretching-on-lattice}
            A_h(R')>(1+\varphi)A_h(R).
        \end{equation}
        Equivalently, there exists $z\in\Reflat$ and $i\in\{0,1, \dots, K-1\}$ such that the rectangle $R'=R(Q(R,i),z)$ satisfies \eqref{E:stretching-on-lattice}.
    \end{enumerate}
\end{lemma}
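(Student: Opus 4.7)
My plan is to observe that Lemma \ref{L:dichotomy-good-rectangles} is essentially a direct translation of Lemma \ref{L:dichotomy} to the language of admissible rectangles, with a careful dictionary between the two notations. I would obtain the constants $N_0$, $M$, and $\varphi$ simply by invoking Lemma \ref{L:dichotomy} with the given parameters $d$, $L$, $\varepsilon$, and then fix any $K\geq N_0$.

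Given $h$ finitely $L$-biLipschitz and an admissible rectangle $R$ of order $k$, a short induction on $k$ based on the construction yields that $R$ is the translate by $l(R)$ of the ``model'' rectangle $[0,c]\times[0,c/K]^{d-1}$ with $c=1/(KM)^k$. Setting $\tilde h(x)=h(x+l(R))$ gives a map that is again finitely $L$-biLipschitz, so Lemma \ref{L:dichotomy} applies to it with $N=K$ and yields one of its two conclusions.

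If conclusion \ref{Enum:DKK1} holds with some $i\in\{1,\dots,K-1\}$, I would set $j=i-1\in\{0,\dots,K-2\}$ and translate back: $Q_i$ corresponds to $Q(R,j)$, the step $c/K$ equals $1/(K(KM)^k)$, and $\tfrac{1}{K}(\tilde h(ce_1)-\tilde h(0))$ corresponds to $\tfrac{1}{K}(h(r(R))-h(l(R)))$. Since the overall sign inside the norm is irrelevant, this is exactly \ref{Enum:DKK1-const1}. If instead conclusion \ref{Enum:DKK2} holds with some lattice point $z$, the key step is to observe that, after translating back by $l(R)$, the lattice $\tfrac{c}{KM}\integers^d\cap[0,c-\tfrac{c}{KM}]\times[0,\tfrac{c}{K}-\tfrac{c}{KM}]^{d-1}$ coincides exactly with the set of left endpoints of admissible rectangles $R'\in\mathcal{R}_{k+1}$ contained in $R$. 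Indeed, every such left endpoint is of the form $l(R)+(c/K)(i'e_1+z_{k+1})$ for a unique $i'\in\{0,\dots,K-1\}$ and $z_{k+1}\in\Reflat$, and a short counting argument shows this parametrization sweeps out precisely the required lattice points. Since $r(R')-l(R')=(c/(KM))e_1$ is the long side of $R'$, the stretching ratio appearing in \ref{Enum:DKK2} is precisely $A_h(R')$ while the comparison ratio is $A_h(R)$, so the stated inequality becomes \eqref{E:stretching-on-lattice}.

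The main point to handle with care is the lattice identification in the second case together with the indexing shift in the first; I do not expect any genuinely new mathematical difficulty beyond Lemma \ref{L:dichotomy} itself, but the verification that the combinatorial data (lattices, subcubes, long-versus-short sides, translation base points) match correctly between the two formulations has to be carried out systematically.
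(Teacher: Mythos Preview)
Your proposal is correct and follows essentially the same approach as the paper: both obtain $N_0, M, \varphi$ from Lemma~\ref{L:dichotomy}, translate the admissible rectangle $R$ to the model rectangle $[0,c]\times[0,c/K]^{d-1}$ with $c=1/(KM)^k$, apply Lemma~\ref{L:dichotomy} to the shifted map, and then read off the two alternatives after translating back. If anything, you are slightly more explicit than the paper about the index shift in case~\ref{Enum:DKK1-const1} and the lattice identification in case~\ref{Enum:DKK1-const2}, both of which the paper handles in one line.
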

\begin{proof}
    Let $M, N_0, \varphi$ be the constants obtained in Lemma \ref{L:dichotomy}.
    Let us write
    \begin{equation*}
        R=[0,c]\times[0, \frac{c}{K}]^{d-1} + l(R),
    \end{equation*}
    where $c=\frac{1}{(MK)^k}$.
    Applying Lemma \ref{L:dichotomy} to the function $x\mapsto h(x-l(r))$, it follows that if statement \ref{Enum:DKK1-const1} does not hold, it must be the case that there exists a point $y$ which lies simultaneously in the lattice $\frac{c}{KM}\mathbb{Z}^d$ and in the set
    \begin{equation*}
        [0, c- \frac{c}{NM}]\times [0, \frac{c}{K}-\frac{c}{KM}]
    \end{equation*}
    such that
    \begin{equation*}
        \frac{\lVert h(l(R)+y+\frac{c}{KM}e_1)-h(l(R)+y)\rVert}{\frac{c}{KM}}> (1+\varphi) \frac{\lVert h(l(R)+ce_1)-h(l(R))\rVert}{c}.
    \end{equation*}
    Consider the rectangle $R'\in \mathcal{R}_{k+1}$ uniquely determined by $l(R')=l(R)+y$.
Then $r(R')=l(R)+y+ \frac{c}{KM}e_1$ and therefore the rectangle $R'$ has the required properties.
\end{proof}

\begin{figure}[ht]
    \centering
    \begin{tikzpicture}
    \def\bs{1.8}
    \def\num{7} 
    \def\M{5} 
    \def\choice{3} 
    \def\locsq{2} 
    \def\xcoor{2} 
    \def\ycoor{3} 
    \def\dsize{0.04} 
    \def\blist{1,2,...,\num} 
    \def\llist{1,2,...,\M}
    \draw (0,0) rectangle (\num*\bs,\bs);
    \node[left] at (0,\bs/2) {$R$};
    \foreach \x in \blist {
        \draw (\x*\bs, 0) -- (\x*\bs, \bs);
    }

    \foreach \sq in \blist {
        \foreach \x in \llist {
        \foreach \y in \llist {
            \fill[black] (\x*\bs/5-\bs/5+\sq*\bs-\bs,\y*\bs/5-\bs/5) circle (\dsize);
        }
    }

    }
    \foreach \x in \llist {
        \foreach \y in \llist {
            \fill[blue] (\x*\bs/5-\bs/5+\choice*\bs,\y*\bs/5-\bs/5) circle (\dsize);
        }
    }
    \draw[red] (\locsq*\bs + \xcoor*\bs/\M,\ycoor*\bs/\M) rectangle (\locsq*\bs + \xcoor*\bs/\M +\bs/\M,\ycoor*\bs/\M+\bs/\M/\num);
\end{tikzpicture}
    \caption{A picture of a rectangle together with its lattice, in the case $d=2$, $K=7$ and $M=5$.
If $R=R_{i_0, i_1, \dots, i_{k-1}}^{z_1,\dots, z_{k}}$, then the part of lattice in blue is a rescaled and translated copy of the reference lattice $\Reflat$.
    Explicitly, it is equal to $\frac{1}{K}\frac{1}{(KM)^k}\Reflat+p(R_{i_0, i_1, \dots, i_{k-1}}^{z_1,\dots, z_{k}}) + (0, \frac{3}{K(KM)^k})$.
The $3$ appears because the blue lattice lies in the fourth square from the left and the indexing of the squares starts with $0$.
A different way of writing the blue lattice is $\frac{1}{K}\frac{1}{(KM)^k}\Reflat+p(Q_{i_0, i_1, \dots, i_{k-1}, \scriptscriptstyle 3}^{z_1,\dots, z_{k}})$.
    We further remark that for any pair of horizontally adjacent lattice points, say $p,q$, with $p$ left of $q$, there is a rectangle of the form $R_{i_0, i_1, \dots, i_{k}}^{z_1,\dots, z_{k+1}}$ such that $p$ is the left endpoint of $R_{i_0, i_1, \dots, i_{k}}^{z_1,\dots, z_{k+1}}$ while $q$ is the right endpoint of $R_{i_0, i_1, \dots, i_{k}}^{z_1,\dots, z_{k+1}}$.
    The tiny red rectangle is the rectangle $R_{i_0, i_1, \dots, i_{k-1}, \scriptscriptstyle 3}^{z_1,\dots, z_{k}, \scriptscriptstyle (2,3)}$.
    }
     \label{fig:lattice}
\end{figure}
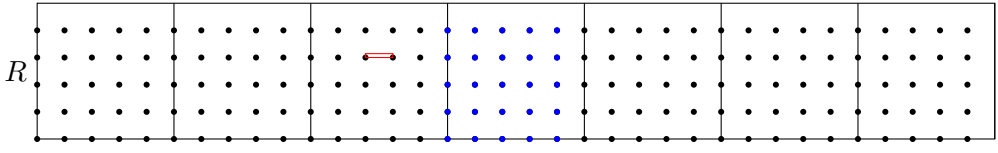

\begin{definition}
    Suppose $\varepsilon>0$ and $\varphi>0$.
Let $h\colon [0,1]^d\to \ell^2$ be a function.
We say that an admissible rectangle $R$ \emph{has $\varepsilon$-property $1$ (with respect to $h$)}, if statement \ref{Enum:DKK1-const1} of Lemma \ref{L:dichotomy-good-rectangles} holds.
We say that $R$ \emph{has $\varphi$-property $2$ (with respect to $h$)}, if statement \ref{Enum:DKK1-const2} of Lemma \ref{L:dichotomy-good-rectangles} holds.
\end{definition}

\begin{lemma}\label{L:not-2}
    Let $L\geq 1$ and $d\in \nat$, $d\geq 2$.
Then, for any $\varphi>0$, there exists some $k_0\in\nat$ such that the following is true.
    For any $L$-biLipschitz $h\colon [0,1]^d\to \ell^2$, there exists a rectangle $R\in\mathcal{R}_{\leq k_0}$ such that $R$ does \emph{not} have $\varphi$-property $2$.
\end{lemma}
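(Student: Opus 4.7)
The plan is to argue by contradiction, exploiting the fact that the stretching ratio $A_h(R)$ is bounded above and below by constants depending only on $L$, so it cannot be multiplied by $(1+\varphi)$ arbitrarily many times.

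First I would choose $k_0 \in \nat$ large enough that $(1+\varphi)^{k_0} > L^2$, i.e.\ $k_0 > 2\log L / \log(1+\varphi)$. Then, assuming for contradiction that every admissible rectangle of order at most $k_0$ has $\varphi$-property $2$, I would build inductively a nested sequence $R_0 \supset R_1 \supset \dots \supset R_{k_0}$ with $R_j \in \mathcal{R}_j$. Start from the unique order-$0$ admissible rectangle $R_0$; having constructed $R_j$ with $j < k_0$, apply the hypothesis that $R_j$ has $\varphi$-property $2$: by definition this furnishes some $R_{j+1} \in \mathcal{R}_{j+1}$ contained in $R_j$ (coming from a choice of $z \in \Reflat$ and $i \in \{0,\dots,K-1\}$ via $R_{j+1} = R(Q(R_j,i),z)$) such that
\begin{equation*}
A_h(R_{j+1}) > (1+\varphi)\, A_h(R_j).
\end{equation*}
Iterating this $k_0$ times yields $A_h(R_{k_0}) > (1+\varphi)^{k_0} A_h(R_0)$.

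To reach the contradiction, I would use the $L$-biLipschitz bounds on $h$: for any admissible rectangle $R$, the inequalities $\|l(R)-r(R)\|/L \leq \|h(l(R))-h(r(R))\| \leq L\|l(R)-r(R)\|$ give $A_h(R) \in [1/L, L]$. Hence
\begin{equation*}
L \;\geq\; A_h(R_{k_0}) \;>\; (1+\varphi)^{k_0} A_h(R_0) \;\geq\; (1+\varphi)^{k_0}/L,
\end{equation*}
so $(1+\varphi)^{k_0} < L^2$, contradicting the choice of $k_0$.

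There is no real obstacle here: once the inductive construction of the decreasing chain of rectangles is written down, the proof reduces to the elementary observation that geometric growth of $A_h$ is incompatible with the uniform two-sided bound coming from the biLipschitz hypothesis. The only minor care needed is to confirm that the rectangle $R_{j+1}$ produced by $\varphi$-property $2$ is genuinely of order $j+1$ and contained in $R_j$, which is immediate from the formulation of statement \ref{Enum:DKK1-const2} in Lemma~\ref{L:dichotomy-good-rectangles}.
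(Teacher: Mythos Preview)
Your proposal is correct and essentially identical to the paper's proof: choose $k_0$ so that $(1+\varphi)^{k_0}$ exceeds $L^2$, build a nested chain $R_0\supset R_1\supset\dots\supset R_{k_0}$ by repeatedly invoking $\varphi$-property~2, and derive a contradiction from the two-sided bound $A_h(R)\in[1/L,L]$ coming from the $L$-biLipschitz hypothesis.
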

\begin{proof}
    Let $k_0\in\nat$ be such that $(1+\varphi)^{k_0}\geq L^2$.
    Suppose, for contradiction, that every rectangle $R\in \mathcal{R}_{\leq k_0}$ has $\varphi$-property $2$.
Then, by definition, we find a sequence of rectangles $R_1, R_2, \dots, R_{k_0}$, each $R_k\in\mathcal{R}_k$ with $R_0\supset R_1 \supset R_2\supset \dots\supset R_{k_0}$ and such that
    \begin{equation*}
        A_h(R_k)>(1+\varphi) A_h(R_{k-1})\quad \text{for each $k=1,2,\dots, k_0$.}
    \end{equation*}
    Thus, $A_h(R_{k_0})> (1+\varphi)^{k_0} A_h(R_0)\geq (1+\varphi)^{k_0} \frac{1}{L}$.
Since $A_h(R_{k_0})\leq L$, this implies
    \begin{equation*}
        L> (1+\varphi)^{k_0} \frac{1}{L},
    \end{equation*}
    which contradicts the choice of $k_0$.
\end{proof}

\begin{corollary}\label{C:good-rectangle}
    Suppose $d\geq 2$, $L\geq 1$ and $\varepsilon>0$.
Then there exists $N_0, M\in\nat$ and $k_0\in\nat$ such that for every $K\geq N_0$, the following is true.
If $h\colon [0,1]^d\to \ell^2$ is a finitely $L$-biLipschitz map, then there exists $R\in \mathcal{R}_{\leq k_0}$ which has $\varepsilon$-property $1$.
\end{corollary}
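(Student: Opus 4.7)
The plan is to combine Lemma \ref{L:dichotomy-good-rectangles} (the local dichotomy on admissible rectangles) with Lemma \ref{L:not-2} (the impossibility of an unbounded chain of $\varphi$-property $2$ rectangles). The two lemmas dovetail exactly: the first produces, for a given $\varepsilon$, constants $N_0, M, \varphi$ such that \emph{every} admissible rectangle must have either $\varepsilon$-property $1$ or $\varphi$-property $2$ with respect to $h$; the second uses $L$-biLipschitzness of $h$ to rule out the possibility that every rectangle up to some depth has $\varphi$-property $2$.

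More precisely, I would first apply Lemma \ref{L:dichotomy-good-rectangles} to the data $(L,\varepsilon,d)$ to obtain $N_0, M \in \nat$ and $\varphi > 0$ so that, for every $K \geq N_0$ and every finitely $L$-biLipschitz map $h \colon [0,1]^d \to \ell^2$, each admissible rectangle $R$ has $\varepsilon$-property $1$ or $\varphi$-property $2$. Next I would feed this $\varphi$ into Lemma \ref{L:not-2} (noting that a finitely $L$-biLipschitz map is in particular $L$-biLipschitz, as remarked after the definition) to obtain $k_0 \in \nat$ such that, for any $L$-biLipschitz $h$, there is at least one $R \in \mathcal{R}_{\leq k_0}$ which does \emph{not} have $\varphi$-property $2$.

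Combining the two, for $K \geq N_0$ and any finitely $L$-biLipschitz $h$, we choose $R \in \mathcal{R}_{\leq k_0}$ not having $\varphi$-property $2$; by the dichotomy of Lemma \ref{L:dichotomy-good-rectangles}, this $R$ must have $\varepsilon$-property $1$. There is no genuine obstacle here: the technical content has already been absorbed by Lemmas \ref{L:dichotomy-good-rectangles} and \ref{L:not-2}, and the corollary is essentially a bookkeeping statement that packages them into the form needed for the iterative construction. The only point worth checking carefully is that the constants match up in the correct order of quantifiers, namely that $N_0, M$ depend only on $(L,\varepsilon,d)$ and that $k_0$ depends only on $(L,\varphi)$, so that the ``$K \geq N_0$'' quantifier can be pulled outside without circularity.
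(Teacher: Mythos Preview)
Your proposal is correct and follows essentially the same approach as the paper: apply Lemma \ref{L:dichotomy-good-rectangles} to obtain $N_0$, $M$, and $\varphi$, then feed $\varphi$ into Lemma \ref{L:not-2} to obtain $k_0$, and combine the dichotomy with the failure of $\varphi$-property $2$ to force $\varepsilon$-property $1$. Your additional remark about the order of quantifiers is a helpful sanity check but is not needed for the argument to go through.
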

\begin{proof}
    Let $N_0$ and $M$ and $\varphi$ be from Lemma \ref{L:dichotomy-good-rectangles}.
Find $k_0\in\nat$ from Lemma \ref{L:not-2}.
By Lemma \ref{L:dichotomy-good-rectangles}, for each rectangle $R\in\mathcal{R}_{\leq k_0}$, R has $\varepsilon$-property $1$ or $\varphi$-property $2$.
By Lemma \ref{L:not-2}, there exists some $R\in\mathcal{R}_{\leq k_0}$ which does not have $\varphi$-property $2$.
Thus, $R$ has $\varepsilon$-property $1$.
\end{proof}

\begin{theorem}\label{T:finding-good-squares}
    Suppose $S\geq 0$ and $\varepsilon>0$ and $d\in\nat$, $d\geq 2$.
Then there exists some $N_0, M, k_0\in\nat$ such that for any $K\in\nat$ with $K\geq N_0$, the following is true.
Let $\sum_i(f_i, \pi_i)$ be a regular formal Lipschitz sum with $s(\sum_i(f_i,\pi_i))\leq S$.
Then there exists an admissible rectangle $R$ of order $k\leq k_0$ and some $n\in\{0,1,\dots K-2\}$ such that the pair of cubes $Q=Q(R,n)$ and $Q'=Q(R,n+1)$ of order $k$ has the following property.
    For each $i\in\nat$, a vector $W_i\in\reals^d$ exists such that if we define
    \begin{equation*}
        \tilde{\pi}_i(x)=\pi_i(x+\tau)-W_i,
    \end{equation*}
    for $\tau = \frac{1}{K(KM)^k}e_1$ (the vector which translates $Q$ to $Q'$), then
    \begin{equation*}
        \sum_i L_i^{d-2} \lVert \pi_i - \tilde{\pi}_i\rVert_{\ell^\infty(\partial Q)}^2\leq r^2 \varepsilon^2,
    \end{equation*}
    where $r=\frac{1}{K(KM)^k}$ is the side length of $Q$ and $Q'$.
    The choice $W_i= \frac{1}{K}(\pi_i(l(R))-\pi_i(r(R)))$ suffices.
\end{theorem}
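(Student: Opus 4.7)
The plan is to realise $\sum_i(f_i,\pi_i)$ as a single map into a Hilbert space and then invoke the dichotomy machinery developed earlier. First, I would apply Lemma \ref{L:def-h-into-ell2} to produce the finitely $L$-biLipschitz map $h\colon [0,1]^d \to \ell^2$ with $L=\sqrt{1+dS}$ (only depending on $d$ and $S$). Fix an auxiliary parameter $\varepsilon'>0$, to be chosen at the end in terms of $\varepsilon$, $S$, $d$, and apply Corollary \ref{C:good-rectangle} with these $L$ and $\varepsilon'$; this gives $N_0,M,k_0\in\nat$ (depending only on $d$, $S$, $\varepsilon$ once $\varepsilon'$ is fixed) such that for every $K\geq N_0$ there exists an admissible rectangle $R$ of order $k\leq k_0$ enjoying $\varepsilon'$-property~1. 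Unfolding the definition (Lemma \ref{L:dichotomy-good-rectangles}) yields an index $n\in\{0,\dots,K-2\}$ for which $Q:=Q(R,n)$, $Q':=Q(R,n+1)$ satisfy the pointwise bound
\[
\bigl\lVert h(x)-h(x+\tau)-\tfrac{1}{K}\bigl(h(l(R))-h(r(R))\bigr)\bigr\rVert\leq r\varepsilon'
\]
for every $x\in Q$, where $r=1/(K(KM)^k)$ and $\tau=re_1$ is the translation from $Q$ to $Q'$.

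Next I would translate this $\ell^2$-estimate back into an estimate on the $\pi_i$. Plugging in the explicit form of $h$ given in Lemma \ref{L:def-h-into-ell2}, the contributions of the first $d$ coordinates of $h(x)-h(x+\tau)$ and of $\tfrac{1}{K}(h(l(R))-h(r(R)))$ are both equal to $-\tau$ and cancel, because $r(R)-l(R)=K\tau$. What remains is the weighted $\ell^2$ tail, whose squared norm equals
\[
\sum_i L_i^{d-2}\bigl\lVert (\pi_i(x)-\pi_i(x+\tau))-W_i\bigr\rVert^2
\]
with $W_i=\tfrac{1}{K}(\pi_i(l(R))-\pi_i(r(R)))$ exactly as stated. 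Setting $g_i:=\pi_i-\tilde\pi_i$ identifies this with $\sum_i L_i^{d-2}\lVert g_i(x)\rVert^2$ (matching the sign conventions in the definition of $\tilde\pi_i$), so the estimate on $h$ yields the pointwise bound
\[
\sum_i L_i^{d-2}\lVert g_i(x)\rVert^2\leq r^2{\varepsilon'}^2\qquad\text{for every }x\in Q,
\]
and in particular for every $x\in\partial Q$.

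The final and hardest step is to upgrade this pointwise bound (a bound on $\sup_{\partial Q}\sum_i$) into the theorem's weighted sum of uniform norms $\sum_i L_i^{d-2}\lVert g_i\rVert^2_{\ell^\infty(\partial Q)}$. The plan is to exploit that each $g_i$ is $2L_i$-Lipschitz. Choose an $s$-net $\{y_1,\dots,y_N\}$ of $\partial Q$; for each $i$ the supremum $M_i:=\lVert g_i\rVert_{\ell^\infty(\partial Q)}$ is attained within distance $s$ of some $y_{k(i)}$, so
\[
M_i^2\leq 2\lVert g_i(y_{k(i)})\rVert^2+8L_i^2s^2.
\]
Summing with weights $L_i^{d-2}$, applying the pointwise estimate at each $y_k$, and using $\sum_i L_i^d\leq S$ gives
\[
\sum_i L_i^{d-2}M_i^2\leq 2\sum_{k=1}^N\sum_i L_i^{d-2}\lVert g_i(y_k)\rVert^2+8s^2 S\leq 2Nr^2{\varepsilon'}^2+8s^2 S.
\]
Choosing $s\sim r\varepsilon/\sqrt{S}$ bounds the second term by $\tfrac12 r^2\varepsilon^2$ and makes $N\lesssim(\sqrt S/\varepsilon)^{d-1}$ depend only on $d,S,\varepsilon$; then picking $\varepsilon'\sim\varepsilon/\sqrt N$ bounds the first term by $\tfrac12 r^2\varepsilon^2$. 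The final bound is $\sum_i L_i^{d-2}M_i^2\leq r^2\varepsilon^2$, as required. The main obstacle is exactly this sup-versus-sum reconciliation: Corollary \ref{C:good-rectangle} yields only a pointwise Hilbert-space bound, and both the individual Lipschitz regularity of the $g_i$ and the summability $\sum_i L_i^d\leq S$ must be used together to recover the weighted $\ell^\infty$-sum demanded by Lemma \ref{L:principal-est-adjacent-squares}.
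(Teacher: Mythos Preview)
Your overall strategy is exactly the paper's: embed the regular formal Lipschitz sum into $\ell^2$ via Lemma~\ref{L:def-h-into-ell2}, apply Corollary~\ref{C:good-rectangle} to $h$, and read off the coordinates to recover the $\pi_i$. The cancellation of the first $d$ coordinates (because $r(R)-l(R)=K\tau$) and the identification $W_i=\tfrac{1}{K}(\pi_i(l(R))-\pi_i(r(R)))$ are handled correctly.

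Where you genuinely diverge is in the last step, and here you are \emph{more} careful than the paper. The paper's own proof stops at the pointwise bound
\[
\sum_i L_i^{d-2}\lVert \pi_i(x)-\tilde\pi_i(x)\rVert^2\le r^2\varepsilon^2\quad\text{for every }x\in Q,
\]
and simply declares that the theorem follows. But the theorem as stated asks for $\sum_i L_i^{d-2}\lVert \pi_i-\tilde\pi_i\rVert_{\ell^\infty(\partial Q)}^2\le r^2\varepsilon^2$, i.e.\ a sum of suprema rather than a supremum of a sum, and these are not interchangeable in general. You spotted this and your $s$-net argument closes the gap correctly: the Lipschitz bound $\tLip(g_i)\lesssim L_i$ controls the oscillation term via $\sum_i L_i^d\le S$, and the ratio $s/r$ depends only on $(\varepsilon,S)$, so the net cardinality $N$ is independent of $K,M,k$, allowing $\varepsilon'$ to be fixed at the outset. (One cosmetic point: when $\varepsilon\gtrsim\sqrt{S}$ the crude bound $\lVert g_i\rVert_{\ell^\infty}\lesssim L_i r$ already gives $\sum_i L_i^{d-2}M_i^2\lesssim r^2 S\le r^2\varepsilon^2$, so you may assume $\varepsilon$ small and $s<r$.)

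An alternative, arguably simpler, repair is to leave Theorem~\ref{T:finding-good-squares} with the pointwise conclusion the paper actually proves, and instead strengthen Lemma~\ref{L:est-on-cubes-sums}: carry the Cauchy--Schwarz step \emph{inside} the boundary integral coming from Stokes in Lemma~\ref{L:Average-det}, so that only $\sup_{x\in\partial Q}\sqrt{\sum_i L_i^{d-2}\lVert g_i(x)\rVert^2}$ appears. Either route works; yours proves the theorem exactly as stated.
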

\begin{proof}
    Let $L=\sqrt{1+dS}$ and find $M, N_0$ and $k_0$ from Corollary \ref{C:good-rectangle}.
Suppose $K\in\nat$ and $K\geq N_0$.
Let $\sum_i(f_i,\pi_i)$ be a regular formal Lipschitz sum with $s(\sum_i(f_i,\pi_i))\leq S$.
Let $h$ be given by the formula from Lemma \ref{L:def-h-into-ell2} and recall that then $h$ is finitely $L$-biLipschitz.
Therefore, by Corollary \ref{C:good-rectangle}, there exists an admissible rectangle $R\in\mathcal{R}_k$ for some $k\leq k_0$ such that $h$ has $\varepsilon$-property $1$ on $R$.
By definition, this means that there is some $n\in\{0,\dots, K-2\}$ an admissible cube $Q=Q(R,n)$ of order $k$, such that
    \begin{equation*}
        \lVert h(x+\tau)- h(x) - \frac{1}{NM}(h(\Rright)-h(\Rleft))\rVert^2 \leq \left(\frac{\varepsilon}{NM(NM)^k}\right)^2.
    \end{equation*}
    By the definition of $h$, in particular
    \begin{equation}\label{E:h-norm-est-on-good-square}
        \begin{split}
            &\sum_{i=1}^\infty \sum_{j=1}^d L^{d-2}_i [\pi_i^j(x+\tau)-\pi_i^j(x)-\frac{1}{NM}(\pi_i^j(\Rright)-\pi_i^j(\Rleft))]^2\\
            &\leq \left(\frac{\varepsilon}{NM(NM)^k}\right)^2,    
        \end{split}
    \end{equation}
    for every $x\in Q$.
    We let $W_i= \frac{1}{NM}(\pi_i(\Rright)-\pi_i(\Rleft))$.
Note that now the left hand side of \eqref{E:h-norm-est-on-good-square} reads as
    \begin{equation*}
        \begin{split}
            \sum_{i=1}^\infty \sum_{j=1}^d L^{d-2}_i [\pi_i^j(x+\tau)-\pi_i^j(x)-W_i^j]^2&=\sum_{i=1}^\infty L_i^{d-2} \lVert\pi_i(x+\tau) - \pi_i(x) -W_i \rVert^2\\
            &=\sum_i L_i^{d-2} \lVert \pi_i(x) - \tilde{\pi}_i(x)\rVert^2,
        \end{split}
    \end{equation*}
    where the Pythagoras's theorem was used in the first equality.
    Therefore, the theorem follows from \eqref{E:h-norm-est-on-good-square} and $r=\frac{1}{NM(NM)^k}$.
\end{proof}

\begin{corollary}\label{C:est-on-good-square}
     Suppose $S\geq 0$ and $\varepsilon>0$ and $d\in\nat$, $d\geq 2$.
Then there exists some $N_0, M, k_0\in\nat$ such that for any $K\in\nat$ with $K\geq N_0$, the following is true.
Let $\sum_i(f_i, \pi_i)$ be a regular formal Lipschitz sum with $s(\sum_i(f_i,\pi_i))\leq S$.
Then there exists an admissible rectangle of order $k\leq k_0$ and some $n\in\{0,1,\dots K-2\}$ such that the pair of cubes $Q=Q(R,n)$ and $Q'=Q(R,n+1)$ of order $k$ satisfies the estimate
     \begin{equation*}
         |\int_Q \sum_i f_i\det\Diff\pi_i- \int_{Q'} \sum_i f_i \det\Diff\pi_i| \leq r^{d+1} (\sqrt{d}+1)S+r^dc_d\sqrt{S}\varepsilon.
     \end{equation*}
\end{corollary}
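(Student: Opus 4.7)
The proof is an immediate combination of Theorem \ref{T:finding-good-squares} with Lemma \ref{L:principal-est-adjacent-squares}; the former supplies a pair of adjacent cubes on which the maps $\pi_i$ are nearly translation-invariant, and the latter converts this near-invariance into the desired estimate on the difference of integrals.

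First, I apply Theorem \ref{T:finding-good-squares} to the given $S$, $\varepsilon$ and $d$ to obtain constants $N_0, M, k_0 \in \nat$. Fix any $K \geq N_0$ and any regular formal Lipschitz sum $\sum_i(f_i,\pi_i)$ with $s(\sum_i(f_i,\pi_i)) \leq S$. The theorem produces an admissible rectangle $R$ of some order $k \leq k_0$, an index $n \in \{0,\ldots,K-2\}$, the two adjacent admissible cubes $Q = Q(R,n)$ and $Q' = Q(R,n+1)$ of common side length $r = \tfrac{1}{K(KM)^k}$, and vectors $W_i \in \reals^d$ (namely $W_i = \tfrac{1}{K}(\pi_i(l(R))-\pi_i(r(R)))$) such that the corresponding maps $\tilde\pi_i$, obtained from $\pi_i$ by translation by $\tau = c_{Q'}-c_Q$ and a shift by $-W_i$, satisfy
\begin{equation*}
\sum_i L_i^{d-2}\,\lVert \pi_i - \tilde\pi_i \rVert_{\ell^\infty(\partial Q)}^2 \leq r^2 \varepsilon^2.
\end{equation*}

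Second, I apply Lemma \ref{L:principal-est-adjacent-squares} to this pair $(Q,Q')$: the two cubes have the same side length $r$, share a face (being consecutive subcubes of the integral rectangle $R$ in the partition \eqref{E:partitioning-rect-into-cubes}), are contained in $[0,1]^d$, and the translation vector $\tau = c_{Q'}-c_Q$ is exactly the one supplied by the theorem. The lemma then yields
\begin{equation*}
\left\lvert \int_Q \sum_i f_i\det\Diff\pi_i - \int_{Q'} \sum_i f_i\det\Diff\pi_i \right\rvert \leq r^{d+1}(\sqrt{d}+1)S + r^{d-1} c_d \sqrt{S}\,\sqrt{\sum_i L_i^{d-2}\lVert\pi_i-\tilde\pi_i\rVert_{\ell^\infty(\partial Q)}^2}.
\end{equation*}
Substituting the bound from the theorem into the square root shows that it is at most $r\varepsilon$, so the second term becomes $r^d c_d \sqrt{S}\,\varepsilon$, producing exactly the inequality claimed.

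There is no real obstacle: all the geometric and analytic content is contained in the two preceding results, and the corollary is just their juxtaposition. The only point warranting a moment's thought is the matching of conventions for $\tilde\pi_i$ between the theorem and the lemma, but in both cases $\lVert\pi_i - \tilde\pi_i\rVert_{\ell^\infty(\partial Q)}$ measures the same quantity: the defect of $\pi_i$ from being invariant, up to the additive constant $W_i$, under the translation that carries $Q$ to $Q'$.
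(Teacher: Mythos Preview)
Your proposal is correct and follows exactly the paper's own proof, which simply states that the rectangle and cubes come from Theorem \ref{T:finding-good-squares} and the estimate follows by combining that theorem with Lemma \ref{L:principal-est-adjacent-squares}. Your observation about the matching of conventions for $\tilde\pi_i$ is well taken (there is indeed a sign discrepancy between the two statements in the paper), but as you note, the quantity $\lVert \pi_i - \tilde\pi_i\rVert_{\ell^\infty(\partial Q)}$ being controlled is the same in both.
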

\begin{proof}
    The rectangle and the cubes are found in Theorem \ref{T:finding-good-squares}.
The estimate then follows from the combination of the estimate in Theorem \ref{T:finding-good-squares} and Lemma \ref{L:principal-est-adjacent-squares}.
\end{proof}

Finally, we are prepared to construct particularly ill-behaved right hand sides for the PDE \eqref{E:intro-the-equation1}.

Suppose we are given some $k\in\nat$, a bounded function $\rho\colon [0,1]^d\to\reals$.
We now construct a \emph{refinement of $\rho$ at depth $k$} in the following way.
If $x\in [0,1]^d$ does not lie inside any admissible rectangle of order $k$, we let $\refine_k(\rho)(x)=\rho(x)$.
Suppose $x\in R$ for some $R\in \mathcal{R}_k$.
Then there exists some $i\in\{0, \dots, K-1\}$, such that $x\in Q(R,i)$.
If such $i$ can be found even, we let $\refine_k(\rho)(x)=1$ and we let $\refine_k(\rho)(x)=2$ otherwise\footnote{For almost every $x$, the $i$ is unique.
There can only be multiple $i$'s if $x$ lies on the face of one of the cubes.}.
We use the notation
\begin{equation*}
    \refine_{[k]}(\rho)=\refine_k(\refine_{k-1}(\dots(\refine_1(\rho)))).
\end{equation*}

For illustration, see Figure \ref{fig:rho-1}, where a particular instance of the function $\refine_1(\rho)$ is indicated.

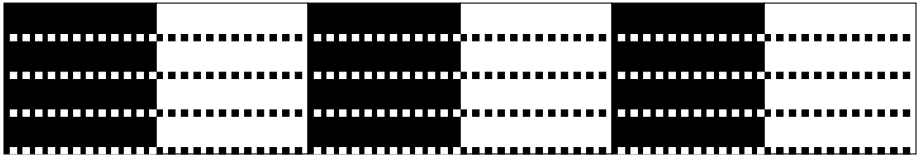
\begin{figure}[ht]
    \centering
    \begin{tikzpicture}
    \def\bs{2}
    \def\num{6} 
    \def\M{4} 
    \def\alllist{0,1,2,3,4,5} 
    \def\elist{0,2,4} 
    \def\olist{1,3,5} 
    \def\llist{0,1,2,3} 

    \foreach \x in \elist {
        \draw[fill] (\x*\bs, 0) rectangle (\x*\bs+\bs, \bs);
    }
    \foreach \x in \olist {
        \draw (\x*\bs, 0) rectangle (\x*\bs+\bs, \bs);
    }
    \foreach \sq in \olist{
    \foreach \x in \llist{
    \foreach \y in \llist {
    \foreach \ssq in \elist
    \draw[fill] (\sq*\bs+\x*\bs/\M+\ssq*\bs/\M/\num,\y*\bs/\M) rectangle (\sq*\bs+\x*\bs/\M + \ssq*\bs/\M/\num+\bs/\M/\num,\y*\bs/\M + \bs/\M/\num);
    }
    }
    }
    \foreach \sq in \elist{
    \foreach \x in \llist{
    \foreach \y in \llist {
    \foreach \ssq in \olist
    \draw[fill, white] (\sq*\bs+\x*\bs/\M+\ssq*\bs/\M/\num,\y*\bs/\M) rectangle (\sq*\bs+\x*\bs/\M + \ssq*\bs/\M/\num+\bs/\M/\num,\y*\bs/\M + \bs/\M/\num);
    }
    }
    }
\end{tikzpicture}
    \caption{Picture of $\refine_1(\rho)$ in case $K=6$, $M=4$ and $d=2$.
The whole rectangle is the initial rectangle $R_0$.
The black regions correspond to sets on which $\refine_1(\rho)=1$ and the white regions correspond to sets on which $\refine_1(\rho)=2$.}
    \label{fig:rho-1}
\end{figure}

\begin{lemma}\label{L:w*-con-subsets}
    Suppose $\rho\in L^\infty([0,1]^d)$ with $\lVert \rho \rVert_\infty\leq 2$ and $V$ is a weak$^*$ open neighbourhood of $\rho$.
Then there exists some $K_0$ such that for every $K\geq K_0$ and for every $k\in\nat$, the function $\refine_{[k]}\rho\in V$.
\end{lemma}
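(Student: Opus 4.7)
The plan is to unfold the definition of the weak$^*$ topology on $L^\infty([0,1]^d)$ into its basic open neighbourhoods and then exploit the fact that $\refine_{[k]}\rho$ can only differ from $\rho$ on a very thin slab. Concretely, a basic weak$^*$ open neighbourhood of $\rho$ has the form
\begin{equation*}
    V_0 = \{\sigma \in L^\infty([0,1]^d) : |\textstyle\int (\sigma-\rho) u_i \diff\mathcal{L}^d| < \varepsilon, \; i=1,\dots, N\},
\end{equation*}
for some finite family $u_1,\dots, u_N \in L^1([0,1]^d)$ and some $\varepsilon>0$. It therefore suffices to choose $K_0$ so that, for every $K\geq K_0$, $\refine_{[k]}\rho \in V_0$ for all $k \in \nat$.

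First I would note, using the inclusion $\bigcup \mathcal{R}_j \supset \bigcup \mathcal{R}_{j+1}$ from Lemma \ref{L:orders-and-subsets}, that the set on which $\refine_{[k]}\rho$ can differ from $\rho$ is contained in $\bigcup_{j=1}^k \bigcup \mathcal{R}_j = \bigcup \mathcal{R}_1$, and this set is itself contained in the thin slab $R_0 = [0,1/K] \times [0,1]^{d-1}$, which has measure $\mathcal{L}^d(R_0)=1/K$. Since $\refine_{[k]}\rho$ takes values in $\{1,2\}$ on that set and $\lVert \rho\rVert_\infty \leq 2$, we have the pointwise bound $|\refine_{[k]}\rho - \rho|\leq 4$ everywhere. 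Consequently, for every $i \in \{1,\dots, N\}$ and every $k \in \nat$,
\begin{equation*}
    \Big|\int (\refine_{[k]}\rho - \rho)\, u_i \diff\mathcal{L}^d\Big| \leq 4 \int_{R_0} |u_i| \diff\mathcal{L}^d.
\end{equation*}

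Second, I would invoke absolute continuity of the Lebesgue integral for $L^1$ functions: for the fixed finite family $u_1,\dots, u_N$, there exists $\delta>0$ such that whenever $E \subset [0,1]^d$ is measurable with $\mathcal{L}^d(E)<\delta$, then $\int_E |u_i| \diff\mathcal{L}^d < \varepsilon/4$ for every $i$. Choosing $K_0 \in \nat$ so that $1/K_0 < \delta$ then forces $\mathcal{L}^d(R_0)=1/K < \delta$ for every $K\geq K_0$, and the displayed inequality above becomes strictly less than $\varepsilon$, uniformly in $k$. Hence $\refine_{[k]}\rho \in V_0$ for every $k$, as required.

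The argument has essentially no obstacle: the whole content is the uniform-in-$k$ observation that the modification region collapses to $\bigcup \mathcal{R}_1$ via the nesting from Lemma \ref{L:orders-and-subsets}, and that this set sits inside the slab $R_0$ of measure $1/K$. Everything else is the standard identification of weak$^*$ open sets in $L^\infty$ with $L^1$ pairings plus absolute continuity of integrals.
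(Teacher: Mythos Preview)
Your proof is correct and follows essentially the same approach as the paper: both rely on the observation that $\refine_{[k]}\rho$ agrees with $\rho$ outside the slab $R_0=[0,1/K]\times[0,1]^{d-1}$, whose measure tends to $0$ with $K$, together with the uniform bound $\lVert\refine_{[k]}\rho\rVert_\infty\le 2$. Your version is in fact more careful than the paper's: the paper phrases the conclusion as ``for any fixed $k$, $\refine_{[k]}(\rho)\to\rho$ weak$^*$ as $K\to\infty$'' and then appeals to the definition of limit, which at face value only gives a $K_0$ depending on $k$; you make the uniformity in $k$ explicit by unfolding a basic weak$^*$ neighbourhood and invoking absolute continuity of the integral against the finitely many $u_i$, which yields a single $K_0$ valid for all $k$.
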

\begin{proof}
    Recall the definition of the initial rectangle $R_0=[0,\frac{1}{K}]\times [0,1]^{d-1}$.
We clearly have $\mathcal{L}^d(R_0)\to 0$ as $K\to \infty$.
By the construction, $\rho=\refine_{[k]}(\rho)$ a.e.~outside $R_0$.
It follows therefore that for any fixed $k\in\nat$, $\refine_{[k]}(\rho)\to \rho$ weak$^*$ as $K\to \infty$.
Since $V$ is weak$^*$ open, the required $K_0$ exists by the definition of limit.
\end{proof}

\begin{lemma}\label{L:differences-of-rho-neighbouring}
    Suppose $k_0 \geq 0$ and $\rho \in L^\infty([0,1]^d)$.
Suppose $R$ is admissible of order $k\leq k_0$, $n\in\{0, 1, \dots, K-2\}$ and $Q=Q(R,n)$, $Q'=Q(R,n+1)$.
Let $r>0$ be the side length of $Q$ and $Q'$.
Then for $\rho_{k_0}=\refine_{[k_0]}\rho$, we have
    \begin{equation}\label{E:integral-rho-neighbouring}
        |\int_Q \rho_{k_0} - \int_{Q'}\rho_{k_0}| \geq r^d (1-5\frac{1}{K^{d-1}})
    \end{equation}
\end{lemma}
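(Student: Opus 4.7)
The plan is a direct bookkeeping argument. The key observation is that since $Q,Q'\subset R\in\mathcal{R}_k$ with $k\leq k_0$, the refinement $\refine_k$ occurs as one of the composed refinements defining $\rho_{k_0}=\refine_{k_0}\circ\cdots\circ\refine_1(\rho)$, and it overwrites $\rho$ on all of $R$. Since $Q=Q(R,n)$ and $Q'=Q(R,n+1)$ have indices of opposite parity, $\refine_k$ assigns $Q$ the constant value $v_Q\in\{1,2\}$ and $Q'$ the constant value $v_{Q'}\in\{1,2\}$ with $|v_Q-v_{Q'}|=1$.

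The only subsequent refinements that can alter the value at a point $x\in Q$ are those $\refine_{k'}$ with $k'>k$, and these act only if $x\in\bigcup\mathcal{R}_{k'}\subset\bigcup\mathcal{R}_{k+1}$. Hence $\rho_{k_0}\equiv v_Q$ on $Q\setminus\bigcup\mathcal{R}_{k+1}$ and $\rho_{k_0}\equiv v_{Q'}$ on $Q'\setminus\bigcup\mathcal{R}_{k+1}$. Crucially, on the leftover sets $S=Q\cap\bigcup\mathcal{R}_{k+1}$ and $S'=Q'\cap\bigcup\mathcal{R}_{k+1}$ the value of $\rho_{k_0}$ still lies in $\{1,2\}$, because every refinement writes values in $\{1,2\}$. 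By Lemma \ref{L:orders-and-subsets} applied to the order-$k$ cubes $Q$ and $Q'$, $\mathcal{L}^d(S),\mathcal{L}^d(S')\leq r^d/K^{d-1}$.

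To finish, assume without loss of generality $v_Q=1$ and $v_{Q'}=2$. Then
\begin{align*}
\int_Q\rho_{k_0}&\in[r^d,\,r^d+\mathcal{L}^d(S)]\subset[r^d,\,r^d(1+K^{1-d})],\\
\int_{Q'}\rho_{k_0}&\in[2r^d-\mathcal{L}^d(S'),\,2r^d]\subset[r^d(2-K^{1-d}),\,2r^d],
\end{align*}
so $\int_{Q'}\rho_{k_0}-\int_Q\rho_{k_0}\geq r^d(1-2K^{1-d})$, which is even stronger than the claimed $r^d(1-5K^{1-d})$; the opposite-parity case is identical by symmetry.

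There is no real obstacle here: the argument is purely combinatorial, and the only thing to be careful about is to trace that the original (possibly arbitrary) $\rho\in L^\infty$ plays no role on $Q\cup Q'$, because $\refine_k$ has already overwritten it there with values in $\{1,2\}$ and no later refinement can restore the original data. Everything else is arithmetic, and the resulting bound has a comfortable cushion over the stated one.
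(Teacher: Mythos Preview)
Your proof is correct and follows essentially the same approach as the paper: identify that $\refine_k$ makes $\rho_{k_0}$ constant on $Q$ and $Q'$ up to the leftover set $\bigcup\mathcal{R}_{k+1}$, then bound the leftover contribution using Lemma~\ref{L:orders-and-subsets}. Your bookkeeping is actually slightly sharper than the paper's, since you use that the leftover values lie in $\{1,2\}$ (yielding the constant $2$) whereas the paper only uses the cruder bound $|\rho_{k_0}|\leq 2$ (yielding the constant $5$).
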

\begin{proof}
    Suppose $n$ is even.
If $n$ is odd, the proof is analogous.
    By construction, $\rho_k=1$ a.e.~on $Q$ and $\rho_k=2$ a.e.~on $Q'$.
    Using Lemma \ref{L:orders-and-subsets} and the construction, we see that each $\rho_{k+1}, \rho_{k+2}, \dots, \rho_{k_0}$ differs from $\rho_k$ only on the set $\bigcup \mathcal{R}_{k+1}$ and, moreover,
    \begin{equation}\label{E:smaller-order-est}
        \mathcal{L}^d(Q \setminus \bigcup \mathcal{R}_{k+1})\geq r^d(1-\frac{1}{K^{d-1}}); \quad \mathcal{L}^d(Q' \setminus \bigcup \mathcal{R}_{k+1})\geq r^d(1-\frac{1}{K^{d-1}}).
    \end{equation}
    Let $E= Q\setminus \bigcup \mathcal{R}_{k+1}$ and $E'=Q'\setminus \bigcup \mathcal{R}_{k+1}$.
Then we have
    \begin{equation*}
        \begin{split}
            \left|\int_Q \rho - \int_{Q'}\rho\right|
            &= \left|\int_E \rho - \int_{E'}\rho + \int_{Q\setminus E}\rho-\int_{Q'\setminus E'}\rho\right|\\
            &\geq \left|\int_E \rho - \int_{E'}\rho\right|-\left|\int_{Q\setminus E}\rho\right|-\left|\int_{Q'\setminus E'}\rho\right|.
        \end{split}
    \end{equation*}
    Since $|\rho|\leq 2$ a.e.~and using the inequalities \eqref{E:smaller-order-est}, we have
    \begin{equation*}
        |\int_{Q\setminus E}\rho|+|\int_{Q'\setminus E'}\rho|\leq r^d 2 \cdot 2 \frac{1}{K^{d-1}}.
    \end{equation*}
    Since $\rho=1$ a.e.~on $E$ and $\rho = 2$ a.e.~on $E'$, we have
    \begin{equation*}
        |\int_E \rho - \int_{E'}\rho|= r^d(1-\frac{1}{K^{d-1}}).
    \end{equation*}
    The combination of the three estimates above implies \eqref{E:integral-rho-neighbouring}.
\end{proof}

Here we recall some basic facts about weak$^*$ topology on $L^\infty([0,1]^d)$.
Firstly, let us fix a standard mollificaiton kernel $\psi\colon \reals^d \to \reals$.
We shall assume that $\psi$ is even, smooth, of unit $L^1$-norm and supported inside the unit ball of $\reals^d$.
For $\delta>0$ we let $\psi_\delta(x)= \delta^d\psi(\frac{x}{\delta})$.
Given $\rho \in L^\infty([0,1]^d)$, we denote its extension to $\reals^d$ by $0$ outside of $[0,1]^d$ by $\rho$ as well.
Then $\psi_\delta * \rho \to \rho$ weak$^*$ in $L^\infty(\reals^d)$ and therefore $(\psi_\delta * \rho)_{|[0,1]^d}\to \rho$ weak$^*$ in $L^\infty([0,1]^d)$ and we have the norm estimate
\begin{equation*}
    \lVert \psi_\delta * \rho \rVert_{\infty}\leq \lVert \rho \rVert_{\infty}.
\end{equation*}
In particular, restrictions of smooth maps are weak$^*$ dense in $L^\infty([0,1]^d)$.

Given any measurable set $E\subset [0,1]^d$, the functional
\begin{equation*}
    \rho \mapsto \int_E \rho
\end{equation*}
is weak$^*$ continuous on $L^\infty(Q)$ by definition of weak$^*$ topology and from the fact that the functional is an action of the $L^1([0,1]^d)$ function $\chi_E$ on $L^\infty([0,1]^d)$.

The point of the following lemma is that if all the information we require of a given $\rho\in L^\infty([0,1]^d)$ are its averages over a finite number of cubes, then a weak$^*$ open set of $\rho$'s can be considered.
We shall use the notation $L^\infty = L^\infty([0,1]^d)$ and we recall that $\lambda B_{L^\infty}$ stands for the ball around $0$ of radius $\lambda$ in $L^\infty$.

\begin{lemma}\label{L:smoothing}
    Suppose $\sigma\in 2 B_{L^\infty}$ and $V$ is a weak$^*$ open neighbourhood of $\sigma$.
Then there is $K_0\in \nat$ such that for every
    $\varepsilon>0$, $K\in \nat$, $K\geq K_0$ $M\in\nat$, $k_0 \in \nat$ we have the following.
Let $\mathcal{Q}^2_{\leq k_0}(\textnormal{adj})$ be the collection of all the ordered pairs $(Q, Q')$ where $R\in \mathcal{R}_{\leq k_0}$, $n\in\{0, \dots, K-2\}$,
    $Q=Q(R,n)$, $Q'=Q(R,n+1)$.
For any pair $(Q, Q')$, the side lengths of $Q$ and $Q'$ are equal and denoted by $r(Q)$.
The set
    \begin{equation*}
    \begin{split}
        U=\left\{\rho \in L^\infty([0,1]^d):\left|\int_Q \rho - \int_{Q'}\rho\right| > r^d(Q)(1-\varepsilon - \frac{5}{K^{d-1}})\;\text{for all $(Q,Q')\in \mathcal{Q}^2_{\leq k_0}(\textnormal{adj})$}\right\},
    \end{split}
    \end{equation*}
    is a weak$^*$ open set containing a smooth function $\rho\in V$ with $\lVert \rho \rVert_\infty \leq 2$.
\end{lemma}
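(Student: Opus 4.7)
My plan is to break the argument into two nearly independent parts: first, I would show that $U$ is weak$^*$ open simply by identifying it as a finite intersection of weak$^*$ open strict-inequality conditions; second, I would produce the required smooth $\rho\in V\cap U$ by taking an admissible refinement of $\sigma$ (which already satisfies non-strict versions of all the inequalities) and then mollifying.

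For the weak$^*$ openness, observe that $\mathcal{R}_{\leq k_0}$ is manifestly finite (for each $k\in\{0,\dots,k_0\}$ it has cardinality $K^k(\#\Reflat)^k=K^k M^{dk}$), and each admissible rectangle contributes $K-1$ adjacent cube pairs, so $\mathcal{Q}^2_{\leq k_0}(\textnormal{adj})$ is finite. For each fixed pair $(Q,Q')$, the functional $\rho\mapsto\int_Q\rho-\int_{Q'}\rho$ is the pairing with $\chi_Q-\chi_{Q'}\in L^1([0,1]^d)$ and hence is weak$^*$ continuous on $L^\infty([0,1]^d)$. The condition
\[
\bigl|\int_Q\rho-\int_{Q'}\rho\bigr|>r(Q)^d\bigl(1-\varepsilon-\tfrac{5}{K^{d-1}}\bigr)
\]
therefore determines a weak$^*$ open set, and $U$ is the intersection of finitely many such sets.

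For the existence of $\rho$, I would apply Lemma~\ref{L:w*-con-subsets} to the pair $(\sigma,V)$ to obtain a threshold $K_0\in\nat$ such that, for every $K\geq K_0$ and every $k\in\nat$, the function $\refine_{[k]}\sigma$ lies in $V$. Fixing $\varepsilon>0$, $K\geq K_0$, $M$ and $k_0$, set $\rho_{k_0}=\refine_{[k_0]}\sigma$. By construction $\|\rho_{k_0}\|_\infty\leq 2$, and Lemma~\ref{L:differences-of-rho-neighbouring} gives the non-strict bound
\[
\bigl|\int_Q\rho_{k_0}-\int_{Q'}\rho_{k_0}\bigr|\geq r(Q)^d\bigl(1-\tfrac{5}{K^{d-1}}\bigr)\quad\text{for every $(Q,Q')\in\mathcal{Q}^2_{\leq k_0}(\textnormal{adj})$}.
\]
So $\rho_{k_0}$ already belongs to $V$, obeys the correct $L^\infty$ bound, and satisfies the defining inequalities of $U$ with equality rather than strict inequality; what it lacks is smoothness and that slack of $\varepsilon r(Q)^d$.

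Both deficits can be cured simultaneously by mollification. Extending $\rho_{k_0}$ by zero outside $[0,1]^d$, put $\rho^{(\delta)}=\psi_\delta*\rho_{k_0}$ for $\delta>0$. Then $\rho^{(\delta)}$ is smooth on $\reals^d$, satisfies $\|\rho^{(\delta)}\|_\infty\leq\|\rho_{k_0}\|_\infty\leq 2$, and (as recalled in the paragraph preceding the lemma) $\rho^{(\delta)}|_{[0,1]^d}\to\rho_{k_0}$ weak$^*$ in $L^\infty([0,1]^d)$ as $\delta\to 0^+$. Since $V$ is weak$^*$ open and contains $\rho_{k_0}$, there exists $\delta_1>0$ with $\rho^{(\delta)}\in V$ whenever $0<\delta<\delta_1$. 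Weak$^*$ convergence also yields $\int_Q\rho^{(\delta)}\to\int_Q\rho_{k_0}$ and $\int_{Q'}\rho^{(\delta)}\to\int_{Q'}\rho_{k_0}$ for each of the finitely many pairs $(Q,Q')\in\mathcal{Q}^2_{\leq k_0}(\textnormal{adj})$; combining this with the estimate above, I can pick $\delta_2>0$ such that for all $\delta\in(0,\delta_2)$ and every such pair,
\[
\bigl|\int_Q\rho^{(\delta)}-\int_{Q'}\rho^{(\delta)}\bigr|>r(Q)^d\bigl(1-\varepsilon-\tfrac{5}{K^{d-1}}\bigr).
\]
Any $\delta\in(0,\min\{\delta_1,\delta_2\})$ then gives a smooth $\rho=\rho^{(\delta)}\in V\cap U$ with $\|\rho\|_\infty\leq 2$.

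There is no serious obstacle here: the entire argument rides on the finiteness of the family $\mathcal{Q}^2_{\leq k_0}(\textnormal{adj})$ (which turns weak$^*$ open conditions into a weak$^*$ open intersection, and allows one choice of $\delta$ to preserve all the strict inequalities simultaneously) together with the two earlier lemmas that place $\refine_{[k_0]}\sigma$ into $V$ and furnish the desired non-strict oscillation estimate. The slightly delicate point worth being careful about is only that the $r(Q)$'s take finitely many distinct values, so one can absorb the mollification error uniformly into the slack $\varepsilon r(Q)^d$.
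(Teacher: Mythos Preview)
Your proof is correct and follows essentially the same route as the paper: show $U$ is a finite intersection of weak$^*$ open sets, invoke Lemma~\ref{L:w*-con-subsets} to place $\rho_{k_0}=\refine_{[k_0]}\sigma$ in $V$, invoke Lemma~\ref{L:differences-of-rho-neighbouring} for the oscillation bound, and then mollify. One small misstatement: you write that $\rho_{k_0}$ ``satisfies the defining inequalities of $U$ with equality rather than strict inequality,'' but in fact Lemma~\ref{L:differences-of-rho-neighbouring} gives $\geq r(Q)^d(1-5/K^{d-1})$, which is already \emph{strictly} larger than the threshold $r(Q)^d(1-\varepsilon-5/K^{d-1})$ since $\varepsilon>0$; so $\rho_{k_0}\in U$ outright, and the paper simply uses that $U\cap V$ is weak$^*$ open and contains $\rho_{k_0}$ to conclude. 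Your more explicit handling of the $\varepsilon$-slack under mollification is harmless but unnecessary.
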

\begin{proof}
    Firstly, the fact that side lengths of $Q$ and $Q'$ agree whenever $(Q,Q')\in \mathcal{Q}^2_{\leq k_0}(\textnormal{adj})$ is an immediate consequence of the fact that $Q$ and $Q'$ are of the same order.

    By Lemma \ref{L:w*-con-subsets}, we find $K_0$ such that the map $\rho_{k_0}=\refine_{[k_0]}(\sigma) \in V$ for every $k_0\geq 0$ and $K\geq K_0$.
    By Lemma \ref{L:differences-of-rho-neighbouring}, the set $U$ contains the function $\rho_{k_0}$.
Finally, $\rho_{k_0}\in 2 B_{L^\infty}$ by construction.
    Thus, if we show that $U$ is weak$^*$ open, using also the discussion above, we see that for some $\delta>0$, the mollified map $(\psi_\delta*\rho_{k_0})_{|[0,1]^d} \in U\cap V \cap 2B_{L^\infty}$ and therefore we may take for $\rho$ this mollified map.
To show that $U$ is weak$^*$ open, we write
    \begin{equation*}
        U=\bigcap_{(Q,Q')\in \mathcal{Q}^2_{\leq k_0}(\textnormal{adj})}\left\{\rho \in L^\infty([0,1]^d):\left|\int_Q \rho - \int_{Q'}\rho\right| > r^d(1-\varepsilon - \frac{1}{K^{d-1}})\right\}
    \end{equation*}
    By the discussion above, and by continuity of substraction and absolute value, we see that each function
    \begin{equation*}
        \rho \mapsto \left|\int_Q \rho - \int_{Q'}\rho\right|
    \end{equation*}
    is weak$^*$ continuous.
Since $\mathcal{Q}^2_{\leq k_0}(\textnormal{adj})$ is a finite set, $U$ is a finite intersection of weak$^*$ open sets and therefore weak$^*$ open.
\end{proof}

\begin{theorem}\label{T:obtaining-final-rho}
    Let $d\in \nat, d\geq 2$.
    For any $\eta>0$, $\sigma \in 2B_{L^\infty}$ and a weak$^*$ open neighbourhood $V$ of $\sigma$, there exists $N_1\in\nat$ such that if $K\in\nat$, $K\geq N_1$, $M\in\nat$, then for any $k_0\in\nat$, there exists a weak$^*$ open set $U\subset L^\infty([0,1]^d)$ having the following properties.
Whenever $R\in \mathcal{R}_{\leq k_0}$, $n\in\{0, 1, \dots, K-2\}$, $Q=Q(R,n)$ and $Q'=Q(R,n+1)$, then
        \begin{equation*}
        \left|\int_Q \rho - \int_{Q'}\rho\right| > r^d(1-\eta) \quad \text{for every $\rho \in U$,}
    \end{equation*}
    where $r$ is the side length of $Q$ and $Q'$.
Moreover, $U$ contains a smooth map $\rho\in V$ of norm at most $2$.
\end{theorem}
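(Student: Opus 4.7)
The plan is to derive Theorem \ref{T:obtaining-final-rho} as a straightforward corollary of Lemma \ref{L:smoothing}, by choosing the free parameter $\varepsilon$ in that lemma small and $K$ large so that the combined error term $\varepsilon + 5/K^{d-1}$ is bounded by $\eta$. Concretely, given $\sigma$ and $V$, I would first invoke Lemma \ref{L:smoothing} to obtain the threshold $K_0 \in \nat$ provided by that lemma. I would then set $\varepsilon := \eta/2$ and define $N_1 := \max\{K_0, \lceil (10/\eta)^{1/(d-1)} \rceil\}$; the hypothesis $d \geq 2$ makes $d - 1 \geq 1$, so this exponent is finite and $N_1$ depends only on $\eta$, $\sigma$ and $V$. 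Any $K \geq N_1$ then satisfies $5/K^{d-1} \leq \eta/2$, whence $\varepsilon + 5/K^{d-1} \leq \eta$.

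Next, for any such $K \geq N_1$, any $M \in \nat$, and any $k_0 \in \nat$, I would apply Lemma \ref{L:smoothing} with these parameters and with $\varepsilon = \eta/2$. The lemma directly yields a weak$^*$ open set $U$ whose defining inequality, valid for every pair $(Q, Q') \in \mathcal{Q}^2_{\leq k_0}(\textnormal{adj})$ and every $\rho \in U$, reads
\begin{equation*}
    \left|\int_Q \rho - \int_{Q'} \rho\right| > r^d\Bigl(1 - \tfrac{\eta}{2} - \tfrac{5}{K^{d-1}}\Bigr) \geq r^d(1 - \eta),
\end{equation*}
where $r = r(Q)$ denotes the common side length of $Q$ and $Q'$. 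Since the collection $\mathcal{Q}^2_{\leq k_0}(\textnormal{adj})$ from Lemma \ref{L:smoothing} is defined to consist of exactly the ordered pairs $(Q(R,n), Q(R,n+1))$ with $R \in \mathcal{R}_{\leq k_0}$ and $n \in \{0, 1, \dots, K-2\}$, this is precisely the estimate demanded by the theorem. The same lemma simultaneously guarantees that $U$ contains a smooth map $\rho \in V$ with $\lVert \rho \rVert_\infty \leq 2$, supplying the remaining assertion.

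All the substantive work sits inside Lemma \ref{L:smoothing} (which in turn packages the iterative construction of $\refine_{[k_0]}$ together with the weak$^*$-continuity of the functionals $\rho \mapsto \int_Q \rho$), so there is no genuine obstacle here; the only bookkeeping is to absorb the two error contributions $\varepsilon$ and $5/K^{d-1}$ into a single parameter $\eta$. The purpose of isolating this restatement is purely organizational: the subsequent Hahn--Banach argument needs the estimate in the single-parameter form $r^d(1-\eta)$ rather than in terms of the finer quantities $\varepsilon$ and $K$.
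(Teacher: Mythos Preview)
Your proof is correct and essentially identical to the paper's own argument: both invoke Lemma \ref{L:smoothing} with $\varepsilon = \eta/2$ and choose $N_1 \geq K_0$ large enough that $5/K^{d-1} \leq \eta/2$, then read off the conclusion directly. Your write-up is in fact slightly cleaner than the paper's, which contains a minor typo in the final displayed inequality.
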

\begin{proof}
Let $K_0\in\nat$ be from Lemma \ref{L:w*-con-subsets} and let
    $N_1\geq K_0$ be such that $\frac{5}{N_1^{d-1}}\leq \frac{1}{2}\eta$ and let $\varepsilon=\frac{1}{2}\eta$.
Let $K\geq N_1$ and $M\in\nat$.
Let $k_0\in \nat$.
    We find $U$ and $\rho$ from Lemma \ref{L:smoothing}.
Then \emph{every} $\rho \in U$ has the required property by Lemma \ref{L:smoothing} and the fact that
    \begin{equation*}
        1- \varepsilon - \frac{1}{N_1^{d-1}}\leq 1- \frac{1}{2} \eta + \frac{1}{2}\eta = 1-\eta.
    \end{equation*}
\end{proof}

We say that a formal Lipschitz sum $\sum_i(f_i, \pi_i)$ \emph{solves} the equation \eqref{E:intro-the-equation1} if the sequence $(f_i, \pi_i)$ solves the equation in the sense of Definition \ref{D:solution}.
Note that the following theorem immediately implies Proposition \ref{P:intro-strong-non-surj}.

\begin{theorem}\label{T:main-counterexample}
    For any $\sigma \in 2B_{L^\infty}$ together with a weak$^*$ open neighbourhood $V$ and for any $S\geq 0$, there exists a smooth map $\rho_0\in 2 B_{L^\infty}\cap V$ and a weak$^*$ open set $U$ containing $\rho_0$ such that any formal Lipschitz sum $\sum_i(f_i,\pi_i)$ solving the PDE \eqref{E:intro-the-equation1} with a right hand side $\rho \in U$ must satisfy
    \begin{equation*}
        s(\sum_i(f_i, \pi_i))>S.
    \end{equation*}
\end{theorem}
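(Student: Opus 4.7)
The plan is to prove Theorem \ref{T:main-counterexample} by contradiction, combining Corollary \ref{C:est-on-good-square} with Theorem \ref{T:obtaining-final-rho}. Given $S\geq 0$, $\sigma\in 2B_{L^\infty}$ and a weak$^*$ neighbourhood $V$ of $\sigma$, I would first fix $\eta=\tfrac{1}{2}$ and choose $\varepsilon>0$ such that $c_d\sqrt{S}\,\varepsilon<(1-\eta)/3$. Applying Corollary \ref{C:est-on-good-square} with this $\varepsilon$ and $S$ produces constants $N_0,M,k_0$, while applying Theorem \ref{T:obtaining-final-rho} with $\eta$, $\sigma$, $V$ produces a further constant $N_1$. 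I would then take any $K\in\nat$ with $K\geq\max\{N_0,N_1\}$ and additionally large enough that $(\sqrt{d}+1)S/K<(1-\eta)/3$. Using this $K$, the $M$ from Corollary \ref{C:est-on-good-square}, and the same $k_0$, Theorem \ref{T:obtaining-final-rho} yields a weak$^*$ open set $U\subset L^\infty([0,1]^d)$ together with a smooth $\rho_0\in U\cap V\cap 2B_{L^\infty}$ having the property that, for every $\rho\in U$ and every adjacent admissible pair $(Q,Q')$ of order at most $k_0$, one has $|\int_Q\rho-\int_{Q'}\rho|>r^d(1-\eta)$, where $r$ is the common side length.

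Now suppose for contradiction that some $\rho\in U$ admits a formal Lipschitz sum $\sum_i(g_i,\kappa_i)$ solving \eqref{E:intro-the-equation1} with $s$-value at most $S$. By Proposition \ref{P:regular} I pass to an equivalent regular sum $\sum_i(f_i,\pi_i)$ of the same $s$-value; Lemma \ref{L:equivalences} ensures that this regular sum is also weakly equivalent to the original, so $\sum_i f_i\det\Diff\pi_i=\rho$ a.e. Corollary \ref{C:est-on-good-square} applied to the regular sum then produces an adjacent admissible pair $(Q,Q')$ of some order $k\leq k_0$ satisfying
\begin{equation*}
    \left|\int_Q\sum_i f_i\det\Diff\pi_i-\int_{Q'}\sum_i f_i\det\Diff\pi_i\right|\leq r^{d+1}(\sqrt{d}+1)S+r^d c_d\sqrt{S}\,\varepsilon.
\end{equation*}
Substituting $\sum_i f_i\det\Diff\pi_i=\rho$ on the left, and using $r\leq 1/K$ on the right (which holds for every admissible cube, since even order-zero cubes have side length $1/K$), the defining property of $U$ combined with the parameter choices gives
\begin{equation*}
    r^d(1-\eta)<\left|\int_Q\rho-\int_{Q'}\rho\right|\leq r^d\bigl[(\sqrt{d}+1)S/K+c_d\sqrt{S}\,\varepsilon\bigr]<r^d\cdot\tfrac{2}{3}(1-\eta),
\end{equation*}
which is the desired contradiction.

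Since all the substantive work has been packaged into the two input results, the only delicate point is the order in which the parameters are chosen: $\varepsilon$ must be fixed first, because it determines $k_0$ via Corollary \ref{C:est-on-good-square}, and only then can $K$ be taken sufficiently large to dominate $N_0$, $N_1$ and to make $(\sqrt{d}+1)S/K$ small. The essential structural feature that makes the contradiction possible is that the PDE error in Corollary \ref{C:est-on-good-square} splits into a term proportional to $r^{d+1}S$ and a term proportional to $r^d\sqrt{S}\,\varepsilon$, rather than being a single term proportional to $r^d$; the extra factor $r\leq 1/K$ in the first summand is precisely what is needed to absorb the ``large-$S$'' contribution, which is why the estimates of Lemma \ref{L:est-on-cubes-sums} were arranged with these particular powers of $r$ and $L_i$.
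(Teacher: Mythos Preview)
Your proof is correct and follows essentially the same route as the paper's own argument: fix $\varepsilon$ first (the paper takes $\varepsilon=1/(c_d\sqrt{S})$ and $\eta=\tfrac14$, you take $\eta=\tfrac12$ and $\varepsilon$ so that $c_d\sqrt{S}\varepsilon<(1-\eta)/3$), invoke Corollary~\ref{C:est-on-good-square} to pin down $N_0,M,k_0$, then Theorem~\ref{T:obtaining-final-rho} to get $N_1$, choose $K$ large enough, and finish by the contradiction between the two displayed estimates on $|\int_Q\rho-\int_{Q'}\rho|$. One tiny imprecision: Proposition~\ref{P:regular} does not assert that the regular representative has \emph{the same} $s$-value, only (from its proof) that the new $s$-value does not exceed the old one; since you only need it to be at most $S$, this is harmless.
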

\begin{proof}
    If $S=0$, the statement is obvious.
Otherwise, we let
    $\varepsilon=\frac{1}{c_d \sqrt{S}}$ and find $N_0$, $M$ and $k_0$ from Corollary \ref{C:est-on-good-square}.
    Let $\eta=\frac{1}{4}$ and find $N_1\in \nat$ from Theorem \ref{T:obtaining-final-rho}.
Now, let us choose some $K\in\nat$, $K\geq \max\{N_0, N_1\}$ such that also $\frac{1}{K}(\sqrt{d}+1)S\leq \frac{1}{4}$.
    Let $U$ and $\rho$ be from Theorem \ref{T:obtaining-final-rho} and denote $\rho_0=\rho$.
Let $\rho \in U$ be arbitrary and suppose $\sum_i(f_i,\rho_i)$ is a formal Lipschitz sum solving the PDE \eqref{E:intro-the-equation1}.
Due to Proposition \ref{P:regular} and Lemma \ref{L:equivalences}, we may assume, without loss of generality, that $\sum_i(f_i, \pi_i)$ is regular.
For contradiction, we shall assume that
    \begin{equation*}
        s(\sum_i(f_i, \pi_i))\leq S.
    \end{equation*}
    Then by Corollary \ref{C:est-on-good-square}, (adopting the notation for $Q$ and $Q'$ from there),
    \begin{equation*}
        \left|\int_Q \rho - \int_{Q'}\rho\right|\leq r^{d+1}(\sqrt{d}+1)S+r^d c_d \sqrt{S}\varepsilon.
    \end{equation*}
    On the other hand, by Theorem \ref{T:obtaining-final-rho},
    \begin{equation*}
        \left|\int_Q \rho - \int_{Q'}\rho\right| \geq r^d(1-\eta).
    \end{equation*}
    So in combination, we arrive at
    \begin{equation*}
        r^d(1-\eta) \leq r^{d+1}(\sqrt{d}+1)S+r^d c_d \sqrt{S}\varepsilon,
    \end{equation*}
    i.e.
    \begin{equation*}
        (1-\eta)\leq r (\sqrt{d}+1)S + c_d \sqrt{S}\varepsilon
    \end{equation*}
    where we may write $r$ in terms of $K$ and $M$ and obtain
    \begin{equation*}
        (1-\eta) \leq (\frac{1}{K(KM)^k})(\sqrt{d}+1)S+c_d \sqrt{S}\varepsilon,
    \end{equation*}
    where $k\leq k_0$ is the order of $Q$ and $Q'$.
Thus, by our choice of $\varepsilon$, $\eta$ and $K$,
    \begin{equation*}
        \frac{3}{4}\leq \frac{1}{4} + \frac{1}{4},
    \end{equation*}
    a contradiction.
\end{proof}

Let us now recall Definition \ref{D:pot-forms}.
The space $\mathbb{P}^d$ is an appropriately quotiented space of formal Lipschitz sums and the map $\Em\colon \mathbb{P}^d \to L^\infty$ is given by
\begin{equation*}
    \Em([\sum_i(f_i, \pi_i)])=\sum_i f_i \det\Diff\pi_i\in L^\infty.
\end{equation*}
In the language of functional analysis, Theorem \ref{T:main-counterexample} tells us that for any $\sigma \in 2 B_{L^\infty}$ and a weak$^*$ open neighbourhood $V$ of $\sigma$, we find $\rho_0 \in V \cap 2B_{L^\infty}$ together with a weak$^*$ open neighbourhood $U$ such that $U\cap \Em(SB_{\mathbb{P}^d})=\emptyset$.
This immediately implies that
\begin{equation*}
    2 B_{L^\infty}\setminus \Em(SB_{\mathbb{P}^d})\supset\mathcal{U}= \bigcup_{\sigma, V} U_{\sigma, V},
\end{equation*}
where the union is taken over all $\sigma\in 2B_{L^\infty}$ and weak$^*$ open neighbourhoods $V$ of $\sigma$ and $U_{\sigma, V}$ is any open set having the property described for $U$ above.
In particular, the set $\mathcal{U}$ is weak$^*$ dense and open.
It follows by taking complements, that
\begin{equation*}
    2 B_{L^\infty}\cap\Em(SB_{\mathbb{P}^d})
\end{equation*}
can be covered by a weak$^*$ nowhere dense set (meaning intuitively that it is very small).
Thus,
\begin{equation*}
    2B_{L^\infty}\cap \Em(\mathbb{P}^d),
\end{equation*}
may be covered by a countable union of weak$^*$ nowhere dense sets, which by definition makes it a first category set in the sense of Baire.
Since $2B_{L^\infty}$ equipped with the weak$^*$ topology is a completely metrisable space, the Baire category theorem may be applied in this case to assert that
\begin{equation*}
    2 B_{L^\infty}\setminus\Em({\mathbb{P}^d})
\end{equation*}
is weak$^*$ dense inside $2 B_{L^\infty}$.
This is in the spirit of the results of Dymond et al \cite{DKK}.
Since the Baire categorical considerations are a matter of applying a standard theorem, we state the intermediate result in a convenient form.

\begin{theorem}\label{T:nowhere-dense}
    Let $S\geq 0$ and $d\geq 2$ and suppose in the following we are working with spaces and functions defined over $[0,1]^d$.
Then the set
    \begin{equation*}
        \{\sum_if_i \det\Diff\pi_i: \sum_i\max\{\lVert f_i \rVert_\infty, \tLip(f_i)\}\prod_{j=1}^d \tLip(\pi_i^j)\leq S\}\cap B_{L^\infty}
    \end{equation*}
    is weak$^*$ nowhere dense in $B_{L^\infty}$.
\end{theorem}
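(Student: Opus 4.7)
The plan is to deduce Theorem \ref{T:nowhere-dense} directly from Theorem \ref{T:main-counterexample} via a rescaling that places the constructed counter-example inside $B_{L^\infty}$ rather than merely inside $2B_{L^\infty}$. Denote
\begin{equation*}
    \mathcal{E}_S = \left\{\sum_i f_i \det\Diff\pi_i : \sum_i \max\{\lVert f_i \rVert_\infty, \tLip(f_i)\} \prod_{j=1}^d \tLip(\pi_i^j) \leq S\right\}.
\end{equation*}
The set $\mathcal{E}_S$ is absolutely homogeneous in the parameter, $\lambda \mathcal{E}_S = \mathcal{E}_{|\lambda| S}$ for every $\lambda \in \reals$, as seen by rescaling each $f_i$ by $\lambda$: this multiplies $\sum_i f_i \det\Diff\pi_i$ by $\lambda$ and the control quantity $\sum_i \lVert f_i \rVert_L \prod_j \tLip(\pi_i^j)$ by $|\lambda|$.

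Next, I would fix a non-empty weak$^*$ relatively open $V \subset B_{L^\infty}$ and write $V = \tilde V \cap B_{L^\infty}$ with $\tilde V \subset L^\infty$ weak$^*$ open. Pick any $\sigma \in V$ and apply Theorem \ref{T:main-counterexample} to the rescaled data $\sigma' = 2\sigma \in 2B_{L^\infty}$, the weak$^*$ open neighbourhood $\tilde V' = 2\tilde V$ of $\sigma'$, and the parameter $S' = 2S$. This supplies a smooth $\rho'_0 \in \tilde V' \cap 2B_{L^\infty}$ together with a weak$^*$ open set $U' \ni \rho'_0$ such that $U' \cap \mathcal{E}_{2S} = \emptyset$.

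Now set $\rho_0 = \rho'_0 / 2$ and $U = \{\rho \in L^\infty : 2\rho \in U'\}$. Weak$^*$-continuity of scalar multiplication makes $U$ weak$^*$ open, and $\rho_0 \in U$ since $2\rho_0 = \rho'_0 \in U'$. Moreover, the inclusion $\rho'_0 \in 2\tilde V \cap 2B_{L^\infty}$ yields $\rho_0 \in \tilde V \cap B_{L^\infty} = V$. If some $\rho \in U \cap \mathcal{E}_S$ existed, homogeneity would give $2\rho \in \mathcal{E}_{2S}$ and the definition of $U$ would give $2\rho \in U'$, contradicting $U' \cap \mathcal{E}_{2S} = \emptyset$. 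Hence $U \cap \mathcal{E}_S = \emptyset$, and $W := U \cap \tilde V \cap B_{L^\infty}$ is a non-empty weak$^*$ relatively open subset of $V$ disjoint from $\mathcal{E}_S \cap B_{L^\infty}$. Since $V$ was arbitrary, this establishes the claimed nowhere density.

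I do not expect a real obstacle: all of the substantive content has already been packaged in Theorem \ref{T:main-counterexample}. The only mild subtlety to be aware of is that that theorem produces $\rho_0$ of supremum norm possibly up to $2$, so without first doubling both $S$ and $\tilde V$ one would only obtain nowhere density in $2B_{L^\infty}$, which does not in general descend to $B_{L^\infty}$ as a subspace.
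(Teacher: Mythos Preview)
Your proof is correct and follows essentially the same route as the paper: both derive the theorem from Theorem \ref{T:main-counterexample} by observing that every basic weak$^*$ neighbourhood meets a weak$^*$ open set disjoint from $\mathcal{E}_S$. The paper's discussion preceding the theorem carries this out inside $2B_{L^\infty}$ and then states the result for $B_{L^\infty}$ without further comment; you make explicit the homogeneity rescaling $\lambda\mathcal{E}_S=\mathcal{E}_{|\lambda|S}$ needed to transport nowhere density from $2B_{L^\infty}$ to $B_{L^\infty}$, which is a genuine (if minor) point the paper glosses over.
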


Theorem \ref{T:intro-reg} follows trivially from the following corollary.

\begin{corollary}\label{C:failure-of-everything}
    The operator $\Em\colon \mathbb{P}^d \to L^\infty$ is strongly non-surjective.
The prescribed Jacobian equation has strong non-regularity.
The Conjecture \ref{Con:Lang-TD} fails, i.e.~there exists a non-flat metric $d$-current supported in $[0,1]^d$.
\end{corollary}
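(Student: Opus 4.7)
The proof assembles three previously established results --- Theorem \ref{T:main-counterexample}, Corollary \ref{C:non-surj0}, and Theorem \ref{T:f-a-characterisation} --- with the only genuinely new step being a simple rescaling that bridges the first of these (whose witnesses live in $2B_{L^\infty}$) with the definition of strong non-surjectivity (which demands witnesses in arbitrarily small norm balls).

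First I would establish strong non-surjectivity of $\Em\colon\mathbb{P}^d\to L^\infty$. Fix $n\in\nat$, and apply Theorem \ref{T:main-counterexample} with $\sigma=0$, the weak$^*$ open neighbourhood $V=L^\infty$ of $\sigma$, and $S=2n+1$, obtaining a smooth $\tilde\rho_n\in 2B_{L^\infty}$ together with a weak$^*$ open $\tilde U_n\ni\tilde\rho_n$ such that every formal Lipschitz sum solving the prescribed Jacobian equation with right hand side in $\tilde U_n$ has $s>2n+1$. Set $\rho_n=\tilde\rho_n/(2n)$ and $U_n=\tilde U_n/(2n)$. Since multiplication by a nonzero scalar is a weak$^*$ homeomorphism of $L^\infty$, $U_n$ is weak$^*$ open, and $\lVert \rho_n\rVert_{\infty}\leq 1/n$ by construction. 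Since $\rho_n$ is Lipschitz and bounded on $Q$, the representative $[(\rho_n,\Id)]$ exhibits $\rho_n\in\Em(\mathbb{P}^d)$ via $\det\Diff\Id=1$. To check $U_n\cap\Em(B_{\mathbb{P}^d})=\emptyset$, suppose some $\omega\in B_{\mathbb{P}^d}$ satisfied $\Em(\omega)\in U_n$. Then $\Em(2n\omega)=2n\,\Em(\omega)\in\tilde U_n$, so every representative $\sum_i(f_i,\pi_i)\in 2n\omega$ would satisfy $s(\sum_i(f_i,\pi_i))>2n+1$; taking the infimum over such representatives gives $\lVert 2n\omega\rVert_{\mathbb{P}^d}\geq 2n+1$, hence $\lVert \omega\rVert_{\mathbb{P}^d}\geq (2n+1)/(2n)>1$, contradicting $\omega\in B_{\mathbb{P}^d}$.

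With $\Em$ strongly non-surjective, Corollary \ref{C:non-surj0} upgrades this to strong Lipschitz non-regularity of the prescribed Jacobian equation, and Theorem \ref{T:f-a-characterisation} then delivers the failure of Conjecture \ref{Con:Lang-TD}. The final assertion --- existence of a non-flat compactly supported metric $d$-current in $[0,1]^d$ --- follows immediately from that failure combined with the isometric identification $\mathbb{F}_d(Q)\cong L^1(Q)$ provided by Lemma \ref{L:prel-identify-with-L1}: a current witnessing the failure has comparison-map image not represented by any $L^1$ density, and therefore cannot lie in $\mathbb{F}_d(Q)$. The main obstacle in the entire argument is just the rescaling step above; all of the deep content has already been absorbed into Theorem \ref{T:main-counterexample}.
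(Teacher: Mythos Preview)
Your proof is correct and follows essentially the same approach as the paper's: both invoke Theorem \ref{T:main-counterexample} (with $V=L^\infty$, so the choice of $\sigma$ is immaterial), rescale to place the witness $\rho_n$ in $\frac{1}{n}B_{L^\infty}$, observe that smoothness of $\rho_n$ gives $\rho_n\in\Em(\mathbb{P}^d)$ via $(\rho_n,\Id)$, and then appeal to Corollary \ref{C:non-surj0} and Theorem \ref{T:f-a-characterisation}. The only difference is cosmetic: the paper says ``a rescaled version of Theorem \ref{T:main-counterexample}'' and leaves the details implicit, whereas you carry out the rescaling by hand (dividing by $2n$) and track the norm via $\lVert 2n\omega\rVert_{\mathbb{P}^d}\geq 2n+1$.
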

\begin{proof}
    Once we show that the operator $\Em$ is strongly non-surjective, the remaining statements follow from Corollary \ref{C:non-surj0} and Theorem \ref{T:f-a-characterisation}.
In the following we apply a rescaled version of Theorem \ref{T:main-counterexample} for the special case of $\sigma=1$ and $V=L^\infty$.
For every $n\in\nat$, there exists a smooth function $\rho_n \in \frac{1}{n}B_{L^\infty}$ together with a weak$^*$ open set $U\subset L^\infty$, $\rho_n \in U$, such that for any formal Lipschitz sum $\sum_i (f_i, \pi_i)$ with
    \begin{equation*}
        s(\sum_i(f_i, \pi_i))\leq 1,
    \end{equation*}
    it holds that
    \begin{equation*}
        \sum_i f_i \det\Diff \pi_i \not = \rho \quad\text{on a positive measure set for  every $\rho\in U$.}
    \end{equation*}
    In other words,
    \begin{equation*}
        \Em(B_{\mathbb{P}^d})\cap U = \emptyset.
    \end{equation*}
    Since $\rho_n$ is smooth, it holds that $\rho \in \Em(\mathbb{P}^d)$, by taking for example $f_1 = \rho$, $\pi_1 = \Id$ and $f_i=0, \pi_i = 0$ for $i\geq 2$.
Thus, $\rho_n \in \frac{1}{n}B_{L^\infty(Q)}\cap \Em(\mathbb{P}^d)$ and the definition of strong non-surjectivity is thus verified.
\end{proof}

\section{The Flat Chain Conjecture of Lang in all dimensions and codimensions}\label{S:all-dim}
 The purpose of this section is to prove Theorem \ref{T:fcc-fails-all-cases}. We start with the following property of flat chains.

 \begin{theorem}\label{T:flat-structure}
    If $d\in \nat$, $k\in\{1, \dots, d\}$ and $T \in \mathbb{F}_k(B)$ for some ball in $\reals^d$ and $\partial T = 0$, then we may find $S\in\mathbb{F}_{k+1}(B)$ with $\partial S = T$ and
        \begin{equation*}
            \lVert S \rVert <\infty.
        \end{equation*}
 \end{theorem}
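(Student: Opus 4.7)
The plan is to combine the infimum characterisation of the flat norm with the classical cone construction on normal currents.

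Since $T \in \mathbb{F}_k(B)$, its flat norm is finite, so invoking the infimum definition recalled in Subsection \ref{SS:prel-currents} I fix a decomposition
\begin{equation*}
T = N + \partial M,
\end{equation*}
where $N$ is a $k$-current and $M$ is a $(k+1)$-current, both of finite mass and supported in $B$. Applying $\partial$ and using $\partial T = 0$ with $\partial^2 = 0$ gives $\partial N = 0$, so $N$ is a normal $k$-current supported in $B$.

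Next I would cone off $N$ from the centre of $B$. Let $p$ be the centre and let $h\colon [0,1]\times\reals^d \to \reals^d$ denote the straight-line homotopy $h(t,x) = p + t(x-p)$; set
\begin{equation*}
C := h_\#\bigl(\llbracket 0,1\rrbracket \times N\bigr).
\end{equation*}
By the standard theory of cones over classical currents (see e.g.\ Federer \cite[4.1.11]{Federer}), the $(k+1)$-current $C$ has finite mass bounded by a constant multiple of $r\,\lVert N \rVert$ (with $r$ the radius of $B$), its support lies in $B$ by convexity of $B$ and $p \in B$, and the boundary identity
\begin{equation*}
\partial C \;=\; N - p\#\partial N \;=\; N
\end{equation*}
holds, using $\partial N = 0$ together with $k \geq 1$ (so the contribution from the degenerate slice $h(\{0\}\times\reals^d) = \{p\}$ vanishes).

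Finally I would set $S := C + M$. Then by linearity of the boundary,
\begin{equation*}
\partial S = \partial C + \partial M = N + \partial M = T,
\end{equation*}
and $\lVert S \rVert \leq \lVert C \rVert + \lVert M \rVert < \infty$. Since $S$ has finite mass and compact support in $B$, a standard mollification-and-retraction argument using the nearest-point projection onto the convex set $B$ (in the spirit of the remark following Conjecture \ref{Con:Lang-general}) approximates $S$ in flat norm by normal currents supported in $B$, so $S \in \mathbb{F}_{k+1}(B)$.

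The main obstacle is invoking the cone construction correctly, with the mass estimate, the support inclusion and especially the sign in the boundary identity handled carefully; these are classical but non-trivial facts that should be cited with precision. The edge case $k = d$ is automatic, since $\mathbb{F}_{d+1}(B) = \{0\}$ forces both $C$ and $M$ to vanish and hence $S = 0$; consistency with the conclusion then corresponds to $T = N = 0$, which indeed follows from the constancy theorem via the identification $\mathbb{F}_d(B) = L^1(B)$ of Lemma \ref{L:prel-identify-with-L1} applied to $\partial T = 0$.
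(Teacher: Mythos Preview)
Your proof is correct and follows essentially the same route as the paper's: decompose $T = N + \partial M$ with $N, M$ of finite mass supported in $B$ (the paper cites Federer \cite[p.~382]{Federer} for this, you invoke the infimum formula for the flat norm, which amounts to the same thing), observe $\partial N = 0$, cone $N$ from the centre of $B$ to obtain $C$ with $\partial C = N$, and set $S = C + M$. The paper's $X, Y, R$ are exactly your $N, M, C$. Your additional mollification argument for $S \in \mathbb{F}_{k+1}(B)$ and your discussion of the edge case $k = d$ are not needed (the paper implicitly relies on $M$ being a flat chain from the Federer reference, and on the constancy theorem forcing $N = 0$ when $k = d$), but they are not wrong either.
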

 \begin{proof}
    Using \cite[p. 382]{Federer} we may find $X\in \mathbb{F}_k(B)$ and $Y\in \mathbb{F}_{k+1}(B)$ such that both $X$ and $Y$ are of finite mass and
    \begin{equation*}
        T = X + \partial Y.
    \end{equation*}
    By taking the boundary of both sides, we obtain $0= \partial X$ so that $X$ is a normal current without boundary. Therefore, there exists some $R\in \mathbb{F}_{k+1}(B)$ having finite mass and satisfying $\partial R = X$. Indeed, this follows from the well-known ``cone construction'', details of which are available in \cite[4.1.11]{Federer} or alternatively in the metric setting in \cite[Definition 10.1 and Propositon 10.2]{AKcur}. It follows that
    \begin{equation*}
        T = \partial R + \partial Y = \partial (R+Y),
    \end{equation*} 
    hence, the statement follows by taking $S = R+ Y$.
 \end{proof}

We are now prepared to start considering the various cases of Theorem \ref{T:fcc-fails-all-cases}.
We begin with the negative direction for $k\geq 2$, which is obtained as an immediate consequence of the previous sections.

\begin{corollary}\label{C:k-geq2}
     Suppose $d\in\nat$ and $k\in\nat\cup \{0\}$, $2\leq k\leq d$.
Then there exists a compactly supported metric $k$-current $T$ in $\reals^d$ that is not a flat chain.
\end{corollary}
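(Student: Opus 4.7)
The plan is to reduce the case $2 \leq k \leq d$ to the top-dimensional case $k = d$, which has already been established by Corollary \ref{C:failure-of-everything}. Indeed, applying that corollary with $k$ in place of $d$ (which is legitimate since $k \geq 2$) yields a compactly supported metric $k$-current $T_0 \in \mathcal{D}_k(\reals^k)$ which is not a flat chain. Thus the work remaining is essentially to \emph{lift} this counterexample from $\reals^k$ up to $\reals^d$ without losing the non-flatness.

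For the lift, I would use the canonical isometric embedding $I\colon \reals^k \to \reals^d$ given by $I(x) = (x,0)$. Since $I$ is (smooth and) Lipschitz, the pushforward $T := I_\# T_0$ is a well-defined metric $k$-current on $\reals^d$, and it has compact support because $T_0$ does (its support lies in $I(\spt T_0)$). This will be the candidate counterexample.

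The remaining step is to verify that $T$ is not a flat chain, which I would do by contradiction using the two transport lemmas already proved. Suppose $T$ is a flat chain. By Lemma \ref{L:commutativity-pw}, pushforward commutes with the comparison map, so
\begin{equation*}
    C_k(T) = C_k(I_\# T_0) = I_\# C_k(T_0).
\end{equation*}
Flatness of $T$ as a metric current means, by our identification convention, that $C_k(T)$ is a classical flat chain in $\reals^d$, and hence $I_\# C_k(T_0)$ is a flat chain. Now Lemma \ref{L:flat-injection} (invariance of flatness under isometric embedding) applies to $C_k(T_0)$ and yields that $C_k(T_0)$ itself is a flat chain in $\reals^k$, i.e.\ $T_0$ is flat as a metric current, contradicting the choice of $T_0$.

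There is no real obstacle in this argument beyond a diagram chase: all the analytic content is already contained in Corollary \ref{C:failure-of-everything}, and Lemmas \ref{L:commutativity-pw} and \ref{L:flat-injection} are exactly the tools designed to pass flatness between metric and classical currents and between a Euclidean space and its isometric image. The only thing to check carefully is that the compatibility $C_k \circ I_\# = I_\# \circ C_k$ is actually available for the (smooth, Lipschitz) isometric embedding $I$, which is precisely the hypothesis of Lemma \ref{L:commutativity-pw}.
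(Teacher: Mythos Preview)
Your proof is correct and follows essentially the same approach as the paper: obtain a non-flat top-dimensional metric current in $\reals^k$, push it forward under an isometric embedding $I\colon \reals^k \to \reals^d$, and use Lemmas \ref{L:commutativity-pw} and \ref{L:flat-injection} to conclude. The only cosmetic difference is that the paper pushes forward the classical current $C_k(T_0)$ and then invokes the commutativity lemma to see the result is in the range of $C_k$, whereas you push forward the metric current and invoke commutativity to compare; these are two sides of the same identity $C_k(I_\# T_0)=I_\# C_k(T_0)$.
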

\begin{proof}
    By Theorem \ref{T:main-counterexample}, there exists a metric $k$-current $T$ in $[0,1]^k$ that is not a flat chain.
Let $I\colon \reals^k \to \reals^d$ be any isometry.
The current $I_\# C_k(T)$ is not a flat chain by Lemma \ref{L:flat-injection}.
By Lemma \ref{L:commutativity-pw}, $I_\# C_k(T)$ is a metric current (i.e.~it lies in the range of $C_k$).
We produced a compactly supported metric $k$-current in $\reals^d$ which is not a flat chain, concluding the proof.
\end{proof}

We continue by disproving the conjecture for $1$-currents in $\reals^d$ for $d\geq 2$.

\begin{lemma}\label{L:1-current-case}
    Suppose $T$ is a compactly supported metric $2$-current in $\reals^2$ which is not flat.
Then $\partial T$ is not flat.
\end{lemma}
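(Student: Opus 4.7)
My plan is to prove the contrapositive: if $\partial T$ is flat, then $T$ itself must be flat. First I would fix a closed ball $B\subset\reals^2$ containing $\spt T$. Since $\spt(\partial T)\subset \spt T\subset B$ and $B$ is compact, convex with non-empty interior, the remark following Lemma \ref{L:flat-injection} places the flat chain $C_1(\partial T)$ in $\mathbb{F}_1(B)$. Because boundary squares to zero for classical currents, $\partial C_1(\partial T)=0$, and Theorem \ref{T:flat-structure} then supplies a flat $2$-chain $S\in\mathbb{F}_2(B)$ with $\partial S=C_1(\partial T)$.

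The next step is to consider the classical $2$-current $N:=C_2(T)-S$, which is supported in $B$ and satisfies $\partial N=\partial C_2(T)-\partial S=C_1(\partial T)-C_1(\partial T)=0$, using that the comparison map intertwines boundaries, as recalled in Subsection \ref{SS:prel-currents}. Every $2$-current on $\reals^2$ is the same as a distribution via the identification $\omega_0\,dx_1\wedge dx_2\mapsto \omega_0$, so $N$ corresponds to a compactly supported distribution $v$ on $\reals^2$. Testing $\partial N=0$ against $1$-forms of the shapes $\omega_1\,dx_1$ and $\omega_2\,dx_2$ shows that this is exactly the distributional identity $\partial_1 v=\partial_2 v=0$. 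Since $\reals^2$ is connected, $v$ is globally constant, and its compact support forces $v=0$. Thus $C_2(T)=S$ is flat, contrary to the hypothesis on $T$.

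I do not foresee substantial difficulty here: Theorem \ref{T:flat-structure} supplies the only non-trivial ingredient, and the remainder is the elementary fact that a compactly supported distribution in Euclidean space with vanishing distributional gradient must be zero. The sole bookkeeping concern is verifying that $C_k$ commutes with the boundary operator, which is part of its defining properties as recalled earlier in the paper, and that the support inclusions let me use the same ball $B$ throughout so that the cancellation $\partial S=C_1(\partial T)$ indeed takes place.
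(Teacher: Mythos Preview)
Your proof is correct and follows essentially the same route as the paper: assume $\partial T$ is flat, invoke Theorem \ref{T:flat-structure} to produce $S\in\mathbb{F}_2(B)$ with $\partial S=\partial C_2(T)$, and then argue that the compactly supported boundaryless $2$-current $N=C_2(T)-S$ in $\reals^2$ must vanish.

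The only difference is in how the vanishing of $N$ is justified. The paper observes that a compactly supported closed current is exact, so $N=\partial X$ for some $3$-current $X$ in $\reals^2$, and since $\Wedge^3\reals^2=0$ the only $3$-current is $0$, forcing $N=0$. Your version instead identifies $N$ with a compactly supported distribution $v$ and reads off $\partial N=0$ as $\nabla v=0$ distributionally, hence $v$ is constant and thus zero. Your argument is slightly more self-contained in that it does not appeal to the Poincar\'e lemma for currents, while the paper's phrasing is more geometric; both are short and valid.
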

\begin{proof}
    Suppose $\partial T$ is a flat chain.
Then by Theorem \ref{T:flat-structure} we may write $\partial C_2 T=\partial S$, where $S$ is a flat $2$-chain of finite mass and with compact support.
Now $C_2 T - S$ is a compactly supported current without boundary, therefore there is a $(d+1)$-current $X$ such that $\partial X = C_2 T -S$.
The only $(d+1)$-current is $0$, so $C_2 T = S$.
This contradicts the fact that $T$ is non-flat and $S$ is flat.
\end{proof}

\begin{corollary}\label{C:case-k=1}
    If $d\in\nat$, $d\geq 2$, then there exists a metric $1$-current in $\reals^d$ without boundary, which is not a flat chain.
\end{corollary}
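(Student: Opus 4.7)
The plan is to reduce to what has already been proved, splitting into the base case $d=2$ and the higher-codimension case $d>2$.

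For $d=2$, I would apply Corollary \ref{C:k-geq2} with $k=d=2$ to produce a compactly supported metric $2$-current $T$ in $\reals^2$ that is not a flat chain. Its boundary $\partial T$ is a compactly supported metric $1$-current in $\reals^2$: one may define $\partial T(f,\pi^1)=T(\sigma,f,\pi^1)$ where $\sigma\in\tLip_c(\reals^2)$ is any fixed Lipschitz cutoff equal to $1$ on a neighbourhood of $\spt T$, and locality and joint continuity for $\partial T$ follow from those of $T$. Since $\partial\circ\partial=0$, the current $\partial T$ has no boundary. Lemma \ref{L:1-current-case} says precisely that $\partial T$ is not a flat chain, so $\partial T$ already serves as the required example in $\reals^2$.

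For $d>2$, I would reduce to the previous case. Let $S$ denote the boundaryless non-flat metric $1$-current in $\reals^2$ produced above, and let $I\colon\reals^2\to\reals^d$ be any isometric embedding (for instance, $(x_1,x_2)\mapsto(x_1,x_2,0,\dots,0)$, which is smooth and Lipschitz so that Lemma \ref{L:commutativity-pw} applies). The pushforward $I_\# S$ is a compactly supported metric $1$-current in $\reals^d$, and since pushforwards commute with boundaries at the level of metric currents one has $\partial(I_\# S)=I_\#(\partial S)=0$. If $I_\# S$ were a flat chain, then via Lemma \ref{L:commutativity-pw} one would have $I_\# C_1(S)=C_1(I_\# S)\in\mathbb{F}_1(K)$ for some compact $K\subset\reals^d$, and Lemma \ref{L:flat-injection} would then force $S$ itself to be a flat chain, contradicting the base case. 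Hence $I_\# S$ is the desired current.

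The main thing to verify is only the bookkeeping: that $\partial T$ is a metric current in Lang's sense (not merely a classical current obtained by composing $C_2(T)$ with the exterior derivative), that $\partial$ commutes with the comparison map $C_k$ and with isometric pushforwards, and that compact support is preserved at each step. All of these are routine consequences of the definitions, so the entire argument is a short and essentially formal wrap-up once Corollary \ref{C:k-geq2} and Lemma \ref{L:1-current-case} are in hand; the non-constructive content is inherited entirely from Corollary \ref{C:failure-of-everything}.
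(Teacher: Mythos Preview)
Your proof is correct and follows essentially the same route as the paper: produce a non-flat compactly supported metric $2$-current in $\reals^2$ via Corollary \ref{C:k-geq2}, take its boundary and invoke Lemma \ref{L:1-current-case} for the case $d=2$, then push forward along an isometry $I\colon\reals^2\to\reals^d$ using Lemmas \ref{L:commutativity-pw} and \ref{L:flat-injection} for $d>2$. The paper's own proof is terser (it does not spell out the bookkeeping about $\partial$ commuting with $C_k$ and with pushforwards), but the logical structure is identical.
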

\begin{proof}
    By Lemma \ref{L:1-current-case}, upon taking any non-flat metric $2$-current $T$ in $\reals^2$ with compact support $\partial T$ is a non-flat metric $1$-current. Moreover, such a non-flat metric $2$-current $T$ exists by Corollary \ref{C:k-geq2}.
If $d=2$, we are therefore finished.
If $d>2$, we may consider an arbitrary isometry $I\colon \reals^2 \to \reals^d$ and the current $I_\# \partial T$ will have the required properties.
\end{proof}

Now we observe what happens for $1$-currents in $\reals$.

\begin{theorem}\label{T:k=1=d}
    Every compactly supported metric $1$-current in $\reals$ is a flat chain.
\end{theorem}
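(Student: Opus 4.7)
The plan is to exploit the fact that in one dimension there are no nonzero $2$-currents, so flat $1$-chains supported in a compact set coincide with $L^1$-densities; combined with a rigidity observation (a compactly supported classical $1$-current on $\reals$ with vanishing boundary is itself zero), we can finish by reducing $T$ to such a closed current. Concretely, given a compactly supported metric $1$-current $T$, I plan to find a flat $1$-chain with the same boundary as $T$ and then argue the difference vanishes.

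First I would identify $\partial T$ explicitly. The boundary $\partial T$ is a compactly supported metric $0$-current. Restricting to any compact interval $K \subset \reals$ containing $\spt T$, Proposition~\ref{P:0currents} identifies $\partial T$ with an element of $\AEm_b(K)$; since weak$^*$ convergence on bounded subsets of $\tLip_b(K)$ coincides with uniform convergence on the compact metric space $K$ (Arzelà--Ascoli), this element extends to a continuous functional on $C(K)$, i.e.~a signed Radon measure $\nu$ with compact support. Testing $\partial T$ against any $\sigma \in \tLip_c(\reals)$ equal to $1$ on a neighbourhood of $\spt T$ yields $\nu(\reals)=\int \sigma\,d\nu=0$, by locality of $T$.

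Next I would construct a flat chain with boundary $\nu$. Set $F(x):=-\nu((-\infty,x])$; the compact support of $\nu$ together with $\nu(\reals)=0$ ensures $F$ has compact support, and $F$ is bounded, hence $F\in L^1(\reals)$. The classical $1$-current $R:=F\,dx$ has finite mass $\lVert F\rVert_{L^1}$, and distributional integration by parts gives $\partial R=\nu$. Thus $R$ is normal, in particular a flat $1$-chain; by Lang's theorem it lies in the range of the comparison map, so there is a unique metric $1$-current $\tilde R$ with $C_1(\tilde R)=R$. The identity $C_0\circ\partial=\partial\circ C_1$ and injectivity of $C_0$ then give $\partial\tilde R=\partial T$.

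Finally I would close the argument with one-dimensional rigidity. The difference $U:=T-\tilde R$ is a compactly supported metric $1$-current with $\partial U=0$, so $C_1(U)$ is a compactly supported classical $1$-current on $\reals$ with vanishing boundary. Identifying $C_1(U)$ with a distribution on $\reals$ via the identification of $1$-forms with functions, $\partial C_1(U)=0$ says that its distributional derivative vanishes, forcing $C_1(U)$ to be a constant distribution $c\cdot \mathcal{L}^1$. But a constant distribution with compact support must have $c=0$, so $C_1(U)=0$; injectivity of $C_1$ gives $U=0$, whence $T=\tilde R$ is a flat chain.

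The main obstacle I anticipate is Step 1, i.e.~rigorously identifying compactly supported metric $0$-currents on the (only locally compact) space $\reals$ with signed Radon measures, since Proposition~\ref{P:0currents} is stated for compact spaces. This is handled by localising: restrict $T$ (hence $\partial T$) to any compact $K$ containing $\spt T$ and note that by locality the restriction determines $\partial T$ on all of $\reals$. Once the identification is in place, the remaining steps are entirely standard, and specific to dimension one, where closed compactly supported currents vanish.
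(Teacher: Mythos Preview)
Your Step~1 contains a genuine gap: the claim that $\partial T$ is a signed Radon measure is false in general, and the Arzel\`a--Ascoli argument does not prove it. You correctly observe that on $\lVert\cdot\rVert_L$-bounded subsets of $\tLip_b(K)$ weak$^*$ convergence agrees with uniform convergence; this shows $\partial T$ is continuous for uniform convergence \emph{on bounded sets}. But to extend to $C(K)$ you would need a bound $|\partial T(f)|\le C\lVert f\rVert_\infty$, and continuity on bounded sets gives nothing of the sort. For a concrete counterexample take $u\in L^1([0,1])\setminus BV([0,1])$ and let $T=T_u$ be the metric $1$-current $T_u(f,\pi)=\int_0^1 u\,f\,\pi'$. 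Then $\partial T_u(g)=\int_0^1 u\,g'$; this functional is a measure precisely when $u\in BV$, so for such $u$ your measure $\nu$ does not exist and the construction of $F$ collapses. More abstractly, Proposition~\ref{P:0currents} identifies $\mathcal D_0(K)$ with $\AEm_b(K)$, and for $K=[0,1]$ one has $\AEm_0([0,1])\cong L^1([0,1])$, which strictly contains the image of finite measures.

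Your overall strategy can be rescued: by Theorem~\ref{T:case-k=0} the boundary $\partial T$ is a flat $0$-chain, hence $C_0(\partial T)=X+\partial Y$ with $X,Y$ of finite mass; testing against a cutoff equal to $1$ near $\spt T$ forces the signed measure $X$ to have total mass zero, and then your antiderivative construction applies to $X$ (not to $\partial T$) to produce $R$ with $\partial R=X$, whence $\partial(R+Y)=C_0(\partial T)$. Steps~2 and~3 then go through unchanged. The paper takes a quite different route: rather than working with the boundary, it proves directly that $C_1(T)$ has finite mass via the antiderivative trick $C_1(T)(\omega\,\diff t)=T(1,W)$ with $W(t)=\int_0^t\omega$, bounding this by $\lVert T\rVert_{C(\mathcal K)}\tLip(W)=\lVert T\rVert_{C(\mathcal K)}\lVert\omega\rVert_\infty$ using the potential norm machinery of Section~\ref{S:char}.
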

\begin{proof}
    It suffices to show that every metric $1$-current $T$ supported in $[0,1]$ is of finite mass.
The statement then follows by elementary considerations, which the reader can carry out as an exercise.
Alternatively, one can obtain the statement immediately by applying either the results of
    DePhilippis and Rindler \cite{DeRindler} or Schioppa \cite{Sch}.

    Thus, we continue by showing finite mass.
    Let us denote $\diff x_1 = \diff t$ in this case, as the underlying space is $\reals$.
By the basic properties of the comparison map \cite[Theorem 5.5]{UL}, it suffices to show that $C_1(T)$ is of finite mass.
By the Riesz representation theorem, it suffices to show that there is some constant $C_T\in(0, \infty)$ such that
    \begin{equation*}
        |C_1T(\omega \diff t)|\leq C_T \lVert \omega \rVert_\infty \quad \text{for every smooth $\omega\colon \reals\to\reals$.}
    \end{equation*}
    We fix such an $\omega$ and define
    \begin{equation*}
        W(t)=\int_0^t \omega(s)\diff s \quad\text{for $t\in \reals$.}
    \end{equation*}
    Then $W$ is a smooth map with $\tLip(W)=\lVert \omega \rVert_\infty$.
We observe that $W'\diff t=\diff W$ and therefore
    \begin{equation*}
        T(1,W)=C_1(T)(1 \diff W)=C_1(T)(\omega\diff t).
    \end{equation*}
    But then by taking for $C_T$ the norm of $T$ in $\mathbb{P}_1$, we have
    \begin{equation*}
        |C_1(T)(\omega\diff t)|=|T(1,W)|\leq C_T \tLip(W)=C_T \lVert \omega \rVert_\infty.
    \end{equation*}
\end{proof}

We have now proven all of the cases of Theorem \ref{T:fcc-fails-all-cases} except for the case of $0$-currents.
This statement is essentially just Corollary \ref{C:density-of-normal-in-D0}, but we give a detailed explanation.

\begin{theorem}\label{T:case-k=0}
    Every metric $0$-current in $\reals^d$ with compact support is a flat chain.
\end{theorem}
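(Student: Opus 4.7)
The plan is to leverage Corollary \ref{C:density-of-normal-in-D0}, which already approximates any metric $0$-current on a compact metric space in the dual norm of $\tLip_b$ by finite linear combinations of Dirac masses. Such finitely supported signed measures, viewed as classical $0$-currents, have finite total variation (hence finite mass) and trivially have zero boundary, so they are flat chains in any compact set containing their supports. The only nontrivial task is to upgrade the convergence delivered by Corollary \ref{C:density-of-normal-in-D0} to convergence in the flat norm.

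First I would reduce to a compact setting. Choose a closed ball $B \subset \reals^d$ containing $\spt T$. Since $T(f)$ depends only on the values of $f$ on $\spt T$, and since Lipschitz bounded functions on $B$ extend (e.g., by McShane's construction) to Lipschitz bounded functions on $\reals^d$, the current $T$ induces a unique $\tilde T \in \mathcal{D}_0(B)$ satisfying $\tilde T(f|_B) = T(f)$ for all $f \in \tLip_c(\reals^d)$. Applying Corollary \ref{C:density-of-normal-in-D0} to $\tilde T$, one obtains finitely supported signed measures $\mu_i$ on $B$ with
\begin{equation*}
    \sup_{\pi \in \tLip_{b,1}(B)} \left| \tilde T(\pi) - \int_B \pi \, \diff \mu_i \right| \to 0,
\end{equation*}
and each $\mu_i$, read as a classical $0$-current on $\reals^d$, is normal and supported in $B$, hence belongs to $\mathbb{F}_0(B)$.

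To upgrade to the flat norm, let $\omega \in C^\infty_c(\reals^d)$ be a test form with $\lVert \omega \rVert_{\mathbb{F}^0(B)} \leq 1$, i.e., $\sup_B |\omega| \leq 1$ and $\sup_B \lVert \Diff \omega \rVert \leq 1$. The convexity of $B$ together with the mean value inequality ensures that $\omega|_B$ is $1$-Lipschitz and bounded by $1$, so $\omega|_B \in \tLip_{b,1}(B)$. Using that $C_0(T)(\omega) = T(\omega) = \tilde T(\omega|_B)$ (because $\spt T \subset B$) and that $\mu_i(\omega) = \int_B \omega|_B \, \diff \mu_i$, we obtain
\begin{equation*}
    \lVert C_0(T) - \mu_i \rVert_{\mathbb{F}_0(B)} \leq \sup_{\pi \in \tLip_{b,1}(B)} \left| \tilde T(\pi) - \int_B \pi \, \diff \mu_i \right| \to 0.
\end{equation*}
Since $\mathbb{F}_0(B)$ is complete and each $\mu_i \in \mathbb{F}_0(B)$, the limit $C_0(T)$ lies in $\mathbb{F}_0(B)$, meaning $T$ is a flat chain. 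The only delicate point is the convexity reduction: without the convexity of $B$, the pointwise bound on $\Diff \omega$ inside $B$ would not control the Lipschitz constant of $\omega|_B$, and the comparison between the two norms would break down.
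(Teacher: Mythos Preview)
Your proof is correct and follows essentially the same route as the paper: invoke Corollary \ref{C:density-of-normal-in-D0} to approximate $T$ by (finite) measures in the dual norm of $\tLip_b$, observe these are normal $0$-currents, and then check that the unit ball for the flat seminorm $\mathbb{F}^0$ restricts into $\tLip_{b,1}$ so that the approximation carries over to the flat norm. You are in fact slightly more careful than the paper in one respect: you explicitly take $B$ convex and note that this is what makes $\sup_B\lVert \Diff\omega\rVert\leq 1$ imply $\tLip(\omega|_B)\leq 1$; the paper silently identifies $\sup_K\lVert\Diff\omega\rVert$ with the Lipschitz constant on $K$ without remarking on this.
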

\begin{proof}
    Let $K$ be a compact set containing the support of a metric $0$-current $T$.
The flat norm of a $0$-form $\omega$ (which is merely a real-valued function) in $K$ is equal to
    \begin{equation*}
        \lVert \omega \rVert_{\mathbb{F}^0(K)}=\max\{\lVert \omega \rVert_{\infty}, \tLip(\omega)\},
    \end{equation*}
    where the supremum and the Lipschitz constant are taken on $K$.
    By Corollary \ref{C:density-of-normal-in-D0}, we find a sequence of measures $\mu_i$, such that
    \begin{equation*}
        \sup_{\pi} |T(\pi)-\int \pi \;\diff\mu_i|\to 0,
    \end{equation*}
    where the supremum is taken over Lipschitz functions $\pi$ on $K$ with
    \begin{equation*}
        \max\{\lVert \pi \rVert_\infty, \tLip(\pi)\}\leq 1.
    \end{equation*}
    If we let $N_i$ be the $0$-current given by for
    \begin{equation*}
    N_i(\omega)=\int \omega \;\diff \mu_i\quad \text{for}\;
        \omega\in C^\infty_c(\reals^d, \Wedge^0 \reals^d)=C^\infty_c(\reals^d, \reals),
    \end{equation*}
    then $N_i$ is of finite mass and therefore normal.
    It follows that
    \begin{equation*}
        \sup_{\lVert\omega\rVert_{\mathbb{F}^0}\leq 1} |T(\omega)-N_i(\omega)|\to 0,
    \end{equation*}
    i.e.~$N_i$ converge to $T$ in the flat norm.
\end{proof}

\begin{proof}[Proof of Theorem \ref{T:fcc-fails-all-cases}]
        The case $k=0$ is Theorem \ref{T:case-k=0}, the case $k=1$ and $d\geq 2$ is a special case of Corollary \ref{C:case-k=1}, the case $d\geq k\geq2$ is Corollary \ref{C:k-geq2}.
Finally, the case $k=d=1$ is Theorem \ref{T:k=1=d}.
\end{proof}

\section{The general case of the lack of regularity for the exterior derivative equation}\label{S:prescribed-exterior}

The main result of this section is that the prescribed exterior derivative equation lacks Lipschitz regularity.
This appears to only be explicitly available in the literature in the case when the exterior derivative coincides with divergence \cite{PreissDiv, McM, BB}.
This is discussed in more depth in the introductory Section \ref{S:intro}, Subsection \ref{SS:intro-reg}.
Here we only reitarate, that the dualisation methods employed by Bourgain and Brezis \cite{BB} and McMullen \cite{McM} together with the non-inequality of Ornstein \cite{Orn} could likely be adjusted to yield similar results for the general exterior derivative operator.
Given that this is not available in the literature and that these results can be immediately obtained from our main non-regularity result, we provide here the full statement with rigorous proof.

We denote by $L_{\textnormal{cl}}^\infty([0,1]^d, \Wedge^k\reals_d)$ the subspace of distributionally closed $L^\infty$ differential $k$-forms, i.e.~those forms $\omega$ for which $\diff \omega = 0$ distributionally.
It is easily verified, that this subspace is weak$^*$ closed in $L^\infty([0,1]^d, \Wedge^k\reals_d)$.

\begin{theorem}
    Consider the equation for differential $k$-forms in $\reals^d$:
    \begin{equation}\label{E:ext-diff}
        \diff \omega = \eta.
    \end{equation}
    If $k\in\{1,2, \dots, d-1\}$, then there exists $\eta\in L_{\textnormal{cl}}^\infty(\reals^d, \Wedge^k\reals_d)$, supported inside $Q=[0,1]^d$, such that no Lipschitz $\omega$ solving \eqref{E:ext-diff} a.e.~in $Q$ exists.
\end{theorem}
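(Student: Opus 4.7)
The plan is a slicing and averaging reduction to Theorem~\ref{T:intro-reg} (linearized Lipschitz non-regularity of the prescribed Jacobian equation) applied in dimension $k+1$. Since $k+1\ge 2$, that theorem produces a function $\rho_0\in L^\infty([0,1]^{k+1})$ which admits no Lipschitz solution $(f_i,\pi_i)$ to $\sum_i f_i\det\Diff\pi_i=\rho_0$ in the sense of Definition~\ref{D:solution}. By translating and rescaling I may assume $\supp\rho_0\subset(\tfrac14,\tfrac34)^{k+1}$, and by subtracting a constant (which is trivially in the range of the multi-Jacobian operator because $c=c\cdot\det\Diff\Id$) I may additionally require $\int\rho_0=0$.

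Next, I build $\eta$, understood as a $(k+1)$-form so that the degrees in $\diff\omega=\eta$ match a Lipschitz $k$-form $\omega$. Fix $\phi\in C^\infty_c((\tfrac14,\tfrac34)^{d-k-1})$ with $\int\phi=1$; for $k=d-1$ this factor is absent and the correction below is unnecessary. Start with
\begin{equation*}
    \eta_1=\rho_0(x_1,\dots,x_{k+1})\,\phi(x_{k+2},\dots,x_d)\,dx_1\wedge\cdots\wedge dx_{k+1},
\end{equation*}
which is $L^\infty$ and supported in $(\tfrac14,\tfrac34)^d\subset Q$, but whose distributional exterior derivative carries spurious terms $\rho_0\sum_{l>k+1}\partial_l\phi\,dx_l\wedge dx_1\wedge\cdots\wedge dx_{k+1}$. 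A classical Bogovskii-type construction, with $\int\rho_0=0$ as the solvability condition, yields $L^\infty$ functions $c_1,\dots,c_{k+1}$ compactly supported in $(\tfrac14,\tfrac34)^{k+1}$ satisfying $\sum_{j=1}^{k+1}(-1)^{j-1}\partial_j c_j=(-1)^k\rho_0$ on $\reals^{k+1}$; only $L^\infty$ (not Lipschitz) solutions are required, so Bogovskii applies. Define the correction
\begin{equation*}
    \eta_2=\sum_{l>k+1}\sum_{j=1}^{k+1}c_j(x_1,\dots,x_{k+1})\,\partial_l\phi(x_{k+2},\dots,x_d)\,dx^{J_{j,l}},
\end{equation*}
with $J_{j,l}=(\{1,\dots,k+1\}\setminus\{j\})\cup\{l\}$. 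A coefficient-wise check confirms $\diff(\eta_1+\eta_2)=0$: for a $(k+2)$-multi-index with exactly one element $l>k+1$, the contributions from $\diff\eta_1$ and the $\partial_j$-derivatives of $\eta_2$ combine via the defining equation for $c_j$; for multi-indices with two elements $l,m>k+1$, only $\eta_2$ contributes and the symmetry $\partial_l\partial_m\phi=\partial_m\partial_l\phi$ together with the $l$-independence of $c_j$ forces cancellation. Hence $\eta:=\eta_1+\eta_2$ is distributionally closed, lies in $L^\infty$, and is supported in $Q$.

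Finally, suppose for contradiction that a Lipschitz $k$-form $\omega=\sum_{|I|=k}\omega_I\,dx^I$ satisfies $\diff\omega=\eta$ a.e.\ in $Q$. Since $\eta_2$ has zero coefficient at $dx_1\wedge\cdots\wedge dx_{k+1}$, matching that coefficient yields
\begin{equation*}
    \sum_{j=1}^{k+1}(-1)^{j-1}\partial_j\omega_{I_j}(x_1,\dots,x_d)=\rho_0(x_1,\dots,x_{k+1})\,\phi(x_{k+2},\dots,x_d)
\end{equation*}
a.e.\ on $Q$, where $I_j=\{1,\dots,k+1\}\setminus\{j\}$. Averaging over $(x_{k+2},\dots,x_d)\in[0,1]^{d-k-1}$, which preserves the Lipschitz class and commutes with $\partial_j$ for $j\le k+1$, and using $\int\phi=1$, yields Lipschitz functions $\bar\omega_{I_j}\colon[0,1]^{k+1}\to\reals$ with $\sum_{j=1}^{k+1}(-1)^{j-1}\partial_j\bar\omega_{I_j}=\rho_0$. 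Writing $\bar\pi_j=(x_1,\dots,x_{j-1},\bar\omega_{I_j},x_{j+1},\dots,x_{k+1})$ gives $\det\Diff\bar\pi_j=\partial_j\bar\omega_{I_j}$, assembling into a finite linearized-Jacobian representation of $\rho_0$ with coefficients $f_j=(-1)^{j-1}$, contradicting the choice of $\rho_0$.

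The principal obstacle is step~2: engineering the correction $\eta_2$ supported inside $Q$ while restoring closedness. This reduces to solving a single compactly-supported alternating-divergence equation in $L^\infty$ via the Bogovskii operator, which is standard but delicate. Once $\eta$ is constructed, the averaging argument of step~3 is a direct short computation.
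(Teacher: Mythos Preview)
Your argument is correct, but it takes a significantly more elaborate route than the paper's. The entire second step---the Bogovskii correction $\eta_2$---is avoidable. The paper simply extends $\eta_1$ \emph{constantly} in the transverse variables, setting $\eta_1(x_1,\dots,x_d)=\eta_1(x_1,\dots,x_{k+1})$ and $\eta=\eta_1\,dx_1\wedge\cdots\wedge dx_{k+1}$. Then $\partial_l\eta_1=0$ for $l>k+1$, so $\eta$ is closed automatically and no correction is needed. The contradiction step is then essentially the same as yours, except the paper slices (restricting to $[0,1]^{k+1}\times\{y\}$ for a.e.\ $y$ via Fubini) rather than averages; both reductions yield Lipschitz functions on $[0,1]^{k+1}$ satisfying the same alternating-divergence identity, and the final assembly into $\pi_j=(x_1,\dots,x_{j-1},\omega_{I_j},x_{j+1},\dots,x_{k+1})$ is identical.

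What your route buys is that $\eta$ has support in the open cube $(\tfrac14,\tfrac34)^d$, whereas the paper's $\eta$ is supported on all of $Q$. Since the theorem only asks for $\supp\eta\subset Q$, this extra localisation is not needed. What the paper's route buys is that it sidesteps the divergence equation entirely. One small caution on your step~2: the name ``Bogovskii'' is slightly misleading at $p=\infty$, since Bogovskii's singular integral does not map $L^\infty$ to $W^{1,\infty}$; the correct justification is either the elementary iterated-antiderivative construction, or Bogovskii on $L^p$ for $p>k+1$ combined with Sobolev embedding to land in $L^\infty$ with zero trace. Either way the claim you need is true.
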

\begin{proof}
    Suppose first that $\eta$ is of the form
    \begin{equation}\label{E:eta-def}
        \eta=\eta_1 \diff x_1 \wedge \dots \wedge \diff x_{k+1}
    \end{equation}
    and observe that since $k+1\leq d$, if $\eta_1$ is not identically zero, then neither is $\eta$.

    Let $\omega$ be a Lipschitz map solving \eqref{E:ext-diff} a.e.
For every $I=(i_1, \dots, i_k)$ where $i_1< \dots < i_k$, $i_1, \dots, i_k\in\{1, \dots, d\}$, we find a Lipschitz real-valued map $\omega_I$ such that
    \begin{equation*}
        \omega=\sum_I \omega_I \diff x^I=\sum_I \omega_I \diff x_{i_1}\wedge \dots \wedge \diff x_{i_k}.
    \end{equation*}
    For any $j=\{1, \dots, k+1\}$, let
    \begin{equation*}
        I^j=(1, \dots, j-1, j+1, \dots, k+1).
    \end{equation*}
    Then, by considering the projection of $\diff \omega$ to the basis vector $\diff x_1 \wedge\dots \wedge \diff x_{k+1}$,
    the equality \eqref{E:ext-diff} in particular implies
    \begin{equation}\label{E:from-ext-to=det}
        \sum_{j=1}^{k+1} (-1)^{j-1} \frac{\partial \omega_{I^j}}{\partial e_j}=\eta_1.
    \end{equation}
    Finally, since the equality holds almost everywhere, by Fubini's theorem, it also holds $\mathcal{L}^{k+1}$-a.e.~on at least one (in fact almost every) section of the cube $[0,1]^d$ by the plane spanned by $e_1, \dots , e_{k+1}$.
We may assume, without loss of generality, that it holds on $[0,1]^{k+1}\subset \reals^{k+1} \subset \reals^d$.
Let
    \begin{equation*}
        \pi_j^j=(-1)^{j-1}{\omega_{I^j}}_{|[0,1]^{k+1}} \quad \text{for $j\in\{1, \dots, k+1\}$}
    \end{equation*}
    and
    \begin{equation*}
        \pi_i^j= x_j \quad \text{whenever $i\not=j$, $i,j\in\{1, \dots, k+1\}$.}
    \end{equation*}

    It is now immediately verified that the definition of $\pi_i^j$ together with \eqref{E:from-ext-to=det} imply
    \begin{equation}\label{E:pde-ext-to-det}
        \sum_{i=1}^{k+1} \det\Diff \pi_i = \eta_1 \quad \text{a.e.~in $[0,1]^{k+1}$.}
    \end{equation}
    Since $k+1\geq 2$, using Corollary \ref{C:failure-of-everything}, we may find $\eta_1 \in L^\infty([0,1]^{k+1})$ with $\lVert \eta_1\rVert_\infty\leq 1$ such that no sequence of Lipschitz maps $\pi_i$ satisfies \eqref{E:pde-ext-to-det}.
We extend $\eta_1$ to $[0,1]^d$ via
    \begin{equation*}
        \eta_1(x_1, \dots, x_d)=\eta_1(x_1, \dots, x_{k+1}).
    \end{equation*}
    Finally, we let $\eta$ by given by \eqref{E:eta-def}.
By the argument above, the proof is finished once we show that $\eta$ is distributionally closed.

    The argument for this is completely standard, but for the sake of completeness we provide it.
Firstly, via a mollification argument, we may find a sequence of smooth functions satisfying $\eta_1^i \to \eta_1$ weak$^*$ in $L^\infty([0,1]^{k+1})$.
Once again, we extend
    \begin{equation*}
        \eta_1^i(x_1, \dots ,x_d) = \eta^i_1(x_1, \dots, x_{k+1}).
    \end{equation*}
    Finally, we let
    \begin{equation*}
        \eta^i = \eta_1^i \diff x_1 \wedge \dots \wedge \diff x_{k+1}.
    \end{equation*}
    We observe that the weak$^*$ convergence of $\eta_1^i$ implies also the weak$^*$ convergence $\eta^i \to \eta$ inside
    \begin{equation*}
        L^\infty([0,1]^d, \Wedge^{k+1} \reals^d).
    \end{equation*}
    Since the space $L^\infty_{\operatorname{cl}}([0,1]^d, \Wedge^{k+1} \reals^d)$
     is weak$^*$ closed, it suffices to show that each $\eta^i$ is closed.
This follows from the definition of the exterior derivative and the fact that
     \begin{equation*}
         \frac{\partial}{\partial e_{j}}\eta^i=0
     \end{equation*}
     for all $j>k+1$, which is a consequence of the definition.
\end{proof}

\newpage
\bibliographystyle{abbrvnat}
\renewcommand{\bibname}{Bibliography}
\bibliography{bibliography}
\end{document}